\documentclass[11pt]{amsart}

\usepackage{amsmath}	
\usepackage{amssymb}	
\usepackage{amsthm}
\usepackage[utf8]{inputenc}

\usepackage{url}
\usepackage{mathtools}
\usepackage[english]{babel}

\usepackage{enumitem}

\usepackage{float}

\usepackage{amsfonts}
\usepackage{verbatim}
\usepackage[T1]{fontenc} 	
\usepackage{amscd}
\usepackage{calc}
\usepackage[
hyperfootnotes=false]{hyperref}

\usepackage{xcolor}
\usepackage{crossreftools}

\usepackage{graphicx}

\topmargin=-0.2cm \textheight=24cm \textwidth=16cm \hoffset=-1.6cm
\vfuzz2pt 
\hfuzz2pt 
\newtheorem{thm}{Theorem}[section]
\newtheorem{cor}[thm]{Corollary}
\newtheorem{lem}[thm]{Lemma}
\newtheorem{prop}[thm]{Proposition}

\theoremstyle{definition}

\newtheorem{conj}{Conjecture}

\numberwithin{equation}{section}

\newcommand{\norm}[1]{\left\Vert#1\right\Vert}
\newcommand{\abs}[1]{\left\vert#1\right\vert}

\newcommand{\R}{\mathbb{R}}
\newcommand{\D}{\mathbb{D}}
\newcommand{\C}{\mathbb{C}}
\newcommand{\T}{\mathbb{T}}
\newcommand{\Z}{\mathbb{Z}}
\newcommand{\N}{\mathbb{N}}

\newcommand{\HOLO}{\mathcal H}
\newcommand*\CONJ[1]{\overline{#1}}
\newcommand*\closed[1]{\overline{#1}}

\newcommand*\BOP{\mathcal L}
\newcommand{\AUT}{ {\rm Aut}}

\newcommand{\fAlpha}{f_a^{(\alpha)}
}
\newcommand{\fBeta}{f^{(\beta)}
}
\newcommand{\gAlpha}{g^{(\alpha)}}
\newcommand{\gBeta}{g^{(\beta)}}

\newcommand{\BMO}{   {\rm BMO}  }
\newcommand{\VMO}{   {\rm VMO}  }
\newcommand{\BLOCH}{   {\mathcal B }  }
\newcommand{\BMOA}{   {\rm BMOA}  }
\newcommand{\VMOA}{ {\rm VMOA}  }
\newcommand{\WCO}{  \psi C_{\phi}   }
\newcommand{\PEF}{  h   } 
\newcommand{\WUC}{wuC}

\DeclareMathOperator*{\esssup}{ess\,sup}
\DeclareMathOperator{\ARG}{arg}
\DeclareMathOperator{\EXP}{exp}

\newcounter{Genum}

\newcounter{GenumE}

\newcounter{Genumi}

\makeatletter
\newcommand\MCG{
 \stepcounter{Genum}
\item[(G\arabic{Genum})]\def\@currentlabel{(G\arabic{Genum})}
}
\newcommand\MCGtwo{
 \stepcounter{Genumi}
\item[(G\arabic{Genumi})]\def\@currentlabel{(G\arabic{Genumi})}
}
\newcommand\GItem{%
\stepcounter{GenumE}
 \item[(G\arabic{GenumE})]\def\@currentlabel{(G\arabic{GenumE})}
}
\newcommand\GPrimeItem{%
\stepcounter{GenumE}
 \item[(G\arabic{GenumE}\textquotesingle)]\def\@currentlabel{(G\arabic{GenumE}\textquotesingle)}
}
\makeatother

\newcommand{\MainAssumptions}{
Let \(g\in \HOLO(\C_{\Re\geq\frac{1}{2}})   \) be such that \(g|_{[\frac{1}{2},\infty[} \) is (strictly) positive and almost increasing. Assume also that 

\setcounter{Genum}{0}
\begin{enumerate}
 \MCG There exists \(\epsilon_0 > 0\) such that \(  \sup_{0<x<1}   x\, g(\frac{1}{x})^{2+\epsilon_0}   < \infty \), \label{eq:mainAssump1}
\MCG \( g(1/b) \lesssim g(a/b)  g(1/a)    \) for \(0<b\leq a<2\), \label{eq:mainAssump2}
\MCG \(\abs{g(z)} \gtrsim  g(\abs{z}) , \ z\in \C_{\Re\geq\frac{1}{2}} \) ,  \label{eq:mainAssump3}
\end{enumerate}

and let \(v(z) \asymp  g(\frac{1}{1-\abs{z}}) \).

}

\newcommand{\MainAssumptionsTwo}{
There is a \(g\in \HOLO(\C_{\Re\geq\frac{1}{2}})   \) such that \(g|_{[\frac{1}{2},\infty[} \) is (strictly) positive and almost increasing. Assume also that

\setcounter{Genumi}{0}
\begin{enumerate}
 \MCGtwo There exists \(\epsilon_0 > 0\) such that \(  \sup_{0<x<1}   x\, g(\frac{1}{x})^{2+\epsilon_0}   < \infty \), \label{eq:mainAssumpA1}
\MCGtwo \( g(1/b) \lesssim g(a/b)  g(1/a)    \) for \(0<b\leq a<2\), \label{eq:mainAssumpA2}
\MCGtwo \(\abs{g(z)} \gtrsim  g(\abs{z}) , \ z\in \C_{\Re\geq\frac{1}{2}} \) ,  \label{eq:mainAssumpA3}
\end{enumerate}
such that \(v(z) \asymp  g(\frac{1}{1-\abs{z}}) \).

}

\allowdisplaybreaks

\begin{document}

\title[Compactness and related properties...]{Compactness and related properties of weighted composition operators on weighted BMOA spaces}

\author[Norrbo] {David Norrbo}
\address{David Norrbo. Department of Mathematics and Statistics, School of Mathematical and Physical Sciences, University of Reading, Whiteknights, PO Box 220, Reading RG6 6AX, UK.
\emph{E}-mail: d.norrbo@reading.ac.uk}
\footnote{2020 Mathematics Subject Classification.  30H35, 47B33, 47B38}
\keywords{compactness, complete continuity, composition operator, Lipschitz space, strict singularity,  weighted composition operator, BMOA, weak compactness, weighted BMOA}


\begin{abstract} 

 It is shown that a large class of properties coincide for weighted composition operators on a large class of weighted \(\VMOA\) spaces, including the ones with logarithmic weights and the ones with standard weights \((1-\abs{z})^{-c}, \ 0\leq c< \frac{1}{2}\). Some of these properties are compactness, weak compactness, complete continuity and strict singularity. A function-theoretic characterization for these properties is also given. Similar results are also proved for many weighted composition operators on similarly weighted \(\BMOA\) spaces. The main results extend the theorems given in [Proc. Amer. Math. Soc. 151 (2023), 1195--1207], and new test functions that are suitable for the weighted setting are developed.

\end{abstract}

 \maketitle

\section{Introduction}\label{sec:intro}

Let $\D$ be the open unit disk in the complex plane \(\C\) and let \(\HOLO(\D)\) be the space of all analytic functions \(\D \to \C\). The linear space \(\HOLO(\D)\) is a Fréchet space when equipped with the metrizable topology \(\tau_0\), induced by convergence on compact subsets. If \(\phi,\psi\in \HOLO(\D)\) with \(\phi\colon \D\to \D\), then a weighted composition operator is defined as \( \WCO\colon \HOLO(\D) \to \HOLO(\D) \colon f \mapsto \psi f\circ \phi\), which is the product of a multiplication operator \(f\mapsto \psi f\) and a composition operator \(C_{\phi}\colon f\mapsto f\circ \phi\). The main aim of the  paper is to obtain a complete characterization of compactness of \(\WCO\) acting on the space \(\VMOA_v\) in terms of conditions on \(\psi\) and \(\phi\), and to prove that compactness is equivalent to many other operator-theoretic properties for \(\WCO\) on \(\VMOA_v\). Concerning the operator \(\WCO\) on \(\BMOA_v\), a similar result is obtained under some extra assumptions. One sufficient additional condition for the results involving compactness to hold, is that the composition operators \(f\mapsto f\circ \phi\) is bounded on \(\BMOA_v\) and \(\BMOA_v\not\subset H^\infty\). 

In the literature there are many results on the equivalence of weak compactness and compactness for (weighted) composition operators. For example in \cite{Eklund-2017} Eklund, Galindo, Lindström and Nieminen proved  that weakly compact weighted composition operators from  Bloch type spaces into a wide class of Banach spaces of analytic functions on the open unit disk are always compact.  
 The harder well-known problem of whether every weakly compact composition operator on  the space \(\BMOA\) (and \(\VMOA\)) is compact was
solved by Laitila, Nieminen, Saksman and Tylli \cite{Laitila-2013}. Very recently, Laitila, Lindstr\"om and Norrbo \cite{LaitilaLN-2023} extended the result to weighted composition operators on \(\BMOA\) and simplified the function-theoretic characterization of compactness given in \cite{Laitila-2009}. This is now generalized to a large class of weighted \(\BMOA\)  and \(\VMOA\) spaces, contained in \(\BMOA\). In addition to the immediate changes the weight \(v\) presents, the known proof of the invariance of \(p\) in \(\BMOA_{v,p}\) demands some constraints on \(v\). As a consequence, some new estimates are developed (see Proposition \ref{prop:implicationOfJohnNirenberg}) and the previous function-theoretic characterization is changed in a nontrivial way.

For the standard weights \((1-\abs{z})^{-c}, \ 0\leq c< \frac{1}{2}\), boundedness and compactness of composition operators was characterized in \cite{Xiao-2014} by Xiao and Xu. The logarithmic BMOA-space appears naturally in the study of Toeplitz and Hankel operators on \(\BMOA\) and \(H^1\) (see for example \cite{Virtanen-2008} and the references therein), and hence, some of the obtained results on the space \(\BMOA_v\) may prove useful outside the study of weighted composition operators. 

Next, some relevant vector spaces are introduced. Let \(\T\subset \C\) be the unit circle and \(dm(e^{it}) = \frac{dt}{2\pi}, \ t\in [0,2\pi[ \) be the normalized rotationally invariant Haar measure on \(\T\).  For \( 0< p < \infty \), the Hardy space, \(H^p\),  is the linear space of functions \( f\in \HOLO(\D) \) that satisfy
\[
\norm{ f }^p_{H^p} := \sup_{r\in [0,1[} \int_\T \abs{ f(rw) }^p \, dm(w) < \infty
\]
and
\[
H^\infty := \big\{ f\in \HOLO(\D) :  \norm{f}_\infty := \sup_{z\in\D} \abs{f(z)} <\infty \big\}.
\]
For any function \(f\in H^1\), the nontangential limit \(\lim_{z\to w} f(z)\) exists for almost every \(w\in\T\) (see e.g. \cite[Section 2 until Theorem 2.2]{Duren-1970}). A useful composition operator which will be used several times is \(T_c \colon f  \mapsto  [z\mapsto f(cz)], \ c\in \closed{\D}\), which is a rotational shift of argument when \(c\in\T\) and a dilation when \(c\in[0,1]\). The symbol \(\hat{\phi}\) will always represent an analytic automorphism of \(\D\), that is, a function of the form \( z\mapsto b (a-z)/(1-\CONJ{a}z) \), where \(b\in \T\) and \(a\in \D\). The set of these functions is denoted by \( \AUT \). Given \( 0<p<\infty\) and a weight \(v\), that is, an integrable function \( \D \to ]0,\infty[\), the space \( \BMOA_{v,p} \) consists of all functions \(f\in H^p\) such that 
\[
\norm{f}_{*,v,p} :=  \sup_{\hat{\phi} \in {\AUT} } v(\hat{\phi}(0))  \norm{  f\circ\hat{\phi} -   f(\hat{\phi}(0))     }_{H^p} <\infty.
\]
The subscript \(*\) stands for semi-norm and as a superscript \(*\) means the dual space. As in the classical case, the subspace \(\VMOA_{v,p}\subset \BMOA_{v,p}\) is defined as
\[
\VMOA_{v,p} := \Big\{  f\in \BMOA_{v,p} :  \lim_{\substack{  \abs{\hat{\phi}(0)}\to 1 \\ \hat{\phi} \in {\AUT} }  }  v(\hat{\phi}(0))  \norm{  f\circ\hat{\phi} -   f(\hat{\phi}(0))     }_{H^p} = 0 \Big\}.
\]

For \(a,z \in\D\), let \(\sigma_a(z) := (a -z)/(1 -\CONJ{a} z)\), and note that \(\sigma_a(\sigma_a(z)) = z\). Since 
\[
 b \frac{a-z}{1-\CONJ{a}z}  = \frac{ba-bz}{1-\CONJ{ba}bz}  , \ b\in \T 
\]
and \(\norm{T_c f }_{H^p} = \norm{f }_{H^p},\ c\in \T\), it follows that
\[
\norm{f}_{*,v,p} =  \sup_{a\in \D } v(a)  \norm{  f\circ\sigma_a -   f(a)     }_{H^p} =  \sup_{a\in \D } v(a) \left(  \int_{\T} \abs{  f(w) -   f(a)     }^p  P_a(w)\,  dm(w)\right)^{\frac{1}{p}},
\]
where \( P_a(z):= (1-\abs{a}^2)/\abs{ 1- \CONJ{a}z}^2   \) is the Poisson kernel. 
The following well-known identities will be used throughout the paper without any explicit mentioning:  
\[
1-\abs{\sigma_a(z)}^2  = \frac{(1-\abs{z}^2)(1-\abs{a}^2)}{      \abs{ 1-\CONJ{a}z}^2   } =  (1-\abs{z}^2) \abs{\sigma_a'(z)} = (1-\abs{z}^2) P_a(z) ,\ a,z \in \D.
\]

Before we proceed, we introduce some elementary notations. The space bounded linear operators on a Banach space \(X\) is denoted by \(\BOP(X)\). The evaluation maps, \( \HOLO(\D) \ni f \mapsto f(z) \), are denoted by \(\delta_z ,  z\in \D\). For two quantities \(A,B\geq 0\) the notation \(A\lesssim B\) or \(B\gtrsim A\) means that there exists a constant \(C>0\) such that \(A\leq CB\). The constant will quite often depend on an exponent \(1\leq p,q<\infty\) (John-Nirenberg related) or the weight \(v\) (or the related function \(g\)). Dependencies will be mentioned by a subscript. Moreover, \( A \asymp B \) means that both \(A\lesssim B\) and \(A\gtrsim B\) hold, and in this case we say that \(A\) is equivalent to \(B\). The function \(\chi_A\) will appear inside integrals and is a function of all relevant integration variables. The subscript \(A\) can be a set, in which case the function takes the value \(1\) if the integration variables are inside the set and \(0\) else. The subscript \(A\) can  also be a logic expression, which works as an abbreviation for the set of points satisfying the expression. Henceforth, \(\BMOA_v := \BMOA_{v,2}\) and for \(v=1\), \(\BMOA := \BMOA_{1}\). For convenience, we also define
\[
\gamma(f,a,p) := \norm{f\circ \sigma_a - f(a)}_{H^p} , \quad a\in \D , \, 1\leq p<\infty .
\] 
A real-valued, nonnegative function \(f\) is said to be almost increasing if there is a constant \(C\geq 1\) such that \(f(y) \leq C f(x)\) whenever \(y\leq x \). If \(C=1\), then the word increasing is used. For a set \(M\subset \C\) and a number \(c\in \C\), the notation \(cM := Mc := \{cx\in \C \colon x\in M  \}\) is used. When a function \(f\) defined on \(M\) is considered on a subdomain \(M'\subset M\), the restriction of \(f\) to \(M'\) is denoted by \(f|_{M'}\). The notation \(\C_{\Re\geq c} := \{z\in\C : \Re z \geq  c \}\), \(c\in\mathbb R\), denotes a right half plane and \(\HOLO(M)\) is the linear space of functions analytic on some domain containing \(M\).

In this paragraph we assume that the weight \(v\colon \D \to ]0,\infty[\) is radial (rotationally invariant), that is, \(v(z)=v(\abs{z}), \ z\in \D\). In the classical case, where \( v\equiv 1\), the fact that \( \AUT \) is a group with respect to composition yields  \( \norm{   \smash{   f\circ\hat{\phi}  }  }_{*,v,p} = \norm{f}_{*,v,p} \). For more general weights, it holds that \( \norm{ \smash{  f\circ\hat{\phi}   }  }_{*,v,p} = \norm{f}_{*,v,p} \) for every \(\hat{\phi}\in \AUT\), if and only if \(v\) is constant. In general, for every \(\hat{\phi}\in \AUT \) it holds that
\[
 \norm{f\circ\hat{\phi}}_{*,v,p} =  \norm{f}_{*,v\circ \hat{\phi}^{-1},p}.
\]

If the weight \(v\) is almost increasing, equivalent to a radial weight and satisfies \(v(b)\lesssim v(\frac{b-a}{1-a}) v(a) ,\ 0\leq a\leq b <1\), it still holds that 
\[
f\in \BMOA_{v,p} \ \Longrightarrow  f\circ\hat{\phi}\in \BMOA_{v,p}
\]
(see Lemma \ref{lem:compWithAutomorph}). In general, it is not evident that this is true (see Conjecture \ref{conj:invariantCompAut} in Section \ref{sec:proofsOfMainResults}).

A useful tool is the Hardy-Stein estimates (see for example \cite[Theorem 4.22]{Zhu-2005}), from which it follows that for \(0<p<\infty\)
\begin{equation}\label{eq:LittlewoodPaley}
\norm{ f\circ \sigma_a - f(a) }_{H^p}^p \asymp \int_{\D} \abs{f'(z)}^2 \abs{f(z)-f(a)}^{p-2}  \frac{(1-\abs{z}^2)(1-\abs{a}^2)}{      \abs{ 1-\CONJ{a}z}^2   } \, dA(z) \quad f\in H^p.
\end{equation}
The case \(p=2\) is also a consequence of the well-known Littlewood-Paley identity.

If the weight satisfies 
\[
\sup_{a\in\D} v(a) (1-\abs{a})^{\frac{1}{p} -\epsilon }<\infty
\] 
for some \(\epsilon>0\) and \(p\geq 2\), we have by \eqref{eq:LittlewoodPaley} and \cite[Theorem 1.12]{Zhu-2005} that all bounded functions \(f\in \HOLO(\D) \) with \smash{\( \abs{f'(z)}^2\lesssim (1-\abs{z})^{-(1+ p\epsilon)}\)} belong to \(\BMOA_{v,p}\). By inclusion, the same functions \(f \in \BMOA_{v,q}, 1\leq q <2\).

The main function-theoretic characterization of compactness concerns the following two functions, \(\alpha\) and \(\beta\). For \(\psi,\phi\in H^1\)  and \(\phi \colon \D\to \D, \ a\in\D\), we define
\[
\alpha(\psi,\phi,a) :=  \frac{v(a)}{  v( \phi(a) )   }  \abs{\psi(a)} \norm{\phi_a}_{H^2},
\]
where \(\phi_a:=\sigma_{\phi(a)}\circ \phi \circ \sigma_a\). The function \(\alpha\) is sufficient for a characterization of boundedness and compactness for composition operators, but for weighted composition operators, a complementary function
\[
\beta(\psi,\phi,a) :=   \norm{  \delta_{\phi(a)}  }_{ (\BMOA_v)^* }   v(a)  \gamma(\psi,a,1)
\]
is also needed. It follows that for \(\sup_{a\in \D} \beta(\psi,\phi,a) < \infty \) to hold, it is necessary that \(\psi\in\BMOA_{v,1}\).

\subsection{Main results}\label{subsec:MainResults}

\noindent
\MainAssumptions

If \(v\) is such a weight, also called admissible weight, we have the following two theorems.

\begin{thm}\label{thm:MainThm}

\[
\WCO\in\BOP(\BMOA_v)   \  \Longleftrightarrow  \  \sup_{a\in\D} (\alpha(\psi,\phi,a) +\beta(\psi,\phi,b)) <\infty
\]
and
\begin{align*}
\WCO\in\BOP(\VMOA_v)   \  &\Longleftrightarrow  \   \sup_{a\in\D} (\alpha(\psi,\phi,a) + \beta(\psi,\phi,a))<\infty \text{ and } \psi,\psi\phi\in\VMOA_v\\
&\Longleftrightarrow  \   \sup_{a\in\D} (\alpha(\psi,\phi,a) + \beta(\psi,\phi,a))<\infty, \psi\in\VMOA_v \text{ and }\\
& \hspace{2cm} \lim_{\abs{a}\to 1} v(a) \psi(a)  \gamma(\phi,a,2)  = 0. 
\end{align*}
More specifically, for \(X=\BMOA_v\) or \(X=\VMOA_v\) and \(\WCO\colon X\to X\), it holds that
\[
\norm{\WCO}_{\BOP(X)} \asymp_{v,g}    \abs{\psi(0)} \norm{\delta_{\abs{\phi(0)}}}_{X^*} +    \sup_{a\in\D} \alpha(\psi,\phi,a) +   \sup_{a\in\D} \beta(\psi,\phi,a),
\]
where the evaluation map satisfies
\[
\norm{\delta_{z}}_{X^*} \asymp_{v,g}  1 + \int_0^{\abs{z}} \frac{dt}{ (1-t) v(t) }.
\]

\end{thm}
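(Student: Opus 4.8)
The plan is to prove the two halves of the displayed estimate in order, since the operator-norm equivalence is built directly on top of the evaluation-functional estimate.

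\emph{The evaluation functional.} For the upper bound I would first extract a pointwise derivative estimate: combining the Hardy--Stein identity \eqref{eq:LittlewoodPaley} (with $p=2$) with the subharmonicity of $\abs{f'}^2$ and the fact that $(1-\abs{z}^2)P_a(z)=1-\abs{\sigma_a(z)}^2\asymp 1$ on a pseudohyperbolic disk about $a$, one gets $\abs{f'(a)}(1-\abs{a}^2)\lesssim\gamma(f,a,2)\le\norm{f}_{*,v,2}/v(a)$, hence $\abs{f'(a)}\lesssim\norm{f}_X/((1-\abs{a})v(a))$. Integrating along the radius to $z$ and using that $v$ is equivalent to a radial weight yields $\abs{f(z)-f(0)}\lesssim\norm{f}_X\int_0^{\abs{z}}dt/((1-t)v(t))$, which is the upper bound. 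For the lower bound I would construct, for each boundary direction $\zeta=z/\abs{z}$, a test function modelled on $f_\zeta'(w)=(1-\bar\zeta w)^{-1}g(1/(1-\bar\zeta w))^{-1}$; this is legitimate because $w\mapsto 1/(1-\bar\zeta w)$ maps $\D$ into $\C_{\Re>\frac12}$, so $g$ may be composed with it. Along the radius $f_\zeta'(t\zeta)=1/((1-t)g(1/(1-t)))\asymp 1/((1-t)v(t))$, giving $\abs{f_\zeta(z)}\gtrsim\int_0^{\abs{z}}dt/((1-t)v(t))$, while $\norm{f_\zeta}_X\lesssim1$ uniformly is checked by estimating the Littlewood--Paley integral, using (G3) to pass from $\abs{g}$ to $g(\abs{\cdot})$ and (G1)--(G2) to control the resulting weighted integral. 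Rotational invariance reduces every $z$ to $\abs{z}$, so $\norm{\delta_z}_{X^*}\asymp\norm{\delta_{\abs{z}}}_{X^*}$.

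\emph{Boundedness and the operator norm.} The engine is the splitting
\[
\WCO f\circ\sigma_a-\WCO f(a)=(\psi\circ\sigma_a)\bigl(f\circ\phi\circ\sigma_a-f(\phi(a))\bigr)+f(\phi(a))\bigl(\psi\circ\sigma_a-\psi(a)\bigr),
\]
which separates $v(a)\norm{\WCO f\circ\sigma_a-\WCO f(a)}_{H^2}$ into an ``$\alpha$-part'' and a ``$\beta$-part''. For the $\beta$-part, $\abs{f(\phi(a))}\le\norm{\delta_{\phi(a)}}_{X^*}\norm{f}_X$ and $v(a)\gamma(\psi,a,2)\asymp v(a)\gamma(\psi,a,1)$ by the invariance of the exponent (Proposition \ref{prop:implicationOfJohnNirenberg}), so this part is $\lesssim\beta(\psi,\phi,a)\norm{f}_X$. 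For the $\alpha$-part I would write $f\circ\phi\circ\sigma_a-f(\phi(a))=F\circ\phi_a$ with $F=f\circ\sigma_{\phi(a)}-f(\phi(a))$, $F(0)=0$, bound $\abs{F}$ via the evaluation estimate applied to $f\circ\sigma_{\phi(a)}$, and combine with $\norm{\phi_a}_{H^2}$ to reach $\lesssim\alpha(\psi,\phi,a)\norm{f}_X$; taming the product against the possibly unbounded factor $\psi\circ\sigma_a$ needs a H\"older/John--Nirenberg argument with the weight. Adding the zeroth-order term $(\WCO f)(0)=\psi(0)f(\phi(0))$, which produces $\abs{\psi(0)}\norm{\delta_{\abs{\phi(0)}}}_{X^*}$, yields ``$\lesssim$''; the reverse inequality follows by testing $\WCO$ against the functions $f_\zeta$ and their compositions with $\sigma_{\phi(a)}$, isolating $\beta$ and $\alpha$ respectively.

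\emph{The $\VMOA_v$ case.} The supremum conditions are inherited from $\BMOA_v$, so one only characterizes when $\WCO$ maps into $\VMOA_v$. Since $\WCO 1=\psi$ and $\WCO(\mathrm{id})=\psi\phi$, the conditions $\psi,\psi\phi\in\VMOA_v$ are necessary, and together with boundedness on $\BMOA_v$ they are sufficient by approximating $f\in\VMOA_v$ with dilations $f_r\to f$. The equivalence with the second formulation comes from the same splitting applied to $f=\mathrm{id}$: the $\beta$-part tends to $0$ precisely because $\psi\in\VMOA_v$, so $\psi\phi\in\VMOA_v$ is equivalent to the vanishing of the $\alpha$-part, i.e.\ to $\lim_{\abs{a}\to1}v(a)\abs{\psi(a)}\gamma(\phi,a,2)=0$.

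I expect the main obstacles to be exactly the two weighted estimates flagged in the introduction: verifying $\norm{f_\zeta}_X\lesssim1$ for the new test functions, where the behaviour of $g$ off the radius $\zeta$ must be absorbed using (G1)--(G3), and the passage between the $H^1$ and $H^2$ seminorms (Proposition \ref{prop:implicationOfJohnNirenberg}) that makes the $\gamma(\cdot,\cdot,1)$ defining $\beta$ interchangeable with the $\gamma(\cdot,\cdot,2)$ forced by $\BMOA_v=\BMOA_{v,2}$; the latter is what alters the characterization nontrivially relative to the unweighted case.
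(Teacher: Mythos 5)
Your overall architecture matches the paper's (derivative bound plus radial integration for \(\norm{\delta_z}_{X^*}\), test functions built from the antiderivative \(\PEF\) of \(1/((1-t)g(\frac{1}{1-t}))\) as in Lemma \ref{lem:GoodOne} and Corollary \ref{cor:EvaluationBoundedOnBMOAv}, a splitting of \(\WCO f\circ\sigma_a-\WCO f(a)\), John--Nirenberg \(p\)-invariance, and density of dilations/polynomials for the \(\VMOA_v\) case). But there is one genuine gap, and it sits exactly at the point the paper flags as delicate: in your \(\beta\)-part you invoke ``\(v(a)\gamma(\psi,a,2)\asymp v(a)\gamma(\psi,a,1)\) by the invariance of the exponent (Proposition \ref{prop:implicationOfJohnNirenberg})'' \emph{pointwise in \(a\)}, and then multiply by \(\abs{f(\phi(a))}\le\norm{\delta_{\phi(a)}}_{X^*}\norm{f}_X\). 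Proposition \ref{prop:implicationOfJohnNirenberg} only equates \emph{suprema} (over all \(a\), or over regions \(\abs{a}\ge 1-R\) with the error terms in \eqref{eq:Rineq}--\eqref{eq:R_BMOineq}); it does not give \(\gamma(\psi,a,2)\lesssim\gamma(\psi,a,1)\) at a fixed \(a\), and the \(a\)-dependent factor \(\norm{\delta_{\phi(a)}}_{X^*}\) blocks sliding the supremum past it. Indeed, the comparison \(\sup_a\norm{\delta_{\phi(a)}}_{X^*}\gamma(\psi,a,2)\lesssim\sup_a\norm{\delta_{\phi(a)}}_{X^*}\gamma(\psi,a,1)\) is precisely Conjecture \ref{conj:ComparingBetas}, which the paper leaves \emph{open} whenever \(X\not\subset H^\infty\). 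The paper avoids this by working in the opposite order: Lemma \ref{lem:WCO_Bounded} estimates \(v(a)\gamma(\WCO f,a,1)\) (so the \(\beta\)-term appears directly with \(\gamma(\psi,a,1)\)), Cauchy--Schwarz is applied only to the genuine cross term \((\psi\circ\sigma_a-\psi(a))(f\circ\phi\circ\sigma_a-f(\phi(a)))\), and the exponent is converted only at the very end, on the whole function \(\WCO f\), via \(\sup_a v(a)\gamma(\WCO f,a,2)\asymp\sup_a v(a)\gamma(\WCO f,a,1)\) (Theorem \ref{thm:BoundednessOfWCO}). Your two-term splitting measured in \(H^2\) cannot be repaired without resolving the conjecture; switching to the \(H^1\)-first route fixes it.

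Two smaller points. For the \(\alpha\)-part, a pointwise bound on \(F=f\circ\sigma_{\phi(a)}-f(\phi(a))\) via the evaluation estimate is too lossy (\(F\) is unbounded for general \(f\in\BMOA_v\)); what is actually needed is the multiplicative estimate \(\norm{F\circ\phi_a}_{H^2}\le C\norm{F}_{H^2}\norm{\phi_a}_{H^2}\) of Lemma \ref{lem:laitilaLittlewood}, giving \(\gamma(f,\phi(a),2)\norm{\phi_a}_{H^2}\le\norm{f}_*\norm{\phi_a}_{H^2}/v(\phi(a))\). For the reverse inequality, testing with \(f_\zeta\circ\sigma_{\phi(a)}\) does not isolate the quantities cleanly: the paper uses \(\fAlpha=(\sigma_{\phi(a)}-\phi(a))/v(\phi(a))\) for \(\alpha\), and for \(\beta\) the \emph{squared} and normalized function \(\gBeta_a=(1+\fBeta_a)^2/\norm{(1+\fBeta_a)^2}_{\BMOA_v}\), whose key feature is \(\abs{\gBeta_a(\phi(a))}\asymp\norm{\delta_{\phi(a)}}_{X^*}\) with unit norm (\eqref{eq:lowerBoundForfBeta}--\eqref{eq:BoundsForfBeta}); extracting \(\alpha\) and \(\beta\) from \(\norm{\WCO\fAlpha}\), \(\norm{\WCO\gBeta_a}\) still requires the cross-term control of Lemmas \ref{lem:AlphaBounded} and \ref{lem:BetaBounded}, which your sketch elides. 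Your \(\VMOA_v\) paragraph and the evaluation-functional estimates are in line with the paper (for \(X=\VMOA_v\) take \(c\in\D\) and use the uniform bound in Lemma \ref{lem:GoodOne}, since \(f_\zeta\) with \(\zeta\in\T\) need not lie in \(\VMOA_v\)).
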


\begin{thm}\label{thm:MainThmCpt}

Let \(X= \BMOA_v\) or \(X=\VMOA_v\). For \(\WCO\in\BOP(X)\) with at least one of the following properties:
\begin{itemize}
\item \( C_{\phi} \in \BOP(\BMOA_v)\), and \(\BMOA_v \not\subset  H^\infty\) or \(\psi\in\VMOA_v\),
\item \(\WCO|_{\VMOA_v} \in \BOP(\VMOA_{v} )\),
\end{itemize} 
the following are equivalent:
\begin{enumerate}
\item \(\WCO \) is compact,    \label{eq:Main_Cpt}
\item \(\WCO \) is weakly compact, \label{eq:Main_wCpt}
\item \(\WCO \) is completely continuous, \label{eq:Main_cCts}
\item \(\WCO \) does not fix a copy of \(c_0\) \ (\(c_0\)-singular), \label{eq:Main_fixC0}
\item \(\WCO \) is unconditionally converging, \label{eq:Main_UC}
\item \(\WCO \) is strictly singular, \label{eq:Main_SS}
\item \(\WCO \) is finitely strictly singular, \label{eq:Main_FSS}
\item \( \limsup_{\abs{\phi(a)}\to 1} (\alpha(\psi,\phi,a) +\beta(\psi,\phi,a)) = 0 \),\label{eq:Main_FuncTheorCharact} 
\item \( \limsup_{\abs{\phi(a)}\to 1} \Big(  \norm{\WCO\fAlpha}_{\BMOA_v}  +\beta(\psi,\phi,a) \Big) = 0 \),\label{eq:Main_FuncTheorCharactTwo} 
\end{enumerate}
where 
\[
\fAlpha\colon z \mapsto  \frac{\sigma_{\phi(a)}(z) - \phi(a) }{v(\phi(a))}.
\]
In the case \(\WCO|_{\VMOA_v} \in \BOP(\VMOA_v )\), the following are equivalent:
\begin{enumerate}[resume]
\item[\eqref{eq:Main_wCpt}] \(\WCO \) is weakly compact,
\item \(\WCO(\BMOA_v)\subset \VMOA_v \) (here \(\WCO\) is interpreted as an operator on \(\HOLO(\D)\)),\label{eq:Main_Gantmacher}
\item \(\forall \epsilon>0 \ \exists C>0 : \norm{\WCO f}_{\BMOA_v} \leq N\norm{f}_{H^2} + \epsilon \norm{f}_{\BMOA_v}  , \quad f\in \VMOA_v. \)\label{eq:Main_AC}
\end{enumerate}

\end{thm}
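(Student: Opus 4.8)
\emph{Proof plan.}
The strategy is to close a cycle of implications in which almost every link is a soft Banach-space fact and only two links carry analytic content. First I would record the routine functional-analytic implications. Since \(\WCO\in\BOP(X)\), compactness \eqref{eq:Main_Cpt} immediately yields weak compactness \eqref{eq:Main_wCpt}, complete continuity \eqref{eq:Main_cCts}, strict singularity \eqref{eq:Main_SS} and finite strict singularity \eqref{eq:Main_FSS}, and \eqref{eq:Main_FSS}\(\Rightarrow\)\eqref{eq:Main_SS}. Each of weak compactness, complete continuity and strict singularity forces \(\WCO\) not to fix a copy of \(c_0\): an isomorphic copy of \(c_0\) carries a weakly null normalised sequence bounded below in norm, which cannot survive under a completely continuous, weakly compact or strictly singular map. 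By Pełczyński's theorem, not fixing \(c_0\) \eqref{eq:Main_fixC0} is equivalent to unconditional convergence \eqref{eq:Main_UC}, and weak compactness also gives \eqref{eq:Main_UC} directly. Hence items \eqref{eq:Main_Cpt}--\eqref{eq:Main_UC} together with \eqref{eq:Main_SS},\eqref{eq:Main_FSS} all imply \eqref{eq:Main_fixC0}, and it remains to prove \eqref{eq:Main_fixC0}\(\Rightarrow\)\eqref{eq:Main_FuncTheorCharact}, then \eqref{eq:Main_FuncTheorCharact}\(\Rightarrow\)\eqref{eq:Main_Cpt}, together with the equivalence \eqref{eq:Main_FuncTheorCharact}\(\Leftrightarrow\)\eqref{eq:Main_FuncTheorCharactTwo}.

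For \eqref{eq:Main_FuncTheorCharact}\(\Rightarrow\)\eqref{eq:Main_Cpt} I would use the normal-families criterion: a bounded operator on \(X\) that sends bounded \(\tau_0\)-null sequences to norm-null sequences is compact. Given a bounded sequence \((f_n)\) in \(X\) with \(f_n\to0\) uniformly on compact subsets of \(\D\), I estimate \(\norm{\WCO f_n}_{\BMOA_v}\) via the Hardy--Stein formula \eqref{eq:LittlewoodPaley}, splitting each \(\gamma\)-type integral into a part over \(\phi^{-1}\) of a compact subset of \(\D\) (where uniform convergence on compacta and boundedness of the integrands win) and a part near \(\T\) (controlled by the assumed smallness of \(\alpha+\beta\) when \(\abs{\phi(a)}\) is close to \(1\)). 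The decomposition of \(\norm{\WCO f}\) into an \(\alpha\)-type (composition) contribution and a \(\beta\)-type (multiplication) contribution is exactly the norm estimate of Theorem \ref{thm:MainThm}, and Proposition \ref{prop:implicationOfJohnNirenberg} is what makes the underlying \(p\)-independence in this splitting legitimate in the weighted setting.

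The main obstacle is the contrapositive \(\neg\)\eqref{eq:Main_FuncTheorCharact}\(\Rightarrow\neg\)\eqref{eq:Main_fixC0}: if \(\limsup_{\abs{\phi(a)}\to1}(\alpha+\beta)>0\), then \(\WCO\) fixes a copy of \(c_0\). Choosing \(a_n\) with \(\abs{\phi(a_n)}\to1\) and \(\alpha(\psi,\phi,a_n)+\beta(\psi,\phi,a_n)\ge\delta>0\), and passing to a subsequence, either the \(\alpha\)-part or the \(\beta\)-part stays bounded below. In each case I would build, from the weight-adapted test functions \(\fAlpha\) and \(\fBeta\) concentrated at the boundary points \(\phi(a_n)/\abs{\phi(a_n)}\), a normalised sequence \((h_n)\) in \(X\) equivalent to the unit vector basis of \(c_0\) on which \(\WCO\) is bounded below. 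The delicate estimate is the upper bound \(\norm{\sum_n c_n h_n}_{\BMOA_v}\lesssim\sup_n\abs{c_n}\) for arbitrary bounded coefficients: the \(a_n\) must be thinned so that the test functions are sufficiently separated (a lacunary/Carleson-type separation of the associated boundary points), and the weighted seminorm of a superposition must be shown to be governed by its largest single term. This is precisely where the new test functions and the almost-increasing and submultiplicative hypotheses (G1)--(G3) on \(g\) enter, since they let the weight \(v(\hat\phi(0))\) be redistributed across the automorphism group (cf.\ Lemma \ref{lem:compWithAutomorph}) while keeping the superposition estimates uniform. Producing a single construction that detects the \(\alpha\)- and \(\beta\)-mechanisms simultaneously, and that works verbatim for \(X=\VMOA_v\) because the test functions already lie in \(\VMOA_v\), is the heart of the argument.

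The equivalence \eqref{eq:Main_FuncTheorCharact}\(\Leftrightarrow\)\eqref{eq:Main_FuncTheorCharactTwo} would follow from a direct computation giving \(\norm{\WCO\fAlpha}_{\BMOA_v}\asymp\alpha(\psi,\phi,a)\) up to a term absorbed by \(\beta\), so the two \(\limsup\) conditions coincide. Finally, in the case \(\WCO|_{\VMOA_v}\in\BOP(\VMOA_v)\) I would handle the extra equivalences \eqref{eq:Main_Gantmacher} and \eqref{eq:Main_AC} by a Gantmacher argument: under the admissibility hypotheses one has \((\VMOA_v)^{**}=\BMOA_v\), and the second adjoint of \(\WCO\) agrees with \(\WCO\) viewed on \(\HOLO(\D)\), so Gantmacher's theorem says \(\WCO\) is weakly compact iff this second adjoint maps into the canonical copy of \(\VMOA_v\), i.e.\ iff \(\WCO(\BMOA_v)\subset\VMOA_v\), which is \eqref{eq:Main_Gantmacher}. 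The reformulation \eqref{eq:Main_AC} is the quantitative small-tail version of \eqref{eq:Main_Gantmacher}, obtained by an \(\epsilon\)-approximation of the \(\BMOA_v\) norm by the \(H^2\) norm on the ball, again via \eqref{eq:LittlewoodPaley} and Proposition \ref{prop:implicationOfJohnNirenberg}.
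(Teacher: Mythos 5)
Your overall architecture does coincide with the paper's: the same cycle \eqref{eq:Main_FuncTheorCharact}\(\Rightarrow\)\eqref{eq:Main_Cpt}\(\Rightarrow\cdots\Rightarrow\)\eqref{eq:Main_fixC0}\(\Rightarrow\)\eqref{eq:Main_FuncTheorCharact}, the same soft links for \eqref{eq:Main_UC}, \eqref{eq:Main_SS}, \eqref{eq:Main_FSS}, and the Gantmacher/Perfekt treatment of \eqref{eq:Main_wCpt}\(\Leftrightarrow\)\eqref{eq:Main_Gantmacher}\(\Leftrightarrow\)\eqref{eq:Main_AC} is exactly Theorem \ref{thm:Gantmacher}. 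But there is a genuine gap at the heart of your plan: you never use the bulleted standing hypotheses, and they are precisely what makes two links work; a proof that closes the cycle without them would settle Conjecture \ref{conj:MainBMOA}. Concretely, in \eqref{eq:Main_FuncTheorCharact}\(\Rightarrow\)\eqref{eq:Main_Cpt} on \(\BMOA_v\) your ``split into \(\phi^{-1}\) of a compact set and a part near \(\T\)'' fails in the region \(\abs{\phi(a)}\leq r\): the quantity \(\gamma(\WCO f_n,a,1)\) integrates \(f_n\circ\phi\circ\sigma_a\) over all of \(\T\), where \(\abs{\phi\circ\sigma_a}\) can be close to \(1\) even though \(\phi(a)\) stays in a compact set, so neither uniform convergence on compacta nor the assumed smallness of \(\alpha+\beta\) (which only bites as \(\abs{\phi(a)}\to 1\)) controls it. The paper's Theorem \ref{thm:SuffForCpt} handles this by isolating \(E=\{w:\abs{\phi_a(w)}>t\}\), applying H\"older twice, importing \(\lim_{t\to1}\sup_{\abs{\phi(a)}\leq r}\norm{\chi_E\,\psi\circ\sigma_a}_{H^2}=0\) from the unweighted theory via Lemma \ref{lem:HelpWithApplyingTheKnown} (which needs \(v\) unbounded and \(\WCO\in\BOP(\BMOA)\)), and proving \eqref{eq:ExtraConditionInAction} via \eqref{eq:ExtraConditionInActionTwo} --- and this last step is exactly where \(C_\phi\in\BOP(\BMOA_v)\) or \(\WCO|_{\VMOA_v}\in\BOP(\VMOA_v)\) enters. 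For \(X=\VMOA_v\) your criterion is not available verbatim either, since \(B_{\VMOA_v}\) is not \(\tau_0\)-compact; the paper instead proves (Theorem \ref{thm:SuffForCptVMOA}) that \(\WCO K_n\to\WCO\) in operator norm for the dilations \(K_n=T_{n/(n+1)}\), using the Carleson-measure estimate of Proposition \ref{prop:PracticalEstimate} and the two-scale John--Nirenberg inequality \eqref{eq:R_BMOineq}. Similarly, the \(\beta\)-branch of \eqref{eq:Main_fixC0}\(\Rightarrow\)\eqref{eq:Main_FuncTheorCharact} (Theorem \ref{thm:fixCopyC0NotUC}) needs \(\BMOA_v\not\subset H^\infty\) or \(\psi\in\VMOA_v\) when the evaluation functionals are uniformly bounded.

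Two further points would need repair. In the \(c_0\)-construction, having a normalized sequence equivalent to the \(c_0\)-basis on which \(\norm{\WCO h_n}\geq\delta\) does \emph{not} yield that \(\WCO\) fixes a copy of \(c_0\): images of the basis vectors could nearly cancel in combinations, so you also need a lower \(c_0\)-estimate for the image sequence. The paper secures this by applying the sliding-hump Lemma \ref{lem:EquivBasisC0} \emph{twice} --- once to the test functions \(\gAlpha_{a_n}\) (or \(\gBeta_{a_n}\)), once to their images \(\WCO\gAlpha_{a_{n_k}}\) --- with radial (annular) separation of the quantity \(v(a)\gamma(\cdot,a,2)\) rather than your angular/Carleson separation of boundary points; your plan only secures the upper estimate on the domain side. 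Finally, your claim that \eqref{eq:Main_FuncTheorCharact}\(\Leftrightarrow\)\eqref{eq:Main_FuncTheorCharactTwo} follows from a direct computation \(\norm{\WCO\fAlpha}_{\BMOA_v}\asymp\alpha(\psi,\phi,a)\) modulo \(\beta\) is only half true: Lemma \ref{lem:AlphaBounded} gives \(\alpha(\psi,\phi,a)\lesssim\norm{\WCO\fAlpha}_{\BMOA_v}+(\text{a term absorbed by }\beta)\), but no norm computation gives the reverse; the paper obtains \(\limsup_{\abs{\phi(a)}\to1}\norm{\WCO\fAlpha}_{\BMOA_v}=0\) from compactness together with \(\fAlpha\to0\) in \(\tau_0\) (Lemma \ref{lem:FuncCharactTwo}, via \cite[Lemma 3.3]{Contreras-2016}), so \eqref{eq:Main_FuncTheorCharactTwo} must be spliced into the cycle through \eqref{eq:Main_Cpt}, not equated to \eqref{eq:Main_FuncTheorCharact} directly.
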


This yields a complete characterization of the properties listed for \(\WCO\) on the space \(\VMOA_v\), but not on \(\BMOA_v\). Note that for \(v\equiv 1\) the function \(\beta\) in this work is smaller than the one used in \cite{Laitila-2009}. Since \( \norm{\delta_{\phi(a)} }_{X^*} \), where \( X= \BMOA_v \) or \( X= \VMOA_v \), is not necessarily equivalent to a radial function, it is unclear if the reverse relation holds, see Conjecture \ref{conj:ComparingBetas}.

Concerning the multiplication operator, S. Ye characterized, in \cite{Ye-2020}, boundedness and compactness of the multiplication operators on \(\BMOA_v\) and \(\VMOA_v\) with \(v(z) = \ln (2/(1-\abs{z}^2))\). For more general weights, S. Janson described the multipliers in \cite[Theorem 2]{Janson-1976} using a different proof. The following result can essentially be compared to Theorem 2 by Janson:
\begin{cor}\label{cor:MainCorMult}
Let \(X=\BMOA\) or \(X=\VMOA\). The multiplication operator \(M_\psi \colon f\mapsto \psi f \) is bounded on \(X_v\) if and only if
\[
\psi\in H^\infty \cap \BMOA_w,
\]
where 
\[
 w(a)=  v(a)\bigg(1 + \int_0^{\abs{a}} \frac{dt}{  (1-t) v(t) }  \bigg) \asymp v(a) \norm{\delta_a}_{(X_v)^*}     ,\quad a\in \D. 
\]
Especially, if \(\sup_{a\in\D}\int_0^{\abs{a}} \frac{dt}{  (1-t) v(t) } <\infty \), or equivalently \(X_v\subset H^\infty\), then \(X_v=X_w\) is an algebra.

Moreover, the operator \(M_\psi\) is compact (or satisfies any of the other equivalent properties   \eqref{eq:Main_wCpt}--\eqref{eq:Main_FSS}) on \(X_v\) if and only if \( \psi \equiv 0\).
\end{cor}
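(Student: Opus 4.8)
The plan is to read the corollary off Theorems \ref{thm:MainThm} and \ref{thm:MainThmCpt} by specializing to \(\phi=\mathrm{id}\), so that \(\WCO=M_\psi\). First I would record the two symbol functions in this case. Since \(\phi(a)=a\) and \(\phi_a=\sigma_a\circ\mathrm{id}\circ\sigma_a=\sigma_a\circ\sigma_a=\mathrm{id}\), we have \(\norm{\phi_a}_{H^2}=1\), whence \(\alpha(\psi,\mathrm{id},a)=\frac{v(a)}{v(a)}\abs{\psi(a)}\cdot 1=\abs{\psi(a)}\). Using the evaluation-norm equivalence \(\norm{\delta_a}_{(\BMOA_v)^*}\asymp 1+\int_0^{\abs{a}}\frac{dt}{(1-t)v(t)}\) recorded in Theorem \ref{thm:MainThm}, we get \(\beta(\psi,\mathrm{id},a)=\norm{\delta_a}_{(\BMOA_v)^*}v(a)\gamma(\psi,a,1)\asymp w(a)\gamma(\psi,a,1)\), where \(w\) is exactly the weight in the statement.

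For boundedness I would then invoke Theorem \ref{thm:MainThm}. Taking suprema, \(\sup_a\alpha(\psi,\mathrm{id},a)=\norm{\psi}_\infty\), finite iff \(\psi\in H^\infty\), and \(\sup_a\beta(\psi,\mathrm{id},a)\asymp\sup_a w(a)\gamma(\psi,a,1)=\norm{\psi}_{*,w,1}\), finite iff \(\psi\in\BMOA_{w,1}\). The one genuinely nonformal point is to replace \(\BMOA_{w,1}\) by \(\BMOA_w=\BMOA_{w,2}\): this is the invariance of the exponent \(p\), available once \(w\) is seen to be an admissible weight so that Proposition \ref{prop:implicationOfJohnNirenberg} applies to \(w\). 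Granting this, \(M_\psi\in\BOP(\BMOA_v)\) iff \(\psi\in H^\infty\cap\BMOA_w\). For \(X=\VMOA\) the same computation feeds the \(\VMOA\) line of Theorem \ref{thm:MainThm}; here one also uses that constants lie in \(\VMOA_v\), so \(\psi=M_\psi 1\in\VMOA_v\) is forced, while \(z\psi=M_z\psi\in\VMOA_v\) follows from boundedness of \(M_z\), which collapses the extra \(\VMOA\) conditions to the stated form.

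The algebra remark is immediate: \(\sup_a\int_0^{\abs{a}}\frac{dt}{(1-t)v(t)}<\infty\) is equivalent to \(\sup_a\norm{\delta_a}_{(X_v)^*}<\infty\), i.e. \(X_v\subset H^\infty\), and in that regime \(w\asymp v\), so \(X_w=X_v\). The multiplier condition then reads \(\psi\in H^\infty\cap\BMOA_v=\BMOA_v\), the last equality because \(X_v\subset H^\infty\); hence every element of \(X_v\) multiplies \(X_v\) into itself and \(X_v\) is an algebra.

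Finally, for compactness I would apply Theorem \ref{thm:MainThmCpt}, whose hypotheses hold here: \(C_{\mathrm{id}}=I\in\BOP(\BMOA_v)\), and in the \(\VMOA\) case \(M_\psi|_{\VMOA_v}\in\BOP(\VMOA_v)\). Property \eqref{eq:Main_FuncTheorCharact} then reads \(\limsup_{\abs{a}\to1}(\abs{\psi(a)}+\beta(\psi,\mathrm{id},a))=0\); in particular \(\lim_{\abs{a}\to1}\psi(a)=0\), and the maximum modulus principle forces \(\psi\equiv0\). The converse is trivial since \(M_0=0\) enjoys every property in the list. I expect the main obstacles to be (i) verifying that \(w\) is admissible so that the \(p\)-invariance of Proposition \ref{prop:implicationOfJohnNirenberg} is available, and (ii) the bookkeeping of the hypotheses of Theorem \ref{thm:MainThmCpt} in the borderline case \(\BMOA_v\subset H^\infty\) with \(X=\BMOA\); there a clean alternative is the spectral argument \(M_\psi^*\delta_a=\psi(a)\delta_a\), which exhibits \(\psi(\D)\) as eigenvalues of the compact adjoint, forcing \(\psi\) constant by the open mapping theorem and then \(\psi\equiv0\), since a nonzero multiple of the identity is noncompact on the infinite-dimensional \(X_v\).
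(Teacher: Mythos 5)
Your boundedness argument is essentially the paper's: the corollary is read off Theorem \ref{thm:MainThm} with \(\phi=\mathrm{id}\), using \(\phi_a=\sigma_a\circ\sigma_a=\mathrm{id}\), \(\norm{\phi_a}_{H^2}=1\), so \(\alpha(\psi,\mathrm{id},a)=\abs{\psi(a)}\) and \(\beta(\psi,\mathrm{id},a)\asymp w(a)\gamma(\psi,a,1)\); and your obstacle (i) is exactly what the paper supplies: Lemma \ref{lem:ExampleNewWeight} verifies the growth condition \ref{eq:mainAssumpA1} for \(w\) (almost-increasingness being clear), so Proposition \ref{prop:implicationOfJohnNirenberg} applies to \(w\) and \(\BMOA_{w,1}=\BMOA_w\) with equivalent seminorms. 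One small gloss in your \(\VMOA\) sufficiency step: "\(\psi=M_\psi 1\in\VMOA_v\) is forced" is the necessity direction; for sufficiency you must deduce \(\psi,z\psi\in\VMOA_v\) from \(\psi\in H^\infty\cap\BMOA_w\), which works because either \(h\) is unbounded, in which case \(v(a)\gamma(\psi,a,1)=w(a)\gamma(\psi,a,1)/(1+h(\abs{a}))\to 0\), or \(h\) is bounded and one is in the algebra regime \(w\asymp v\).

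The genuine gap is in the compactness part, precisely in the borderline case you flag (\(X=\BMOA\), \(\BMOA_v\subset H^\infty\), \(\psi\notin\VMOA_v\)), where the hypotheses of Theorem \ref{thm:MainThmCpt} fail. The corollary asserts that \emph{every} property \eqref{eq:Main_Cpt}--\eqref{eq:Main_FSS} forces \(\psi\equiv 0\), but your spectral fallback \(M_\psi^*\delta_a=\psi(a)\delta_a\) rules out only compactness (and, with Kato's spectral theory, strict singularity): the adjoint of a weakly compact or completely continuous operator may perfectly well have an open set of eigenvalues, so no contradiction is obtained for properties \eqref{eq:Main_wCpt}, \eqref{eq:Main_cCts}, \eqref{eq:Main_fixC0} or \eqref{eq:Main_UC}. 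The paper closes this case differently, as stated in the remark following the corollary: since \(\alpha(\psi,\mathrm{id},a)=\abs{\psi(a)}\) and the maximum modulus principle gives \(\limsup_{\abs{a}\to 1}\abs{\psi(a)}>0\) whenever \(\psi\not\equiv 0\), the \(\alpha\)-branch of the proof of Theorem \ref{thm:fixCopyC0NotUC} applies verbatim; that branch uses only \(M_\psi\in\BOP(\BMOA_v)\), Lemma \ref{lem:AlphaBounded} and Lemma \ref{lem:EquivBasisC0} applied to the test functions \(\gAlpha_{a_n}\) --- the extra hypotheses "\(\BMOA_v\not\subset H^\infty\) or \(\psi\in\VMOA_v\)" enter only the \(\beta\)-branch. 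Hence \(M_\psi\) fixes a copy of \(c_0\) and is not unconditionally converging, and through the implication scheme in the proof of Theorem \ref{thm:MainThmCpt} this negates all of \eqref{eq:Main_Cpt}--\eqref{eq:Main_FSS} at once. Replacing your spectral fallback by this observation repairs the proposal completely.
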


Although the statement about compactness does not following immediately from Theorem \ref{thm:MainThmCpt}, it holds that \( C_{\phi} \in \BOP(\BMOA_v)\) and \(\limsup_{\abs{\phi(a)}\to 1} (\alpha(\psi,\phi,a) > 0\) unless \(\psi\equiv 0\), so the same proof as for the theorem applies in this case.

Concerning the composition operator, we have the following generalization of the part of the work \cite{Xiao-2014} by J. Xiao and W. Xu, which concerns boundedness and compactness of \(C_\phi\) on the Analytic Lipschitz spaces, \( \BMOA_v\), where \( v(a)=(1-\abs{a}^2)^{-c}, \ c\in]0,1/2[, \, a\in\D \).

\begin{cor}\label{cor:MainCorComp}
Let \(X=\BMOA\) or \(X=\VMOA\) and let \(v\) be an admissible weight. The composition operator \(C_\phi\) is bounded on \(X_v\) if and only if 
\[
\sup_{a\in\D} \frac{ v(a) }{ v(\phi(a)) }\norm{\phi_a}_{H^2} < \infty,
\]
and \(C_\phi\in\BOP(X_v)\) is a compact operator if and only if 
\[
\limsup_{\abs{\phi(a)}\to 1} \frac{ v(a) }{ v(\phi(a)) }\norm{\phi_a}_{H^2} = 0.
\]

Furthermore, it is necessary that \(\phi\in X_v\) for \(C_{\phi}\) to be bounded.
\end{cor}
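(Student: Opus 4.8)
The plan is to specialize the two main theorems to the unweighted symbol \(\psi\equiv 1\), so that \(\WCO=C_\phi\), and then to read off the stated conditions after simplifying \(\alpha\) and \(\beta\). First I would record two elementary facts. Since \(\gamma(1,a,1)=\norm{1\circ\sigma_a-1}_{H^1}=0\) for every \(a\in\D\), the complementary function satisfies \(\beta(1,\phi,a)=\norm{\delta_{\phi(a)}}_{(\BMOA_v)^*}\,v(a)\,\gamma(1,a,1)=0\); thus \(\beta\) disappears entirely from both theorems. On the other hand \(\alpha(1,\phi,a)=\frac{v(a)}{v(\phi(a))}\abs{1}\norm{\phi_a}_{H^2}=\frac{v(a)}{v(\phi(a))}\norm{\phi_a}_{H^2}\), which is exactly the quantity appearing in the corollary. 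Finally \(\psi=1\) is constant, hence \(\psi\in\VMOA_v\) trivially, and \(\psi\phi=\phi\).

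Boundedness then follows by substitution into Theorem \ref{thm:MainThm}. For \(X=\BMOA\) the criterion \(\sup_a(\alpha+\beta)<\infty\) collapses to \(\sup_a\frac{v(a)}{v(\phi(a))}\norm{\phi_a}_{H^2}<\infty\). For \(X=\VMOA\) the same theorem imposes the extra requirements \(\psi,\psi\phi\in\VMOA_v\), i.e. \(1\in\VMOA_v\) (automatic) and \(\phi\in\VMOA_v\); equivalently, via the third form of the theorem with \(\psi=1\), the extra condition reads \(\lim_{\abs{a}\to1}v(a)\gamma(\phi,a,2)=0\), which is precisely \(\phi\in\VMOA_v\). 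This membership is genuinely needed, as the classical inner-function case (\(v\equiv1\)) shows: there \(\sup_a\norm{\phi_a}_{H^2}=1\) is finite while a nonconstant inner \(\phi\notin\VMOA\); it is supplied precisely by the ``furthermore'' clause.

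To establish that clause I would test \(C_\phi\) on the identity map \(j\colon z\mapsto z\). A direct computation gives \(j\circ\sigma_a-j(a)=\sigma_a-a=-(1-\abs{a}^2)\tfrac{z}{1-\CONJ{a}z}\), so \(\gamma(j,a,2)=(1-\abs{a}^2)^{1/2}\) and \(v(a)\gamma(j,a,2)\asymp g\!\left(\tfrac{1}{1-\abs{a}}\right)(1-\abs{a})^{1/2}\). By assumption \ref{eq:mainAssump1} one has \(g(1/x)\lesssim x^{-1/(2+\epsilon_0)}\), whence \(v(a)\gamma(j,a,2)\lesssim(1-\abs{a})^{\epsilon_0/(2(2+\epsilon_0))}\to0\) as \(\abs{a}\to1\); thus \(j\in\VMOA_v\subset\BMOA_v\). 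Since \(C_\phi j=j\circ\phi=\phi\), boundedness of \(C_\phi\) on \(X_v\) forces \(\phi=C_\phi j\in X_v\), which gives necessity of \(\phi\in X_v\) and, in the \(\VMOA\) case, exactly the membership condition isolated above.

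For compactness I would invoke Theorem \ref{thm:MainThmCpt} with \(\psi=1\). Its hypotheses hold via the first bullet: \(C_\phi\in\BOP(\BMOA_v)\) (which holds once \(C_\phi\) is bounded, since \(\sup_a\alpha<\infty\) already forces boundedness on \(\BMOA_v\)) together with \(\psi=1\in\VMOA_v\). Hence properties \eqref{eq:Main_Cpt}--\eqref{eq:Main_FuncTheorCharactTwo} are all equivalent, and in particular compactness is equivalent to \eqref{eq:Main_FuncTheorCharact}, namely \(\limsup_{\abs{\phi(a)}\to1}(\alpha+\beta)=0\), which with \(\beta\equiv0\) becomes \(\limsup_{\abs{\phi(a)}\to1}\frac{v(a)}{v(\phi(a))}\norm{\phi_a}_{H^2}=0\), as claimed. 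The routine substitutions aside, the only genuinely delicate points are verifying \(j\in\VMOA_v\) (which is where the growth restriction \ref{eq:mainAssump1} on \(v\) enters) and keeping the \(\VMOA\) membership condition in view, so that the ``furthermore'' clause is correctly understood as part of the \(\VMOA\) boundedness characterization rather than a mere afterthought.
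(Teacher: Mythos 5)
Your proposal is correct and follows essentially the same route as the paper, which obtains this corollary by specializing Theorems \ref{thm:MainThm} and \ref{thm:MainThmCpt} to \(\psi\equiv 1\) (so that \(\beta(1,\phi,a)=0\) because \(\gamma(1,a,1)=0\)), with the necessity of \(\phi\in X_v\) and the extra \(\VMOA_v\)-membership in the boundedness criterion extracted exactly as you do, by testing on \(z\mapsto z\in\VMOA_v\) (cf. Proposition \ref{prop:polynomialsInBMOAV} and the proof of Corollary \ref{cor:BoundednessOfWCO_VMOA}). One cosmetic slip: not every nonconstant inner function fails to lie in \(\VMOA\) (finite Blaschke products do belong to it), so your \(v\equiv 1\) illustration should use, say, an infinite Blaschke product; this does not affect the argument.
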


\vspace{1cm}

Some examples of admissible weights are \(v(z) = (1-\abs{z})^{-c} \), that is, \( g(z)= z^c  , \ 0\leq c <1/2\) and   
\[
v(z) = (\ln(\frac{e}{1-\abs{z}}) )^c , \text{ that is, }  g(z)= (\ln (ez))^c  , \  c>0,
\]
where the branch cuts are chosen appropriately (e.g. along the negative real axis). To see that condition \ref{eq:mainAssump2} holds for \( g(z)= (\ln (ez))^c  , \  c>0 \), we substitute \(a=e^{-A}\) and \(b=e^{-B}\), and use the fact that
\[
 \frac{1+B}{(1+A)(B-A+1)} \leq \frac{1}{(1+A)} + \frac{1}{(B-A+1)}\leq  1 + \frac{1}{\ln\frac{e}{2}} \quad\quad \Big(\ln \frac{1}{2} < A\leq B <\infty \Big).
\]

In view of Theorem \ref{thm:MainThm} and the evaluation map, the logarithmic weights mentioned above yield a space \(\BMOA_v\subset H^\infty\) if and only if \(c > 1\). Concerning the standard weights, \(\BMOA_v\subset H^\infty\) for all \(c>0\). Moreover, if \(v\) is an admissible weight and \(u\colon \D\to [a,b]\) for some \(0<a<b<\infty\), then \(vu\) is an admissible weight. This is clear from the fact that the norms \(\norm{\cdot}_{\BMOA_v}\) and \(\norm{\cdot}_{\BMOA_{vu} }\) are equivalent. An example of a non-radial admissible weight is \( z\mapsto \abs{2+z}(1-\abs{z})^{-1/4}\). Another useful property of the set of admissible weights is given in the paragraph where \eqref{eq:wOne} appears, that is, the product of two admissible weights is admissible if it satisfies the growth restriction \ref{eq:mainAssump1}.

The article is structured as follows: Section \ref{sec:Prelim} contains some more definitions and some preliminary results. The most important new result in this section is Proposition \ref{prop:implicationOfJohnNirenberg}, which in addition to \cite[Proposition 2.6]{dyakonov-1998} (\(\BMOA_{v,p} =  \BMOA_{v,1}\) for suitable \(v,1<p<\infty\)), contains some more precise estimates, which are, for example, used to prove that the given function-theoretic condition is sufficient for \(\WCO\in \BOP(\VMOA_v)\) to be compact (Theorem \ref{thm:SuffForCptVMOA}). Another important tool is the denseness of polynomials in \(\VMOA_v\), which is given in Proposition \ref{prop:ConvergenceInVMOAV}. In Theorem \ref{thm:Gantmacher} it is shown that \(\VMOA_v^{**}\) is isometrically isomorphic to \(\BMOA_v\). The section ends with a brief discussion regarding the demands for the weight \(v\) to be admissible. The two following sections contain some preparatory results, of which a few might be of interest on their own. Section \ref{sec:ImportantLemmas} contains three important results: Lemmas \ref{lem:coolStuff} and  \ref{lem:GoodOne} concern the main function-theoretic characterization for boundedness and compactness, and they are related to the functions \(\alpha\) and \(\beta\) respectively. The third important result is Corollary \ref{cor:EvaluationBoundedOnBMOAv}, which is an estimate for the evaluation map. In Section \ref{sec:FuncAlphaBeta}, the test functions are developed and proofs of important properties for these functions are given. 

Section \ref{sec:WrappingItUp} contains, in contrast to Section \ref{sec:Prelim}, the main parts of the main theorems, whose proofs make heavy use of the fact that the operator is a weighted composition operator and it acts on \(\VMOA_v\) (or \(\BMOA_v\)), where \(v\) is admissible. The function-theoretic characterization for boundedness is proved followed by the remaining implications (the ones that are not of a more general type) to conclude that Theorem \ref{thm:MainThmCpt} holds. Section \ref{sec:Examples} contains some examples of symbols \(\psi\) and \(\phi\) making \(\WCO\) bounded or even compact. Using the obtained results, the proofs for the three main results are completed and summarized in Section \ref{sec:proofsOfMainResults}. Section \ref{sec:proofsOfMainResults} also contains some conjectures.

\section{Preliminaries}\label{sec:Prelim}

This section contains some more definitions and preliminary results. We begin the section by showing that the polynomials in \(\HOLO(\D)\) are often dense in a proper subspace of \(\BMOA_{v,p}\). The section ends with a brief discussion concerning the conditions \ref{eq:mainAssump1} and \ref{eq:mainAssump2}.  

\begin{prop}\label{prop:polynomialsInBMOAV}
Let \(1< p<\infty\). If \(  \sup_{a\in\D}  v(a)  (1-\abs{a})^\frac{1}{p} < \infty \), then the polynomials belong to \(\BMOA_{v,p}\). If \(  \lim_{a\to 1}  v(a)  (1-\abs{a})^\frac{1}{p} = 0 \), then \(\VMOA_{v,p} \subset \BMOA_{v,p}\) is a closed subspace containing the analytic polynomials. If \(p=1\) the same statements are valid when  \( v(a)  (1-\abs{a})^\frac{1}{p} \) is replaced by \( v(a)  (1-\abs{a}) \ln \frac{e}{1-\abs{a}} \). 
\end{prop}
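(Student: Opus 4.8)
The plan is to reduce the entire statement to a single pointwise estimate of $\gamma(f,a,p)$ for an analytic polynomial $f$, and then feed the hypotheses on $v$ directly into the definitions of $\BMOA_{v,p}$ and $\VMOA_{v,p}$. First I would use the integral form of the seminorm recorded in the introduction, namely $\gamma(f,a,p)^p = \int_\T \abs{f(w)-f(a)}^p P_a(w)\,dm(w)$. Since a polynomial extends holomorphically to $\closed\D$ with $f'$ bounded there, and the segment $[a,w]$ lies in $\closed\D$, the fundamental theorem of calculus gives $\abs{f(w)-f(a)} \le \norm{f'}_{\infty} \abs{w-a}$ for all $a\in\D$, $w\in\T$. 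Membership $f\in H^p$ is automatic (polynomials lie in $H^\infty$), so this derivative bound is the only place where the polynomial structure enters; everything else is weight bookkeeping.

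Next I would exploit the boundary identity $\abs{1-\CONJ{a}w} = \abs{w-a}$ for $w\in\T$, which turns the Poisson kernel into $P_a(w) = (1-\abs{a}^2)/\abs{w-a}^2$ and yields
\[
\gamma(f,a,p)^p \le \norm{f'}_{\infty}^p\,(1-\abs{a}^2)\int_\T \abs{w-a}^{p-2}\,dm(w) = \norm{f'}_{\infty}^p\,(1-\abs{a}^2)\int_\T \frac{dm(w)}{\abs{1-\CONJ{a}w}^{2-p}}.
\]
The step requiring the most care is the classical estimate of this last integral with exponent $s=2-p$: it stays bounded as $\abs{a}\to 1$ when $s<1$ (that is, $p>1$) and grows like $\log\frac{1}{1-\abs{a}}$ when $s=1$ (that is, $p=1$). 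Consequently $\gamma(f,a,p)\lesssim \norm{f'}_{\infty}\,(1-\abs{a})^{1/p}$ for $1<p<\infty$, while for $p=1$ the logarithm survives and $\gamma(f,a,1)\lesssim \norm{f'}_{\infty}\,(1-\abs{a})\ln\frac{e}{1-\abs{a}}$. This dichotomy is exactly what forces the stated substitution of the comparison function in the $p=1$ case, and I regard it as the crux of the argument.

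Multiplying through by $v(a)$, the bound $\sup_{a\in\D}v(a)(1-\abs{a})^{1/p}<\infty$ (respectively its $p=1$ analogue) makes $\norm{f}_{*,v,p}<\infty$, so every polynomial lies in $\BMOA_{v,p}$; the limit hypothesis $\lim_{a\to1}v(a)(1-\abs{a})^{1/p}=0$ gives $\lim_{\abs{a}\to1}v(a)\gamma(f,a,p)=0$, placing the polynomials in $\VMOA_{v,p}$. It then remains only to check that $\VMOA_{v,p}$ is a closed subspace, which is routine: linearity follows from the triangle inequality for $\gamma(\cdot,a,p)$ in $H^p$, and closedness from the standard $\limsup$ argument, in that if $f_n\to f$ in $\norm{\cdot}_{*,v,p}$ with each $f_n\in\VMOA_{v,p}$, then $v(a)\gamma(f,a,p)\le \norm{f-f_n}_{*,v,p}+v(a)\gamma(f_n,a,p)$, and taking $\limsup_{\abs{a}\to1}$ and then $n\to\infty$ yields $f\in\VMOA_{v,p}$.
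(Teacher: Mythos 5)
Your proposal is correct and follows essentially the same route as the paper: both reduce to the Lipschitz-type bound \(\abs{f(w)-f(a)}\lesssim_f \abs{w-a}\) for a polynomial (you via \(\norm{f'}_\infty\) and the fundamental theorem of calculus, the paper via the explicit factorization of \(z^k-a^k\)), then use \(\abs{1-\CONJ{a}w}=\abs{w-a}\) on \(\T\) to reduce \(\gamma(f,a,p)\) to the classical integral \(\int_\T \abs{1-\CONJ{a}w}^{p-2}\,dm(w)\), which is bounded for \(p>1\) and logarithmic for \(p=1\), exactly producing the stated dichotomy; the closedness argument is likewise the paper's standard \(\limsup\) estimate.
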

\begin{proof}
Let \({Q}(z)=\sum_{k=0}^n c_k z^k\) be a polynomial on the disk. Now
\begin{align*}
{Q}(z) - {Q}(a)  &=  \sum_{k=0}^n c_k z^k - \sum_{k=0}^n c_k a^k =   \sum_{k=1}^n c_k (z^k - a^k) = (z - a) \sum_{k=1}^n c_k \sum_{j=0}^{k-1}  z^j a^{k-1-j}. 
\end{align*}
There exists a constant \(C({Q},p)\)  only dependent of the polynomial \({Q}\) and \(1\leq p<\infty\)  such that for \(1<p<\infty\),
\begin{align*}
\gamma({Q},a,p)^p  &= \int_\T \abs{(z - a)}^p \abs{   \sum_{k=1}^n c_k \sum_{j=0}^{k-1}  z^j a^{k-1-j}     }^p  P_a(z) \, dm(z)  \leq C({Q},p)  (1-\abs{a}^2). 
\end{align*}

Assuming \(  \lim_{a\to 1}  v(a)  (1-\abs{a})^\frac{1}{p} = 0 \), this yields
\[
v(a)  \gamma({Q},a,p) \leq  C'({Q},p) v(a)  (1-\abs{a})^\frac{1}{p} \stackrel{\abs{a}\to 1}{\longrightarrow} 0,
\]
that is, \({Q}\in \VMOA_{v,p}\). If \((f_n)\subset \VMOA_{v,p}\) is a sequence, converging with respect to \(\norm{\cdot}_{\BMOA_{v,p}}\) to an analytic function \(f\), and \(\epsilon>0\), then for \(n\) large enough 
\begin{equation*}
v(a)  \gamma(f,a,p) \leq  v(a)  \gamma(f_n,a,p)  + v(a)  \gamma(f_n-f,a,p)   \leq  v(a)  \gamma(f_n,a,p)  + \epsilon.
\end{equation*}
Letting first \(\abs{a}\to 1\), then \(\epsilon\to 0\), we obtain  
\[
\lim_{ \abs{a}\to 1 } v(a)  \gamma(f,a,p)  = 0.
\]
Hence, \(\VMOA_{v,p}\) is a closed subspace of \(\BMOA_{v,p}\), containing the polynomials. 

Finally, if \(p=1\),
\[
\gamma({Q},a,p) = (1-\abs{a}^2 )   \int_0^{2\pi}   J(e^{it}) \abs{ 1- a e^{-it} }^{p-1}   \frac{dt}{ 2\pi }  \lesssim C({Q},p)  (1-\abs{a}^2) \ln \frac{e}{1-\abs{a}},
\]
 and the statement follows similarly to the proof above.
\end{proof}

The first part of Lemma \ref{lem:EquivBasisC0} yields that \(\BMOA_v\) is a Banach space and since \(\VMOA_v\) is closed, it is also a Banach space. 

Proposition \ref{prop:polynomialsInBMOAV} shows some similarities between the weighted spaces \(\BMOA_v\) and \(\VMOA_v\), and their unweighted variants. The following Proposition shows some contrast between the spaces, which renders some classical approaches ineffective. The function \(z\mapsto z^n\) is one common tool in characterizing compactness in the unweighted setting, and can be found in, for example, \cite{Laitila-2015,Smith-1999,Wulan-2009} (and in some form also in \cite[(3.13)]{Laitila-2009}).

\begin{prop}
Let \(1< p<\infty\) and assume \(  \lim_{ \abs{a}\to 1 }  v(a)  (1-\abs{a})^\frac{1}{p} = 0 \). The family \( F \) consisting of \(f_n\colon z\mapsto z^n, n\in\mathbb N \) belongs to \(\VMOA_{v,p}\), and  \(\sup_n\norm{f_n}_{*,v,p}<\infty\) if and only if \(v\) is bounded. 
\end{prop}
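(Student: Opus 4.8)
The plan is to reduce the whole statement to a single two-sided estimate of $\gamma(f_n,a,p)$ and read off both claims from it. Membership $F\subset\VMOA_{v,p}$ is immediate: every $f_n$ is an analytic polynomial, and the standing hypothesis $\lim_{\abs{a}\to1}v(a)(1-\abs{a})^{1/p}=0$ is exactly the assumption of Proposition~\ref{prop:polynomialsInBMOAV}, which therefore gives $f_n\in\VMOA_{v,p}$ for every $n\in\N$.

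For the equivalence I would work with the boundary representation
\[
\gamma(f_n,a,p)^p=\int_\T\abs{w^n-a^n}^p\,P_a(w)\,dm(w),
\]
together with $\norm{f}_{*,v,p}=\sup_{a\in\D}v(a)\,\gamma(f,a,p)$ and $\int_\T P_a\,dm=1$. Since $\abs{w^n}=1$ for $w\in\T$, the reverse and ordinary triangle inequalities give $1-\abs{a}^n\le\abs{w^n-a^n}\le 1+\abs{a}^n\le2$ for all $w\in\T$, $a\in\D$; integrating against $P_a\,dm$ yields the key estimate
\[
1-\abs{a}^n\le\gamma(f_n,a,p)\le 2,\qquad a\in\D,\ n\in\N.
\]

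The sufficiency direction is then immediate from the upper bound: if $v$ is bounded, then $\norm{f_n}_{*,v,p}\le 2\sup_{a\in\D}v(a)<\infty$ uniformly in $n$. For necessity I argue by contraposition. Suppose $v$ is unbounded and choose $(a_k)\subset\D$ with $v(a_k)\to\infty$; as $v$ is real-valued, $a_k\neq0$ and hence $0<\abs{a_k}<1$ for all large $k$. For each such $k$ pick $n_k$ so large that $\abs{a_k}^{n_k}\le\tfrac12$, which is possible because $\abs{a_k}<1$; the lower bound then gives $\gamma(f_{n_k},a_k,p)\ge\tfrac12$, so that
\[
\norm{f_{n_k}}_{*,v,p}\ge v(a_k)\,\gamma(f_{n_k},a_k,p)\ge\tfrac12\,v(a_k),
\]
which tends to $\infty$ as $k\to\infty$. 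Hence $\sup_n\norm{f_n}_{*,v,p}=\infty$, completing the contraposition. The only step carrying genuine content is the lower bound $\gamma(f_n,a,p)\ge1-\abs{a}^n$ combined with allowing the exponent $n$ to depend on the point $a$; I anticipate no real obstacle here, since for each fixed $a\in\D$ the decay $\abs{a}^n\to0$ is precisely what decouples the choice of $n$ from $a$.
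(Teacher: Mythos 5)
Your proof is correct, and it takes a genuinely different --- and more elementary --- route than the paper's. For the necessity direction the paper passes to the Hardy--Stein area estimate \eqref{eq:LittlewoodPaley} for the \(H^2\) norm, restricts the area integral to the annulus \(t_n<\abs{z}<1\) with \(t_n=1-n^{-1/2}\), and invokes beta-function asymptotics to conclude \(\limsup_n\norm{f_n\circ\sigma_a-f_n(a)}_{H^2}\gtrsim 1\); you instead bound the boundary integral directly, noting \(1-\abs{a}^n\le\abs{w^n-a^n}\le 2\) on \(\T\) and integrating against \(P_a\), which gives the clean two-sided bound \(1-\abs{a}^n\le\gamma(f_n,a,p)\le 2\) with explicit constants, valid for every \(p\) simultaneously. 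This buys you something concrete: the paper's final display, \(\sup_n\norm{f_n}_{*,v,p}\ge\sup_{a}\limsup_n v(a)\norm{f_n\circ\sigma_a-f_n(a)}_{H^2}\), implicitly needs \(\gamma(f,a,p)\gtrsim\gamma(f,a,2)\), which is immediate only for \(p\ge 2\) (for \(1<p<2\) one would have to invoke John--Nirenberg machinery such as Proposition \ref{prop:implicationOfJohnNirenberg}, whose hypotheses on \(v\) are not assumed in this proposition), whereas your pointwise estimate works uniformly in \(1<p<\infty\) with no such detour; the price is that your argument is specific to the monomials, while the paper's area-integral method is the one that generalizes to other test families. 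The membership claim and the sufficiency direction coincide in both proofs (Proposition \ref{prop:polynomialsInBMOAV} and the trivial sup-norm bound, respectively). One cosmetic point: your justification that \(a_k\neq 0\) (``as \(v\) is real-valued'') is neither correct nor needed --- if \(a_k=0\) then \(\abs{a_k}^{n}=0\le\tfrac12\) for every \(n\), so your argument goes through verbatim without that remark.
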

\begin{proof}
The fact that \(F\subset \VMOA_{v,p}\) follows from Proposition \ref{prop:polynomialsInBMOAV} and the trivial estimate \(\norm{f_n}_{\BMOA_{1,p}} \leq 3\norm{f_n}_\infty = 3 \) proves one of the implications in the remaining statement. Now, let \(t_n = 1-n^{-\frac{1}{2}}, n\in\mathbb N\). By \eqref{eq:LittlewoodPaley} and the fact that \(\int_{\T} P_b \, dm = 1\) for every \(b\in\D\), we have
\begin{align*}
\norm{ f_n\circ \sigma_a - f_n(a) }_{H^2}^2  &\asymp \int_{\D} n^2\abs{z}^{2(n-1)}  (1-\abs{\sigma_a(z)}^2) \, dA(z)    \geq \int_{t_n}^1 n^2 x^{n-1}  \frac{(1-x)(1-\abs{a}^2)}{     1-\abs{a}^2 t_n   } \, dx.
\end{align*}
Moreover, by the well-known asymptotics for the classical beta function, we have

\begin{align*}
n^{-2} \stackrel{ n\to\infty }{\sim}   \int_{t_n}^1  x^{n-1} (1-x) \, dx + O\big(  2^{-\sqrt{n}}  \big).
\end{align*}
This yields that
\[
\limsup_{n\to\infty} \norm{ f_n\circ \sigma_a - f_n(a) }_{H^2}^2  \gtrsim 1.
\]
Finally,
\[
\sup_n\norm{f_n}_{*,v,p} \geq  \sup_{a\in\D} \limsup_{n\to\infty}  v(a) \norm{ f_n\circ \sigma_a - f_n(a) }_{H^2} \gtrsim   \sup_{a\in\D}  v(a).  
\]
\end{proof}

\subsection{Consequences of John-Nirenberg's result}

\

We begin by introducing the spaces \(\BMO_{v,p}\) and \(\VMO_{v,p}\). (Note that the following representations are not standard and these spaces will only be used in this section.) We define
\[
\norm{f}_{*,\BMO_{v,p}} :=  \sup_{ \substack{I\subset\T\\ I\text{ arc} }  }  v(1-m(I))  \eta(f,I,p),
\] 
where 
\[
\eta(f,I,p) := \left( \int_I  \abs{  f(z) - m_I(f)  }^p \, \frac{dm(z)}{  m(I)  } \right)^{\frac{1}{p}},
\]
and \( m_I(f) :=  \int_I  f(w)  \frac{ dm(w)}{m(I)}   \) is the mean of the function on the arc \(I\subset \T\). Now

\[
\BMO_{v,p} := \Big\{  f\in H^p : \norm{f}_{*,\BMO_{v,p}} < \infty   \Big\}
\]
and
\[
\VMO_{v,p} := \Big\{  f\in \BMO_{v,p} :  \lim_{ \substack{m(I) \to 0 \\ I\text{ arc} }  } v(1-m(I))\eta(f,I,p) = 0 \Big\}.
\]

The following well-known results can be found in, for example, \cite{Baernstein-1979} (see also \cite{Garnett-1981,Girela-2001}). If \(v\asymp 1\), then \(\BMOA_{v,p} = \BMOA_v , \ 0<p<\infty\) and for  \(1\leq p<\infty\), any of the semi-norms \( \norm{f}_{*,v,p} \) is comparable with any of \(\norm{f}_{*,\BMO_{v,q}} , \  1\leq q <\infty\). The independency of the parameter \(p\) is, in the work of Baernstein (\cite{Baernstein-1979}), proved in the conformally invariant setting, \(\BMOA\), making use of the group structure of \( \AUT \). This yields a better result than the classical approach carried out in \cite{Garnett-1981} and \cite{Girela-2001}, which proves independency in the classical \(\BMO\) setting and then apply the result that for a fixed \(p, \ \norm{f}_{*,\BMO_{v,p}}\asymp\norm{f}_{*,v,p} \) when \(v\asymp 1\). It is, however, no surprise that stronger results can exist in the analytic setting compared to the measurable setting due to additional structure. When \(v\) is almost increasing, we have the following results, pointed out in \cite{dyakonov-1998} by Dyakonov. (The weights \(\varphi(t)\) in \cite{dyakonov-1998} and \cite{Spanne-1965} are comparable to \(v(1-t)^{-1}\) for \( t\in]0,1[\).) The following proposition contains \cite[Proposition 2.6]{dyakonov-1998}, but also some new crucial estimates for the proofs of the main results.

\begin{prop}\label{prop:implicationOfJohnNirenberg}
Let \(1\leq q\leq p < \infty\) and \(v\colon \D \to ]0,\infty[\) be radial. Assume \(v|_{[0,1[}\) is almost increasing and there is an \(\epsilon_0>0\) such that \(x\mapsto v|_{[0,1[}(1-x) x^{\frac{1}{p}-\epsilon_0} \) is almost increasing. Then
\[
\BMOA_{v,p} = \BMO_{v,q}  \quad \text{ and } \quad \norm{f}_{*,\BMO_{v,q}} \asymp_{v,p,q,\epsilon_0} \norm{f}_{*,v,p}.  
\]
Moreover, for \(R\in]0,1[\)
\begin{equation}\label{eq:Rineq}
    \sup_{m(I) \leq  R} v(1-m(I)) \eta(f,I,p) \asymp_{v,p,q}     \sup_{m(I) \leq  R} v(1-m(I)) \eta(f,I,q)  \lesssim \sup_{ \abs{a}\geq 1-R } v(a) \gamma(f,a,q) 
\end{equation}
and for any \(R_{\BMO}\in]0,1[\) and \(R_A\in]0,R_{\BMO}/2]\), we have 
\begin{equation}\label{eq:R_BMOineq}
\sup_{\abs{a}\geq 1-R_A} v(a) \gamma(f,a,q)  \lesssim_{v,q,\epsilon_0}     \sup_{m(I) \leq  R_{\BMO}} v(1-m(I)) \eta(f,I,q) + \norm{f}_{*,\BMO_{v,q}} \bigg(  \frac{2R_A}{R_{\BMO}}   \bigg)^{\epsilon_0   }.  
\end{equation}
Hence,
\begin{equation}\label{eq:VMOequvalentNorms}
\VMOA_{v,p} = \VMO_{v,q} \quad  \text{ and }  \quad   \limsup_{\abs{a}\to 1} v(a) \gamma(f,a,p)  \asymp_{v,p,q,\epsilon_0}  \limsup_{m(I)\to 0} v(1-m(I)) \eta(f,I,q).    
\end{equation}
\end{prop}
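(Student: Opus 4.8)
The engine of the whole proposition is the (local) John--Nirenberg inequality, which controls the distribution of \(f-m_I(f)\) on an arc \(I\) by the oscillation of \(f\) on subarcs of \(I\); everything beyond it is bookkeeping with the weight, for which the two almost-increasing hypotheses are precisely tailored. The plan is to establish the two quantitative estimates \eqref{eq:Rineq} and \eqref{eq:R_BMOineq} first, and then read off the space identity \(\BMOA_{v,p}=\BMO_{v,q}\) with comparable norms (by taking full suprema, as in \cite[Proposition 2.6]{dyakonov-1998}) and the final statement \eqref{eq:VMOequvalentNorms} as formal consequences. The genuinely new content is the scale-localized estimates, especially the explicit decay rate \((2R_A/R_{\BMO})^{\epsilon_0}\).

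For \eqref{eq:Rineq}: the left-hand equivalence is where John--Nirenberg enters. For a fixed arc it gives \(\eta(f,I,p)\lesssim_p\sup_{J\subseteq I}\eta(f,J,1)\leq\sup_{J\subseteq I}\eta(f,J,q)\), the last step being Hölder (\(q\geq1\)). Since \(J\subseteq I\) forces \(m(J)\leq m(I)\leq R\) and \(v\) almost increasing gives \(v(1-m(J))\gtrsim v(1-m(I))\), one absorbs the weight and takes suprema; the reverse inequality is Hölder on the same arc with matching weight. The bound by \(\sup_{\abs{a}\geq 1-R}v(a)\gamma(f,a,q)\) is the standard arc-to-point passage: to an arc \(I\) of length \(\ell=m(I)\) about \(e^{i\theta}\) associate \(a=(1-\ell)e^{i\theta}\); replacing \(m_I(f)\) by \(f(a)\) at bounded cost and using \(P_a\gtrsim 1/\ell\) on \(I\) gives \(\eta(f,I,q)^q\lesssim\int_I\abs{f-f(a)}^q P_a\,dm\leq\gamma(f,a,q)^q\), while \(v(1-m(I))=v(\abs{a})\) with \(\abs{a}\geq 1-R\).

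The main obstacle is \eqref{eq:R_BMOineq}, the reverse point-to-arc estimate with decay. Here I would expand \(\gamma(f,a,q)^q=\int_\T\abs{f-f(a)}^q P_a\,dm\) and split \(\T\) into dyadic arcs \(I_k\) about \(a/\abs{a}\) with \(m(I_k)\asymp 2^k(1-\abs{a})\), on which \(P_a\lesssim 2^{-2k}/(1-\abs{a})\). Telescoping the means \(m_{I_k}(f)\) back to \(f(a)\) and using \(\eta(f,I_k,q)^q m(I_k)\) for the oscillation reduces each annulus to a term \(\lesssim 2^{-k}\eta(f,I_k,q)^q\). The crux is the weighted summation: writing \(\eta(f,I_k,q)\leq[v(1-m(I_k))\eta(f,I_k,q)]/v(1-m(I_k))\) and invoking the hypothesis that \(x\mapsto v(1-x)x^{1/p-\epsilon_0}\) is almost increasing yields \(v(1-m(I_k))\gtrsim v(\abs{a})2^{-k(1/p-\epsilon_0)}\), so the \(k\)-th term carries a factor \(2^{k(q/p-q\epsilon_0-1)}\) with negative exponent (since \(q\leq p\)). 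Splitting the sum at the scale \(2^{k}(1-\abs{a})\approx R_{\BMO}\) routes the small-arc part into the first term \(\sup_{m(I)\leq R_{\BMO}}v(1-m(I))\eta(f,I,q)\), while the geometric tail, estimated with \(\eta(f,I_k,q)\leq\norm{f}_{*,\BMO_{v,q}}/v(1-m(I_k))\), produces exactly the decaying factor \((2R_A/R_{\BMO})^{\epsilon_0}\) after taking \(q\)-th roots (here \(1-\abs{a}\leq R_A\) and the surplus exponent \(1/q-1/p\geq0\) is discarded). Keeping the telescoping correction \(\abs{m_{I_k}(f)-f(a)}\) under control in the weighted sum, where it contributes extra polynomial-in-\(k\) factors absorbed by the geometric decay, is the fiddly bookkeeping step.

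Finally, \eqref{eq:VMOequvalentNorms} follows formally. Fixing \(R_{\BMO}\) and letting \(R_A\to 0\) in \eqref{eq:R_BMOineq} kills the decay term; then letting \(R_{\BMO}\to 0\) yields \(\limsup_{\abs{a}\to1}v(a)\gamma(f,a,q)\lesssim\limsup_{m(I)\to0}v(1-m(I))\eta(f,I,q)\), and \eqref{eq:Rineq} gives the reverse. The exponent mismatch in the displayed claim is handled as follows: for \(\limsup_{m(I)\to0}v\,\eta(\cdot,q)\lesssim\limsup_{\abs{a}\to1}v\,\gamma(\cdot,p)\) use Jensen's inequality \(\gamma(f,a,q)\leq\gamma(f,a,p)\) (since \(P_a\,dm\) is a probability measure and \(q\leq p\)); for the reverse, run \eqref{eq:R_BMOineq} with exponent \(p\) in place of \(q\)—legitimate since its proof used only \(q\leq p\) together with the \(p\)-adapted weight hypothesis—to get \(\limsup_{\abs{a}\to1}v\,\gamma(\cdot,p)\lesssim\limsup_{m(I)\to0}v\,\eta(\cdot,p)\), and then use the \(p\)-\(q\) equivalence of \eqref{eq:Rineq} to pass from exponent \(p\) to \(q\). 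The same \(R\to0\) limiting argument upgrades the norm comparison to the identity \(\VMOA_{v,p}=\VMO_{v,q}\).
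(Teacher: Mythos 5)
Your proposal is correct and follows essentially the same route as the paper: a localized John--Nirenberg inequality to equate the arc oscillations at exponents \(p\) and \(q\) (the paper routes this through exponent \(1\) and then H\"older), the classical arc-to-point passage for the last inequality in \eqref{eq:Rineq}, and for \eqref{eq:R_BMOineq} a dyadic decomposition of \(\T\) about \(a/\abs{a}\) in which the hypothesis that \(x\mapsto v(1-x)x^{\frac{1}{p}-\epsilon_0}\) is almost increasing converts the weight ratios into a geometric factor \(2^{-\epsilon_0 k}\), with the tail beyond the scale \(R_{\BMO}\) producing exactly \((2R_A/R_{\BMO})^{\epsilon_0}\). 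The only cosmetic differences are that you estimate the annular integrals directly via \(P_a\lesssim 2^{-2k}/(1-\abs{a})\) and work at exponent \(q\) from the start, where the paper uses Minkowski with \(\inf_{w\in J_{k-1}}P_a(w)\) and Girela-type lemmas at exponent \(p\), invoking the remark that the weight hypothesis at \(p\) persists for all smaller exponents.
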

Note that if \(x\mapsto v|_{[0,1[}(1-x) x^{\frac{1}{p}-\epsilon_0} \) almost increasing for some \(p=p_0\), it is almost increasing for any \(0< p\leq p_0\).

The proof is split into a few results and most of them can be found, in some form, in \cite{Baernstein-1979,Garnett-1981} and \cite{Girela-2001}. The following proposition follows immediately from the proof in the classical unweighted \(\BMO\) setting (see e.g. \cite[p.~73]{Girela-2001}).
\begin{prop}\label{prop:ConformInvBMOAimplyintBMOA}
For \(1\leq p <\infty\) and \(\inf_{x\in]0,1[} v(x) >0\) and all \(R>0\), it holds that
\[
 \sup_{ \substack{m(I)\leq R \\ I\text{ arc} }  }  v(1-m(I))  \eta(f,I,p) \leq 2 \sup_{\abs{a}\geq 1-R}  v(a) \gamma(f,a,p).
\]
Furthermore,
\[
\norm{f}_{*,v,p} \gtrsim \norm{f}_{*,\BMO_{v,p}}.      
\]

\end{prop}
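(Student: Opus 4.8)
The plan is to realize every arc by a single interior test point and to transport the arc oscillation $\eta(f,I,p)$ to the conformally invariant quantity $\gamma(f,a,p)$ by comparing the normalized arc measure with the Poisson measure $P_a\,dm$. First I would fix an arc $I\subset\T$ with $m(I)\leq R$, let $\zeta\in\T$ be its midpoint, and set $a:=(1-m(I))\zeta$. Then $\abs{a}=1-m(I)\geq 1-R$, so $a$ is an admissible point for the supremum on the right, and because $v$ is radial we have \emph{exactly} $v(a)=v(\abs{a})=v(1-m(I))$; thus the weights on the two sides match with no loss, and it suffices to prove the unweighted comparison $\eta(f,I,p)\leq 2\,\gamma(f,a,p)$ for this pairing.

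The comparison rests on two ingredients. The first is a mean-replacement step: since $m_I(f)$ need not minimize the $L^p(I)$ oscillation, I replace it by the constant $f(a)=\int_\T f\,P_a\,dm$, which is exactly the constant subtracted inside $\gamma(f,a,p)$. By Jensen, $\abs{f(a)-m_I(f)}=\abs{m_I(f-f(a))}\leq\big(\int_I\abs{f-f(a)}^p\,\frac{dm}{m(I)}\big)^{1/p}$, so the triangle inequality in $L^p(I,\frac{dm}{m(I)})$ yields $\eta(f,I,p)\leq 2\big(\int_I\abs{f-f(a)}^p\,\frac{dm}{m(I)}\big)^{1/p}$. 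The second ingredient is the pointwise Poisson lower bound $P_a(w)\geq 1/m(I)$ for $w\in I$, i.e. $\chi_I/m(I)\leq P_a$ on $\T$; granting this, $\int_I\abs{f-f(a)}^p\,\frac{dm}{m(I)}\leq\int_\T\abs{f-f(a)}^p\,P_a\,dm=\gamma(f,a,p)^p$, and combining the two steps gives $\eta(f,I,p)\leq 2\gamma(f,a,p)$.

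To assemble the statement I would multiply by $v(1-m(I))=v(a)$ and take the supremum over arcs with $m(I)\leq R$ on the left, noting that the associated points $a$ all satisfy $\abs{a}\geq 1-R$, so the right-hand side is bounded by $2\sup_{\abs{a}\geq 1-R}v(a)\gamma(f,a,p)$. The second assertion $\norm{f}_{*,v,p}\gtrsim\norm{f}_{*,\BMO_{v,p}}$ is then the limiting case $R\uparrow 1$: every arc is eventually captured, so $\norm{f}_{*,\BMO_{v,p}}\leq 2\norm{f}_{*,v,p}$.

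The only genuine content is the Poisson lower bound, and that is where I expect the main difficulty. Writing $s=m(I)$ and $\phi$ for the angular distance from $\zeta$, one has $P_a(w)=\frac{s(2-s)}{s^2+2(1-\abs{a})(1-\cos\phi)}$, and $1-\abs{a}^2\asymp m(I)$ together with $\abs{1-\overline{a}w}\lesssim m(I)$ on $I$ (since $1-\cos\phi\leq\tfrac12\pi^2 s^2$ for $\abs{\phi}\leq\pi s$) gives $P_a(w)\gtrsim 1/m(I)$ for all $w\in I$. The subtlety is that the clean constant degrades near the endpoints of $I$, where the naive bound $P_a\geq 1/m(I)$ can fail; so either one verifies the sharp geometric estimate that produces the stated factor, or—since only the $\gtrsim$ conclusion is needed downstream—one absorbs the resulting absolute constant into $\lesssim$. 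This is precisely the classical unweighted computation, which is why the proposition follows at once once the weight is handled by radiality.
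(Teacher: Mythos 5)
Your proposal is correct and is essentially the paper's own argument: the paper offers no proof beyond deferring to the classical unweighted computation in Girela (p.~73), which is precisely your scheme of pairing each arc with \(a=(1-m(I))\zeta\) at its midpoint radius (so radiality gives the exact weight match \(v(a)=v(1-m(I))\)), replacing \(m_I(f)\) by \(f(a)\) at the cost of a factor \(2\) via Jensen and the triangle inequality, and invoking the Poisson lower bound \(P_a\gtrsim 1/m(I)\) on \(I\) (modulo a harmless slip writing \(2(1-\abs{a})(1-\cos\phi)\) where the denominator of \(P_a\) should read \(2\abs{a}(1-\cos\phi)\)). Your closing caveat is also accurate and worth keeping: since \(\abs{1-\CONJ{a}w}^2=m(I)^2+2(1-m(I))(1-\cos\phi)\leq(1+\pi^2)\,m(I)^2\) on \(I\), this route gives \(P_a\geq\frac{1}{(1+\pi^2)m(I)}\) rather than \(P_a\geq\frac{1}{m(I)}\), so one obtains the inequality with an absolute constant in place of the literal \(2\) in the first display; this is immaterial because every downstream use of the proposition (e.g.\ \eqref{eq:Rineq} and the second assertion \(\norm{f}_{*,v,p}\gtrsim\norm{f}_{*,\BMO_{v,p}}\)) only requires \(\lesssim\).
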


For the proof of \cite[Theorem 3.1]{Girela-2001}, a dyadic decomposition of \(\T\) is used. To be able to summarize the approximations made from the dyadic decomposition in the classical fashion, the weight needs to satisfy an extra condition stated in \cite[Proposition 2.6]{dyakonov-1998}, the fact that \((v(1-t))^{-1}\) need to be of upper type less than \( 1/p \). In this work, the comparable property found in \cite{Spanne-1965} of almost increasing/decreasing is used. Inspired by \cite[Proof of Theorem 3.1]{Girela-2001}, we have the following proposition, which includes a new, suitable estimate for this work.

\begin{prop}\label{prop:intBMOAimplyConformInvBMOA}
Let \(1\leq p < \infty\) and \(v\colon \D \to ]0,\infty[\) be radial. Assume there is an \(\epsilon_0>0\) such that \(x\mapsto v|_{[0,1[}(1-x) x^{\frac{1}{p}-\epsilon_0} \) is almost increasing. Then
\[
 \norm{f}_{*,v,p}  \lesssim_{v,p,\epsilon_0}  \norm{f}_{*,\BMO_{v,p}}
\]
and for any \(R_{\BMO}\in]0,1[\) and \(a\in\D\) with \(\abs{a}\geq 1-R_{\BMO}/2 \), we have 
\begin{equation}\label{eq:lemR_BMOineq}
 v(a) \gamma(f,a,p)  \lesssim_{v,p,\epsilon_0}     \sup_{m(I) \leq  R_{\BMO}} v(1-m(I)) \eta(f,I,p) + \norm{f}_{*,\BMO_{v,p}} \bigg(  \frac{2(1-\abs{a})}{R_{\BMO}}   \bigg)^{\epsilon_0   }.  
\end{equation}

\end{prop}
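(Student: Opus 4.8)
The plan is to estimate \(\gamma(f,a,p)=\norm{f\circ\sigma_a-f(a)}_{H^p}\) directly through its Poisson representation and a dyadic annular decomposition of \(\T\) adapted to \(a\), and then to use the single almost-increasing hypothesis on \(x\mapsto v(1-x)x^{\frac1p-\epsilon_0}\) to turn the geometric gain of the Poisson kernel into a convergent weighted series. First I would record the harmless reduction that for \emph{every} constant \(c\),
\[
\gamma(f,a,p)\le 2\,\norm{f\circ\sigma_a-c}_{H^p}=2\left(\int_\T\abs{f(w)-c}^p P_a(w)\,dm(w)\right)^{1/p},
\]
obtained by subtracting the value at \(0\) of the analytic function \(g:=f\circ\sigma_a-c\) (note \(g(0)=f(a)-c\)), using \(\abs{g(0)}\le\norm{g}_{H^1}\le\norm{g}_{H^p}\), and the substitution \(\zeta=\sigma_a(w)\) on \(\T\). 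This frees the argument from the value \(f(a)\) and lets me centre everything on a convenient mean.

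Next I would fix \(a=(1-\delta)\zeta_0\) with \(\zeta_0\in\T\), \(\delta=1-\abs{a}\), take nested arcs \(I_k\subset\T\) centred at \(\zeta_0\) with \(m(I_k)\asymp 2^k\delta\) (capped once the length reaches \(1\)), set \(c_k:=m_{I_k}(f)\), and choose \(c=c_0\) above. Three standard facts enter: on \(I_k\setminus I_{k-1}\) one has \(P_a\lesssim 2^{-2k}/\delta\), hence \(\int_{I_k\setminus I_{k-1}}P_a\,dm\lesssim 2^{-k}\); the telescoping bound \(\abs{c_j-c_{j-1}}\lesssim\eta(f,I_j,1)\le\eta(f,I_j,p)\), from \(m(I_{j-1})\asymp\tfrac12 m(I_j)\) and Jensen; and, crucially, the weight comparison
\[
\frac{v(a)}{v(1-m(I_k))}\lesssim_{v,p,\epsilon_0}\left(\frac{m(I_k)}{\delta}\right)^{\frac1p-\epsilon_0}\asymp 2^{k\left(\frac1p-\epsilon_0\right)},
\]
which is exactly the almost-increasing hypothesis applied with \(y=\delta\le x=m(I_k)\). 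Writing \(f-c_0=(f-c_k)+\sum_{j=1}^k(c_j-c_{j-1})\) on \(I_k\setminus I_{k-1}\) and inserting these bounds, \(v(a)^p\gamma(f,a,p)^p\) is dominated by a double sum built from the blocks \(2^{-k}\bigl(v(a)\eta(f,I_j,p)\bigr)^p\) with \(1\le j\le k\). Since \(-k+p\cdot k\bigl(\tfrac1p-\epsilon_0\bigr)=-kp\epsilon_0\), each diagonal term carries the decay factor \(2^{-kp\epsilon_0}\) and the telescoping inner sums cost only a further polynomial (or geometric) factor in \(k\), so every series converges with constant \(\lesssim_{p,\epsilon_0}1\). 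Bounding \(v(a)\eta(f,I_j,p)\) by \(2^{j(\frac1p-\epsilon_0)}\norm{f}_{*,\BMO_{v,p}}\) throughout then gives the first assertion \(\norm{f}_{*,v,p}\lesssim_{v,p,\epsilon_0}\norm{f}_{*,\BMO_{v,p}}\); the case of \(\abs{a}\) bounded away from \(1\) falls out of the same computation, since only boundedly many arcs then occur.

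For the refined inequality \eqref{eq:lemR_BMOineq} I would rerun the computation but split the index range at \(K_0\), the largest \(k\) with \(m(I_k)\le R_{\BMO}\) (so \(2^{K_0}\asymp R_{\BMO}/\delta\) and \(K_0\ge1\) because \(\delta\le R_{\BMO}/2\)). For \(j\le K_0\) the arc \(I_j\) is admissible for the local supremum, so \(v(1-m(I_j))\eta(f,I_j,p)\le S:=\sup_{m(I)\le R_{\BMO}}v(1-m(I))\eta(f,I,p)\); for \(j>K_0\) only the global \(N:=\norm{f}_{*,\BMO_{v,p}}\) is available. Carrying both constants through the two series, the local part sums to \(\lesssim S^p\) exactly as before, while the global part is a tail beginning at \(K_0+1\) and hence acquires the factor \(2^{-K_0p\epsilon_0}\asymp(\delta/R_{\BMO})^{p\epsilon_0}\), yielding \(\lesssim N^p(\delta/R_{\BMO})^{p\epsilon_0}\). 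Taking \(p\)-th roots and using subadditivity of \(t\mapsto t^{1/p}\) turns the exponent \(p\epsilon_0\) into \(\epsilon_0\), producing precisely \(S+N\,(2(1-\abs{a})/R_{\BMO})^{\epsilon_0}\).

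The main obstacle I anticipate is the bookkeeping of the telescoping double sum with the split local/global constants: one must verify that convergence of all the geometric-times-polynomial series is uniform regardless of the sign of \(\tfrac1p-\epsilon_0\) (the borderline \(\epsilon_0=1/p\), where the inner sum grows linearly in \(k\), needs a separate but harmless check), and confirm that it is the tail factor \(2^{-K_0p\epsilon_0}\) — not the decay already present in the local part — that supplies the correct power \(\epsilon_0\) after the \(p\)-th root. The Poisson-kernel and telescoping estimates are classical, as in the unweighted argument of \cite{Girela-2001}; the only genuinely new ingredient is routing the weight through the single almost-increasing hypothesis so that the geometric gain \(2^{-kp\epsilon_0}\) survives.
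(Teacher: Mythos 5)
Your proposal is correct and follows essentially the same route as the paper's proof: the same dyadic arcs \(m(J_k)=2^k(1-\abs{a})\) centred at \(a/\abs{a}\), the annular Poisson-kernel decay, the telescoping of means \(\abs{m_{J_k}(f)-m_{J_0}(f)}\lesssim\sum_{j\le k}\eta(f,J_j,1)\), the almost-increasing hypothesis used exactly as in the paper's key estimate \eqref{eq:usefulForJohnNirenberg} to produce the factor \(2^{-\epsilon_0 j}\), and the same split at the arc of length \(\asymp R_{\BMO}\) giving the tail factor \(2^{-\epsilon_0 N_I}\asymp\bigl(2(1-\abs{a})/R_{\BMO}\bigr)^{\epsilon_0}\). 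The only deviations are cosmetic: you replace the paper's use of a variant of \cite[Lemma 3.2]{Girela-2001} by the mean-value reduction \(\abs{g(0)}\le\norm{g}_{H^p}\), and you work with \(p\)-th powers and double sums (forcing the sign/borderline case check on \(\tfrac1p-\epsilon_0\), which you correctly flag as harmless) where the paper applies Minkowski's inequality at the \(L^p\)-norm level and thereby avoids that case analysis.
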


\begin{proof}
Let \(a\in \D\) and define \(J_k, \ k=0,1,\ldots,N\) to be the arc with center \(a/\abs{a}\)  and \(m(J_k) = 2^k (1-\abs{a})\), where \(N\) is the number such that \(m(J_N) < 1 \leq  2 m(J_N)\). We also put \(J_{N+1}:=\T\). The relation between \(a\) and \(N\) is given by 
\[
N=N(a) = \max\Big\{  n\in \Z :  n < \ln \frac{1}{1-\abs{a}  } \frac{1}{\ln 2}   \Big\}  . 
\]
We have \(\T\setminus J_0 =  \bigcup_{k=1}^{N+1} (J_k\setminus J_{k-1}) \) and hence, with the aid of  Minkowski's inequality and a variant of \cite[Lemma 3.2]{Girela-2001}, we have

\begin{align*}
 \bigg(    \int_{\T} \abs{f(z) - f(a)}^p P_a(z) \, dm(z)  \bigg)^{\frac{1}{p}}&  \lesssim   \bigg(  \frac{1}{ m(J_0) }\int_{J_0} \abs{  f(z) - m_{J_0}(f)   }^p \, dm(z) \bigg)^{\frac{1}{p}}  \\
&\quad +  \sum_{k=1}^{N+1} \left( \int_{J_k\setminus J_{k-1}}  \abs{  f(z) - m_{J_0}(f)   }^p \inf_{w\in J_{k-1}} P_a(w) \, dm(z)    \right)^{\frac{1}{p}}.
\end{align*}
For the integrals in the second term, we apply Minkowski's inequality and \cite[Lemma 3.4]{Girela-2001} to obtain
\begin{align*}
 \bigg( \int_{J_k\setminus J_{k-1}}  \abs{  f(z) - m_{J_0}(f)   }^p \inf_{w\in J_{k-1}} P_a(w) \, dm(z)    \bigg)^{\frac{1}{p}}  \lesssim_p   \frac{1}{2^{\frac{k}{p}} }   \eta(f,J_k,p)  +       \frac{1}{2^{\frac{k}{p}} }    \abs{  m_{J_k}(f) - m_{J_0}(f)   }.
\end{align*}
Furthermore, a variant of  \cite[Lemma 3.3]{Girela-2001} gives
\begin{align*}
 \abs{  m_{J_k}(f) - m_{J_0}(f)   }  \lesssim \sum_{j=1}^k \eta(f,J_j,1)  
\end{align*}
yielding
\begin{align*}
\sum_{k=1}^{N+1} \left(  \int_{J_k\setminus J_{k-1}}  \abs{  f(z) - m_{J_0}(f)   } \inf_{w\in J_{k-1}} P_a(w) \, dm(z)  \right)^{\frac{1}{p}} &\lesssim_p        \sum_{k=1}^{N+1}   \frac{1}{2^{\frac{k}{p}} }   \sum_{j=1}^k \eta(f,J_j,1) \\
& \lesssim_p   \sum_{j=1}^{N+1}   \frac{1}{2^{\frac{j}{p}} }   \eta(f,J_j,p)  . 
\end{align*}
Since \(m(J_j) = 2^j (1-\abs{a})\) and
\begin{equation}\label{eq:usefulForJohnNirenberg}
v(a) m(J_0)^{\frac{1}{p}-\epsilon_0} = v(1-m(J_0)) m(J_0)^{\frac{1}{p}-\epsilon_0} \lesssim_v v(1-m(J_j)) m(J_j)^{\frac{1}{p}-\epsilon_0} 
\end{equation}
for every \(j=1,\ldots, N+1\) by assumption, we have now obtained
\begin{align*}
  v(a)   \left(    \int_{\T} \abs{f(z) - f(a)}^p P_a(z) \, dm(z)  \right)^{\frac{1}{p}} \lesssim_{v,p}     \sum_{j=0}^{N+1}   \frac{1}{2^{\epsilon_0 j} }  v(1-m(J_j)) \eta(f,J_j,p).
\end{align*}
Using \( v(1-m(J_j)) \eta(f,J_j,p) \leq  \norm{f}_{*,\BMO_{v,p}}  \), the first statement, \(\norm{f}_{*,v,p}  \lesssim_{v,p,\epsilon_0}  \norm{f}_{*,\BMO_{v,p}}\), follows. For the second, fix \(R_{\BMO}\in ]0,1[\). For \(a\in\D\) with \(\abs{a}\geq 1 - R_{\BMO}/2 \), let \(N_I = N_I(a) \in [1,N] \) be the integer such that 
\( m(J_{N_I(a)}) \in ] R_{\BMO}/2,R_{\BMO} ]  \). Furthermore, using \eqref{eq:usefulForJohnNirenberg}, we have
\[
 v(a) \sum_{j=N_I}^{N+1}   \frac{1}{2^{\frac{j}{p}} }   \eta(f,J_j,p)  \lesssim_v  \sum_{j=N_I}^{N+1}   \frac{1}{2^{\epsilon_0 j} }  v(1-m(J_j)) \eta(f,J_j,p) \lesssim_{\epsilon_0}  \norm{f}_{*,\BMO_{v,p}} 2^{-\epsilon_0 N_I } .
\]
For \(\abs{a}\geq 1-R_{\BMO}/2 \), we have now obtained
\begin{align*}
 v(a) \gamma(f,a,p) &=   v(a)   \left(    \int_{\T} \abs{f(z) - f(a)}^p P_a(z) \, dm(z)  \right)^{\frac{1}{p}} \\
&\lesssim_{v,p}    \sum_{j=0}^{N_I}   \frac{1}{2^{\epsilon_0 j} }  v(1-m(J_j)) \eta(f,J_j,p)  +    \sum_{j=N_I}^{N+1}    \frac{1}{2^{\epsilon_0 j} }  v(1-m(J_j)) \eta(f,J_j,p)  \\
&\lesssim_{\epsilon_0}   \sup_{m(I) \leq  R_{\BMO}} v(1-m(I)) \eta(f,I,p)  +  \norm{f}_{*,\BMO_{v,p}} 2^{-\epsilon_0 N_I }.
\end{align*}
Using \( 2^{N_I}( 1-\abs{a}) = m(J_{N_I(a)}) \geq \frac{R_{\BMO}}{2}   \), we can conclude that for any \(R_{\BMO}\in]0,1[\) and \(a\in\D\) with \(\abs{a}\geq 1-R_{\BMO}/2 \), we have 
\[
 v(a) \gamma(f,a,p)  \lesssim_{v,p,\epsilon_0}     \sup_{m(I) \leq  R_{\BMO}} v(1-m(I)) \eta(f,I,p) + \norm{f}_{*,\BMO_{v,p}} \bigg(  \frac{R_{\BMO}}{2(1-\abs{a})}   \bigg)^{-\epsilon_0   }.  
\]
\end{proof}

Finally, the crucial ingredient for independency of \(p\) is a John-Nirenberg type result. We have the following, inspired by \cite{Spanne-1965}:
\begin{lem}[John-Nirenberg]\label{lem:JohnNirenberg}
Let \(0<R<1<M\) and \(f\in L^1(\T)\) with \(0< \norm{f}_{*,\BMO_{1,1}}<\infty\). For any arc \(I\subset \T\) with \(m(I)\leq R\) and \( \lambda >0 \), we have
\[
m(\{ w\in I  : \abs{   f(w) - m_I(f)    } > \lambda  \})  \leq m(I) \sqrt{M}e^{-\frac{ \lambda}{  \sup_{m(I)\leq R}  \eta(f,I,1)}    \frac{\ln M }{2M}  }.
\]
\end{lem}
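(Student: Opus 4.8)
The plan is to run the classical Calder\'on--Zygmund stopping-time argument, but to keep careful track of the scales involved so that only arcs of length at most \(R\) ever enter the estimate; this is precisely what makes the \emph{local} quantity \(b:=\sup_{m(I)\le R}\eta(f,I,1)\) the correct normalising constant. Note \(b\) is finite, being bounded above by \(\norm{f}_{*,\BMO_{1,1}}\), and positive, since \(b=0\) would force \(f\) to agree a.e. with a single constant and hence \(\norm{f}_{*,\BMO_{1,1}}=0\). Set the threshold \(t:=Mb\) (so \(t>b\) because \(M>1\)) and introduce the distribution function
\[
g(\lambda):=\sup_{m(I)\le R}\frac{1}{m(I)}\,m\big(\{w\in I:\abs{f(w)-m_I(f)}>\lambda\}\big),\qquad \lambda>0,
\]
observing that \(0\le g\le 1\) everywhere. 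The goal is a self-improving recursion for \(g\).

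First I would fix an arc \(I\) with \(m(I)\le R\), dyadically subdivide it, and apply the stopping-time decomposition to \(f-m_I(f)\) at level \(t\): let \(\{I_j\}\) be the maximal dyadic subarcs on which the mean of \(\abs{f-m_I(f)}\) exceeds \(t\). Since \(m(I)\le R\) gives \(\frac{1}{m(I)}\int_I\abs{f-m_I(f)}=\eta(f,I,1)\le b<t\), the arc \(I\) itself does not stop, so the decomposition is genuine and three standard facts hold: (i) \(\abs{f-m_I(f)}\le t\) a.e.\ on \(I\setminus\bigcup_jI_j\) by Lebesgue differentiation; (ii) comparing each stopping arc with its dyadic parent, \(\frac{1}{m(I_j)}\int_{I_j}\abs{f-m_I(f)}\le 2t\), whence \(\abs{m_{I_j}(f)-m_I(f)}\le 2t\); and (iii) \(\sum_j m(I_j)\le \frac1t\int_I\abs{f-m_I(f)}\le \frac{b}{t}\,m(I)\). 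Crucially every \(I_j\subset I\) satisfies \(m(I_j)\le R\), so the decomposition never leaves the regime controlled by \(b\).

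The key step is then to bound the super-level set through the stopping arcs. For \(\lambda\ge t\) the set \(\{\abs{f-m_I(f)}>\lambda\}\) misses \(I\setminus\bigcup_jI_j\) by (i), while on each \(I_j\) the triangle inequality together with (ii) yields \(\abs{f-m_I(f)}\le\abs{f-m_{I_j}(f)}+2t\), so that
\[
\{w\in I_j:\abs{f(w)-m_I(f)}>\lambda\}\subset\{w\in I_j:\abs{f(w)-m_{I_j}(f)}>\lambda-2t\}.
\]
Estimating each right-hand set by \(g(\lambda-2t)\,m(I_j)\) (legitimate because \(m(I_j)\le R\)), summing over \(j\), and inserting (iii) gives, after dividing by \(m(I)\) and taking the supremum, the recursion \(g(\lambda)\le \frac{b}{t}\,g(\lambda-2t)=\frac1M\,g(\lambda-2Mb)\), valid for every \(\lambda\ge Mb\).

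Finally I would iterate. Applying the inequality \(n\) times (permissible as long as the argument stays \(\ge Mb\), i.e.\ as long as \(\lambda\ge(2n-1)Mb\)) and using \(g\le1\) as base case gives \(g(\lambda)\le M^{-n}\). Choosing \(n\) to be the nearest integer to \(\frac{\lambda}{2Mb}\), so that \(n\le\frac{\lambda}{2Mb}+\frac12\) (making the iteration admissible) and \(n\ge\frac{\lambda}{2Mb}-\frac12\), produces
\[
g(\lambda)\le M^{-n}\le M^{\frac12-\frac{\lambda}{2Mb}}=\sqrt{M}\,e^{-\frac{\lambda}{b}\frac{\ln M}{2M}},
\]
which is exactly the asserted bound; for \(0<\lambda\le Mb\) the right-hand side is \(\ge1\ge g(\lambda)\), so the estimate is trivial there. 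Since \(m(\{w\in I:\abs{f-m_I(f)}>\lambda\})\le m(I)\,g(\lambda)\) for every admissible \(I\), the lemma follows. The only delicate points are bookkeeping ones: verifying that all stopping arcs stay of length \(\le R\) so that \(b\) may be reused at every scale, and tuning the iteration count (rounding to the nearest integer rather than the floor) so the constants emerge as the stated \(\sqrt{M}\) and \(\frac{\ln M}{2M}\) rather than merely comparable quantities.
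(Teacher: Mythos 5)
Your proposal is correct and follows essentially the same route as the paper, which simply adapts the classical Calder\'on--Zygmund/iteration proof of John--Nirenberg (as in Girela's Theorem 4.1 and Garnett) by replacing \(\norm{f}_{*,\BMO_{1,1}}\) with the local supremum \(\sup_{m(I)\le R}\eta(f,I,1)\) --- exactly your observation that all stopping arcs satisfy \(m(I_j)\le m(I)\le R\), so the local constant \(b\) controls every scale; your nearest-integer tuning of the iteration count reproduces the stated constants \(\sqrt{M}\) and \(\frac{\ln M}{2M}\). The only cosmetic point is that when \(\lambda-2Mb\le 0\) one should read the recursion as the direct estimate \(g(\lambda)\le \sum_j m(I_j)/m(I)\le 1/M\) (equivalently, extend \(g\equiv 1\) on \(]-\infty,0]\)), which your ``base case \(g\le 1\)'' implicitly does.
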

The proof of Lemma \ref{lem:JohnNirenberg} is the same as in \cite[Theorem 4.1]{Girela-2001} (see also \cite[Theorem 2.1]{Garnett-1981}). Instead of considering the function \(f/ \norm{f}_{*,\BMO_{1,1}}\) as done in the references, one should fix \(0<R<1\) and change the supremum in the denominator to only include arcs \(I\subset\T\) with \(m(I)\leq R\).

We are now ready to prove Proposition \ref{prop:implicationOfJohnNirenberg}:
\begin{proof}[Proof of Proposition \ref{prop:implicationOfJohnNirenberg}]
We begin by proving
\begin{equation} 
\norm{f}_{*,\BMO_{v,p}} \lesssim_{v,p}  \norm{f}_{*,\BMO_{v,1}},      
\end{equation}
which together with Proposition \ref{prop:intBMOAimplyConformInvBMOA} and Proposition \ref{prop:ConformInvBMOAimplyintBMOA} implies the equivalence of norms and that \(\BMOA_{v,p} = \BMOA_{v,1}\) given the assumptions hold. Let \(f\in L^1(\T)\) with \( 0 <  \norm{f}_{*,\BMO_{v,1}} <\infty \). Fix \(I\subset \T \) and put \(R=m(I)\). Using Lemma \ref{lem:JohnNirenberg} for the first inequality and the fact that \(v\) is almost increasing, we get
\begin{align*}
v(1-R )^p \eta(f,I,p)^p & =   \frac{ v(1-R )^p }{m(I)} \int_0^\infty  m(\{ w\in I  : \abs{    f(w) - m_I(f)    }> \lambda  \})  \,   d\lambda^p \\
&\lesssim_p v(1-R )^p  \int_0^\infty    e^{- \frac{ \lambda }{    \sup_{m(I)\leq R}  \eta(f,I,1)}        }  \,   d\lambda^p \lesssim_p  v(1-R )^p  \left( \sup_{m(I)\leq R}  \eta(f,I,1)     \right)^p  \\
&\lesssim_v \sup_{m(I)\leq R}  v(1-m(I) )^p \eta(f,I,1)^p   \leq    \left( \norm{f}_{*,\BMO_{v,1}} \right)^p,
\end{align*}
and hence,
\[
\norm{f}_{*,\BMO_{v,p}}  \lesssim_{v,p}  \norm{f}_{*,\BMO_{v,1}}.  
\]
Moving on, given \(R_{\BMO}\in]0,1[\), we do the same calculations for any arc with \(m(I)\leq R_{\BMO}\) to obtain
\[
v(1-R ) \eta(f,I,p)  \lesssim_{v,p}   \sup_{m(I)\leq R}  v(1-m(I) )  \eta(f,I,1)    \leq  \sup_{m(I)\leq R_{\BMO}}  v(1-m(I) )  \eta(f,I,1),   
\] 
yielding
\[
\sup_{m(I)\leq R_{\BMO}} v(1-m(I) ) \eta(f,I,p) \asymp_{v,p} \sup_{m(I)\leq R_{\BMO}} v(1-m(I) ) \eta(f,I,1).
\]
Combining this with Proposition \ref{prop:ConformInvBMOAimplyintBMOA} proves \eqref{eq:Rineq}. Concerning \eqref{eq:R_BMOineq}, given any \(R_{\BMO}\in]0,1[\), pick \(R_A\in]0,R_{\BMO}/2]\) and apply \(\sup_{\abs{a}\geq 1-R_A} \) to both sides of \eqref{eq:lemR_BMOineq} in Proposition \ref{prop:intBMOAimplyConformInvBMOA} and we are done. Finally, \eqref{eq:VMOequvalentNorms} follows immediately from \eqref{eq:Rineq} and  \eqref{eq:R_BMOineq}.

\end{proof}

The following proposition is a generalization of \cite[Theorem 2.1]{Girela-2001}.

\begin{prop}\label{prop:ConvergenceInVMOAV}
Let \(1\leq p<\infty\), \(v\colon \D \to ]0,\infty[\) be radial and put \(q=\max\{2,p\}\). Assume \(v|_{[0,1[}\) is almost increasing and there is an \(\epsilon_0>0\) such that \(x\mapsto v|_{[0,1[}(1-x) x^{\frac{1}{q}-\epsilon_0} \) is almost increasing. Then, for a given \(f\in\HOLO(\D)\), the following are equivalent:
\begin{itemize}
\item \( f\in \VMOA_{v,p} \)
\item \( \lim_{c \to 1 }\norm{ T_c f - f}_{\BMO_v,p} = 0 \),
\item \(f\) belongs to the \(\BMOA_{v,p} \)-closure of analytic polynomials.
\end{itemize}
where the limit is taken arbitrary inside \(\closed{\D}\).
\end{prop}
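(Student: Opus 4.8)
The plan is to prove the three conditions equivalent through the cyclic chain VMOA $\Rightarrow$ dilation-convergence $\Rightarrow$ polynomial-closure $\Rightarrow$ VMOA, where I abbreviate the three bullets as membership in $\VMOA_{v,p}$, the convergence $\norm{T_cf-f}\to 0$, and membership in the $\BMOA_{v,p}$-closure of the analytic polynomials. Throughout I would pass to the exponent $q=\max\{2,p\}\ge 2$: by Proposition \ref{prop:implicationOfJohnNirenberg}, together with the subsequent note on the exponent and the resulting $p$-independence, the hypothesis that $x\mapsto v|_{[0,1[}(1-x)x^{1/q-\epsilon_0}$ is almost increasing gives $\VMOA_{v,p}=\VMO_{v,q}$ and $\BMOA_{v,p}=\BMOA_{v,q}$ with comparable (semi-)norms (see \eqref{eq:VMOequvalentNorms}). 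This lets me compute mean oscillations at the exponent $q\ge 2$, which renders the Hardy--Stein estimate \eqref{eq:LittlewoodPaley} harmless, its integrand carrying the factor $\abs{f(z)-f(a)}^{p-2}$.

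Two of the implications are routine. For polynomial-closure $\Rightarrow$ VMOA, note first that the almost-increasing hypothesis forces $v(a)(1-\abs{a})^{1/p}\to 0$ (and its logarithmic variant when $p=1$), so by Proposition \ref{prop:polynomialsInBMOAV} the polynomials lie in $\VMOA_{v,p}$ and the latter is a closed subspace of $\BMOA_{v,p}$; hence so does every limit of polynomials. For dilation-convergence $\Rightarrow$ polynomial-closure, fix $r\in\,]0,1[$: then $T_rf$ is analytic on a disk of radius $1/r>1$, so $T_rf$ and its derivative are uniform limits on $\closed{\D}$ of the Taylor partial sums. Since any $h$ analytic across $\closed{\D}$ satisfies $\gamma(h,a,p)\lesssim \norm{h'}_{\infty}(1-\abs{a})^{1/p}$ (the logarithmic bound for $p=1$), exactly as in the proof of Proposition \ref{prop:polynomialsInBMOAV}, and since $\sup_a v(a)(1-\abs{a})^{1/p}<\infty$, these partial sums converge to $T_rf$ in $\BMOA_{v,p}$; thus each $T_rf$ lies in the polynomial closure, and letting $r\to 1^-$ (a legitimate case of the limit in the hypothesis) puts $f$ there too.

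The heart is VMOA $\Rightarrow$ dilation-convergence, i.e. $\lim_{c\to 1}\norm{T_cf-f}=0$ with $c\to 1$ inside $\closed{\D}$. Writing $c=r\zeta$ with $r=\abs{c}$ and $\zeta\in\T$, and using that $v$ is radial so that each rotation $T_\zeta$ is an isometry, I reduce to the two limits $\norm{T_rf-f}\to 0$ ($r\to 1^-$) and $\norm{T_\zeta f-f}\to 0$ ($\zeta\to 1$); the rotation limit is the easier, fully analogous case, so I concentrate on $r\to 1^-$. Fix $\epsilon>0$ and split the supremum defining the semi-norm into a compact part $\abs{a}\le 1-\delta$ and a boundary part $\abs{a}\ge 1-\delta$. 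On the compact part $v$ is bounded and $T_rf\to f$ together with all derivatives uniformly on compacta, so \eqref{eq:LittlewoodPaley} gives $\sup_{\abs{a}\le 1-\delta}v(a)\gamma(T_rf-f,a,q)\to 0$ as $r\to 1$, while $\norm{T_rf-f}_{H^p}\to 0$ is standard. Everything then reduces to a tail estimate uniform in $r$: one must find, for each $\epsilon$, a $\delta>0$ with
\[
\sup_{r\in[0,1]}\ \sup_{\abs{a}\ge 1-\delta} v(a)\,\gamma(T_rf,a,q)<\epsilon,
\]
after which the triangle inequality and $f\in\VMOA_{v,p}$ (controlling $\sup_{\abs{a}\ge 1-\delta}v(a)\gamma(f,a,q)$) finish the proof.

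This uniform tail bound is the principal obstacle, and I would attack it through \eqref{eq:R_BMOineq}: applied to $T_rf$ with $R_A=\delta$ and a fixed intermediate scale $R_{\BMO}$ it gives
\[
\sup_{\abs{a}\ge 1-\delta}v(a)\gamma(T_rf,a,q)\lesssim \sup_{m(I)\le R_{\BMO}} v(1-m(I))\eta(T_rf,I,q)+\norm{T_rf}_{*,\BMO_{v,q}}\Big(\tfrac{2\delta}{R_{\BMO}}\Big)^{\epsilon_0},
\]
so two facts about the family $\{T_rf\}$ remain. First, a uniform-boundedness statement $\sup_r\norm{T_rf}_{*,\BMO_{v,q}}<\infty$: since on $\T$ one has $T_rf=P_r*f$, mean oscillation over an arc can only be contracted by this probability-averaging, and I would combine that classical contraction with the radiality and almost-increasing growth of $v$ to bound $\norm{T_rf}_{*,\BMO_{v,q}}$ by a constant times $\norm{f}_{*,\BMO_{v,q}}$. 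Second, and most delicate, the uniform small-arc vanishing $\sup_r\sup_{m(I)\le R_{\BMO}}v(1-m(I))\eta(T_rf,I,q)\to 0$ as $R_{\BMO}\to 0$; here I would split according to the arc length $s=m(I)$ relative to $1-r$. For $s\gtrsim 1-r$ the Poisson average over $I$ is controlled by the genuine mean oscillation of $f$ over comparable arcs, which is small uniformly in $r$ because $f\in\VMO_{v,q}$ (via \eqref{eq:Rineq}). For $s\lesssim 1-r$ the dilate is smooth at scale $s$, and I would estimate $\eta(T_rf,I,q)\lesssim s\sup_I\abs{(T_rf)'}$ using the derivative bound $\abs{f'(z)}\lesssim \norm{f}_{*,v,p}/((1-\abs{z})v(\abs{z}))$, balancing the competing factors $v(1-s)$ and $1/((1-r)v(r))$ by means of the spare exponent $\epsilon_0$ furnished by the almost-increasing hypothesis. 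Choosing first $R_{\BMO}$ small (to handle both regimes) and then $\delta\ll R_{\BMO}$ (to kill the $(2\delta/R_{\BMO})^{\epsilon_0}$ term via the first fact) completes the tail bound, and hence the proposition. I expect this last scale-splitting balance to be the main difficulty, precisely because $v(1-s)$ and the derivative growth pull in opposite directions and must be reconciled through $\epsilon_0$.
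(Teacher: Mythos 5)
Your overall architecture is sound and matches the paper's cyclic scheme, and two of the three implications are handled exactly as in the paper: polynomial-closure $\Rightarrow$ $\VMOA_{v,p}$ via Proposition \ref{prop:polynomialsInBMOAV} (your derivation of $v(a)(1-\abs{a})^{1/p}\to 0$, resp.\ the logarithmic variant for $p=1$, from the almost-increasing hypothesis is valid), and dilation-convergence $\Rightarrow$ closure by approximating $T_rf$, which is analytic across $\closed{\D}$, in the $\BMOA_{v,p}$-norm. For the key implication $\VMOA_{v,p}\Rightarrow \lim_{c\to1}\norm{T_cf-f}=0$, however, you take a genuinely different and, in its hardest step, much heavier route. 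The paper reduces to $p=2$, proves the rotation case $c\in\T$ entirely in the arc-oscillation setting (small arcs by $f\in\VMO_{v,2}$ together with rotation invariance, arcs with $m(I)\geq R$ by making $\norm{T_cf-f}_{H^2}$ small), and then dispatches general $c=rw_0$ in one stroke by writing $T_cf=\int_\T T_{w_0w}f\,P_r(w)\,dm(w)$ and applying Minkowski's inequality, splitting the Poisson average into $\abs{\ARG w}<\delta$ (controlled by the rotation case) and its tail (killed by $P_r$ as $r\to1$); the quantitative estimate \eqref{eq:R_BMOineq} is never invoked. Your route instead runs through \eqref{eq:R_BMOineq} applied to $T_rf$, which requires your two uniform-in-$r$ facts. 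The decisive observation you missed is that your own mechanism for the first fact already yields the second for free and collapses what you call the most delicate step: since $T_rf=\int_\T T_wf\,P_r(w)\,dm(w)$, Minkowski plus Fubini (the means $m_I$ average correctly) give $\eta(T_rf,I,q)\leq\int_\T \eta(f,wI,q)\,P_r(w)\,dm(w)$, and since $m(wI)=m(I)$ and $v$ is radial, this is a \emph{scale-wise} contraction, $\sup_{m(I)\leq R}v(1-m(I))\eta(T_rf,I,q)\leq \sup_{m(I)\leq R}v(1-m(I))\eta(f,I,q)$ uniformly in $r$; combined with \eqref{eq:Rineq} and $f\in\VMO_{v,q}$ this is exactly your uniform small-arc vanishing, with no regime-splitting at all. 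This averaging contraction is in substance the same trick the paper's proof is built on.

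Two further soft spots, both repairable. First, your two-regime analysis \emph{as written} fails at the junction: the crude Bloch-type bound $\abs{f'(z)}\lesssim \norm{f}_{*,v,1}/((1-\abs{z})v(\abs{z}))$ gives, for $m(I)=s\leq c(1-r)$, only $v(1-s)\eta(T_rf,I,q)\lesssim (s/(1-r))^{1-1/q+\epsilon_0}\norm{f}_{*,\BMO_{v,q}}$, which at the matching scale $s\asymp 1-r$ is $O(\norm{f}_{*,\BMO_{v,q}})$ rather than $o(1)$ — the global norm carries no smallness. One can repair this by letting the threshold $c$ depend on $\epsilon$ and redoing the regime $s\geq c(1-r)$ (where the kernel width may far exceed the arc) with oscillation-transfer sums over dyadic enlargements, trading the Poisson tail decay against the growth $2^{k(1/q-\epsilon_0)}$ permitted by the almost-increasing hypothesis — but all of this is precisely what the one-line contraction above renders unnecessary. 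Second, on the compact part $\abs{a}\leq 1-\delta$ it is the convergence $\norm{T_rf-f}_{H^q}\to 0$ (which you do cite), not uniform convergence of derivatives on compacta, that controls $\gamma(T_rf-f,a,q)$: the Poisson and area integrals in the definition of $\gamma$ and in \eqref{eq:LittlewoodPaley} extend over all of $\T$, resp.\ $\D$, so convergence on compacta alone does not suffice there; for $\abs{a}\leq 1-\delta$ one simply uses $\gamma(g,a,q)^q\lesssim_\delta \norm{g}_{H^q}^q+\abs{g(a)}^q$.
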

\begin{proof}
First, for all \(c\in \D\) and \(f\in \BMOA_{v,p}\), the functions \( T_c f \in \BMOA_{v,p}  \) (see the remark right after \eqref{eq:LittlewoodPaley}). Furthermore, \( (T_c f - f)(0)= 0\). Let \(1\leq p<\infty\), \(q=\max\{2,p\}\) and assume \(v\) satisfies the assumptions and \(\epsilon_0 > 0\) is the \(\epsilon_0\) given in the statement. Then \(x\mapsto v|_{[0,1[}(1-x) x^{\frac{1}{q}-\epsilon_0} \) is almost increasing and bounded. By Proposition \ref{prop:implicationOfJohnNirenberg}, we have \(  \VMOA_{v,p}=\VMOA_{v} = \VMO_{v,p} = \VMO_{v,2} \) with equivalent norms. It is, therefore, sufficient to prove that the three properties are equivalent for \(p=2\). To this end, assume \( f\in \VMOA_{v} \). If \(c\in \T\), it follows from the proof of \cite[Theorem 2.1]{Girela-2001} that we can fix \(0<R\) small such that 
\begin{align*}
\sup_{m(I)\leq R} v(1-m(I)) \eta(f-T_c f,I,2) & \leq \sup_{m(I)\leq R} v(1-m(I)) \eta(f,I,2) +  \sup_{m(I)\leq R} v(1-m(I)) \eta(T_c f,I,2) \\
&= 2 \sup_{m(I)\leq R} v(1-m(I)) \eta(f,I,2) <  \epsilon,
\end{align*}
because \( f\in \VMO_{v,2} \). For any \(I\) such that  \(m(I)\geq R\), we can choose \(c\), independent of \(I\), close enough to \(1\) so that \(\norm{ T_c f - f }_{H^2} <\frac{ R^{\frac{1}{2}} \epsilon }{ v(1-R)} \). It follows that 
\[
\sup_{m(I)\geq R} v(1-m(I)) \eta(T_c f - f,I,2) \lesssim_v  \frac{v(1-R)}{R^{\frac{1}{2}}}  \norm{ T_c f - f }_{H^2}<\epsilon, 
\]
and hence,
\[
\norm{T_c f - f}_{\BMO_v,2}  \lesssim_v \epsilon
\]
when \(c\in \T\) is close to \(1\). Now, if \(c= r w_0\in \D , 0< r < 1 , w_0\in \T \), we first note that since \(f\in H^1\)
\[
\int_{\T} P_{cz} f \, dm = f(cz) = (T_c f)(z) ,\ c\in \D , z\in \closed{\D}.
\]
The reasoning used in \cite[Theorem 2.1]{Girela-2001} combined with Minkowski's inequality gives us that for every \(\delta>0\)   
\begin{equation*}
v(1-m(I))   \eta(  T_c f - f , I , 2 )  \lesssim \kern-1pt \sup_{\abs{\ARG w}<\delta}   \norm{T_{w_0 w} f - f}_{*,\BMO_{v,2} }  \kern-1pt + \int_{ \abs{\ARG w} \geq  \delta }   P_r(w)  \norm{ f}_{*,\BMO_{v,2} }   dm(w).
\end{equation*}
Choosing \(\delta>0\) small enough and \(w_0\) close to \(1\), the first term is less than \(\epsilon\). By choosing \(r\) close to \(1\), the second term is less than \(\epsilon\) and we have proved that \( f\in \VMOA_{v,p} \) implies \( \lim_{c \to 1 }\norm{ T_c f - f}_{\BMO_{v,2}} = 0 \), where the limit is taken arbitrary inside \(\closed{\D}\). Assuming  \( \lim_{c \to 1 }\norm{ T_c f - f}_{\BMO_{v,2}} = 0 \) an application of Proposition \ref{prop:implicationOfJohnNirenberg} yields
\[
\lim_{c \to 1 }\norm{ T_c f - f}_{\BMOA_{v}} = 0.
\]
By choosing \(c\in[0,1[\) close enough to \(1\), we have \( \norm{ T_c f - f}_{\BMOA_{v}}  < \epsilon \). Since \(T_c f \in \HOLO(\closed{\D})\), the function \((T_c f)' \in \HOLO(\closed{\D})\) can be approximated uniformly in \(\D\) by analytic polynomials, and hence, the derivative of an analytic polynomial, say \(\sup_{z\in\D} \abs{p_0'(z) - (T_c f)'(z) } < \epsilon \). Now, using formula \eqref{eq:LittlewoodPaley}, we have
\begin{align*}
\norm{ p_0 - (T_c f) }_{\BMOA_{v}}^2  \leq    \epsilon^2  (1+    \sup_{a\in \D} v(a) (1-\abs{a}^2))
\end{align*}
 and we can conclude that \(f\) belongs to the \(\BMOA_{v} \)-closure of analytic polynomials. Finally, any function in the  \(\BMOA_{v} \)-closure of the polynomials belong to \(\VMOA_{v} \) according to Proposition \ref{prop:polynomialsInBMOAV}.

\end{proof}

For the rest of the paper, it will be assumed \(v|_{[0,1[}\) is almost increasing and (strictly) positive. Using Proposition \ref{prop:ConformInvBMOAimplyintBMOA}, another interesting fact about \(\BMOA_{v,1}\) is the following result by Spanne, \cite[p.~594]{Spanne-1965}.
\begin{prop}\label{prop:SomeBMOAvLipCts}
If \(v|_{[0,1[}\) is increasing and \( \int_0^1 \frac{ dt }{ t\, v(1-t)  } \) is finite, then for every \(f\in \BMOA_{v,1}\), there exists a constant \(C_f\) (depending on \(f\)) and \(\theta_f>0\) such that for \(r<\theta_f\), it holds that
\[
\esssup_{\abs{t_1-t_2}<r}\abs{f(e^{it_1})  -   f(e^{it_2})   } \leq  C_f  \int_0^{r}   \frac{ dt }{ t\, v(1-t)  } .
\] 

\end{prop}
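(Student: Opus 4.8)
The plan is to run the classical Campanato-type argument that recovers a Dini modulus of continuity from control of the mean oscillations over arcs. First I would pass from the conformally invariant seminorm to the integral one: since \(v\) is positive and increasing, \(\inf_{[0,1[}v = v(0) > 0\), so Proposition \ref{prop:ConformInvBMOAimplyintBMOA} gives \(\norm{f}_{*,\BMO_{v,1}} \lesssim \norm{f}_{*,v,1} < \infty\) for every \(f\in\BMOA_{v,1}\). Writing \(\omega(\mu) := 1/v(1-\mu)\), the definition of the seminorm yields the basic arc estimate \(\eta(f,I,1) \leq \norm{f}_{*,\BMO_{v,1}}\,\omega(m(I))\) for each arc \(I\), and because \(v\) is increasing, \(\omega\) is nondecreasing on \(]0,1[\). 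The finiteness hypothesis \(\int_0^1 \frac{dt}{t\,v(1-t)} < \infty\) forces \(v(1^-)=\infty\), which is exactly what makes the right-hand side of the claim finite and tending to \(0\) as \(r\to0\).

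Next I would set up a telescoping estimate anchored at a single mean. Fix two boundary Lebesgue points \(w_1=e^{it_1},\,w_2=e^{it_2}\) with angular separation \(s:=\abs{t_1-t_2}<r\), and let \(I^*\) be the smallest arc containing both, so that \(w_1,w_2\) are its endpoints and \(\mu:=m(I^*)=s/(2\pi)\). For \(j\in\{1,2\}\) let \(I_k^{(j)}\subset I^*\) be the sub-arc with endpoint \(w_j\) and \(m(I_k^{(j)})=2^{-k}\mu\); these nest down to \(w_j\), so \(m_{I_k^{(j)}}(f)\to f(w_j)\) at the (one-sided) Lebesgue point \(w_j\). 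For nested arcs with ratio \(2\) one has \(\abs{m_{I_{k+1}^{(j)}}(f)-m_{I_k^{(j)}}(f)}\leq 2\,\eta(f,I_k^{(j)},1)\), whence telescoping gives \(\abs{f(w_j)-m_{I^*}(f)}\leq 2\sum_{k\geq0}\eta(f,I_k^{(j)},1)\leq 2\norm{f}_{*,\BMO_{v,1}}\sum_{k\geq0}\omega(2^{-k}\mu)\). Using the common value \(m_{I^*}(f)\) as a hinge removes any cross term: \(\abs{f(w_1)-f(w_2)}\leq\abs{f(w_1)-m_{I^*}(f)}+\abs{m_{I^*}(f)-f(w_2)}\).

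Then I would compare the geometric sum to the integral. Since \(\omega\) is nondecreasing, \(\int_{2^{-k}\mu}^{2^{1-k}\mu}\frac{\omega(t)}{t}\,dt\geq(\ln2)\,\omega(2^{-k}\mu)\), so summing over \(k\geq0\) gives \(\sum_{k\geq0}\omega(2^{-k}\mu)\leq(\ln2)^{-1}\int_0^{2\mu}\frac{dt}{t\,v(1-t)}\). Combining, \(\abs{f(w_1)-f(w_2)}\lesssim\norm{f}_{*,\BMO_{v,1}}\int_0^{2\mu}\frac{dt}{t\,v(1-t)}\), and since \(2\mu=s/\pi<s<r\), monotonicity of the integral upgrades \(\int_0^{2\mu}\) to \(\int_0^{r}\). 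Taking the essential supremum over all such pairs (a full-measure set) gives the claim with \(C_f\asymp\norm{f}_{*,\BMO_{v,1}}\lesssim\norm{f}_{*,v,1}\) and any fixed \(\theta_f\in]0,\pi[\).

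The main obstacle is the bookkeeping in the last two steps: one must arrange the chain of arcs so that the summed oscillations telescope cleanly into \(\int_0^{2\mu}\frac{dt}{t\,v(1-t)}\) and, crucially, so that the coarsest scale \(2\mu\) stays below \(r\). This is precisely where anchoring both telescopes at the single mean \(m_{I^*}(f)\) (rather than at two comparable but distinct coarse arcs, which would force a cross term and a larger top scale) together with the \(2\pi\) in the normalized Haar measure absorbs the dyadic constant and lands the bound exactly at \(\int_0^{r}\). The only remaining technical point is the a.e. convergence \(m_{I_k^{(j)}}(f)\to f(w_j)\) over one-sided arcs, which I would justify by dominating each one-sided arc by the centered arc of twice the length.
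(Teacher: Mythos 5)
Your proposal is correct and takes essentially the same route as the paper, which prints no proof of its own but simply combines Proposition \ref{prop:ConformInvBMOAimplyintBMOA} (to pass from \(\norm{\cdot}_{*,v,1}\) to \(\norm{\cdot}_{*,\BMO_{v,1}}\), legitimate here since \(v\) increasing gives \(\inf v = v(0)>0\)) with the cited result of Spanne, whose classical argument is exactly the anchored dyadic telescoping you wrote out. Your handling of the two delicate points --- one-sided Lebesgue-point convergence via the doubled centered arc, and the scale bookkeeping \(2\mu = s/\pi < r\) that lands the bound at \(\int_0^r \frac{dt}{t\,v(1-t)}\) --- is sound, so nothing is missing.
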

For analytic functions, the statement above is not evidently as close to an if and only if statement 
(compare with \cite[p.~594]{Spanne-1965}). However, under some additional constraints on \(v\), which will be present for the main results in this paper, the function defined in Lemma \ref{lem:GoodOne} could be used to prove a counterpart to Proposition \ref{prop:SomeBMOAvLipCts} (see Corollary \ref{cor:EvaluationBoundedOnBMOAv}). Recall that \(\BMOA_{v,1}\) only consists of constants if \(v(a) \gtrsim (1-\abs{a})^{-c} \) for any \(c>1\) (see e.g. \cite[Theorem 1.2]{Giaquinta-1983}).

\subsection{Some general Banach space theory}\label{ss:GeneralBanach}

\

Let \(X\) be a Banach space and \( T\in \BOP(X) \). 
\begin{itemize}
\item A series \(\sum_n x_n \subset X\) is weakly unconditionally Cauchy (\(\WUC\)) if \(\sum_n l(x_n)\) is unconditionally convergent, equivalently absolutely convergent, for all \(l\in X^*\). 
\item If for every infinite dimensional subspace \(M\subset X\) the operator \(T|_M\colon M \to T(M)\) is not an isomorphism, then the operator is said to be \emph{strictly singular} (also called Kato operator, \cite[1.9.2]{Pietsch-1980}).
\item If for \(\epsilon>0\), there exists \(N_\epsilon\geq 1\) such that for every subspace \(M\subset X\) with dimension greater than \(N_\epsilon\) , there is \(x\in \partial B_M\) such that \( \norm{Tx}_X\leq \epsilon \), then the operator is said to be \emph{finitely strictly singular} (this notion is used e.g. in \cite{XXX}).
\item Let \(M\) be a Banach space. The operator \(T\) \emph{fixes a copy of \(M\)} if there exists a closed subspace \(Y\subset X\) such that \(Y\simeq M\)  (isomorphic) and \(T|_Y\) is an isomorphism onto its image \(T(Y)\subset X\).
\item Let \(M\subset X\) be a subspace. The operator \(T\) is \(M\)-singular if it does not fix a copy of \(M\).
\item The operator \(T\) is \emph{unconditionally converging} if it maps \(\WUC\)- series to unconditionally convergent series.
\item The operator \(T\) is said to be \emph{completely continuous} if it maps weakly convergent sequences to norm convergent sequences.
\end{itemize}

For a normed space \(X\), the closed unit ball is given by \(B_X  :=  \{  f\in X :  \norm{f}_X\leq 1  \}     \).

The following lemma is found in \cite[C. II. Theorem 8.4']{PRZE-1968}. 
\begin{lem}\label{lem:UCfixC0}
Let \(X\) be a Banach space. If \(T\in \BOP(X)\) is not unconditionally converging, then it fixes a copy of \(c_0\). 
\end{lem}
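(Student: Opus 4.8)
The plan is to prove the contrapositive direction of Pe\l czy\'nski's classical characterization: an operator fails to be unconditionally converging precisely when it fixes a copy of \(c_0\). Unwinding the definitions, if \(T\in\BOP(X)\) is not unconditionally converging there is a \(\WUC\) series \(\sum_n x_n\subset X\) whose image \(\sum_n Tx_n\) is not unconditionally convergent. Since a series is unconditionally convergent exactly when every subseries converges, this failure yields an \(\epsilon>0\) together with pairwise disjoint finite index sets \((F_k)\) such that \(\norm{\sum_{n\in F_k} Tx_n}\geq\epsilon\) for every \(k\). Setting \(u_k:=\sum_{n\in F_k}x_n\), the blocks \(\sum_k u_k\) again form a \(\WUC\) series (for \(l\in X^*\), \(\sum_k\abs{l(u_k)}\leq\sum_n\abs{l(x_n)}<\infty\) by disjointness), and \(v_k:=Tu_k\) satisfies \(\norm{v_k}\geq\epsilon\).

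First I would record the operator-theoretic reformulation of the \(\WUC\) property: \(\sum_k u_k\) is \(\WUC\) iff there is a constant \(C\) with \(\norm{\sum_k a_k u_k}\leq C\sup_k\abs{a_k}\) for all finitely supported scalar sequences \((a_k)\), equivalently iff \(e_k\mapsto u_k\) extends to a bounded operator \(c_0\to X\). In particular \(\sum_k v_k\) is a \(\WUC\) series, being the image of a \(\WUC\) series under the bounded operator \(T\), and it has \(\inf_k\norm{v_k}\geq\epsilon>0\); hence its terms do not tend to zero, so \(\sum_k v_k\) is not norm-convergent and thus not unconditionally convergent.

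The main step is a Bessaga--Pe\l czy\'nski type selection: a \(\WUC\) series that is not unconditionally convergent admits, after passing to a subsequence, a gliding-hump block \((v_{k_j})\) equivalent to the unit vector basis of \(c_0\), so that \(c'\sup_j\abs{a_j}\leq\norm{\sum_j a_j v_{k_j}}\leq C'\sup_j\abs{a_j}\) for all finitely supported \((a_j)\). This is the part carrying the genuine content; I would realize it through the standard gliding-hump argument, or simply invoke \cite{PRZE-1968}.

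Finally I would transfer the estimate back to \((u_{k_j})\). From \(\norm{\sum_j a_j v_{k_j}}=\norm{T\sum_j a_j u_{k_j}}\leq\norm{T}\,\norm{\sum_j a_j u_{k_j}}\) together with the \(c_0\)-lower bound for \((v_{k_j})\) one gets \(\norm{\sum_j a_j u_{k_j}}\geq(c'/\norm{T})\sup_j\abs{a_j}\), while the \(\WUC\) property of \(\sum_k u_k\) supplies the matching upper bound. Hence \((u_{k_j})\) is equivalent to the \(c_0\)-basis, so \(Y:=[u_{k_j}]\simeq c_0\); on \(Y\) the operator acts by \(T(\sum_j a_j u_{k_j})=\sum_j a_j v_{k_j}\), which is bounded above and below by multiples of \(\sup_j\abs{a_j}\). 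Therefore \(T|_Y\) is an isomorphism onto \([v_{k_j}]\simeq c_0\), i.e.\ \(T\) fixes a copy of \(c_0\). The one real obstacle is the \(c_0\)-extraction from a non-unconditionally-convergent \(\WUC\) series; everything else is bookkeeping with the two-sided \(c_0\)-estimates.
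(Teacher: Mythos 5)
Your argument is correct, and it is worth noting that the paper itself offers no proof of this lemma at all: it simply cites \cite[C. II. Theorem 8.4']{PRZE-1968}, so what you have written is a self-contained replacement for a bare literature reference. Your route is the classical Pe\l czy\'nski argument, and every step checks out: failure of unconditional convergence of \(\sum_n Tx_n\) does yield (via the subseries criterion) an \(\epsilon>0\) and disjoint finite blocks \(F_k\) with \(\norm{\sum_{n\in F_k}Tx_n}\geq\epsilon\); blocks of a \(\WUC\) series are \(\WUC\) by the disjointness estimate you give; \(\WUC\) pushes forward through the bounded operator \(T\) since \(l\circ T\in X^*\); and the transfer of the lower \(c_0\)-estimate from \((v_{k_j})\) back to \((u_{k_j})\) through \(\norm{T}\) is exactly right, as is the conclusion that \(T\) restricted to \(Y=[u_{k_j}]\simeq c_0\) is bounded below, hence an isomorphism onto \([v_{k_j}]\). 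The one step you leave as an invocation, the Bessaga--Pe\l czy\'nski \(c_0\)-extraction, deserves a remark: it is cheaper than it may look, because once you know \(\sum_k v_k\) is \(\WUC\) you get \(\sum_k\abs{l(v_k)}<\infty\) for every \(l\in X^*\), so \((v_k)\) is weakly null with \(\inf_k\norm{v_k}\geq\epsilon\); the selection principle then produces a basic subsequence, and for a basic sequence the coefficient functionals together with the norm lower bound give \(\norm{\sum_j a_jv_{k_j}}\geq \frac{\epsilon}{2K}\sup_j\abs{a_j}\) (with \(K\) the basis constant), while the \(\WUC\) property supplies the matching upper bound --- so the gliding hump is only needed to make the subsequence basic, not to produce the lower estimate separately. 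In short: your proof is sound and complete modulo a genuinely classical ingredient; what it buys over the paper's citation is a quantitative, self-contained argument, at the cost of a paragraph of bookkeeping the paper chose to outsource.
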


The first statement in the lemma below is found e.g. in \cite[1.11]{Pietsch-1980} and the other follows more or less from the definitions.
\begin{lem}\label{lem:Structure}
Let \(X\) be a Banach space. If \(T\in \BOP(X)\) is weakly compact or completely continuous, then it does not fix a copy of \(c_0\). Moreover, if \(T\) is compact, it is both weakly compact and completely continuous.
\end{lem}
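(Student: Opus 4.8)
The plan is to treat the two assertions separately, handling the $c_0$-non-fixing statement by contradiction and the compactness implications by soft standard arguments (this is the content attributed to \cite[1.11]{Pietsch-1980} and to the definitions). Throughout I would exploit the two structural features of $c_0$ that matter here: its canonical basis $(e_n)$ is weakly null---since $(c_0)^*=\ell^1$ and $l(e_n)$ is the $n$-th coordinate of an $\ell^1$-sequence, hence tends to $0$---while $\norm{e_n}=1$, and $c_0$ is not reflexive. So suppose, for contradiction, that $T$ fixes a copy of $c_0$: there is a closed subspace $Y\subset X$, an isomorphism $J\colon c_0\to Y$, and $T|_Y$ is an isomorphism onto its image. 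I would first record the two comparison constants this yields, namely $\norm{Jy}\asymp\norm{y}$ for $y\in c_0$ and $\norm{Ty}\gtrsim\norm{y}$ for $y\in Y$.

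For the \emph{completely continuous} case, set $x_n:=Je_n$. Since $J$ is bounded and linear it is weak-to-weak continuous, so $(x_n)$ is weakly null in $X$; complete continuity then forces $\norm{Tx_n}\to 0$. On the other hand $\norm{Tx_n}\gtrsim\norm{x_n}\gtrsim\norm{e_n}=1$, the desired contradiction.

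For the \emph{weakly compact} case I would instead argue that $T$ weakly compact forces the closed image $T(Y)$ to be reflexive, which is impossible since $T(Y)\simeq c_0$. As $T|_Y$ is an isomorphism onto $T(Y)$ there is $m>0$ with $\norm{Ty}\geq m\norm{y}$ for $y\in Y$, and unwinding this inclusion shows $mB_{T(Y)}\subset T(B_Y)\subset T(B_X)$. Because $T$ is weakly compact, $T(B_X)$ is relatively weakly compact, hence so is $B_{T(Y)}$; and since $T(Y)$ is a closed subspace, its intrinsic weak topology is the restriction of the weak topology of $X$, so $B_{T(Y)}$ is relatively weakly compact in $T(Y)$ itself, i.e. $T(Y)$ is reflexive---a contradiction.

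Finally, for the second sentence I would use only the elementary relations between the compactness notions. Compactness gives that $T(B_X)$ is relatively norm-compact, hence relatively weakly compact (norm-compact sets are weakly compact), so $T$ is weakly compact. For complete continuity, take $x_n\to x$ weakly; then $(x_n)$ is bounded, so by compactness every subsequence of $(Tx_n)$ has a norm-convergent further subsequence, whose limit must be $Tx$ by the weak-to-weak continuity of $T$ together with uniqueness of weak limits, and a routine subsequence argument upgrades this to $\norm{Tx_n-Tx}\to 0$. The only step requiring any care is the transfer of relative weak compactness down to the subspace $T(Y)$ in the weakly compact case, where one must invoke (via Hahn--Banach) that the relative weak topology on a closed subspace coincides with its intrinsic weak topology; everything else is a direct application of the definitions.
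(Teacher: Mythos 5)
Your proof is correct in all its steps. There is, however, nothing in the paper to compare it against: the paper deliberately gives no proof of this lemma, citing \cite[1.11]{Pietsch-1980} for the first statement and remarking that the second ``follows more or less from the definitions.'' What you have written out is precisely the standard content behind those citations, and every step checks: the basis \((e_n)\) of \(c_0\) is weakly null but normalized, which kills complete continuity; the inclusion \(m B_{T(Y)}\subset T(B_Y)\subset T(B_X)\) (with \(m\) the lower isomorphism constant of \(T|_Y\)) transfers relative weak compactness to \(B_{T(Y)}\); and your point about the intrinsic weak topology of a closed subspace agreeing with the restricted one is exactly the detail that needs Hahn--Banach. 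Two remarks. First, in the weakly compact branch you implicitly use that \(T(Y)\) is closed (it is, being the isomorphic image of the complete space \(Y\)) and, more substantially, that weak compactness of the unit ball implies reflexivity --- this is the nontrivial direction of Kakutani's theorem, proved via Goldstine's theorem, so it is a citable theorem rather than a definition-chase; since \(c_0\) is non-reflexive, the contradiction follows. Second, the formulation in Pietsch that the paper points to is the operator-ideal one, which gives a slightly slicker packaging of both branches at once: if \(T\) fixes a copy of \(c_0\) via \(J\colon c_0\to Y\), then the identity of \(c_0\) factors as \(I_{c_0}=(T|_Y\circ J)^{-1}\,T\,J\), and since the weakly compact operators and the completely continuous operators each form a two-sided ideal, \(I_{c_0}\) would inherit either property --- contradicting non-reflexivity of \(c_0\) in the first case and the weakly null normalized basis in the second. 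Your direct arguments are equivalent to this and fully rigorous; the factorization just avoids repeating the subspace bookkeeping in each branch.
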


By the bounded inverse theorem, we have the following:
\begin{prop}
Let \(X\) be a Banach space and \( T\in \BOP(X) \). Then the following are equivalent:
\begin{itemize}
\item \(T\) is strictly singular.
\item For every \(\epsilon>0\) and every infinite dimensional subspace \(M\subset X\), there is \(x\in \partial B_M\) such that \( \norm{Tx}_X\leq \epsilon \). 
\end{itemize}
\end{prop}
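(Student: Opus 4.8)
The plan is to reduce both conditions to a single quantitative statement, namely that $T$ is \emph{bounded below} on the subspace. Recall that by definition $T$ is strictly singular precisely when, for every infinite-dimensional subspace $M\subset X$, the restriction $T|_M\colon M\to T(M)$ fails to be an isomorphism onto its image. The key elementary observation I would record first is that $T|_M$ is an isomorphism onto its image if and only if there is a constant $c>0$ with $\norm{Tx}_X\geq c\norm{x}_X$ for all $x\in M$. Once this is in hand, negating it and quantifying over all infinite-dimensional $M$ produces exactly the second condition in the proposition.

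To prove that equivalence, I would argue in both directions. If $T|_M$ is bounded below by some $c>0$, then it is injective, and the estimate together with completeness of $M$ shows that $T(M)$ is closed in $X$ (a sequence $(Tx_n)$ that is Cauchy forces $(x_n)$ to be Cauchy, hence convergent, hence its image converges into $T(M)$); thus $T(M)$ is a Banach space and the inverse is bounded by $c^{-1}$, so $T|_M$ is an isomorphism onto $T(M)$. Conversely, if $T|_M\colon M\to T(M)$ is an isomorphism onto its image, then $T(M)$ is a Banach space, being isomorphic to the Banach space $M$, and the inverse $(T|_M)^{-1}\colon T(M)\to M$ is bounded, say by $K$, by the bounded inverse theorem. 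Writing $\norm{x}_X=\norm{(T|_M)^{-1}(Tx)}_X\leq K\norm{Tx}_X$ then yields $\norm{Tx}_X\geq K^{-1}\norm{x}_X$, the bounded-below property.

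It remains to negate and conclude. The failure of $T|_M$ to be bounded below means that for every $c>0$ there is some nonzero $x\in M$ with $\norm{Tx}_X<c\norm{x}_X$; normalizing $x$ to lie in $\partial B_M$, this is equivalent to $\inf_{x\in\partial B_M}\norm{Tx}_X=0$, i.e. for every $\epsilon>0$ there exists $x\in\partial B_M$ with $\norm{Tx}_X\leq\epsilon$. Applying the equivalence of the preceding paragraph subspace-by-subspace, the condition ``for every infinite-dimensional $M$, $T|_M$ is not an isomorphism onto its image'' becomes ``for every infinite-dimensional $M$ and every $\epsilon>0$ there is $x\in\partial B_M$ with $\norm{Tx}_X\leq\epsilon$,'' which is the second assertion.

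There is no serious obstacle here; the only point requiring care is verifying that $T(M)$ is closed, hence a genuine Banach space, so that the bounded inverse theorem is legitimately applicable. This is exactly where the completeness of $M$ and the bounded-below estimate are used, and it is the reason the bounded inverse theorem is invoked rather than merely the definition of an isomorphism.
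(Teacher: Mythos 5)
Your proof is correct and follows exactly the route the paper intends: the paper offers no written argument beyond the remark that the proposition follows ``by the bounded inverse theorem,'' and your reduction to the bounded-below property, with the closed-range verification justifying the use of that theorem, is precisely the standard fleshing-out of this remark. One minor point worth noting is that your closedness argument uses completeness of \(M\); if one allows non-closed subspaces in the definition, one can pass to \(\closed{M}\) first, since \(T\) is bounded below on \(M\) if and only if it is bounded below on \(\closed{M}\).
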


For more information see, for example, \cite{Diestel-1984} and \cite{YYY}. Inspired by \cite{Leibov-1990} and \cite[Proposition 6]{Laitila-2013}, we have

\begin{lem}\label{lem:EquivBasisC0}
Let \(Y\) be a Banach space on which, for every \(a\in\D\), \(U(\cdot,a)\colon Y\to [0,\infty[\) is a complete norm yielding the evaluation maps bounded. Then the norm \(\norm{f} := \sup_{a\in\D} U(f,a)\) renders \(X\subset Y\) a Banach space for some subspace \(X\subset Y\).

Moreover, if \( (f_n) \subset X \) is a sequence with \( \norm{f_n}\asymp 1 \), \( \lim_{\abs{a}\to 1} U(f_n,a) = 0\) for all \(n\), and for all \(0<R<1\), it holds that  \(\lim_{n\to \infty} \sup_{\abs{a}\leq R}  U(f_n,a) = 0 \). Then, there is a subsequence \( (f_{n_k} ) \) equivalent to the standard basis for \(c_0\), and hence, the identity \(X\to X\) fixes a copy of \(c_0\).
\end{lem}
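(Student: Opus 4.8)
The first assertion is a routine completeness argument, so I would dispatch it quickly. Writing $X$ for the set of $f\in Y$ with $\norm{f}:=\sup_{a\in\D}U(f,a)<\infty$, the inequality $U(f,a)\leq\norm{f}$ together with the fact that each $U(\cdot,a)$ is a norm shows $\norm{\cdot}$ is a norm on $X$. Given a $\norm{\cdot}$-Cauchy sequence $(g_n)$, it is Cauchy in each complete norm $U(\cdot,a)$, hence converges in $(Y,U(\cdot,a))$ to some $h_a\in Y$; since every $U(\cdot,a)$ renders the evaluations $\delta_z$ bounded, these limits agree pointwise and so define a single $h\in Y$. Freezing $n$ and letting $m\to\infty$ in $\sup_a U(g_n-g_m,a)<\epsilon$ gives $\norm{g_n-h}\leq\epsilon$, whence $h\in X$ and $g_n\to h$. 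I expect no difficulty here.

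For the main (gliding-hump) assertion, I would first fix $\kappa>0$ with $\norm{f_n}\geq\kappa$ for all $n$, set $\kappa':=\kappa/2$, and record a peak point $a_n\in\D$ with $U(f_n,a_n)>\kappa'$. The local-decay hypothesis $\lim_n\sup_{\abs{a}\leq R}U(f_n,a)=0$ forces $\abs{a_n}\to1$: for any fixed $R<1$ one has $U(f_n,a)<\kappa'$ on $\abs{a}\leq R$ for $n$ large, so $\abs{a_n}>R$ eventually. Fixing a summable sequence $\epsilon_k>0$ with $\sum_k\epsilon_k<\kappa'/2$, I would then build inductively indices $n_1<n_2<\cdots$ and radii $0=\rho_0<\rho_1<\cdots\to1$ satisfying, for each $k$,
\[
\text{(I)}\quad \sup_{\abs{a}\leq\rho_{k-1}}U(f_{n_k},a)<\epsilon_k, \qquad\qquad \text{(II)}\quad \sup_{\abs{a}\geq\rho_k}U(f_{n_k},a)<\epsilon_k.
\]
Given $\rho_{k-1}$, the local-decay hypothesis lets me choose $n_k$ large enough for (I); then, with $f_{n_k}$ fixed, the boundary-decay hypothesis $\lim_{\abs{a}\to1}U(f_{n_k},a)=0$ lets me choose $\rho_k$ close enough to $1$ for (II), also insisting $\rho_k\geq1-1/k$. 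Thus the $U$-mass of $f_{n_k}$ is essentially concentrated in the disjoint annulus $\rho_{k-1}\leq\abs{a}<\rho_k$.

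With this localization, $c_0$-equivalence follows from two estimates on finitely supported scalars $(c_k)$. For the upper bound, fixing $a$ and letting $m$ be the index of the annulus containing it, (I)--(II) give $U(f_{n_k},a)<\epsilon_k$ for all $k\neq m$, so $\sum_k\abs{c_k}U(f_{n_k},a)\leq\sup_k\abs{c_k}\big(\norm{f_{n_m}}+\sum_k\epsilon_k\big)\lesssim\sup_k\abs{c_k}$, whence $\norm{\sum_k c_k f_{n_k}}\lesssim\sup_k\abs{c_k}$. For the lower bound, I pick $j$ with $\abs{c_j}=\sup_k\abs{c_k}$ and evaluate at $a_{n_j}$, which lies in annulus $j$ since $U(f_{n_j},a_{n_j})>\kappa'>\epsilon_j$; the triangle inequality for $U(\cdot,a_{n_j})$ then yields $\norm{\sum_k c_k f_{n_k}}\geq\abs{c_j}U(f_{n_j},a_{n_j})-\sum_{k\neq j}\abs{c_k}\epsilon_k\geq\sup_k\abs{c_k}(\kappa'-\kappa'/2)$. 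Hence $(f_{n_k})$ is equivalent to the standard $c_0$-basis, its closed span is isomorphic to $c_0$, and the identity restricted there is trivially an isomorphism onto its image, so it fixes a copy of $c_0$.

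The main obstacle is engineering the two-sided localization (I)--(II) simultaneously: one must use the local-decay hypothesis to push every later bump outward past a fixed radius and the boundary-decay hypothesis to keep each already-fixed bump away from the boundary, while tuning the thresholds $\epsilon_k$ to be summable and below $\kappa'$ so that both the interaction sum in the upper bound and the one in the lower bound are controlled. Once (I)--(II) are secured, the remainder is bookkeeping.
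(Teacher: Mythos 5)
Your proof is correct, and the two-sided localization you engineer in (I)--(II) is precisely the paper's sliding-hump conditions \eqref{eq:slidingHump1}--\eqref{eq:slidingHump2}, obtained by the same alternation of the local-decay and boundary-decay hypotheses; the completeness argument for the first part also matches the paper's. Where you genuinely diverge is the endgame. The paper proves only the upper estimate \(\norm{\sum_k t_k f_{n_k}}\lesssim \norm{(t_k)}_\infty\), deduces that \(\sum_k f_{n_k}\) is weakly unconditionally Cauchy, and then invokes the Bessaga--Pe\l czy\'nski selection principle to extract a further \emph{basic} subsequence, concluding \(c_0\)-equivalence from the standard fact that a seminormalized wuC basic sequence is equivalent to the \(c_0\) basis. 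You instead prove the lower estimate directly: evaluating \(U(\cdot,a_{n_j})\) at the peak point of the extremal coefficient, noting that (I)--(II) together with \(U(f_{n_j},a_{n_j})>\kappa'>\epsilon_j\) force \(a_{n_j}\) into the \(j\)-th annulus, where every other term contributes at most \(\epsilon_k\), and the budget \(\sum_k\epsilon_k<\kappa'/2\) absorbs the interaction. This buys you a more self-contained argument (no selection principle, no separate appeal to basic-sequence theory, since the two-sided estimate on finite combinations itself makes \((f_{n_k})\) basic) and a slightly stronger conclusion: the whole extracted subsequence, not a further subsequence of it, is equivalent to the standard \(c_0\) basis. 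One small remark, shared equally by the paper's own proof: identifying the \(U(\cdot,a)\)-limits for different \(a\) in the completeness argument implicitly uses that the evaluation maps separate the points of \(Y\), which is harmless in the intended setting of spaces of analytic functions.
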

\begin{proof}
Let 
\[
X:=\{ f\in Y : \norm{f} < \infty  \}.
\]
Clearly \(\norm{\cdot}_X := \norm{\cdot} \) is a norm on \(X\). Let \(f_n\) be a Cauchy sequence in \(X\) with respect to \(\norm{\cdot}_X\). Since \( \norm{f}_X \geq U(f,a) \) for all \(f\in X\) and \((Y,U(\cdot,a))\) is complete, there is a limit  \(g\) in \(Y\). Since the evaluation maps are bounded, the limit, \(g\), is independent of \(a\in\D\). For all \(a\in \D\), we have
\[
U(g-f_n,a)  = \lim_{m\to\infty} U(f_m-f_n,a)  \leq   \lim_{m\to\infty} \norm{ f_m -f_n }_X, 
\] 
and since the right-hand side is independent of \(a\in \D\), we have proved that \(\lim_{n\to\infty} \norm{f_n- g}_X = 0\) and 
\( \norm{g}_X \leq \lim_{n\to \infty} \norm{f_n}_X <\infty \) since \((f_n)\) is Cauchy in \(X\).

For the second statement, applying the standard sliding hump technique to the assumptions (see for example \cite[Proof of Proposition 6]{Laitila-2013}) yields that there exists an increasing sequence \((r_k)\subset[0,1[\) and a subsequence \( (f_{n_k})\subset (f_n)\) such that
\begin{equation}\label{eq:slidingHump1}
\sup_{\abs{a}\leq r_k}  U(f_{n_k},a)  \leq  2^{-k}, \hspace{2cm}  \text{for all k},  
\end{equation}
\begin{equation}\label{eq:slidingHump2}
  \sup_{\abs{a} > r_{k+1}}  U(f_{n_k},a)  \leq  2^{-k}, \hspace{2cm}  \text{for all k}.  
\end{equation}

The sequences \( (r_k) \) and \( (f_{n_k}) \) are obtained, by first choosing e.g. \(r_1= \frac{1}{2}\) and then an element \(f_{n_1}\)  such that \eqref{eq:slidingHump1} is satisfied, which is possible due to  \(\lim_{n\to \infty} \sup_{\abs{a}\leq R}  U(f_n,a) = 0 \). After that, we apply the fact that \( \lim_{\abs{a}\to 1} U(f_{n_1},a) = 0\) to obtain \(r_2\) via \eqref{eq:slidingHump2} and so on.

Now, let \((t_k)\in \ell^\infty\). For every \(a\in\D\), there exists exactly one \(k_a\in \{0,1,2,\ldots\}\) such that 
\[
 a\in A(k_a) = \begin{cases}
]r_{k_a},r_{k_a+1}] , & k_a>0  \text{ and }\\
[0,\frac{1}{2}], & k_a = 0 . 
\end{cases}
\]
On the one hand, condition \eqref{eq:slidingHump1} tells us that for a fixed \(a\in \D\), it holds that \( U(f_{n_k},a) \leq 2^{-k} \), whenever, \(k>k_a\). On the other hand, condition \eqref{eq:slidingHump2} tells us that for a fixed \(a\in \D\), it holds that \( U(f_{n_k},a) \leq 2^{-k} \), whenever, \(k<k_a\). We can now conclude that for every \(K\)
\begin{align*}
\norm{ \sum_{k=1}^K  t_k f_{n_k}  }_X & \leq \norm{(t_k)}_\infty  \sum_{k=1}^K  \norm{f_{n_k}  }_X  = \norm{(t_k)}_\infty   \sup_{a\in\D}  \left( \sum_{k>k_a} + \sum_{k<k_a} +  \sum_{k=k_a}   \right)  U( f_{n_k},a) \\
& \leq \norm{(t_k)}_\infty (\sum_k 2^{-k}+\norm{f_{n_{k_a}}}_X) = \norm{(t_k)}_\infty (1+ \sup_n \norm{f_n}_X). 
\end{align*}

This is a characterization of \( (\sum_k f_{n_k})\) being a weakly unconditionally Cauchy series, which is equivalent to \(\sum_k t_k f_{n_k}\) converging for every \((t_k)\in c_0\). The sequence \( f_{n_k} \) is therefore, a basis, but not necessary Schauder. By the Bessaga-Pe\l czy\'nski selection principle, we can extract a subsequence \( (g_k) \subset (f_{n_k}) \), which is basic, and since \(wuC\)-property and \( \norm{f_{n_k}}_X \gtrsim 1  \) are inherited to (series of) subsequences, we have finally obtained a sequence \( (g_k) \subset (f_n) \), which is equivalent to the standard basis of \(c_0\).

\end{proof}

The following lemma is well known.
\begin{lem}\label{lem:dualOperatorProperties}
Let \(X\) be a Banach space and \(T\in\BOP(X)\). Then \(T^{**}\colon X^{**} \to  X^{**}  \) is weak\(^*\)-weak\(^*\) continuous and \(\iota^-1 T^{**}\iota = T\) on \(X\), where  \(\iota\colon  X \to X^{**} \) is the canonical embedding. 
\end{lem}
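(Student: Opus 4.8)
The plan is to prove Lemma \ref{lem:dualOperatorProperties}, which asserts two standard facts about the bidual operator: first, that \(T^{**}\) is weak\(^*\)-to-weak\(^*\) continuous, and second, that \(T^{**}\) extends \(T\) in the sense that \(\iota^{-1} T^{**} \iota = T\), where \(\iota\colon X \to X^{**}\) is the canonical embedding. Both are classical, so I would keep the argument short and structural. I would begin by recalling the definition of the adjoint: for \(T \in \BOP(X)\), the adjoint \(T^* \in \BOP(X^*)\) is defined by \((T^* l)(x) = l(Tx)\) for all \(l \in X^*\), \(x \in X\), and iterating this once more gives \(T^{**} \in \BOP(X^{**})\) defined by \((T^{**} F)(l) = F(T^* l)\) for all \(F \in X^{**}\), \(l \in X^*\).

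For the weak\(^*\)-weak\(^*\) continuity, I would argue as follows. The weak\(^*\) topology on \(X^{**} = (X^*)^*\) is the topology of pointwise convergence on \(X^*\), so a net \(F_\nu \to F\) weak\(^*\) means \(F_\nu(l) \to F(l)\) for every \(l \in X^*\). Then for each fixed \(l \in X^*\) we have \((T^{**} F_\nu)(l) = F_\nu(T^* l) \to F(T^* l) = (T^{**} F)(l)\), since \(T^* l\) is itself a fixed element of \(X^*\). As this holds for every \(l\), we conclude \(T^{**} F_\nu \to T^{**} F\) weak\(^*\), which is precisely weak\(^*\)-weak\(^*\) continuity.

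For the second identity, I would compute \(T^{**} \iota\) directly against an arbitrary functional. Fix \(x \in X\) and \(l \in X^*\). By definition of \(\iota\), \((\iota x)(l) = l(x)\). Then
\[
(T^{**}(\iota x))(l) = (\iota x)(T^* l) = (T^* l)(x) = l(Tx) = (\iota(Tx))(l).
\]
Since this holds for all \(l \in X^*\), we get \(T^{**}(\iota x) = \iota(Tx)\), i.e. \(T^{**} \iota = \iota T\) on \(X\). Because \(\iota\) is an isometric isomorphism onto its image, it is injective, so \(\iota^{-1}\) is well defined on \(\iota(X)\), and the relation \(T^{**} \iota = \iota T\) yields \(\iota^{-1} T^{**} \iota = T\) on \(X\), as claimed.

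There is essentially no obstacle here, as both statements follow from unwinding definitions; the only point requiring minor care is the final inversion, where one must note that \(\iota^{-1}\) makes sense only on the range \(\iota(X)\), and that \(T^{**}\iota\) indeed maps into \(\iota(X)\) (which is exactly what the computation \(T^{**}\iota = \iota T\) shows), so that composing with \(\iota^{-1}\) is legitimate. No results earlier in the paper are needed beyond the definitions of the dual space and the canonical embedding.
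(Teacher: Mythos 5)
Your proof is correct: the weak\(^*\)-weak\(^*\) continuity via nets and the computation \((T^{**}\iota x)(l) = l(Tx)\) giving \(T^{**}\iota = \iota T\) are exactly the standard definition-unwinding arguments, and you rightly note that \(T^{**}\iota\) lands in \(\iota(X)\) so that composing with \(\iota^{-1}\) is legitimate. The paper itself states this lemma without proof, calling it well known, and your argument is precisely the canonical one it implicitly appeals to.
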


\begin{lem}\label{lem:dilationsOfBMOAvUniformlyBounded}
Let \(1\leq p<\infty\) and \(v\colon \D\to ]0,\infty[\) be a radial function. Then
\[
\sup_{c\in\D}\norm{T_c f}_{\BMOA_{v,p} } \leq  \norm{ f }_{\BMOA_{v,p}}.
\]
\end{lem}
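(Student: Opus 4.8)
The plan is to prove the dilation estimate
\[
\sup_{c\in\D}\norm{T_c f}_{\BMOA_{v,p}} \leq \norm{f}_{\BMOA_{v,p}}
\]
directly from the definition of the semi-norm, by showing that for every fixed \(c\in\D\) and every \(a\in\D\) one has
\[
v(a)\,\gamma(T_c f,a,p) \leq \sup_{b\in\D} v(b)\,\gamma(f,b,p).
\]
First I would unwind the left-hand side: since \(T_c f = f\circ (z\mapsto cz)\) and \(v\) is radial, I want to relate the dilated ball \(v(a)\norm{T_c f\circ\sigma_a - (T_c f)(a)}_{H^p}\) to an undilated ball for \(f\) at some point. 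The natural idea is that composing with the dilation \(z\mapsto cz\) has the effect of moving the evaluation point \(a\) to \(ca\) (up to a subordination), and since \(v\) is radial with \(\abs{ca}\le\abs{a}\), the weight can only decrease or stay comparable.

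The key technical step is a subordination/averaging argument for the \(H^p\)-seminorm. For \(c\in[0,1]\) the map \(T_c\) is a genuine dilation and one has the standard fact that \(\norm{T_c h}_{H^p}\le \norm{h}_{H^p}\) because the dilated boundary integrals are integral averages against the Poisson kernel of the original; for general \(c\in\D\) one writes \(c = \abs{c}\,w_0\) with \(w_0\in\T\), and since \(T_{w_0}\) is a rotation it preserves every \(H^p\)-norm and commutes nicely with \(\sigma_a\) (up to reindexing the supremum over \(\AUT\)), reducing to the radial case \(c=\abs{c}\in[0,1]\). Concretely, I would use the representation
\[
(T_c f)\circ\sigma_a - (T_c f)(a)
\]
and push the dilation through: writing the value as an integral against the Poisson kernel \(P_{|c|}\) on \(\T\), Minkowski's integral inequality gives
\[
\norm{(T_{|c|}f)\circ\sigma_a - (T_{|c|}f)(a)}_{H^p}
\leq \int_\T P_{|c|}(w)\,\norm{f\circ\sigma_{a_w} - f(a_w)}_{H^p}\,dm(w)
\]
for suitable points \(a_w\) arising from the composition \(\sigma_a\) with the rotation by \(w\); since \(|a_w|=|a|\) and \(v\) is radial, \(v(a)=v(a_w)\) and each integrand is bounded by \(\norm{f}_{\BMOA_{v,p}}/v(a)\), so the average is too. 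Multiplying by \(v(a)\) and taking the supremum over \(a\) yields the claim.

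The main obstacle I anticipate is making the bookkeeping precise when \(c\in\D\) is not real: one must verify that composing \(f\) with the dilation and then with an automorphism \(\sigma_a\) can be rewritten, via the group structure of \(\AUT\) together with the rotation-invariance of the \(H^p\)-norm (i.e. \(\norm{T_w g}_{H^p}=\norm{g}_{H^p}\) for \(w\in\T\), already noted in the excerpt), as an \emph{average} of automorphism-translates of \(f\) at points of the \emph{same} modulus as \(a\). The radiality of \(v\) is exactly what guarantees that this averaging does not increase the weighted seminorm. I expect the cleanest route is to first dispose of the rotation \(T_{w_0}\) (which is an isometry of \(\BMOA_{v,p}\) since \(v\) is radial — this follows from the invariance identity \(\norm{f\circ\hat\phi}_{*,v,p}=\norm{f}_{*,v\circ\hat\phi^{-1},p}\) applied to a rotation, which fixes radial \(v\)), and then treat only the genuine contraction \(T_r\), \(r=|c|\in[0,1]\), by the Poisson-average argument above, where the contractivity is transparent.
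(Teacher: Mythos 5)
Your proposal is correct and follows essentially the same route as the paper: the paper likewise writes \(c=rw_0\), uses the Poisson representation \(f(cz)=\int_\T (T_{w_0z}f)(w)P_r(w)\,dm(w)\), applies Minkowski's integral inequality, and then a rotation substitution together with radiality of \(v\) to bound each term by \(\norm{f}_{*,v,p}\). The only (cosmetic) difference is that you split off the rotation \(T_{w_0}\) as an isometry first and then treat the real dilation \(T_r\), whereas the paper handles rotation and dilation in a single Minkowski step.
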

\begin{proof}
Let \(c=rw_0, w_0\in \T, r\in [0,1[\). Using the fact that \(f(cz)  = \int_\T  (T_{w_0z} f) (w) P_r(w) \, dm(w)\), an application of Minkowski's inequality gives us
\begin{align*}
\norm{T_c f}_{*,v,p} &=     \sup_{a\in \D } v(a) \left(  \int_{\T} \abs{  f(cz) -   f(ca)     }^p  P_a(z)\,  dm(z)\right)^{\frac{1}{p}} \\
&\leq    \sup_{a\in \D } v(a)  \int_{\T}    \left(  \int_{\T} \abs{   (T_{w_0z} f) (w)  -   (T_{w_0a} f) (w)    }^p  P_a(z) \,  dm(z)\right)^{\frac{1}{p}} P_r(w) \, dm(w) \\
& \leq   \sup_{a,w\in \D } v(aw_0 w)   \left(  \int_{\T} \abs{    f (z)  -   f (aw_0w)    }^p  P_{aw_0w}(z) \,  dm(z)\right)^{\frac{1}{p}} \\
&= \norm{ f}_{*,v,p}.
\end{align*}
For the last inequality, we have approximated the integrand of the outer intergral by its supremum, followed by a variable substitution \(z\mapsto z\CONJ{w_0w}\), and the fact that \(v(a) = v(aw), w\in \T\).

\end{proof}

We are now ready to prove the following result, which does mainly rely on fundamental properties of weighted composition operators on Banach spaces of analytic functions and a certain duality relation. 

\begin{thm}\label{thm:Gantmacher}
Let \(v\) be an admissible weight. The spaces \( \VMOA_v^{**} \) and \( \BMOA_v \) are isomorphic. Moreover, if \(\phi\) and \(\psi\) are such that \(\WCO\in \BOP(\VMOA_v)\), then the domain of \(\WCO\) can be extended to \(\BMOA_v\) and the extension \( \WCO \colon \BMOA_v \to \BMOA_v \) is weakly compact if and only if \(\WCO|_{\VMOA_v}\) is weakly compact if and only if \(\WCO(\BMOA_v) \subset \VMOA_v\). 
\end{thm}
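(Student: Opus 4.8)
The plan is to establish the isomorphism $\VMOA_v^{**} \simeq \BMOA_v$ first, and then deduce the weak-compactness equivalences from Gantmacher's theorem together with the weak$^*$-weak$^*$ continuity of the bitranspose recorded in Lemma \ref{lem:dualOperatorProperties}. For the isomorphism, I would exploit the structure supplied by Lemma \ref{lem:EquivBasisC0}: the evaluation maps $\delta_a$ are bounded on $\BMOA_v$ (via Corollary \ref{cor:EvaluationBoundedOnBMOAv}), so $\BMOA_v$ sits inside $\HOLO(\D)$ with pointwise control. The candidate duality is the one familiar from the classical case $v\equiv 1$, where $\VMOA^{**} = \BMOA$ via the Cauchy/Fefferman pairing $\langle f,g\rangle = \lim_{r\to 1}\int_\T f(rw)\overline{g(rw)}\,dm(w)$. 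I expect the weighted analogue to go through because Proposition \ref{prop:ConvergenceInVMOAV} gives that polynomials are dense in $\VMOA_v$, so $\VMOA_v$ is a separable space with a well-understood predual structure, and the admissibility hypotheses on $v$ make the pairing converge with the right norm bounds.

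Concretely, I would argue that $\VMOA_v$ is the closure of polynomials, use the dilation bound $\sup_{c}\norm{T_c f}_{\BMOA_{v,p}}\le\norm{f}_{\BMOA_{v,p}}$ from Lemma \ref{lem:dilationsOfBMOAvUniformlyBounded} to show that the map $f\mapsto T_r f$ is a uniformly bounded approximate identity on $\VMOA_v$, and then identify a bounded functional $L\in\VMOA_v^{**}$ with the analytic function $z\mapsto L(k_z)$ for a suitable reproducing family $k_z$. The key estimate is that this analytic function lands in $\BMOA_v$ with $\norm{\cdot}_{\BMOA_v}\asymp\norm{L}$, which should follow by testing $L$ against the normalized difference quotients or the test functions $\fAlpha$ developed later and controlling the $\BMOA_v$ seminorm through the equivalent $\BMO_{v,q}$ seminorm from Proposition \ref{prop:implicationOfJohnNirenberg}. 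Conversely every $f\in\BMOA_v$ induces a weak$^*$-continuous functional on $\VMOA_v^*$ via the pairing, and surjectivity plus the norm equivalence give the isomorphism.

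For the second statement, once $\VMOA_v^{**}\simeq\BMOA_v$ is in hand and the embedding $\iota\colon\VMOA_v\hookrightarrow\VMOA_v^{**}$ is identified with the inclusion $\VMOA_v\subset\BMOA_v$, I would extend $\WCO|_{\VMOA_v}$ to $\BMOA_v$ simply as its bitranspose $(\WCO|_{\VMOA_v})^{**}$. Lemma \ref{lem:dualOperatorProperties} guarantees this extension is weak$^*$-weak$^*$ continuous and restricts to $\WCO$ on $\VMOA_v$, so it coincides with the pointwise-defined operator $f\mapsto\psi\, f\circ\phi$ on $\BMOA_v$ by density of polynomials and continuity of evaluations. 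Gantmacher's theorem then states that $\WCO|_{\VMOA_v}$ is weakly compact if and only if its bitranspose maps $\VMOA_v^{**}$ into $\iota(\VMOA_v)$, which under the identification is exactly the assertion $\WCO(\BMOA_v)\subset\VMOA_v$. This simultaneously gives the equivalence with weak compactness of the extension, since a bounded operator and its bitranspose are weakly compact together.

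The main obstacle I anticipate is pinning down the predual pairing precisely enough that the canonical embedding $\iota$ really matches the set-theoretic inclusion $\VMOA_v\subset\BMOA_v$, rather than merely being isomorphic to it in some abstract sense; getting the two-sided norm comparison $\norm{L}_{\VMOA_v^{**}}\asymp\norm{f_L}_{\BMOA_v}$ with constants depending only on $v,g$ is where the admissibility conditions \ref{eq:mainAssump1}--\ref{eq:mainAssump3} and the John--Nirenberg estimates of Proposition \ref{prop:implicationOfJohnNirenberg} must be used carefully, since the nonradial behaviour of $\norm{\delta_a}_{X^*}$ prevents a naive reduction to the classical $v\equiv 1$ computation. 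Once that identification is secure, the Gantmacher step is formal.
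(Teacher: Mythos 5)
Your overall architecture (bidual identification, then bitranspose extension, then Gantmacher) matches the paper's, and your second half is essentially correct: Gantmacher's theorem does give that \(\WCO|_{\VMOA_v}\) is weakly compact iff \((\WCO|_{\VMOA_v})^{**}(\VMOA_v^{**})\subset\iota(\VMOA_v)\), and a bounded operator is weakly compact together with its bitranspose. But the central step — that \(\VMOA_v^{**}\cong\BMOA_v\) with the canonical embedding matching the set-theoretic inclusion — is exactly what your proposal does not prove. You defer it to a weighted analogue of the Fefferman pairing, saying it "should go through"; in the weighted setting, however, \(\VMOA_v^*\) is \emph{not} \(H^1\), no concrete predual is identified anywhere in the paper, and the two-sided estimate \(\norm{L}\asymp\norm{f_L}_{\BMOA_v}\) for \(f_L(z):=L(\delta_z)\) is precisely the hard content (one would also need analyticity of \(f_L\), injectivity of \(L\mapsto f_L\) — which requires the point evaluations to span a norm-dense subspace of \(\VMOA_v^*\) — and surjectivity). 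The paper sidesteps all of this by invoking the abstract o–O duality theorem of Perfekt \cite[Theorem 2.2]{Perfekt-2013}, applied with \(X=Y=H^2/\C\) and the operator family \(\mathcal L=\{f\mapsto v(a)(f\circ\sigma_a-f(a))\}\); the only hypotheses to verify are the uniform dilation bound of Lemma \ref{lem:dilationsOfBMOAvUniformlyBounded} together with \(T_rf\in\VMOA_v\) and \(\norm{T_rf-f}_{H^2}\to 0\). Perfekt's theorem then delivers the isometric identification \emph{and} the fact that the isomorphism extends the canonical embedding \(\iota\), which is the point you yourself flag as the main obstacle. Without either citing such a result or carrying out the predual analysis, your first half is a plan, not a proof.

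A second, smaller misstep: you identify the bitranspose extension with the pointwise operator \(f\mapsto\psi\,f\circ\phi\) on all of \(\BMOA_v\) "by density of polynomials and continuity of evaluations." Polynomials are norm-dense only in \(\VMOA_v\) (Proposition \ref{prop:ConvergenceInVMOAV}); they are \emph{not} norm-dense in \(\BMOA_v\), so this argument only re-proves agreement on \(\VMOA_v\), where it is already known. To extend the identification to a general \(f\in\BMOA_v\) you need approximation in the correct topology: the paper shows \(\iota\colon(B_{\BMOA_v},\tau_0)\to(B_{\VMOA_v^{**}},w^*)\) is a homeomorphism (Banach–Alaoglu plus \(\delta_z\in\VMOA_v^*\)), notes that \(\iota^{-1}\WCO^{**}\iota\) restricted to \(B_{\BMOA_v}\) is \(\tau_0\)-\(\tau_0\) continuous by Lemma \ref{lem:dualOperatorProperties}, and then uses \(\tau_0\)-density of \(\VMOA_v\) in \(\BMOA_v\) (e.g.\ via bounded dilations \(T_rf\)) to conclude \(\iota^{-1}\WCO^{**}\iota=\WCO\) on \(\BMOA_v\). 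Your sketch gestures at the right ingredients (weak\(^*\)-weak\(^*\) continuity, dilations as an approximate identity) but wires them up with the wrong density statement; repairing this requires the bounded-\(\tau_0\)/weak\(^*\) compatibility argument above rather than norm-density of polynomials.
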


\begin{proof}
By Lemma \ref{lem:dilationsOfBMOAvUniformlyBounded}, we obtain
\[
\sup_{c\in \D} \norm{T_c f}_{\BMOA_v} \leq  \norm{f}_{\BMOA_v}, f\in \BMOA_v.
\]
Since  \( \lim_{r\to 1^-}\norm{T_r f- f}_{H^2} = 0\) \cite[Theorem 2.6]{Duren-1970} and \(T_r f \in \VMOA_v \) for every \(0<r<1\) according to the remark right after \eqref{eq:LittlewoodPaley}, we can use \cite[Theorem 2.2]{Perfekt-2013} with \(X=Y=H^2/\C\) and
\[
\mathcal L = \{ L_{\hat{\phi}} \colon f \mapsto  v(a) (f\circ \sigma_a - f(a))  :  \hat{\phi}\in\AUT \}.
\]
The result is that  \( (\VMOA'_v)^{**} \) and \( \BMOA'_v \) are isometrically isomorphic, where
\[
\VMOA'_v = \{ f\in \VMOA_v : f(0) = 0  \} \text{ and } \BMOA'_v = \{ f\in \BMOA_v : f(0) = 0  \}.
\]
Now it follows that
\[
\VMOA_v^{**}  = (\VMOA'_v \oplus_1 \C  )^{**} \cong  (\VMOA'_v )^{**} \oplus_1  \C \cong  \BMOA'_v \oplus_1  \C = \BMOA_v,
\]
where \(\cong\) stays for isometrically isomorphic and \(X\oplus_1 Y\) means the direct sum is equipped with the norm \( f\mapsto \norm{(\norm{f}_{X},\norm{f}_{Y})}_{\ell^1}\).

Theorem 2.2 in \cite{Perfekt-2013} also yields that the corresponding isometric isomorphism is an extension of the canonical mapping \(\iota\colon \VMOA_v \to (\VMOA_v)^{**}  \). Hereafter, let \(\iota\) be the extension. By the Banach-Alaoglu theorem \((B_{\VMOA_v^{**}}, w^*)\) is compact, and since \(\delta_z\in \VMOA_v^*\), we have that \(  \iota\colon  (B_{\BMOA_v}, \tau_0 ) \to  (B_{\VMOA_v^{**}}, w^* ) \) is a homeomorphism. Applying Lemma \ref{lem:dualOperatorProperties} to the weighted composition operator \(\WCO\in \BOP(\VMOA_v) \), we get that \(\iota^{-1}\WCO^{**}\iota |_{\VMOA_v} = \WCO \) and that \(\iota^{-1} T^{**}\iota |_{B_{\BMOA_v}}\) is \(\tau_0 - \tau_0\) continuous. Since \(\VMOA_v\) is \(\tau_0\) dense in \(\BMOA_v\) and \(\iota^{-1}\WCO^{**}\iota |_{\VMOA_v} = \WCO \), we obtain \(\iota^{-1} T^{**}\iota = \WCO\in \BOP(\BMOA_v)\).

Finally, Gantmacher's theorem yields the equivalences.

\end{proof}

Another equivalent statement of \(\WCO\in\BOP(\VMOA_v)\) being weakly compact is the following \cite[Theorem 3.2]{Perfekt-2015}: for every \(\epsilon>0\) there exists \(N>0\) such that
\[
\norm{\WCO f}_{\BMOA_v} \leq N\norm{f}_{H^2} + \epsilon \norm{f}_{\BMOA_v}  , \quad  f\in \VMOA_v.
\]
See \cite[Corollary 3.3]{Perfekt-2015} for a similar result concerning the operators induced by the same symbols acting on \(\BMOA_v\).

A final remark is that, assuming Theorem \ref{thm:MainThmCpt} holds for \( (\WCO\in \BOP(\VMOA_v) \),  Theorem \ref{thm:Gantmacher} allows us to immediately extend it to operators \( (\WCO\in \BOP(\BMOA_v) \) satisfying the extra assumption \(\WCO|_{\VMOA_v}\in \BOP(\VMOA_v)\). In this case, we can add  \(\WCO(\BMOA_v) \subset \VMOA_v\) to the list of characterizations of compactness given in  Theorem \ref{thm:MainThmCpt}. It is worth noticing that there are weighted composition operators in \(\BOP(\BMOA_v)\) that are not an extension of operators \(\WCO\in\BOP(\VMOA_v)\), in other words, there exists \(\WCO\in \BOP(\BMOA)\) such that \(\WCO|_{\VMOA_v}\notin\BOP(\VMOA)\). An example of an operator \(\WCO\in\BOP(\BMOA)\), which is compact due to Theorem \ref{thm:MainThmCpt}, but for which \(\WCO|_{\VMOA_v}\notin\BOP(\VMOA)\) is obtain using \(\psi= \PEF\in \BMOA_v\) (see Lemma \ref{lem:GoodOne} and its proof) and \(\phi\colon z\mapsto z/2 \).

\subsection{Some comments about the conditions concerning admissible weights} 

The functions \( g(z)= z^c  , \ c \geq 1/2\) satisfy all assumptions except the global growth restriction \ref{eq:mainAssump1}. The functions
\[
z\mapsto (e+z) ^{ c  e^{\cos( \ln(\ln (e+z) )   )}   } ,
\]
where \(0<c<\frac{1}{2e}\) belong to \(\HOLO(\C_{\Re \geq \frac{1}{2}})\) and satisfy \ref{eq:mainAssump1} (it's easy to see that its growth along the positive real line is bounded by \(z\mapsto z^{ce}\)), but not the curvature restriction \ref{eq:mainAssump2}. The intuition for this function follows from the fact that it can be written as an analytic staircase function \(F(z) = z + \cos z\) composed with an analytic magnifier \(\EXP\EXP \) from the left and its inverse \(\ln\ln \) from the right to create the effect of a rapidly growing size of the steps as \(z\) tends toward infinity along the positive real axis. Finally, the input is translated for the function to have the right domain, and compressed from its linear asymptotic mean growth to be dominated by a suitable root-type growth for condition \ref{eq:mainAssump1} to hold. We have,
\[
(e+z) ^{ c  e^{\cos( \ln(\ln (e+z) )   )}   } = \left(  \EXP \left(  \EXP(F(\ln(\ln (e+z))) \right) \right)^{c} .
\]
To see that the function does not satisfy \ref{eq:mainAssump2}, consider \(0<b=a^2\leq a\leq 1\) in condition \ref{eq:mainAssump2}. For condition \ref{eq:mainAssump2} to hold, it is necessary that (substituting \(a\mapsto \frac{1}{a}\)) 
\[
\sup_{a\in]1,\infty[}  \frac{ (e+a^2) ^{ c  e^{\cos( \ln(\ln (e+a^2) )   )}   }    }{ \Big(\left( (e+a)^2\right) ^{ c  e^{\cos( \ln(\ln (e+a) )   )}   }\Big)^2  } <\infty.
\] 
Moreover,
\[
 \frac{ (e+a^2) ^{ c  e^{\cos( \ln(\ln (e+a^2) )   )}   }    }{ \Big(\left( (e+a)^2\right) ^{ c  e^{\cos( \ln(\ln (e+a) )   )}   }\Big)^2  }  \approx \left( (e+a)^{2c} \right)^ {   e^{\cos( \ln(\ln (e+a^2) )   )}  -  e^{\cos( \ln(\ln (e+a) )   )}     }.
\]
For the above to explode as \(a\to \infty\) along some sequence, it is sufficient to prove that
\[
\limsup_{a\to 0}  \cos( \ln(\ln (e+a^2) )   ) - \cos( \ln(\ln (e+a) )   )  >0.
\]
This can be seen from
\[
 \cos( \ln(\ln (e+a^2) )   ) =  \cos( \ln(\ln (e+a ))  +   \ln\frac{\ln (e+a^2) }{ \ln (e+a)   }   ) \stackrel{ a\to \infty }{\sim} \cos (\ln 2 +   \ln(\ln (e+a ))  ).   
\]

\section{Some important lemmas}\label{sec:ImportantLemmas}

\begin{lem}\label{lem:coolStuff}
Let \(g \colon [1,\infty[\to  ]0,\infty[\) be an almost increasing function. 
We have
\[
\sup_{x,y\in ]0,1[}  \frac{  \left( g\frac{1}{1-y}   \right) }{   \left( g\frac{1}{1-x}   \right)   } \left(  \frac{   (  1-y^2  )(   1-x^2  )         }{    ( 1-xy  )^2   }           \right) \asymp  \sup_{t,x\in]0,1]} \frac{  t \,   g\left(\frac{1}{tx} \right)  }{   g\left(      \frac{1}{x}      \right)       } .  
\]
One sufficient condition for the quantity above to be finite is that \(x \mapsto x  \,  g(\frac{1}{x}) \) is almost increasing.   

\end{lem}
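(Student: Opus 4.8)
The plan is to rewrite both supremums in a common form by elementary substitutions and then compare them through a case split. For the left-hand side I would substitute $u = 1-x$ and $w = 1-y$, so that $g(\tfrac{1}{1-y}) = g(1/w)$ and $g(\tfrac{1}{1-x}) = g(1/u)$ with $u,w\in\,]0,1[$. The Poisson-type factor then simplifies: since $1 - y^2 = w(2-w) \asymp w$, $1-x^2 = u(2-u) \asymp u$, and $1 - xy = u + w - uw$, the elementary inequality $\tfrac{u+w}{2} < u + w - uw \le u+w$ (valid because $u+w-2uw = u(1-w)+w(1-u) > 0$ for $u,w\in\,]0,1[$) gives $1 - xy \asymp u+w$. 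Hence
\[
\frac{(1-y^2)(1-x^2)}{(1-xy)^2} \asymp \frac{uw}{(u+w)^2}
\]
with absolute constants, and the left-hand side becomes $\sup_{u,w\in]0,1[} \frac{g(1/w)}{g(1/u)}\,\frac{uw}{(u+w)^2}$. For the right-hand side I would set $p = x$ and $s = tx$, so that $(t,x)$ ranging over $]0,1]^2$ corresponds exactly to $0 < s \le p \le 1$ with $t = s/p$; the right-hand side thus becomes $\sup_{0<s\le p\le 1} \frac{s\,g(1/s)}{p\,g(1/p)}$.

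Next I would split the left-hand supremum according to whether $w \le u$ or $w > u$. When $w \le u$ one has $(u+w)^2 \asymp u^2$, so $\frac{uw}{(u+w)^2} \asymp \frac{w}{u}$, and the contribution is $\asymp \frac{g(1/w)}{g(1/u)}\frac{w}{u}$, which under $s=w,\ p=u$ is exactly the reduced right-hand side. When $w > u$ one has $(u+w)^2 \asymp w^2$, giving $\frac{uw}{(u+w)^2} \asymp \frac{u}{w} < 1$; moreover $w > u$ forces $1/w < 1/u$, so by the almost increasing property of $g$ we have $g(1/w) \lesssim g(1/u)$, whence this contribution is $\lesssim 1$. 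Since the right-hand side is $\ge 1$ (take $s = p$, which yields the value $1$), the bounded second case is absorbed into it, and combining the two cases gives that the left-hand side is $\asymp$ the right-hand side.

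Finally, the sufficient condition is immediate from the reduced right-hand form: if $x \mapsto x\,g(1/x)$ is almost increasing, then for $s \le p$ one has $s\,g(1/s) \le C\,p\,g(1/p)$, so that $\frac{s\,g(1/s)}{p\,g(1/p)} \le C$ uniformly, and the supremum is finite.

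The main obstacle here is conceptual rather than computational: one must recognize that the left-hand side, unlike the right-hand side, is \emph{not} confined to the ordered region $w \le u$, and that the ``reversed'' region $w > u$ contributes only a bounded amount precisely because $g$ is almost increasing, so that the symmetrization is harmless. A minor technical point to watch is the mismatch between the open interval $]0,1[$ appearing on the left and the half-open $]0,1]$ appearing on the right; this does not affect the suprema, since the relevant extremal behaviour is approached through the interior (as $u,w\to 0$, or along the diagonal $s=p$), but it should be flagged when matching the two reduced forms.
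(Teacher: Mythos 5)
Your proposal is correct and takes essentially the same route as the paper's proof: the same reduction of the M\"obius factor to \( \frac{uw}{(u+w)^2} \) via \(u=1-x\), \(w=1-y\), and the same decomposition into the ordered region, which reproduces the right-hand side (the paper parametrizes it as \(y=tx\) with \(t\le 1\) rather than splitting on \(w\le u\) directly), and the reversed region, disposed of by the almost-increasing property of \(g\) together with the fact that the right-hand side is at least \(1\). Your explicit one-line verification of the sufficiency claim and the remark about the endpoint \(p=1\) are details the paper leaves implicit, but the underlying argument is identical.
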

\begin{proof}

Some elementary calculations yield
\[
\sup_{x,y\in ]0,1[}  \frac{  \left( g\frac{1}{1-y}   \right) }{   \left( g\frac{1}{1-x}   \right)   } \left(  \frac{   (  1-y^2  )(   1-x^2  )         }{    ( 1-xy  )^2   }           \right)  \asymp \sup_{x,y\in]0,1[} \frac{g\left(  \frac{1}{y}   \right) }{    g\left(  \frac{1}{x}   \right)    } \frac{   x y         }{    (x+y)^2   }.
\]
Put \(y=tx\) to obtain
\begin{align*}
\sup_{x,y\in]0,1[} &\frac{g\left(  \frac{1}{y}   \right) }{    g\left(  \frac{1}{x}   \right)    }  \left(  \frac{   xy        }{    (x+y)^2   }           \right) = \sup_{x,t\in]0,\infty[} \chi_{0<x<1} \chi_{0<t<\frac{1}{x}} \frac{   g\left(\frac{1}{tx} \right)  }{   g\left(      \frac{1}{x}      \right)       }   \left(  \frac{   x^2t        }{    x^2(1+t)^2   }           \right)  \\
 & = \sup_{x,t\in]0,\infty[} \chi_{0<x<\min\{1,\frac{1}{t}\}}  \frac{   g\left(\frac{1}{tx} \right)  }{   g\left(      \frac{1}{x}      \right)       }   \left(  \frac{   t        }{    (1+t)^2   }           \right)  \\ 
& = \max\bigg\{  \sup_{t\in]0,1]} \sup_{x\in]0,1[}  \frac{   g\left(\frac{1}{tx} \right)  }{   g\left(      \frac{1}{x}      \right)       }   \left(  \frac{   t        }{    (1+t)^2   }           \right)  , \sup_{t\in]1,\infty[} \sup_{x\in]0,\infty[} \chi_{0<x<\frac{1}{t}}  \frac{   g\left(\frac{1}{tx} \right)  }{   g\left(      \frac{1}{x}      \right)       }   \left(  \frac{   t        }{    (1+t)^2   }           \right)  \bigg\}. 
\end{align*}
Since \(g\) is almost increasing, we have
\[
 \sup_{t\in]1,\infty[} \sup_{x\in]0,\infty[} \chi_{0<x<\frac{1}{t}}  \frac{   g\left(\frac{1}{tx} \right)  }{   g\left(      \frac{1}{x}      \right)       }   \left(  \frac{   t        }{    (1+t)^2   }           \right)  \asymp_g 1.
\]
Moreover,
\[
 \sup_{t\in]0,1]} \sup_{x\in]0,1[}  \frac{   g\left(\frac{1}{tx} \right)  }{   g\left(      \frac{1}{x}      \right)       }   \left(  \frac{   t        }{    (1+t)^2   }           \right)   \asymp \sup_{t,x\in]0,1]}  \frac{  t  \,  g\left(\frac{1}{tx} \right)  }{   g\left(      \frac{1}{x}      \right)       }.
\]
\end{proof}

The weighted Bloch space will only appear in this section and it is here defined as follows:
\[
\BLOCH_v = \{ f\in \HOLO(\D) : \norm{ f }_{\BLOCH_v}  := \abs{f(0)}+  \sup_{z\in\D} (1-\abs{z}^2)v(z) \abs{f'(z)} <\infty \}. 
\]

The following lemma is a trivial generalization of \cite[Corollary 5.2]{Girela-2001} (see also \cite[Lemma 5.1]{Girela-2001}). 
\begin{lem}\label{lem:BlochVContainBMOAV}
It holds that
\[
\BMOA_{v,1} \subset  \BLOCH_v
\]
and
\[
\norm{f}_ {\BLOCH_v}  \leq  \norm{f}_{\BMOA_{v,1}}, \ f\in H^1(\D).
\]
\end{lem}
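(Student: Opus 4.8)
\emph{Proof proposal.} The plan is to collapse the full norm inequality to a single pointwise estimate and then appeal to the elementary first-coefficient bound for $H^1$ functions. Since the constant-term contribution $\abs{f(0)}$ enters $\norm{f}_{\BLOCH_v}$ and $\norm{f}_{\BMOA_{v,1}}$ in the same way, it suffices to prove
\[
\sup_{a\in\D}(1-\abs{a}^2)v(a)\abs{f'(a)} \leq \norm{f}_{*,v,1} = \sup_{a\in\D} v(a)\gamma(f,a,1).
\]
The decisive observation is that the weight $v(a)$ multiplies the Bloch quantity at exactly the point $a$ where it is evaluated, and the same point $a$ is admissible in the supremum defining $\norm{f}_{*,v,1}$; hence $v$ factors out cleanly and the whole matter reduces to the unweighted pointwise bound
\[
(1-\abs{a}^2)\abs{f'(a)} \leq \gamma(f,a,1), \qquad a\in\D.
\]

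To establish this pointwise bound I would fix $a\in\D$ and set $h := f\circ\sigma_a - f(a)$, so that $\gamma(f,a,1)=\norm{h}_{H^1}$. Since $\sigma_a(0)=a$, we have $h(0)=f(a)-f(a)=0$, so $h$ has vanishing constant Taylor coefficient; differentiating and using $\sigma_a'(0)=-(1-\abs{a}^2)$ gives $h'(0)=f'(a)\sigma_a'(0)$, whence $\abs{h'(0)}=(1-\abs{a}^2)\abs{f'(a)}$. The one classical input is that the first Taylor coefficient of an $H^1$ function is controlled by its $H^1$ norm: writing $h'(0)$ as the Fourier/Cauchy coefficient $\int_\T h(w)\CONJ{w}\,dm(w)$ of the boundary function yields $\abs{h'(0)}\leq \int_\T\abs{h(w)}\,dm(w)=\norm{h}_{H^1}$. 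Combining the two identities gives $(1-\abs{a}^2)\abs{f'(a)}=\abs{h'(0)}\leq\gamma(f,a,1)$.

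Multiplying this by $v(a)$, taking the supremum over $a\in\D$, and adding $\abs{f(0)}$ to both sides then yields $\norm{f}_{\BLOCH_v}\leq\norm{f}_{\BMOA_{v,1}}$; in particular every $f\in\BMOA_{v,1}$ has finite $\BLOCH_v$-norm, i.e. $\BMOA_{v,1}\subset\BLOCH_v$. There is no genuine obstacle here beyond bookkeeping, and the only point deserving a word of care is precisely the clean factorization of the weight: it works because both norms sample $v$ at the \emph{same} point and no comparison of $v$ at distinct points (where assumptions \ref{eq:mainAssump2}--\ref{eq:mainAssump3} would be invoked) is required. This is exactly why the statement is, as claimed, a trivial generalization of the unweighted estimate \cite[Corollary 5.2]{Girela-2001} (building on \cite[Lemma 5.1]{Girela-2001}).
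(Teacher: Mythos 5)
Your proof is correct and is essentially the paper's own argument: both reduce the claim to the coefficient estimate $\abs{h'(0)}\leq\norm{h}_{H^1}$ applied to $h=f\circ\sigma_a-f(a)$ (the paper states this as $\abs{f'(0)}\leq\norm{f}_{H^1}$ for $f\in H^1$), then multiply by $v(a)$, take the supremum over $a\in\D$, and add $\abs{f(0)}$. Your version merely spells out the computation $\sigma_a'(0)=-(1-\abs{a}^2)$ and the Cauchy-coefficient justification, which the paper leaves implicit.
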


\begin{proof}
For \(f\in H^1\) we have 
\[
\abs{f'(0)} \leq \norm{f}_{H^1}.
\]
Applying the above to \(f\circ\sigma_a - f(a)\) yields
\[
\abs{f'(a)} (1-\abs{a}^2) \leq \gamma(f,a,1). 
\]
Multiply both sides by \(v(a)\) and take the supremum over \(a\in\D\) to obtain the result (after adding \(\abs{f(0)}\) to both sides).
\end{proof}

The following Lemma provides a relation between the main condition \ref{eq:mainAssump1} and the weaker condition \ref{cond:Cond1prime} used in Lemma \ref{lem:GoodOne}. 
\begin{lem}\label{lem:ExtraInfoAboutG}
For an almost increasing function \(g:[1,\infty[ \to ]0,\infty[\) and \(p>q>0\), we have
\[
  \sup_{0<x<1}   x\, g(\frac{1}{x})^{p}   < \infty \ \Rightarrow  \  g\in L^q(]1,\infty[, d\arctan)  \ \Rightarrow \   \sup_{0<x<1}   x\, g(\frac{1}{x})^{q}   < \infty.
\]
\end{lem}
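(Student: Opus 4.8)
The plan is to translate both outer conditions and the middle one into statements about $g$ along $]1,\infty[$ by the substitution $t=1/x$ (so $x\in]0,1[$ corresponds to $t\in]1,\infty[$ and $x\,g(1/x)^r=g(t)^r/t$), and to read the measure $d\arctan$ as $\frac{dt}{1+t^2}$ on $]1,\infty[$. Under this dictionary the left condition becomes the pointwise bound $g(t)\lesssim t^{1/p}$, the right condition becomes $g(t)\lesssim t^{1/q}$, and the middle one becomes $\int_1^\infty g(t)^q\,\frac{dt}{1+t^2}<\infty$.

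For the first implication I would not even use that $g$ is almost increasing: starting from $g(t)\lesssim t^{1/p}$, substitute directly into the integral to get
\[
\int_1^\infty g(t)^q\,\frac{dt}{1+t^2}\ \lesssim\ \int_1^\infty t^{q/p}\,\frac{dt}{1+t^2}\ \lesssim\ \int_1^\infty t^{\,q/p-2}\,dt,
\]
which converges precisely because $q<p$ forces the exponent to satisfy $q/p-2<-1$. This is a one-line estimate once the reformulation is in place.

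The second implication is where the almost-increasing hypothesis is essential, and it is the only genuinely non-routine point. Let $C\ge 1$ be the almost-increasing constant, so that $g(t)\le C\,g(s)$ whenever $t\le s$; in particular $g(s)\ge g(t)/C$ for every $s\in[t,2t]$. Since $\arctan(2t)-\arctan(t)\asymp 1/t$ for $t\ge 1$, integrating this lower bound over the dyadic window $[t,2t]$ yields, uniformly in $t$,
\[
\frac{g(t)^q}{t}\ \lesssim\ \int_t^{2t} g(s)^q\,\frac{ds}{1+s^2}\ \le\ \int_1^\infty g(s)^q\,\frac{ds}{1+s^2}\ <\ \infty,
\]
which is exactly $\sup_{t>1} g(t)^q/t<\infty$, i.e.\ the right-hand condition $\sup_{0<x<1}x\,g(1/x)^q<\infty$.

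The main obstacle, as the last display makes clear, is that integrability by itself never controls pointwise values; one must exploit almost monotonicity to convert the finite $L^q$-mass carried on the window $[t,2t]$ (whose $\arctan$-measure is comparable to $1/t$) into a pointwise bound on $g(t)$. The dyadic window is the natural device because on it both $g$ and the weight $\frac{1}{1+s^2}$ are comparable, up to constants depending only on $C$, to their values at the left endpoint; any fixed geometric window $[t,\lambda t]$ with $\lambda>1$ would serve equally well.
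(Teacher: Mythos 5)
Your proposal is correct, and the first implication is in substance identical to the paper's: the paper substitutes $x\mapsto 1/x$ and writes $g(1/x)^q = x^{-q/p}\bigl(x\,g(1/x)^p\bigr)^{q/p}$, so the hypothesis bounds the second factor and integrability reduces to $\int_0^1 x^{-q/p}\,dx<\infty$ — exactly your estimate $\int_1^\infty t^{q/p-2}\,dt<\infty$ in the other variable, and the paper likewise makes no use of almost monotonicity there. For the second implication you take a genuinely different and shorter route. The paper argues by contrapositive: it shows that any almost increasing $f$ (applied to $f=g^q$) with $\sup_{t>1} t^{-1}f(t)=\infty$ fails to lie in $L^1(]1,\infty[,d\arctan)$, by extracting a sequence $(t_n)\to\infty$ with $f(t_n)\ge n\,t_n$, introducing an auxiliary lag function $N(k)$ with $t_{N(k)}/t_{k+1}\to 0$, and bounding $\int f\,d\arctan$ from below by a telescoping sum $\sum_{n=N(k)}^{k} n\,t_n\bigl(t_n^{-1}-t_{n+1}^{-1}\bigr)\ge N(k)\bigl(1-t_{N(k)}/t_{k+1}\bigr)\to\infty$. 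Your direct doubling-window argument — using $g(s)\ge g(t)/C$ on $[t,2t]$ together with $\arctan(2t)-\arctan(t)\asymp 1/t$ to get $g(t)^q/t\lesssim \int_t^{2t} g^q\,d\arctan\le \|g\|_{L^q(d\arctan)}^q$ uniformly in $t$ — proves the same contrapositive content in a few lines, avoids the sequence extraction entirely, and in addition yields an explicit quantitative bound $\sup_{t>1} g(t)^q/t\lesssim_C \|g\|_{L^q(d\arctan)}^q$, which the paper's qualitative divergence argument does not state. Both approaches hinge on the same single idea, namely that almost monotonicity converts local integral mass into pointwise control; your implementation of it is the more economical one.
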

\begin{proof}
The first implication follows from
\[
\int_1^\infty  \frac{g(x)^q \, dx}{ 1+x^2 } =  \int_0^1  g(\frac{1}{x})^q \frac{  dx}{ 1+x^2 } =    \int_0^1  x^{-\frac{q}{p}} \left(   x g(\frac{1}{x})^p \right)^{\frac{q}{p}}  \frac{ dx}{ 1+x^2 }. 
\]
To prove the second implication, it is proved that for any almost increasing function \(f\) with \(\sup_{t\in]1,\infty[} t^{-1}f(t) =\infty\), we have \( f\notin L^1(]1,\infty[, d\arctan) \). Since \(\limsup_{t\to\infty}  t^{-1}f(t) =\infty \), there is an increasing sequence \( (t_n) \) tending to infinity such that \( n t_n \leq f(t_n)  \) for all \(n\geq 2\). Furthermore, let \(N\colon \mathbb N \to \mathbb N \) be an increasing function such that \(N(k)\leq k\) for all \(k\), \( \lim_k N(k) = \infty \) and \( \lim_k  \frac{t_{N(k)}}{ t_{k+1} }  = 0  \). Now, for large \(k\) (we can assume \(t_1\geq 1\)),
\begin{align*}
\int_1^\infty \frac{f(t)}{1+t^2} \, dt &\geq\sum_n \int_{t_n}^{t_{n+1}}  \frac{f(t)}{1+t^2} \, dt 
\gtrsim_f \sum_n \int_{t_n}^{t_{n+1}}  \frac{f(t_n)}{t^2} \, dt  
\geq \sum_{n=N(k)}^k  n t_n   \left( \frac{ 1}{t_n}  - \frac{ 1}{t_{n+1}} \right)  \\
&\geq  \sum_{n=N(k)}^k  \frac{n}{ t_{n+1} }  \left(  t_{n+1}  -  t_n \right)  \geq   \frac{N(k)}{ t_{k+1} }  \sum_{n=N(k)}^k   \left(  t_{n+1}  -  t_n \right)  
=   N(k)   \left( 1-   \frac{t_{N(k)}}{ t_{k+1} }  \right)
\end{align*}
and the statement follows from letting \(k\to \infty\).
\end{proof}

The function \(\PEF\) in the following Lemma is the starting point for constructing the test-function associated with \(\beta(\psi,\phi,a)\) (see Subsection \ref{subsec:BetaFunc}). 
\begin{lem}\label{lem:GoodOne}
Let \(g\in \HOLO(\C_{\Re\geq \frac{1}{2}})   \) such that \(g|_{[\frac{1}{2},\infty[} \) is (strictly) positive and almost increasing. Then, for \(v(z) \asymp  g(\frac{1}{1-\abs{z}}) \), all functions \(f\in \BMOA_{v,1}\) satisfy
\begin{equation}\label{eq:EvalEstimate}
\abs{ f(z) - f(0) } \lesssim_{v,g} \norm{f}_{1,*} \PEF(\abs{z}),
\end{equation}
where
\[
\PEF\colon  z\mapsto \int_0^z  \frac{ dt}{   (1-t) g(\frac{1}{1-t})   }.
\]
The constant is \(1\) if \(v(z)\geq g(\frac{1}{1-\abs{z}}) \). Assume also that

\setcounter{Genumi}{0}
\begin{enumerate} 
\GPrimeItem \(g\in L^2(]1,\infty[, d\arctan)  \),  \label{cond:Cond1prime} 
\GItem \( g(1/b) \lesssim g(a/b)  g(1/a)    \) for \(0<b\leq a<2\) and  \label{cond:Cond2}
\GItem \(\abs{g(z)} \gtrsim g(\abs{z}) , \ z\in \C_{\Re\geq\frac{1}{2}} \). \label{cond:Cond3}
\end{enumerate}
\end{lem}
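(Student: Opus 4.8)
The plan is to establish the two separate assertions of the lemma: the evaluation estimate \eqref{eq:EvalEstimate}, which uses only that $g$ is positive, holomorphic and almost increasing, and the companion statement for which conditions \ref{cond:Cond1prime}--\ref{cond:Cond3} are introduced, namely that the test function $\PEF$ itself lies in $\BMOA_v=\BMOA_{v,2}$ with seminorm controlled by $v$ and $g$ alone.

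For \eqref{eq:EvalEstimate} I would begin from the inclusion $\BMOA_{v,1}\subset\BLOCH_v$ of Lemma \ref{lem:BlochVContainBMOAV}; its proof in fact gives the pointwise bound $(1-\abs{z}^2)v(z)\abs{f'(z)}\leq\norm{f}_{1,*}$. Integrating $f'$ along the radius from $0$ to $z=\abs{z}e^{i\theta}$ and using that $v$ is radial with $v(r)\asymp g(\tfrac{1}{1-r})$ gives
\[
\abs{f(z)-f(0)}\leq\int_0^{\abs{z}}\abs{f'(re^{i\theta})}\,dr\leq\norm{f}_{1,*}\int_0^{\abs{z}}\frac{dr}{(1-r^2)\,v(r)}\lesssim_{v,g}\norm{f}_{1,*}\int_0^{\abs{z}}\frac{dr}{(1-r)\,g(\tfrac{1}{1-r})}.
\]
Since $1-r^2\geq 1-r$ on $[0,1[$, the last integral is at most $\PEF(\abs{z})$, which is \eqref{eq:EvalEstimate}. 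The same computation yields the sharp constant $1$ when $v(z)\geq g(\tfrac1{1-\abs{z}})$: the Bloch bound then reads $(1-\abs{z}^2)g(\tfrac1{1-\abs{z}})\abs{f'(z)}\leq\norm{f}_{1,*}$ with constant one, and replacing $1-r^2$ by $1-r$ only decreases the integrand. This part is routine.

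For $\PEF\in\BMOA_v$ I would work with $p=2$, since $\BMOA_v=\BMOA_{v,2}$ by definition, so that it suffices to bound $\sup_{a\in\D}v(a)\gamma(\PEF,a,2)$ (membership in $H^2$ is the case $a=0$). By the Littlewood--Paley/Hardy--Stein formula \eqref{eq:LittlewoodPaley} and the identity for $1-\abs{\sigma_a(z)}^2$, this reduces to showing
\[
\sup_{a\in\D}\ v(a)^2(1-\abs{a}^2)\int_{\D}\frac{\abs{\PEF'(z)}^2(1-\abs{z}^2)}{\abs{1-\CONJ{a}z}^2}\,dA(z)\ \lesssim_{v,g}\ 1 .
\]
Because $\PEF'(z)=\bigl[(1-z)\,g(\tfrac{1}{1-z})\bigr]^{-1}$, condition \ref{cond:Cond3} lets me replace $\abs{g(\tfrac{1}{1-z})}$ by the radial quantity $g(\tfrac{1}{\abs{1-z}})$, giving $\abs{\PEF'(z)}^2\lesssim\bigl[\abs{1-z}^2\,g(\tfrac{1}{\abs{1-z}})^2\bigr]^{-1}$.

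The main obstacle is the uniform-in-$a$ control of this two--variable integral. Its integrand is singular at the fixed point $z=1$, where $\abs{1-z}^{-2}$ is not locally integrable and must be tamed by the decay of $g(\tfrac1{\abs{1-z}})^{-2}$, and at the moving point $z=a/\abs{a}$ from the Poisson factor; as $a\to1$ the two singular regions merge, and the prefactor $v(a)^2(1-\abs{a}^2)$ cannot be separated from the integral, since neither stays bounded on its own. Writing $v(a)^2\asymp g(\tfrac{1}{1-\abs{a}})^2$, I would regroup the full integrand as
\[
\frac{g(\tfrac{1}{1-\abs{a}})^2}{g(\tfrac{1}{\abs{1-z}})^2}\cdot\frac{(1-\abs{a}^2)(1-\abs{z}^2)}{\abs{1-\CONJ{a}z}^2}\cdot\frac{1}{\abs{1-z}^2},
\]
exhibiting a weight ratio $g(\tfrac1{1-\abs{a}})/g(\tfrac1{\abs{1-z}})$ times a Möbius-type factor, which is exactly the structure governed by Lemma \ref{lem:coolStuff}. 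Conditions \ref{cond:Cond2} and \ref{cond:Cond1prime} supply precisely what makes its supremum finite: from \ref{cond:Cond2} one gets $g(\tfrac1{tx})\lesssim g(\tfrac1t)\,g(\tfrac1x)$, and from \ref{cond:Cond1prime}, via Lemma \ref{lem:ExtraInfoAboutG} (i.e.\ $\sup_{0<t<1}t\,g(\tfrac1t)^2<\infty$), the quantity $t\,g(\tfrac1t)$ is bounded. I expect the genuinely delicate step to be a careful splitting of $\D$ near $z=1$ in the merged regime — exploiting the $z$-decay of the weight ratio rather than bounding it crudely, and matching the non-radial $\abs{1-z}$ to the radial form of Lemma \ref{lem:coolStuff} — after which the remaining radial integrations close the estimate and give $\sup_a v(a)\gamma(\PEF,a,2)<\infty$, hence $\PEF\in\BMOA_v$.
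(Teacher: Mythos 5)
Your first half is correct and is essentially the paper's own argument: the pointwise Bloch-type bound \((1-\abs{z}^2)v(z)\abs{f'(z)}\leq \norm{f}_{1,*}\) extracted from Lemma \ref{lem:BlochVContainBMOAV}, integrated along the radius, gives \eqref{eq:EvalEstimate}, with constant \(1\) when \(v\geq g(\tfrac{1}{1-\abs{\cdot}})\). The second half, however, is a plan rather than a proof, and the step you defer (``I expect the genuinely delicate step to be a careful splitting of \(\D\) near \(z=1\)'') is the entire content of the lemma. Worse, the tools you earmark for closing it cannot suffice. You propose to use the structure of Lemma \ref{lem:coolStuff} together with what you extract from \ref{cond:Cond1prime} via Lemma \ref{lem:ExtraInfoAboutG}, namely \(\sup_{0<t<1} t\,g(1/t)^2<\infty\). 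But Lemma \ref{lem:coolStuff} controls only the product of the weight ratio and the M\"obius factor; once that block is bounded, the leftover factor \(\abs{1-z}^{-2}\) is not integrable over \(\D\), so no splitting that ultimately estimates the coolStuff block by a supremum can converge. More decisively, any argument that uses only the sup-condition in place of the full integrability \ref{cond:Cond1prime} proves a false statement: take \(g(x)=x^{1/2}\), which is increasing, satisfies \ref{cond:Cond2} (with equality) and \ref{cond:Cond3}, and has \(t\,g(1/t)^2\equiv 1\), yet fails \ref{cond:Cond1prime}. For \(v(z)=(1-\abs{z})^{-1/2}\) and \(a=1-\delta\) real, a dyadic decomposition on \(\abs{1-z}\sim s\), \(\delta\leq s\leq 1\), gives
\[
\gamma(\PEF,a,2)^2 \asymp \int_{\D}\frac{1-\abs{\sigma_a(z)}^2}{\abs{1-z}}\,dA(z)\ \gtrsim\ \delta \ln\frac{1}{\delta},
\]
so \(v(a)^2\gamma(\PEF,a,2)^2\gtrsim \ln(1/\delta)\to\infty\) and \(\PEF\notin\BMOA_v\). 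Hence the \(L^2(d\arctan)\)-integrability must survive to the endgame of any correct proof; a route that reduces \ref{cond:Cond1prime} to boundedness of \(t\,g(1/t)^2\) is structurally doomed.

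For comparison, the paper spends the hypotheses differently: the sup-form of \ref{cond:Cond1prime} together with \ref{cond:Cond2} yields the quasi-monotonicity of \(x\mapsto x\,g(1/x)^2\) (see \eqref{eq:aConst}), which, with \ref{cond:Cond3}, radializes the derivative bound pointwise at the cost of a residual, integrable singularity \(\abs{1-cz}^{-1}\) (see \eqref{eq:aConstZ}); the resulting two-dimensional integral is then tamed by Hardy--Littlewood rearrangement and polar coordinates \(r^2+t^2=R^2\), reducing everything to the one-dimensional quantity \eqref{eq:finalStageEq1}, whose tail \(\int_b^1 g(1/b)^2 g(1/R)^{-2}\, b\,dR/(b^2+R^2)\) is bounded via \ref{cond:Cond2} by \(\norm{g}_{L^2(]1,\infty[,d\arctan)}^2\) --- the one place where the full strength of \ref{cond:Cond1prime} is used, and exactly the place your sketch discards it. Note finally that the paper's lemma also asserts uniform norm bounds for the dilations \(\PEF_c\), \(c\in\closed{\D}\) (obtained from \(\abs{1-(c/\abs{c})z}\leq 2\abs{1-cz}\)), and that \(\gamma(\PEF_c,a,2)^2\lesssim_{c,g}1-\abs{a}\) so that \(\PEF_c\in\VMOA_v\) for fixed \(c\in\D\); your proposal addresses neither.
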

Then \(\PEF_c\in \BMOA_{v}, \ c\in \D\) with a uniform bound for the norm with respect to \(c\), where \(\PEF_c(z) := \PEF(cz)\). Moreover, for a fixed \(c\in \D\) it holds that \( \gamma(\PEF_c,a,2)^2 \lesssim_{c,g} 1-\abs{a} , \ a\in \D\), implying  \(\PEF_c \in \VMOA_v\).
\begin{proof}
First, let \(f\in \BMOA_{v,1}\) and \(z\in\D\). By definition we have
\begin{align*}
\abs{f(z) - f(0) } &= \abs{\int_0^z f'(t)\, dt } = \abs{z\int_0^1 f'(tz)\, dt } \leq  \abs{z}  \int_0^1 \frac{   v(zt) (1-t^2\abs{z}^2) \abs{f'(tz)}   }{      v(zt) (1-t^2\abs{z}^2)    }\, dt  \\
&\leq \abs{z} \int_0^1 \frac{  \norm{f}_{1,*}  }{      v(zt) (1-t^2\abs{z}^2)    }\, dt \lesssim_{v,g} \norm{f}_{1,*}   \PEF(\abs{z}) ,
\end{align*}
where Lemma \ref{lem:BlochVContainBMOAV} gives the second last inequality.

Next, we prove that \(\norm{\PEF_c}_{\BMOA_v}\lesssim \norm{\PEF}_{\BMOA_v} < \infty\), where \(c\in \D \). First, put 
\begin{align*}
C:= \sup_{0<x<1}  x\, g(\frac{1}{x})^2   \text{ and } M:= \sup_{0<b\leq a<2}\frac{   g(1/b) }{        g(a/b)  g(1/a)   } .
\end{align*}
The constant \(C\) is finite due to \ref{cond:Cond1prime} and Lemma \ref{lem:ExtraInfoAboutG}, and \ref{cond:Cond2} yields that \(M\) is finite.
For \( 0<y\leq x<2\) we have
\begin{equation}\label{eq:aConst}
\frac{    x\, g(\frac{1}{x})^2   }{  y\, g(\frac{1}{y})^2     }  \stackrel{ \ref{cond:Cond2} }{\geq} \frac{    1    }{ M \frac{y}{x}\, g(\frac{x}{y})^2     } \geq \frac{1}{M C} >0.
\end{equation}
It follows that for all \(z,c\in \D, \) we have 
\[
\abs{1-c z} g(\frac{1}{  \abs{1-c z}   })^2 \geq   \frac{1}{M C} (1-\abs{z}) g(\frac{1}{1-\abs{z}})^2
\]
so that
\begin{equation}\label{eq:aConstZ}
\abs{\PEF'(c z)}^2 =\left( \abs{1-cz}^2 \abs{g(\frac{1}{1-cz})   }^2  \right)^{-1} \stackrel{\ref{cond:Cond3}}{ \lesssim_g } \frac{    MC  \left(  (1-\abs{z} ) g(\frac{1}{1-\abs{z} })^2  \right)^{-1}  }{     \abs{1-cz}     }.
\end{equation}
Moreover, for \(c,z\in\D\setminus \{0\}\),  
\[
\frac{    \abs{1-(c/\abs{c})z}  }{\abs{ 1-cz  }     } \leq    \frac{    \abs{1-cz}  }{\abs{ 1-cz  }     }  +    \frac{    \abs{cz(1-\frac{1}{\abs{c}})}  }{\abs{ 1-cz  }     }   \leq  1+    \frac{    \abs{z(\abs{c}-1)}  }{\abs{ 1-cz  }     } \leq 2.
\]

For \(a\in \D\) and \( c\in \D\setminus \{0\}\), \eqref{eq:LittlewoodPaley} together with the above estimates yield

\begin{align*}
\gamma(\PEF_c,a,2)^2 &\asymp \abs{c}^2 \int_{\D}  \abs{\PEF'(c z)}^2 (1-\abs{\sigma_a(z)}^2)  \, dA(z) \\
&  \stackrel{ \eqref{eq:aConstZ}  }{ \lesssim_g } \int_{\D } g\left(\frac{1}{1-\abs{z} }\right)^{-2}  \frac{ (1-\abs{a})}{  \abs{1- (c/\abs{c})z} \abs{ 1-\CONJ{a}z}^2   }  \,  dA(z)  \\
&  \leq \int_0^1 \int_0^{2\pi} g\left(\frac{1}{1-r }\right)^{-2}  \frac{  (1-\abs{a})  }{  \abs{1- re^{it}} \abs{ 1-\abs{a}re^{it}}^2   }   \, dt\, dr,
\end{align*}
where the last inequality is due to Hardy-Littlewood's inequality on rearrangements (see e.g. \cite[378, p.~278]{Hardy-1934}). If \(\abs{a} \leq  \frac{1}{2}\), 
\[
v(a)^2 \gamma(\PEF_c,a,2)^2 \lesssim_{v,g}  g(1)^{-2} < \infty.
\]
Henceforth, we assume \(\abs{a}> \frac{1}{2}\). It is evident that
\[
\sup_{a\in\D} v(a)^2\int_0^{\frac{1}{2}} \int_0^{2\pi} g\left(\frac{1}{1-r }\right)^{-2}  \frac{ (1-\abs{a})}{  \abs{1- re^{it}} \abs{ 1-\abs{a}re^{it}}^2   }   \, dt\, dr \lesssim_{g} \sup_{a\in\D}v(a)^2(1-\abs{a})
\]
and 
\[
\sup_{a\in \D } v(a)^2 \int_0^1  \int_{\frac{1}{2} }^{2\pi-\frac{1}{2}}  g\left(\frac{1}{1-r }\right)^{-2}  \frac{  (1-\abs{a})  }{  \abs{1- re^{it}} \abs{ 1-\abs{a}re^{it}}^2   }   \, dt\, dr  \lesssim  \sup_{\abs{a}\in \D } v(a)^2 (1-\abs{a}).
\]
By \ref{cond:Cond1prime} together with Lemma \ref{lem:ExtraInfoAboutG} \(\sup_{a\in\D}v(a)^2(1-\abs{a})<\infty\). The symmetry of the integrand with respect to \(t\) yields, it is sufficient to prove that
\[
\sup_{\frac{1}{2}<\abs{a}<1  } v(a)^2 \int_{\frac{1}{2}}^1  \int_0^{\frac{1}{2} }  g\left(\frac{1}{1-r }\right)^{-2}  \frac{  (1-\abs{a})  }{  \abs{1- re^{it}} \abs{ 1-\abs{a}re^{it}}^2   }   \, dt\, dr <\infty
\]
in order to establish \(\PEF_c\in \BMOA_v\) with uniformly bounded norm with respect to \(c\in \closed{\D}\). To this end, since \(\frac{1}{2}<r<1\), using the estimate \(\cos t \leq 1-t^2/3\) when \( 0<t<1/2\), we have

\begin{align*}
\int_0^{\frac{1}{2}}   \frac{dt}{    \abs{1- re^{it}} \abs{ 1-\abs{a}re^{it}}^2   } &= \int_0^{\frac{1}{2}}   \frac{dt}{     (1+r^2 - 2r\cos t)^\frac{1}{2}    ( 1+ \abs{a}^2 r^2  - 2 \abs{a} r \cos t )  } \\
&\lesssim \int_0^{\frac{1}{2}}  \frac{    dt}{    \left((1-r)^2 +  t^2 \right)^\frac{1}{2}    \left( (1- \abs{a} +\abs{a}(1- r) )^2  + (\sqrt{\abs{a}}t )^2  \right) },
\end{align*}
and we can therefore conclude that

\begin{align*}
 &\int_{\frac{1}{2}}^1  \int_0^{\frac{1}{2} }  g\left(\frac{1}{1-r }\right)^{-2}  \frac{  1  }{  \abs{1- re^{it}} \abs{ 1-\abs{a}re^{it}}^2   }   \, dt\, dr \\
& \lesssim \int_0^{\frac{1}{2}}   g\left(\frac{1}{r }\right)^{-2} \int_0^{\frac{1}{2}}  \frac{    dt}{    \left(r^2 +  t^2 \right)^\frac{1}{2}    \left( (1- \abs{a} +\abs{a}r )^2  + (\sqrt{\abs{a}}t )^2  \right) } \, dr \\ 
& \leq \int_0^{\frac{1}{2}} \int_0^{\frac{1}{2}}   g\left(\frac{1}{ r }\right)^{-2}  \frac{    1 }{    \left( r^2 +  t^2 \right)^\frac{1}{2}    \left( (1- \abs{a})^2 + \abs{a}^2 (r^2  + t ^2  ) \right) } \, dt \, dr.
\end{align*}
Now consider the two integrals as a representation of an area integral over a square in \(\R^2\). We can get a larger integration area by considering a quarter circle in the first quadrant with radius \(1\). Putting \(r^2+t^2 = R^2\) and \(r=R\cos \theta\), we obtain

\begin{align*}
 &\int_0^{\frac{1}{2}} \int_0^{\frac{1}{2}}   g\left(\frac{1}{ r }\right)^{-2}  \frac{    dt \,  dr }{    \left( r^2 +  t^2 \right)^\frac{1}{2}    \left( (1- \abs{a})^2 + \abs{a}^2 (r^2  + t ^2  ) \right) } \\
&=\frac{1}{\pi}   \int_0^1  \int_0^\frac{\pi}{2}    g\left(  \frac{1}{R\cos \theta}          \right)^{-2}      \frac{   d\theta  R \, dR    }{   R   \left( (1- \abs{a})^2 + \abs{a}^2 R^2  \right) }   \lesssim_g  \int_0^1    g\left(   \frac{1}{R}      \right)^{-2}      \frac{       dR    }{  (1- \abs{a})^2 + R^2    }  ,
\end{align*}
where it has been used that \(g\) is almost increasing, \(\cos \theta\leq 1\) and \(\abs{a}\geq 1/2\).

It remains to prove that
\begin{equation}\label{eq:finalStageEq1}
\sup_{ \frac{1}{2} <\abs{a}<1} g\left(\frac{1}{1-\abs{a}}\right)^2 \int_0^1    g\left(   \frac{1}{R}      \right)^{-2}      \frac{   (1-\abs{a})   \,  dR    }{  (1- \abs{a})^2 + R^2    } =   \sup_{  0 < b < \frac{1}{2} }  \int_0^1  \frac{  g\left(   \frac{1}{b}      \right)^2 }{   g\left(   \frac{1}{R}      \right)^2    }      \frac{   b   \,  dR    }{ b^2 + R^2    } 
\end{equation}
is finite. Now since \(g\) is almost increasing on \([2,\infty[\), we have
\[
b \geq R\ \Rightarrow \   g\left(\frac{1}{b}\right)^2  \lesssim
_g    g\left(\frac{1}{R}  \right)^2    
\]
which yields
\[
 \int_0^b    \frac{  g\left(\frac{1}{b}\right)^2  }{    g\left(\frac{1}{R}  \right)^2    }      \frac{    b \,  dR    }{   b^2 +  R^2   } \leq \int_0^b        \frac{    b \,  dR    }{   b^2 +  R^2   }  \leq \int_0^1        \frac{       dR    }{   1 +  R^2   }  =  \frac{\pi}{4} .
\]

Finally, since
\[
R > b\ \Rightarrow \      g\left(\frac{1}{b}\right) \lesssim_g  g\left(\frac{R}{b}\right) g\left(\frac{1}{R}\right),
\] 
we have

\[
 \int_b^1    \frac{  g\left(\frac{1}{b}\right)^2  }{    g\left(\frac{1}{R}  \right)^2    }      \frac{    b \,  dR    }{   b^2 +  R^2   } \lesssim   \int_b^1     g\left(\frac{R}{b}\right)^2      \frac{    b \,  dR    }{   b^2 +  R^2   }   \leq \int_1^\frac{1}{b}        \frac{    g(R)^2    \,  dR    }{   1 +  R^2   } \leq \norm{g}_{L^2(]1,\infty[,d\arctan)}^2<\infty.
\]

The remaining statement, that  \( \PEF_c \in \VMOA_v, \ c\in\D\),   follows from 
\begin{align*}
\gamma(\PEF_c,a,2)^2 &\asymp \abs{c}^2 \int_{\D}  \abs{\PEF'(c z)}^2 (1-\abs{\sigma_a(z)}^2)  \, dA(z) \lesssim  (1-\abs{a}) (\sup_{z\in\D}  \abs{\PEF'(c z)}^2 ) \int_{\D }  \frac{(1-\abs{z}^2)}{      \abs{ 1-\CONJ{a}z}^2   }  \,  dA(z) \\
&= (1-\abs{a}) (\sup_{z\in\D}  \abs{\PEF'(c z)}^2 ) \int_0^1  \frac{(1-r^2)}{      1-\abs{\CONJ{a}r}^2   }  \,   dr^2 \leq (1-\abs{a}) (\sup_{z\in\D}  \abs{\PEF'(c z)}^2 ),
\end{align*}
where the last equality is true, because \(\int_{\T} P_b \, dm = 1\) for every \(b\in\D\). We already concluded \(\sup_{a\in\D}v(a)^2(1-\abs{a})<\infty\) so we are done.

\end{proof}

In combination with Lemma \ref{lem:BlochVContainBMOAV}, we have the following corollary, telling us that the growth restriction of functions as approaching \(\T\) are the same for functions in \(\BLOCH_v\) and \(\BMOA_{v}\).

\begin{cor}\label{cor:EvaluationBoundedOnBMOAv}
Under the assumptions of Lemma \ref{lem:GoodOne} we have, for \(X=\BMOA_{v}\), \(X=\VMOA_{v}\) or \(X=\BLOCH_{v}\),   

\[
\norm{\delta_z}_{ X^* } \asymp_{v,g} (1+\PEF(\abs{z})) =  1 + \int_0^{\abs{z}}  \frac{ dt}{   (1-t) g(\frac{1}{1-t})   } \asymp 1 + \int_0^{\abs{z}}  \frac{ dt}{   (1-t) v(t)   } \lesssim  \ln \frac{e}{1-\abs{z}}.
\]
\end{cor}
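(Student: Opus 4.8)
The plan is to read off the operator norm of the evaluation functional directly from
\[
\norm{\delta_z}_{X^*} = \sup\{\, \abs{f(z)} : f\in X,\ \norm{f}_X\leq 1 \,\},
\]
and to produce matching upper and lower bounds: the upper bound from the pointwise growth estimate already recorded in Lemma \ref{lem:GoodOne}, and the lower bound from the explicit test functions \(\PEF_c\) that Lemma \ref{lem:GoodOne} certifies to lie in the relevant spaces with uniformly bounded norm. Once \(\norm{\delta_z}_{X^*}\asymp_{v,g} 1+\PEF(\abs{z})\) is in hand, the remaining relations on the right are elementary.

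For the upper bound I would first treat \(X=\BMOA_v\). Given \(\norm{f}_{\BMOA_v}\leq 1\), split \(\abs{f(z)}\leq\abs{f(0)}+\abs{f(z)-f(0)}\), bound \(\abs{f(0)}\leq\norm{f}_{\BMOA_v}\leq 1\), and apply the first estimate of Lemma \ref{lem:GoodOne}, \(\abs{f(z)-f(0)}\lesssim_{v,g}\norm{f}_{*,v,1}\,\PEF(\abs{z})\). Since Hölder's inequality gives \(\norm{f}_{*,v,1}\leq\norm{f}_{*,v,2}\leq\norm{f}_{\BMOA_v}\), this yields \(\norm{\delta_z}_{(\BMOA_v)^*}\lesssim_{v,g}1+\PEF(\abs{z})\). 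The case \(X=\VMOA_v\) is immediate, because \(\VMOA_v\subset\BMOA_v\) isometrically and restricting a functional to a subspace only lowers its norm. For \(X=\BLOCH_v\) I would rerun the chain opening the proof of Lemma \ref{lem:GoodOne}, namely \(\abs{f(z)-f(0)}\leq\abs{z}\int_0^1\abs{f'(tz)}\,dt\), now bounding \((1-t^2\abs{z}^2)v(tz)\abs{f'(tz)}\leq\norm{f}_{\BLOCH_v}\) straight from the definition of the Bloch norm and using \(v(tz)\asymp g(\frac{1}{1-t\abs{z}})\); after the substitution \(s=t\abs{z}\) this reproduces exactly the integral defining \(\PEF(\abs{z})\), giving \(\norm{\delta_z}_{(\BLOCH_v)^*}\lesssim_{v,g}1+\PEF(\abs{z})\).

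For the lower bound I would test against \(\PEF_c(w)=\PEF(cw)\). Fix \(z\neq 0\) and take \(c=\CONJ{z}/\abs{z}\in\T\), so that \(cz=\abs{z}\) and \(\PEF_c(z)=\PEF(\abs{z})\); this value is \emph{real and positive}, so there is no cancellation and \(\abs{\PEF_c(z)}=\PEF(\abs{z})\). Lemma \ref{lem:GoodOne} supplies the uniform bound \(\norm{\PEF_c}_{\BMOA_v}\lesssim_{v,g}1\), and Lemma \ref{lem:BlochVContainBMOAV} transfers it via \(\norm{\PEF_c}_{\BLOCH_v}\leq\norm{\PEF_c}_{\BMOA_{v,1}}\leq\norm{\PEF_c}_{\BMOA_v}\lesssim_{v,g}1\). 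Hence \(\norm{\delta_z}_{X^*}\geq\abs{\PEF_c(z)}/\norm{\PEF_c}_X\gtrsim_{v,g}\PEF(\abs{z})\) for \(X=\BMOA_v\) and \(X=\BLOCH_v\); testing additionally against the constant \(1\), whose norm equals \(1\) in each space, gives \(\norm{\delta_z}_{X^*}\geq 1\), and combining the two estimates produces \(\norm{\delta_z}_{X^*}\gtrsim_{v,g}1+\PEF(\abs{z})\). The one point requiring care is \(X=\VMOA_v\): Lemma \ref{lem:GoodOne} guarantees \(\PEF_c\in\VMOA_v\) only for \(c\in\D\), so I would instead take \(c=r\CONJ{z}/\abs{z}\) with \(r<1\), giving \(\PEF_c(z)=\PEF(r\abs{z})\), and let \(r\to 1^-\); the \(r\)-uniform norm bound together with continuity of \(\PEF\) at \(\abs{z}<1\) delivers the same lower bound.

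It then remains to check the elementary chain on the right. The first equality is the definition of \(\PEF\). The equivalence with \(1+\int_0^{\abs{z}}\frac{dt}{(1-t)v(t)}\) holds because for real \(t\in[0,1[\) we have \(v(t)\asymp g(\frac{1}{1-t})\), so the integrands, and hence the integrals, are comparable. Finally, since \(g\) is almost increasing on \([1,\infty[\) it is bounded below there by a positive multiple of \(g(1)\), so \(\big((1-t)g(\frac{1}{1-t})\big)^{-1}\lesssim_{g}(1-t)^{-1}\) and
\[
\int_0^{\abs{z}}\frac{dt}{(1-t)g(\frac{1}{1-t})}\lesssim_{g}\int_0^{\abs{z}}\frac{dt}{1-t}=\ln\frac{1}{1-\abs{z}}\leq\ln\frac{e}{1-\abs{z}}.
\]
I expect the only genuinely delicate step to be the lower bound: one must use that \(\PEF(\abs{z})\) is real to rule out cancellation, and must keep the test parameter \(c\) inside \(\D\) to stay in \(\VMOA_v\); everything else is a routine assembly of Lemmas \ref{lem:GoodOne} and \ref{lem:BlochVContainBMOAV}.
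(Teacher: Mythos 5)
Your proposal is correct and is essentially the paper's intended argument: the corollary is stated there without a separate proof precisely because it is the combination of the growth estimate \eqref{eq:EvalEstimate} (upper bound, transferred to \(p=2\) by H\"older and to \(\BLOCH_v\) by the opening computation of Lemma \ref{lem:GoodOne}) with the uniformly norm-bounded test functions \(\PEF_c\) of Lemma \ref{lem:GoodOne} together with Lemma \ref{lem:BlochVContainBMOAV} (lower bound). Your two points of care --- keeping the test parameter \(c\) inside \(\D\) and letting \(r\to 1^-\) for the \(\VMOA_v\) case, and using that \(\PEF(\abs{z})\) is real and positive to avoid cancellation --- are exactly the details the paper's assembly requires.
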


Assuming \ref{cond:Cond2} in Lemma \ref{lem:GoodOne}, we have the following result
\begin{lem}\label{lem:compWithAutomorph}
Let \(g:[1,\infty[ \to ]0,\infty[\) be almost increasing and assume \( g(1/b) \lesssim g(a/b)  g(1/a)    \) for \(0<b\leq a\leq 1\). Then, for \(a\in \D\),
\[
g\left(\frac{1}{ 1-\abs{\sigma_a(z)}   } \right) \lesssim_g  g\left(\frac{ 1+ \abs{ a }   }{ 1- \abs{ a }   }    \right)  g\left(\frac{ 1 }{ 1-\abs{z} }    \right).
\]
\end{lem}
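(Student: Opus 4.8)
The plan is to reduce the statement to an elementary pointwise bound on $1-\abs{\sigma_a(z)}$ and then invoke the two structural hypotheses on $g$ (almost increasing, and the submultiplicative-type estimate $g(1/b)\lesssim g(a/b)g(1/a)$) in succession. No analysis beyond the conformal identity and the two assumed inequalities should be needed.

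First I would bound the argument $1/(1-\abs{\sigma_a(z)})$ from above. Using the identity $1-\abs{\sigma_a(z)}^2 = (1-\abs{z}^2)(1-\abs{a}^2)/\abs{1-\CONJ{a}z}^2$ together with $1-\abs{\sigma_a(z)} \geq \tfrac12(1-\abs{\sigma_a(z)}^2)$, the trivial bound $\abs{1-\CONJ{a}z}\leq 1+\abs{a}$, and $1+\abs{z}\geq 1$, a short computation yields
\[
\frac{1}{1-\abs{\sigma_a(z)}} \leq 2\,\frac{1+\abs{a}}{1-\abs{a}}\,\frac{1}{1-\abs{z}}.
\]
Writing $A := (1+\abs{a})/(1-\abs{a}) \geq 1$ and $B := 1/(1-\abs{z}) \geq 1$, the right-hand side equals $2AB \geq 1$.

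Since $g$ is almost increasing on $[1,\infty[$, and both $1/(1-\abs{\sigma_a(z)})$ and $2AB$ lie in $[1,\infty[$ with the former no larger than the latter, I obtain $g\big(1/(1-\abs{\sigma_a(z)})\big) \lesssim_g g(2AB)$. It then remains to split $g(2AB)$ into a product comparable to $g(A)\,g(B)$. For this I would apply the hypothesis $g(1/b)\lesssim g(a/b)g(1/a)$ twice, using its own dummy variables (not to be confused with the $a\in\D$ of the statement). Taking $1/b = 2AB$ and $1/a = B$ — so the dummy variable equals $1-\abs{z}\leq 1$ and $a/b = 2A$ — gives $g(2AB)\lesssim g(2A)\,g(B)$; taking next $1/b = 2A$ and $1/a = A$, so that $a/b = 2$, gives $g(2A)\lesssim g(2)\,g(A)\lesssim_g g(A)$, because $g(2)$ is a fixed positive constant. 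Chaining the two estimates yields $g(2AB)\lesssim_g g(A)\,g(B)$, which is precisely the asserted inequality.

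The argument is essentially mechanical; the only place demanding attention is the bookkeeping in the two applications of the submultiplicative hypothesis. One must choose the split points so that the required constraint $0<b\leq a\leq 1$ is met — both times it reduces to $A\geq 1$ and $B\geq 1$, which hold automatically — and one must absorb the harmless factor $2$ through a \emph{second} application of the hypothesis rather than through almost-increasingness, which would run in the wrong direction. Beyond this I expect no genuine obstacle.
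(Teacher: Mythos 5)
Your proof is correct. It uses the same two ingredients as the paper --- the conformal identity for \(1-\abs{\sigma_a(z)}^2\), the submultiplicative hypothesis, and almost-increasingness --- but organizes them in the opposite order, and the resulting argument is genuinely different in structure. The paper's proof applies the hypothesis exactly once, directly with dummy variables \(b=1-\abs{\sigma_a(z)}\) and \(a=1-\abs{z}\), to get
\[
g\Big(\tfrac{1}{1-\abs{\sigma_a(z)}}\Big) \lesssim_g g\Big(\tfrac{1-\abs{z}}{1-\abs{\sigma_a(z)}}\Big)\, g\Big(\tfrac{1}{1-\abs{z}}\Big),
\]
and then bounds the middle factor by \(g\big(\tfrac{1+\abs{a}}{1-\abs{a}}\big)\) using the sharp distortion estimate \(\frac{1-\abs{z}}{1-\abs{\sigma_a(z)}}\le\frac{1+\abs{a}}{1-\abs{a}}\) quoted from \cite[p.~48]{CowenMacCluer-1995}, together with the fact that \(g\) is almost increasing. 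You instead first prove the slightly lossy pointwise bound \(\frac{1}{1-\abs{\sigma_a(z)}}\le 2AB\) by hand, then invoke almost-increasingness, then factor \(g(2AB)\) by two applications of the hypothesis; your observation that the factor \(2\) must be absorbed by a second application with \(a/b=2\), because almost-increasingness runs in the wrong direction, is exactly right. The paper's route is shorter (one application, no spurious factor \(2\)); your route is self-contained (no external distortion lemma) and, notably, cleaner on the constraint \(0<b\le a\le 1\): your two applications only need \(A,B\ge 1\), which hold automatically, whereas the paper's single application implicitly requires \(1-\abs{\sigma_a(z)}\le 1-\abs{z}\), which can fail --- for instance at \(z=a\ne 0\), where \(\sigma_a(a)=0\). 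That exceptional case is trivially handled (almost-increasingness gives \(g\big(\tfrac{1}{1-\abs{\sigma_a(z)}}\big)\lesssim_g g\big(\tfrac{1}{1-\abs{z}}\big)\) and \(g\big(\tfrac{1+\abs{a}}{1-\abs{a}}\big)\gtrsim_g g(1)>0\)), but the paper's two-line proof glosses it, while your argument needs no case split at all.
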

\begin{proof}
It holds that
\[
g\left(\frac{1}{ 1-\abs{\sigma_a(z)}   } \right) \lesssim_g g\left(\frac{1-\abs{z}}{ 1-\abs{\sigma_a(z)}   } \right)  g\left(\frac{ 1 }{ 1-\abs{z} }    \right).
\]
To finish the proof, we apply a result for automorphisms of the disk (see e.g. \cite[p.~48]{CowenMacCluer-1995}) and the fact that \(g \) is almost increasing. 

\end{proof}

The next result is an estimate for comparison of evaluation maps at \(z\) and \(\phi(z)\) for an analytic self-map \(\phi\). 
\begin{cor}\label{cor:EvaluationHierarchyOnBMOAv}
Let \(g:[1,\infty[ \to ]0,\infty[\) be almost increasing and assume \( g(1/b) \lesssim g(a/b)  g(1/a)    \) for \(0<b\leq a\leq 1\). If \(v(z)\asymp g(\frac{1}{1-\abs{z}} )\) and \(\phi\colon \D\to \D\) is an analytic self-map, we have
\[
 \norm{\delta_{ \phi(z) }     }_{\BMOA_{v,p}^*} \lesssim_{v,g}  \left( \max\bigg\{v\left(  \frac{2 \abs{\phi(0)}  }{1 + \abs{\phi(0)}   } \right),1\bigg\} +  \PEF(\abs{\phi(0)}) \right) \norm{\delta_{ z }     }_{\BMOA_{v,p}^*}.
\]
\end{cor}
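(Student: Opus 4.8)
The plan is to reduce the comparison of the two evaluation functionals to the operator norm of a single composition operator induced by the automorphism $\sigma_{\phi(0)}$, and then to estimate that operator norm using Lemma \ref{lem:compWithAutomorph} for the semi-norm part and the pointwise evaluation bound \eqref{eq:EvalEstimate} of Lemma \ref{lem:GoodOne} for the value at $\phi(0)$. Note that the hypotheses of the corollary (almost increasing plus \ref{cond:Cond2}) are exactly those of Lemma \ref{lem:compWithAutomorph}, while \eqref{eq:EvalEstimate} needs only almost monotonicity, so both ingredients are available; crucially, \emph{no} test function in $\BMOA_{v,p}$ is needed, which is why the statement is phrased with $\norm{\delta_z}_{\BMOA_{v,p}^*}$ rather than with $1+\PEF(\abs{z})$.

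First I would set $a_0:=\phi(0)$ and factor $\phi=\sigma_{a_0}\circ\tilde\phi$, where $\tilde\phi:=\sigma_{a_0}\circ\phi$ is again an analytic self-map of $\D$ (using that $\sigma_{a_0}$ is an involution) with $\tilde\phi(0)=\sigma_{a_0}(a_0)=0$. By the Schwarz lemma, $\abs{\tilde\phi(z)}\leq\abs{z}$ for every $z\in\D$. The second ingredient is a radial monotonicity of the evaluation norm: whenever $\abs{w}\leq\abs{z}$ one has $\norm{\delta_w}_{\BMOA_{v,p}^*}\leq\norm{\delta_z}_{\BMOA_{v,p}^*}$. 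Indeed, writing $w=cz$ with $c=w/z\in\closed{\D}$ (the case $z=0$ being trivial) gives $\delta_w=\delta_z\circ T_c$, and $\norm{T_c}_{\BOP(\BMOA_{v,p})}\leq 1$ by Lemma \ref{lem:dilationsOfBMOAvUniformlyBounded}. Combining these, for every $f\in B_{\BMOA_{v,p}}$,
\[
\abs{f(\phi(z))}=\abs{(f\circ\sigma_{a_0})(\tilde\phi(z))}\leq\norm{\delta_{\tilde\phi(z)}}_{\BMOA_{v,p}^*}\norm{f\circ\sigma_{a_0}}_{\BMOA_{v,p}}\leq\norm{\delta_z}_{\BMOA_{v,p}^*}\norm{C_{\sigma_{a_0}}}_{\BOP(\BMOA_{v,p})},
\]
where the last step uses $\abs{\tilde\phi(z)}\leq\abs{z}$ with the monotonicity and $\norm{f\circ\sigma_{a_0}}_{\BMOA_{v,p}}\leq\norm{C_{\sigma_{a_0}}}_{\BOP(\BMOA_{v,p})}$ for $\norm{f}\leq 1$. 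Taking the supremum over the unit ball yields $\norm{\delta_{\phi(z)}}_{\BMOA_{v,p}^*}\leq\norm{C_{\sigma_{a_0}}}_{\BOP(\BMOA_{v,p})}\,\norm{\delta_z}_{\BMOA_{v,p}^*}$.

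It then remains to prove $\norm{C_{\sigma_{a_0}}}_{\BOP(\BMOA_{v,p})}\lesssim_{v,g}\max\{v(\tfrac{2\abs{a_0}}{1+\abs{a_0}}),1\}+\PEF(\abs{a_0})$. I would split $\norm{f\circ\sigma_{a_0}}_{\BMOA_{v,p}}\asymp\abs{(f\circ\sigma_{a_0})(0)}+\norm{f\circ\sigma_{a_0}}_{*,v,p}$. For the semi-norm part, the identity $\norm{f\circ\sigma_{a_0}}_{*,v,p}=\norm{f}_{*,v\circ\sigma_{a_0},p}=\sup_{a}v(\sigma_{a_0}(a))\gamma(f,a,p)$ combined with Lemma \ref{lem:compWithAutomorph} gives, since $1-\tfrac{2\abs{a_0}}{1+\abs{a_0}}=\tfrac{1-\abs{a_0}}{1+\abs{a_0}}$, the estimate $v(\sigma_{a_0}(a))\lesssim v(\tfrac{2\abs{a_0}}{1+\abs{a_0}})v(a)$, whence $\norm{f\circ\sigma_{a_0}}_{*,v,p}\lesssim v(\tfrac{2\abs{a_0}}{1+\abs{a_0}})\norm{f}_{*,v,p}$. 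For the constant part, $(f\circ\sigma_{a_0})(0)=f(a_0)$, and \eqref{eq:EvalEstimate} together with $\norm{f}_{*,v,1}\leq\norm{f}_{*,v,p}$ yields $\abs{f(a_0)}\leq\abs{f(0)}+\abs{f(a_0)-f(0)}\lesssim(1+\PEF(\abs{a_0}))\norm{f}_{\BMOA_{v,p}}$. Assembling the two pieces gives the desired bound on $\norm{C_{\sigma_{a_0}}}$ and hence the corollary.

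The main obstacle I anticipate is the bookkeeping of the constant term $\abs{f(a_0)}$: it must be controlled by the full norm $\norm{f}_{\BMOA_{v,p}}$ rather than by the semi-norm (which is blind to point values), and this is precisely what forces the summand $1+\PEF(\abs{\phi(0)})$ to appear separately from the multiplicative factor $v(\tfrac{2\abs{\phi(0)}}{1+\abs{\phi(0)}})$. A related point to verify is that the norm on $\BMOA_{v,p}$ is equivalent to $\abs{f(0)}+\norm{f}_{*,v,p}$, so that the point-evaluation and semi-norm contributions combine correctly; this is implicit in the $\oplus_1\C$ decomposition used in the proof of Theorem \ref{thm:Gantmacher}.
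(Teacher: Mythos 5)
Your proof is correct and takes essentially the same route as the paper's: factor \(\phi = \sigma_{\phi(0)}\circ(\sigma_{\phi(0)}\circ\phi)\) and use the Schwarz lemma, control the semi-norm of \(f\circ\sigma_{\phi(0)}\) via Lemma \ref{lem:compWithAutomorph} together with the identity \( \frac{1+\abs{a}}{1-\abs{a}} = \big(1-\frac{2\abs{a}}{1+\abs{a}}\big)^{-1} \), and bound the point value \(f(\phi(0))\) via \eqref{eq:EvalEstimate}. The only cosmetic difference is that you derive the radial monotonicity \(\norm{\delta_w}_{\BMOA_{v,p}^*}\lesssim\norm{\delta_z}_{\BMOA_{v,p}^*}\) for \(\abs{w}\leq\abs{z}\) from the dilation bound of Lemma \ref{lem:dilationsOfBMOAvUniformlyBounded} (writing \(\delta_w = \delta_z\circ T_{w/z}\)), where the paper invokes the maximum modulus principle; both rest on the weight being equivalent to a radial one, so the arguments are interchangeable.
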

\begin{proof}
Let \(z,a\in\D\). We have for \(1\leq p <\infty\),
\begin{align*}
\norm{  f\circ \sigma_a   }_{*,p} & =  \sup_{\hat{\phi} \in  \AUT } v((\sigma_a\circ\sigma_a\circ\hat{\phi})(0))  \norm{  f\circ \sigma_a\circ\hat{\phi} -   (f\circ \sigma_a\circ\hat{\phi})(0)     }_{H^p} \\
& =  \sup_{\hat{\phi} \in \AUT } v((\sigma_a\circ\hat{\phi})(0))  \norm{  f\circ \hat{\phi} -   (f\circ \hat{\phi})(0)     }_{H^p} \leq \left(  \sup_{\hat{\phi} \in \AUT } \frac{ v( \sigma_a(\hat{\phi}(0)) )   }{ v(\hat{\phi}(0))   }     \right) \norm{  f  }_{*,p}. 
\end{align*}
Applying Lemma \ref{lem:compWithAutomorph} and the fact that \(\frac{1+ \abs{ a }   }{1- \abs{ a }  } =  \frac{1}{1-\frac{2 \abs{a}  }{1 + \abs{ a }   }}\), it follows that
\begin{align*}
\norm{  f\circ \sigma_a   }_{*,p} &\lesssim_{v,g}   v\left(  \frac{2 \abs{a}  }{1 + \abs{ a }   } \right)  \norm{  f  }_{*,p}, \quad a\in \D, \ p\in [1,\infty[. 
\end{align*}

Estimate \ref{eq:EvalEstimate} in Lemma \ref{lem:GoodOne} yields the last inequality below:
\begin{equation}\label{eq:compWithAut}
\begin{split}
\abs{(f\circ \sigma_a)(z)} &\leq  ( \norm{  f\circ \sigma_a   }_{*,p} +\abs{f(a)} ) \norm{\delta_z}_{\BMOA_{v,p}^*}  \\
&\lesssim_{v,g}  \left( \norm{  f   }_{*,p} v\left(  \frac{2 \abs{a}  }{1 + \abs{ a }   } \right)  + \abs{f(0)} +\abs{f(a) - f(0)} \right) \norm{\delta_z}_{\BMOA_{v,p}^*} \\
& \lesssim_{v,g}  \left( \norm{  f   }_{\BMOA_{v,p}} \max\{v\left(  \frac{2 \abs{a}  }{1 + \abs{ a }   } \right),1\} + \norm{  f   }_{\BMOA_{v,p}} \PEF(\abs{a}) \right) \norm{\delta_z}_{\BMOA_{v,p}^*} .
\end{split}
\end{equation}
Let \(\phi\) be an analytic self-map of \(\D\). Then \(\sigma_{\phi(0)}\circ \phi \) is an analytic self-map of \(\D\) with zero as a fixed point. It follows from the Schwarz Lemma that  \( \abs{ \sigma_{\phi(0)}\circ \phi(z)  } \leq \abs{z} \) and by the maximum modulus principle, it follows that
\( \norm{  \smash{  \delta_{ (\sigma_{\phi(0)}\circ \phi )(z) }    }    }_{\BMOA_{v,p}^*} \lesssim_{v,g} \norm{\delta_{ z }     }_{\BMOA_{v,p}^*} \). From \eqref{eq:compWithAut} we now obtain

\begin{align*}
\abs{\delta_{\phi(z)} f} &= \abs{(f\circ \phi)(z)} = \abs{(f\circ \sigma_{\phi(0)})(\sigma_{\phi(0)}( \phi (z)))} \\
&\lesssim_{v,g}   \norm{  f   }_{\BMOA_{v,p}}  \left( \max\{v\left(  \frac{2 \abs{\phi(0)}  }{1 + \abs{\phi(0)}   } \right),1\} +  \PEF(\abs{\phi(0)}) \right) \norm{\delta_{ (\sigma_{\phi(0)}\circ \phi )(z) }     }_{\BMOA_{v,p}^*} \\
&\lesssim_{v,g} \norm{  f   }_{\BMOA_{v,p}}  \left( \max\{v\left(  \frac{2 \abs{\phi(0)}  }{1 + \abs{\phi(0)}   } \right),1\} +  \PEF(\abs{\phi(0)}) \right) \norm{\delta_{ z }     }_{\BMOA_{v,p}^*}
\end{align*}
and the result is proved.
\end{proof}

\section{The functions \(\alpha\) and \(\beta\)}\label{sec:FuncAlphaBeta}

It is time to introduce the test functions of unit norm \( \gAlpha_a \) and \(\gBeta_a\), for \(a\in \D\), which are a key ingredient in the proof of the two main results: Theorems \ref{thm:MainThm} and \ref{thm:MainThmCpt}. The conclusions in this section are made under suitable assumptions (see beginning of Section \ref{sec:WrappingItUp}).

\subsection{The function \(\alpha\)}

For \(\phi\ \colon \D\to \D, \ a\in\D\) and \(\psi\in\BMOA_v\) recall that
\[
\alpha(\psi,\phi,a) =  \frac{v(a)}{  v( \phi(a) )   }  \abs{\psi(a)} \norm{\phi_a}_{H^2},
\]
where \(\phi_a=\sigma_{\phi(a)}\circ \phi \circ \sigma_a\).

 For \( a\in \D \) define  
\[
\fAlpha \colon z\mapsto \frac{   \sigma_{\phi(a)}(z) -  \phi(a)  } {   v(\phi(a))    } , \ z\in \D
\]
and
\[
  \gAlpha_a\colon z\mapsto  \frac{ \fAlpha (z) }{  \norm{ \fAlpha }_{\BMOA_v}  }    , \ z\in \D.
\]
The important properties for these functions are that \(\gAlpha_a\in \VMOA_v, \lim_{\abs{\phi(a)}\to 1} \norm{ \smash{ \gAlpha_a } }_{H^2} = 0 \) and their relation with the function \(\alpha\),
\[
\alpha(\psi,\phi,a)   \lesssim v(a) \norm{ \psi(a) ( \gAlpha_a \circ \phi\circ \sigma_a - \gAlpha_a( \phi(a))   )   }_{H^2} ,\ a\in \D,
\]
which will be used to prove that boundedness of \(\WCO\) implies boundedness of \(\alpha\) and \(\beta\). Their relation with the function \(\alpha\) is also crucial when proving that \( \limsup_{\abs{a}\to 1} \alpha(\psi,\phi,a)>0 \) ensures \(\WCO\) is not \(c_0\)-singular.

Using \( f_a^{(2)}(z) = \sigma_{\phi(a)}(z) -  \phi(a) \), we have
\begin{align*}
\abs{f_a^{(2)}\circ \sigma_b(z) -  f_a^{(2)}(b)}^2 &=  \abs{\sigma_{\phi(a)}\circ \sigma_b(z) - \sigma_{\phi(a)}(b)}^2  =  \abs{ \frac{ \phi(a)\CONJ{b} - 1 }{ 1-\CONJ{\phi(a)}b  } \sigma_{\frac{ b-\phi(a) }{ 1- \phi(a)\CONJ{b} }  }(z) -   \frac{ \phi(a)\CONJ{b} - 1 }{ 1-\CONJ{\phi(a)}b  } \frac{ b-\phi(a) }{ 1- \phi(a)\CONJ{b} }     }^2  \\
&=  \abs{  \sigma_{\frac{ b-\phi(a) }{ 1- \phi(a)\CONJ{b} }  }(z) -  \frac{ b-\phi(a) }{ 1- \phi(a)\CONJ{b} }     }^2  =  \abs{ \frac{               z\left(\abs{   \frac{ b-\phi(a) }{ 1- \phi( a )\CONJ{b} }     }^2-1 \right)                 }{
    1-z\CONJ{     \frac{ b-\phi(a) }{ 1- \phi(a)\CONJ{b} }    }                 }   }^2.
\end{align*}
 Since \(\int_{\T} P_c \, dm = 1\) for every \(c\in\D\), we have
\begin{equation}\label{eq:gAlphaEquation}
\begin{split}
 v(b)^2 \gamma(\fAlpha,b,2)^2  &=   \frac{ v(b)^2  }{  v(\phi(a))^2   }     \left(  1 - \abs{   \frac{ b-\phi(a) }{ 1- \phi( a )\CONJ{b} }     }^2      \right)  \leq    \frac{v(b)^2 }{v(\phi(a))^2 }   \left(  \frac{   (  1-\abs{b}^2  )(   1-\abs{\phi(a)}^2  )         }{    \abs{ 1-\abs{ \phi( a )} \abs{b}   }^2   }           \right),
\end{split}
\end{equation}
and hence, by Lemma \ref{lem:coolStuff}
\begin{equation}\label{eq:coolStuff}
\sup_{b\in\D} \gamma(\fAlpha,b,2)^2 v(b)^2 \leq  \sup_{x,y\in[0,1[} \frac{v(x)^2  }{v(y)^2  }   \left(  \frac{   (  1-y^2  )(   1-x^2  )         }{    ( 1-xy  )^2   }           \right)   <\infty
\end{equation}
proving  \(\fAlpha\in \BMOA_v\) for all \(a\in \D\) and \(\sup_{a\in\D}\norm{  \smash{   \fAlpha   }   }_{\BMOA_v}  < \infty \). From \eqref{eq:gAlphaEquation} it also follows that   \(\fAlpha\in \VMOA_v\), and therefore also \(\gAlpha_a\in \VMOA_v\). Finally, from the equality in \eqref{eq:gAlphaEquation} it follows that \(1\leq \norm{  \smash{\fAlpha} }_{\BMOA_v} \) and using \(b=0\) in \eqref{eq:gAlphaEquation} now leads to \(\lim_{\abs{\phi(a)}\to 1} \norm{ \smash{ \gAlpha_a } }_{H^2} = 0\).

Next, we proceed to the function \(\beta\), which should not be confused with what is usually referred to as the \(\beta\)-function.

\subsection{The function \(\beta\)}\label{subsec:BetaFunc}

For \(\phi \colon \D\to \D, \ a\in\D\) and \(\psi\in\BMOA_v\), recall that

\[
\beta(\psi,\phi,a) =   \norm{  \delta_{\phi(a)}  }_{ (\BMOA_v)^* }   v(a)  \gamma(\psi,a,1).
\]

Corollary \ref{cor:EvaluationBoundedOnBMOAv} yields that

\[
\beta(\psi,\phi,a) \asymp_{v,g} \bigg(1 +  \int_0^{1}  \frac{\abs{\phi(a)} dt}{   (1-t\abs{\phi(a)}) g(\frac{1}{1-t\abs{\phi(a)}})   } \bigg)  v(a)  \gamma(\psi,a,1).
\]

For \( a\in \D \) define  
\[
\fBeta_a \colon z\mapsto   \int_0^{   z \CONJ{\phi(a) } }     \frac{   dt}{   (1-t  ) g(\frac{1}{1-t})   } =  \int_0^{   z }     \frac{ \CONJ{\phi(a)} \, dt}{   (1-t \CONJ{\phi(a)} ) g(\frac{1}{1-t \CONJ{\phi(a) } })   } , \ z\in \D
\]
and
\[
 \gBeta_a \colon z\mapsto  \frac{ ( 1+ \fBeta_a )^2 }{  \norm{ (1+\fBeta_a)^2 }_{\BMOA_v}  }    , \ z\in \D.
\]

The important properties for the functions \(\gBeta_a\) are that \(\gBeta_a\in \VMOA_v \) and their relation with the function \(\beta\),
\begin{equation}\label{eq:relationBeta}
\beta(\psi,\phi,a)   \lesssim_{v,g} v(a)   \abs{ \gBeta_a(\phi(a))    }   \gamma(\psi,a,1) ,\ a\in \D,
\end{equation}
which will be used in a similar fashion to the \(\alpha\)-case. Moreover, if \(\BMOA_v\not\subset H^\infty\), we will need \(\lim_{\abs{\phi(a)}\to 1} \norm{ \smash{ \gBeta_a } }_{H^1} = 0\) to hold.

Hereafter, \(X = \VMOA_v\) or \(X = \BMOA_v\). From the remark right after \eqref{eq:LittlewoodPaley}, it is clear that \(\gBeta_a\in \VMOA_v\), because \((\fBeta_a)^2 = S_{\CONJ{\phi(a)}} \PEF^2 \) (dilation of analytic function), where \(\abs{\phi(a)}<1\). 
Moreover, since \(g\) is almost increasing
\[
  (1-t\abs{\phi(a)}) g(\frac{1}{1-t\abs{\phi(a)}})  \gtrsim_g     (1-t\abs{\phi(a)}^2) g(\frac{1}{1-t\abs{\phi(a)}^2}),
\]
and hence,
\begin{equation}\label{eq:changingArgInh}
\begin{split}
1 + \int_0^1     \frac{   \abs{\phi(a)} \, dt}{   (1-t \abs{\phi(a)} ) g(\frac{1}{1-t \abs{\phi(a) } })   } &\lesssim_g  1+ \int_0^1     \frac{    \abs{\phi(a)}  \, dt}{   (1-t \abs{\phi(a)}^2 ) g(\frac{1}{1-t \abs{\phi(a) }^2 })   } \\
&\lesssim  1+ \int_0^1     \frac{  \abs{\phi(a)}^2 \, dt}{   (1-t \abs{\phi(a)}^2 ) g(\frac{1}{1-t \abs{\phi(a) }^2 })   }.
\end{split}
\end{equation}
from which an application of Corollary \ref{cor:EvaluationBoundedOnBMOAv} gives us

\begin{equation}\label{eq:lowerBoundForfBeta}
\norm{(1+\fBeta_a)^2 }_X \gtrsim_{v,g} \frac{  (1+\fBeta_a (\phi(a)))^2    }{ \norm{\delta_{\phi(a)}}_{X^*}   } \gtrsim_{v,g} \norm{\delta_{\phi(a)}}_{X^*}. 
\end{equation}

The inequality in equation \eqref{eq:aConstZ} in Lemma \ref{lem:GoodOne} followed by \eqref{eq:changingArgInh}, yields 
\[
 1+ \sup_{z\in\D} \abs{ \fBeta_a  ( z)  } \lesssim_g   1 +  \int_0^1     \frac{   \abs{\phi(a)} \, dt}{   (1-t \abs{\phi(a)} ) g(\frac{1}{1-t \abs{\phi(a) } })   }  \lesssim_{v,g}    \norm{\delta_{\abs{\phi(a)}^2}}.
\]
Similarly to the proof of Lemma \ref{lem:GoodOne}, for every \(a,b\in \D\) we have

\begin{align*}
v(a)^2 \gamma((1+\fBeta_a)^2,b,2)^2 &\asymp  v(a)^2 \abs{\phi(a)}^2 \int_{\D}  \abs{ 1+ \fBeta_a( z)  }^2\abs{\PEF'(\CONJ{\phi(a)} z)}^2 (1-\abs{\sigma_b(z)}^2) \, dA(z) \\
&   \lesssim_{v,g}   \norm{\delta_{\abs{\phi(a)}^2}}_{X^*}^2   v(a)^2 \int_{\D } g\left(\frac{1}{1-\abs{z} }\right)^{-2}  \frac{ (1-\abs{b})}{  \abs{1- z} \abs{ 1-\CONJ{b}z}^2   }  \,  dA(z) \\
& \lesssim_{v,g}    \norm{\delta_{\abs{\phi(a)}^2}}_{X^*}^2 \leq    \norm{\delta_{\abs{\phi(a)}}}_{X^*}^2.  
\end{align*}
Combining with \eqref{eq:lowerBoundForfBeta}, we can conclude that
\begin{equation}\label{eq:BoundsForfBeta}
\norm{(1+\fBeta_a)^2 }_X \asymp_{v,g}  \norm{\delta_{\phi(a)}}_{X^*}  \asymp_{v,g}  \norm{\delta_{\phi(a)^2}}_{X^*}. 
\end{equation}
As a consequence, \eqref{eq:relationBeta} holds. Finally, concerning \(\lim_{\abs{\phi(a)}\to 1} \norm{ \smash{ \gBeta_a } }_{H^1} = 0\), we can assume \(\lim_{{\phi(a)}\to 1}  \norm{\delta_{\phi(a)}}_{X^*} = \infty\), so it is sufficient to prove that \(\sup_{a\in\D}\norm{\fBeta_a}_{H^2}<\infty \). By the Littlewood-Paley identity followed by the inequality in equation \eqref{eq:aConstZ}, we have for \(a\in\D\),

\begin{align*}
\norm{\fBeta_a }_{H^2}^2 &\asymp \abs{\phi(a)}^2 \int_{\D}  \abs{     (1-z \CONJ{\phi(a)} ) g(\frac{1}{1-z \CONJ{\phi(a) } })    }^{-2}  (1-\abs{z}^2)  \, dA(z) \\
&\lesssim_g     \int_{\D}   \frac{    g(\frac{1}{1-\abs{z \CONJ{\phi(a) }  }})^{-2}     }{  \abs{   1-z \CONJ{\phi(a)} }   }  \, dA(z)  \lesssim_g  \sup_{a\in\D} \int_{\D}   \frac{    g(1)^{-2}     }{  \abs{   1-z \CONJ{\phi(a)} }   }  \, dA(z) <\infty
\end{align*}
and we are done.

The reason for using the parameter \(1\) in the factor \(\gamma(\psi,a,1)\) of the function \(\beta\) (compared to \(2\), which is used in e.g. \cite{Laitila-2009}) is due to the restrictive connection between the parameters \(p,q\) and \(v\) for the statements in Proposition \ref{prop:implicationOfJohnNirenberg} to hold, which is present via \ref{eq:mainAssump1}. On the one hand, if \(\gamma(\psi,a,2)\) is used in the definition of \(\beta\), the application of Hölder's inequality in e.g. \eqref{eq:TheReasonForDefBeta} (see also proof of Lemma \ref{lem:BetaBounded}) would create quantities involving the \(H^4\)-norm. In this case, to be able to apply Proposition \ref{prop:implicationOfJohnNirenberg}, we need a more restrictive condition instead of \ref{eq:mainAssumpA1}. On the other hand, it is essential to be able to connect the \(\gamma(\psi,a,1)\) factor in \(\beta\) with \(\gamma(\psi,a,2)\), for example, to obtain \eqref{eq:essNormUpperBound}. This is solved by the extensive Proposition \ref{prop:implicationOfJohnNirenberg}.

\section{Weighted composition operators on \(\BMOA_v\) and \(\VMOA_v\)}\label{sec:WrappingItUp}
Recall that an admissible weight \(v\), is a function satisfying: 

\noindent\MainAssumptionsTwo

With these assumptions, \(v|_{[0,1[}\) is almost increasing, and a version of equation \eqref{eq:aConst} in Lemma \ref{lem:GoodOne} shows that 
\[
x\mapsto v|_{[0,1[}(1-x) x^{\frac{1}{p}-\epsilon} 
\]
is almost increasing for some \(\epsilon>0\). Moreover, since \(v\) is equivalent to a radial function Proposition \ref{prop:implicationOfJohnNirenberg} yields that \(\BMOA_{v} = \BMOA_{v,1} \) with equivalent norms. Moreover, due to Lemma \ref{lem:ExtraInfoAboutG}, the assumptions of Lemmas \ref{lem:coolStuff} and \ref{lem:GoodOne} are satisfied and the useful properties obtained in Sections \ref{sec:ImportantLemmas} and \ref{sec:FuncAlphaBeta} hold. The results below, up to Corollary \ref{cor:BoundednessOfWCO_VMOA}, are inspired by \cite[Section 2 and Proposition 4.1]{Laitila-2009}.
 
Boundedness of \(\WCO\) on \(\BMOA_v\) and \(\VMOA_v\) is characterized in Theorem \ref{thm:BoundednessOfWCO} and Corollary \ref{cor:BoundednessOfWCO_VMOA} respectively. Another standard but important type of result is Theorem \ref{thm:fixCopyC0NotUC}.

Concerning the \(\VMOA_v\)-case, some Carleson measure theory (Proposition \ref{prop:PracticalEstimate}
) is sufficient, due to the nature of \(\VMOA_v\), to prove that the function-theoretic condition implies that \(\WCO\) is the uniform limit of a sequence of compact operators (\(\WCO\) composed with dilation operators), see Theorem \ref{thm:SuffForCptVMOA}. A candidate for a sufficient condition for \(\WCO\in\BOP(\BMOA_v)\) to be compact is presented in Theorem \ref{thm:SuffForCpt}. Although the structure of the space makes the Carleson measure approach less fruitful, the unit ball \(B_{\BMOA_v}\) is compact with respect \(\tau_0\). This allows another, classical, approach to be carried out, namely, to prove that \(\WCO\) maps \(\tau_0\)-null sequences to norm-null sequences. Due to some complications involving the weight \(v\), no characterization is achieved in the general case (see also Conjecture \ref{conj:MainBMOA}).

In all of the proofs to these results, Proposition \ref{prop:implicationOfJohnNirenberg} has a crucial part.

\bigskip

\begin{lem}[{\cite[Proposition 2.1]{Laitila-2009}}]\label{lem:laitilaLittlewood}
There is a constant \(C\geq 1\) such that
\[
\norm{f\circ u}_{H^2} \leq C \norm{f}_{H^2} \norm{u}_{H^2} 
\]
for all \(f\in H^2\) and analytic self-maps \(u\) of \( \D \) such that \(f(0) = u(0) = 0\).
\end{lem}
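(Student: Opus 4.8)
This is \cite[Proposition 2.1]{Laitila-2009}, so I only indicate how I would recover it. The plan is to convert the \(H^2\)-norm into an area integral governed by the Nevanlinna counting function of \(u\). Writing \(N_u(w)=\sum_{u(z)=w}\log\frac{1}{\abs{z}}\), the nonunivalent change of variables \(w=u(z)\), combined with the Littlewood--Paley form of \eqref{eq:LittlewoodPaley} (since \(f(0)=u(0)=0\) the boundary constant drops out and \((f\circ u)'=f'(u)\,u'\)), gives
\[
\norm{f\circ u}_{H^2}^2\asymp\int_{\D}\abs{f'(u(z))}^2\abs{u'(z)}^2\log\tfrac1{\abs{z}}\,dA(z)=\int_{\D}\abs{f'(w)}^2\,N_u(w)\,dA(w).
\]
First I would record the two properties of \(N_u\) available because \(u(0)=0\): Littlewood's inequality \(N_u(w)\le\log\frac1{\abs{w}}\), and the mass identity \(\int_{\D}N_u\,dA\asymp\norm{u}_{H^2}^2\) (the same change of variables applied to the special case \(f(z)=z\)). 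The whole issue is to combine these into the \emph{bilinear} bound, rather than the mere subordination estimate \(\norm{f\circ u}_{H^2}\le\norm{f}_{H^2}\) that Littlewood's inequality yields on its own.

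Next I would split the integral at \(\abs{w}=\tfrac12\). On the inner region \(\abs{w}\le\tfrac12\) the subharmonic function \(\abs{f'}^2\) obeys the crude pointwise bound \(\abs{f'(w)}^2\lesssim\norm{f}_{H^2}^2\), so that part is immediately controlled by \(\norm{f}_{H^2}^2\int_{\D}N_u\,dA\asymp\norm{f}_{H^2}^2\norm{u}_{H^2}^2\). The outer region \(\tfrac12\le\abs{w}<1\) is the delicate one: there \(\abs{f'}^2\) blows up, and I would feed in the sub-mean-value property of \(N_u\) (it is subharmonic off the origin, so \(N_u(w)\) is dominated by its average over a Whitney disc \(D(w,(1-\abs{w})/2)\)) together with the Carleson-type identity \(\int_{\D}\abs{f'(w)}^2(1-\abs{w})\,dA(w)\asymp\norm{f}_{H^2}^2\). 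Pairing, via a Fubini interchange, the Whitney averages of \(N_u\) against the mass of \(\abs{f'(w)}^2(1-\abs{w})\,dA(w)\) is what produces the remaining factor \(\norm{u}_{H^2}^2\).

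The main obstacle is precisely this outer estimate. Either property of \(N_u\) used in isolation only reproduces the subordination bound \(\norm{f}_{H^2}^2\): Littlewood's pointwise inequality discards the size of \(u\), while the mass identity alone cannot absorb the boundary singularity of \(\abs{f'}^2\). Thus the \(\norm{u}_{H^2}^2\)-factor must be extracted from \(\int_{\D}N_u\,dA\asymp\norm{u}_{H^2}^2\) \emph{without} first collapsing \(N_u\) to \(\log\frac1{\abs{w}}\), and making the two cooperate is exactly the work carried out in \cite{Laitila-2009}.
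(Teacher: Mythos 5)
The paper itself offers no argument for this lemma---it is quoted verbatim from \cite[Proposition 2.1]{Laitila-2009}---so deferring to that reference is not by itself a defect, and your skeleton is exactly the standard one: Littlewood--Paley plus the nonunivalent change of variables to \(\int_{\D}\abs{f'(w)}^2N_u(w)\,dA(w)\), Littlewood's inequality \(N_u(w)\le\log\frac{1}{\abs{w}}\), the mass identity \(\int_{\D}N_u\,dA\asymp\norm{u}_{H^2}^2\), and the split at \(\abs{w}=\frac12\) with the inner region killed by \(\abs{f'(w)}^2\lesssim\norm{f}_{H^2}^2\). The genuine gap is the mechanism you propose for the outer region, and your own closing paragraph senses it. If you sub-average \(N_u\) over Whitney discs and apply Fubini against \(\abs{f'(w)}^2(1-\abs{w})^{-2}\,dA(w)\), what you are left with is a Whitney average of \(\abs{f'}^2\), namely \((1-\abs{\zeta})^{-2}\int_{D(\zeta,c(1-\abs{\zeta}))}\abs{f'}^2\,dA\); this is \emph{not} uniformly \(\lesssim\norm{f}_{H^2}^2\) (for a normalized reproducing kernel it is of order \(\norm{f}_{H^2}^2(1-\abs{\zeta})^{-3}\)), so the pairing with the Carleson identity does not close. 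Nor can any interpolation of the two soft facts work: \(\min\{1-\abs{w},(1-\abs{w})^{-2}\norm{u}_{H^2}^2\}\) only yields bounds like \(\norm{u}_{H^2}^{2/3}\), losing the bilinear structure, exactly as you suspected.

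The missing idea is a genuinely \emph{pointwise} refinement of Littlewood's inequality: for \(u(0)=0\) and \(\abs{w}\ge\frac12\) one has \(N_u(w)\le C(1-\abs{w})\norm{u}_{H^2}^2\), after which the outer integral is at once \(\lesssim\norm{u}_{H^2}^2\int_{\D}\abs{f'(w)}^2(1-\abs{w})\,dA(w)\asymp\norm{u}_{H^2}^2\norm{f}_{H^2}^2\). This refinement comes from Jensen's formula rather than from sub-averaging: applying Jensen to \(\sigma_w\circ u\) gives \(N_u(w)=\int_{\T}\log\bigl(\abs{\sigma_w(u(\zeta))}/\abs{w}\bigr)\,dm(\zeta)\), and since \(\int_{\T}u\,dm=u(0)=0\) you may subtract the first-order term of the expansion of \(\xi\mapsto\log\bigl(\abs{\sigma_w(\xi)}/\abs{w}\bigr)\) at \(\xi=0\) at no cost; the remainder is bounded above by \(C(1-\abs{w})\abs{\xi}^2\) for all \(\xi\in\closed{\D}\) (its quadratic coefficient has modulus \(\frac12\abs{1/w^2-\CONJ{w}^2}\asymp 1-\abs{w}\), the relevant bracket in the derivative vanishes identically when \(\abs{w}=1\), and near \(\xi=w\) the logarithmic singularity only helps an upper bound), which integrates to \(C(1-\abs{w})\norm{u}_{H^2}^2\). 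So: right reduction, right inner region, but the factor \(\norm{u}_{H^2}^2\) in the outer region must be extracted by this mean-zero cancellation in Jensen's formula, not by a Fubini pairing of the sub-mean-value property against the Carleson measure.
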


\begin{lem}\label{lem:WCO_Bounded} 
For \(f\in\BMOA_v\) and \(a\in \D\), we have
\begin{align*}
v(a) \gamma(\WCO f ,a,1 ) & \leq  \norm{f}_{\BMOA_v} \left(  \alpha(\psi,\phi,a)   +    v(a)   \norm{   \psi\circ\sigma_a -   \psi(a)   }_{H^2}          \frac{       \norm{\phi_a}_2  }{ v(\phi(a))  }  + \beta(\psi,\phi,a)   \right)
.
\end{align*}

\end{lem}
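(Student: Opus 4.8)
The plan is to expand $(\WCO f)\circ\sigma_a - (\WCO f)(a)$ by a telescoping splitting into three pieces, estimate each in the $H^1$-norm, and recognize the resulting quantities as $\alpha$, the middle cross term, and $\beta$. Recalling $(\WCO f)\circ\sigma_a(z) = \psi(\sigma_a(z)) f(\phi(\sigma_a(z)))$, I would use the identity
\begin{align*}
\psi(\sigma_a(z)) f(\phi(\sigma_a(z))) - \psi(a)f(\phi(a))
&= \psi(a)\left[f(\phi(\sigma_a(z))) - f(\phi(a))\right] \\
&\quad + \left[\psi(\sigma_a(z))-\psi(a)\right]\left[f(\phi(\sigma_a(z)))-f(\phi(a))\right] \\
&\quad + \left[\psi(\sigma_a(z))-\psi(a)\right]f(\phi(a)),
\end{align*}
which is verified by direct cancellation. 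Applying the triangle inequality for $\norm{\cdot}_{H^1}$ and multiplying by $v(a)$ reduces the claim to bounding the three resulting summands.

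The central computation is to rewrite the inner composition. Since $\sigma_{\phi(a)}$ is an involution, $\phi\circ\sigma_a = \sigma_{\phi(a)}\circ\phi_a$, where $\phi_a = \sigma_{\phi(a)}\circ\phi\circ\sigma_a$ is an analytic self-map of $\D$ satisfying $\phi_a(0)=0$. Hence, putting $F := f\circ\sigma_{\phi(a)} - f(\phi(a))$, which has $F(0)=0$ and lies in $H^2$, we get $f\circ\phi\circ\sigma_a - f(\phi(a)) = F\circ\phi_a$, so Lemma \ref{lem:laitilaLittlewood} yields
\[
\norm{f\circ\phi\circ\sigma_a - f(\phi(a))}_{H^2} \leq C\,\norm{F}_{H^2}\norm{\phi_a}_{H^2} = C\,\gamma(f,\phi(a),2)\norm{\phi_a}_{H^2}.
\]
For the first summand I would use $\norm{\cdot}_{H^1}\le\norm{\cdot}_{H^2}$ together with $v(\phi(a))\gamma(f,\phi(a),2)\le\norm{f}_{\BMOA_v}$, which turns its $v(a)$-weighted norm into $\alpha(\psi,\phi,a)\norm{f}_{\BMOA_v}$. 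For the cross (second) summand I would apply the boundary Cauchy--Schwarz inequality $\norm{gh}_{H^1}\le\norm{g}_{H^2}\norm{h}_{H^2}$ to factor it as $\norm{\psi\circ\sigma_a-\psi(a)}_{H^2}\cdot\norm{f\circ\phi\circ\sigma_a-f(\phi(a))}_{H^2}$, then reuse the displayed subordination estimate; after multiplying by $v(a)$ and again inserting $v(\phi(a))\gamma(f,\phi(a),2)\le\norm{f}_{\BMOA_v}$ this matches the middle term of the claimed bound.

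The third summand is the cleanest: $\norm{[\psi\circ\sigma_a-\psi(a)]f(\phi(a))}_{H^1}=\abs{f(\phi(a))}\gamma(\psi,a,1)$, and the pointwise estimate $\abs{f(\phi(a))}=\abs{\delta_{\phi(a)}f}\le\norm{\delta_{\phi(a)}}_{(\BMOA_v)^*}\norm{f}_{\BMOA_v}$ identifies its $v(a)$-weighted norm with $\beta(\psi,\phi,a)\norm{f}_{\BMOA_v}$. Summing the three estimates gives the assertion. The main obstacle is matching the native seminorm $\gamma(\WCO f,a,1)$, which necessarily lives in $H^1$ (this is essential, since $\psi$ is only assumed to lie in $\BMOA_{v,1}$), with the subordination tool Lemma \ref{lem:laitilaLittlewood}, which is an $H^2$ statement. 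The remedy is to pass through $\norm{\cdot}_{H^1}\le\norm{\cdot}_{H^2}$ for the single-function summands and through the $H^2\times H^2\to H^1$ factorization for the product summand, being careful that both composition factors $F$ and $\phi_a$ vanish at the origin so that Lemma \ref{lem:laitilaLittlewood} applies. The constant $C\ge 1$ from that lemma is $v$-independent and is absorbed into the $\BMOA_v$-norm (equivalently, the inequality is read up to an absolute constant).
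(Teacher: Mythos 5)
Your proposal is correct and follows essentially the same route as the paper: the identical three-term telescoping of \(\psi\circ\sigma_a\, f\circ\phi\circ\sigma_a - \psi(a)f(\phi(a))\), the rewriting \(f\circ\phi\circ\sigma_a - f(\phi(a)) = (f\circ\sigma_{\phi(a)} - f(\phi(a)))\circ\phi_a\) with both factors vanishing at the origin so that Lemma \ref{lem:laitilaLittlewood} applies, Cauchy--Schwarz for the cross term, and the dual-norm bound \(\abs{f(\phi(a))}\le\norm{\delta_{\phi(a)}}_{(\BMOA_v)^*}\norm{f}_{\BMOA_v}\) producing \(\beta\). Your closing remark about the constant \(C\) from Lemma \ref{lem:laitilaLittlewood} is also consistent with how the paper handles it (the inequality is read up to that absolute constant).
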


\begin{proof}
For a fixed \(a\in\D\) and \( f\in\BMOA_{v}\) we have
\begin{align*}
\gamma(\WCO f,a,1) &= \norm{ \psi\circ\sigma_a  f\circ\phi\circ\sigma_a - \psi\circ\sigma_a(0)  f\circ\phi\circ\sigma_a (0)   }_{H^1} \\
&\leq  \abs{\psi(a)}  \norm{f\circ\phi\circ\sigma_a -   f(\phi(a))  }_{H^1}  + \norm{  (  \psi\circ\sigma_a -   \psi(a)   )( f\circ\phi\circ\sigma_a -   f(\phi(a)) )  }_{H^1} \\
&\quad + \abs{f(\phi(a))} \norm{ \psi\circ\sigma_a - \psi(a)    }_{H^1}. 
\end{align*}
For the first term, we have
\begin{align*}
\norm{f\circ\phi\circ\sigma_a -   f(\phi(a))  }_{H^1} &\leq \norm{f\circ\phi\circ\sigma_a -   f(\phi(a))  }_{H^2}= \norm{f\circ\sigma_{\phi(a) }\circ \phi_a -   f(\phi(a))  }_{H^2} \\
&\stackrel{ \mathclap{ \text{Lemma } \ref{lem:laitilaLittlewood}  }  }{\leq} \quad \gamma(f,\phi(a),2) \norm{\phi_a}_{H^2} \leq \frac{      \norm{f}_*   \norm{\phi_a}_{H^2}  }{ v(\phi(a))  }.
\end{align*}

For the middle term, we apply Hölder's inequality and Lemma \ref{lem:laitilaLittlewood} to obtain 
\begin{equation}\label{eq:TheReasonForDefBeta}
\begin{split}
\norm{  (  \psi\circ\sigma_a -   \psi(a)   )( f\circ\phi\circ\sigma_a -   f(\phi(a)) )  }_{H^1}  &\leq  \norm{   \psi\circ\sigma_a -   \psi(a)   }_{H^2} \norm{  f\circ\phi\circ\sigma_a -   f(\phi(a))   }_{H^2}  \\
&\hspace{-5cm}   \leq  \norm{   \psi\circ\sigma_a -   \psi(a)   }_{H^2} \frac{      \norm{f}_*   \norm{\phi_a}_{H^2}  }{ v(\phi(a))  } 
\end{split}
\end{equation}

For the last term, we have

\begin{align*}
 \abs{f(\phi(a))} \norm{ \psi\circ\sigma_a - \psi(a)    }_1 \leq   \norm{\delta_{\phi(a)}}_{(\BMOA_{v})^*}  \norm{f}_{\BMOA_v} \norm{ \psi\circ\sigma_a - \psi(a)    }_{H^1}
\end{align*}
 and the statement follows.

\end{proof}

\begin{lem}\label{lem:AlphaBounded}
Let \(g\colon [1,\infty[\to ]0,\infty[ \) be almost increasing, \smash{\(x\mapsto x\, g(\frac{1}{x})\)} be almost increasing on \(]0,1]\) and \(v(z) \asymp  g(\frac{1}{1-\abs{z}}) \). Let \(X=\BMOA_v\) or \(X=\VMOA_v\). If \(\WCO\in \BOP(X)\), then
\[
 \alpha(\psi,\phi,a)    \lesssim_{v,g}    \norm{\WCO \gAlpha_a}_{\BMOA_v} +   \frac{ v(a) \norm{  (\psi\circ \sigma_a - \psi(a))  }_{H^2}  }{  v(\phi(a))  } \lesssim_v  \norm{\WCO}_{\BOP(X)} .
\]

\end{lem}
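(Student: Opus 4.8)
The plan is to test the operator against the unit-norm functions $\gAlpha_a$ constructed in Section~\ref{sec:FuncAlphaBeta} and to recover $\alpha$ from a simple algebraic decomposition. First I would collect the properties of $\gAlpha_a$ that are available under the present hypotheses. The assumption that $x\mapsto x\,g(\tfrac1x)$ is almost increasing is exactly the sufficient condition in Lemma~\ref{lem:coolStuff}, so by \eqref{eq:coolStuff} we have $\sup_{a}\norm{\fAlpha}_{\BMOA_v}<\infty$, while the equality in \eqref{eq:gAlphaEquation} (evaluated at $b=\phi(a)$) gives $\norm{\fAlpha}_{\BMOA_v}\geq 1$; since $\fAlpha(0)=0$ this yields $\norm{\gAlpha_a}_{\BMOA_v}=1$ and $\gAlpha_a\in\VMOA_v\subset X$. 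I also record the pointwise bound $\norm{\gAlpha_a}_\infty\leq 2/v(\phi(a))$, which follows from $\abs{\sigma_{\phi(a)}(z)}\leq 1$ and $\norm{\fAlpha}_{\BMOA_v}\geq 1$, together with the relation $\alpha(\psi,\phi,a)\lesssim v(a)\norm{\psi(a)(\gAlpha_a\circ\phi\circ\sigma_a-\gAlpha_a(\phi(a)))}_{H^2}$ established in the $\alpha$-subsection.

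Writing $h=\gAlpha_a$ and using $(\WCO h)\circ\sigma_a=(\psi\circ\sigma_a)(h\circ\phi\circ\sigma_a)$ together with $(\WCO h)(a)=\psi(a)h(\phi(a))$, the key identity is
\[
\psi(a)\big(h\circ\phi\circ\sigma_a-h(\phi(a))\big)=\big[(\WCO h)\circ\sigma_a-(\WCO h)(a)\big]-\big(\psi\circ\sigma_a-\psi(a)\big)\big(h\circ\phi\circ\sigma_a\big).
\]
Taking $H^2$ norms, multiplying by $v(a)$, and invoking the relation for $\alpha$, the first bracket contributes $v(a)\gamma(\WCO\gAlpha_a,a,2)\leq\norm{\WCO\gAlpha_a}_{*,v,2}\leq\norm{\WCO\gAlpha_a}_{\BMOA_v}$, since the $\BMOA_v$-norm uses $p=2$. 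For the cross term I would pass to the sup-norm via $\norm{h\circ\phi\circ\sigma_a}_\infty\leq\norm{h}_\infty\leq 2/v(\phi(a))$, giving the bound $\tfrac{2v(a)}{v(\phi(a))}\norm{\psi\circ\sigma_a-\psi(a)}_{H^2}$. Combining these two estimates produces the first inequality of the lemma.

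For the second inequality I would argue directly from operator norms. Since $\norm{\gAlpha_a}_X=\norm{\gAlpha_a}_{\BMOA_v}=1$, we get $\norm{\WCO\gAlpha_a}_{\BMOA_v}\leq\norm{\WCO}_{\BOP(X)}$. For the remaining term, observe that $\psi=\WCO\mathbf 1$ with $\norm{\mathbf 1}_X=1$, so $\norm{\psi}_X\leq\norm{\WCO}_{\BOP(X)}$; hence $\tfrac{v(a)}{v(\phi(a))}\norm{\psi\circ\sigma_a-\psi(a)}_{H^2}=\tfrac{1}{v(\phi(a))}\,v(a)\gamma(\psi,a,2)\leq\tfrac{1}{v(\phi(a))}\norm{\psi}_{*,v,2}\lesssim_v\norm{\WCO}_{\BOP(X)}$, where the last step uses that $v(z)\asymp g(\tfrac{1}{1-\abs{z}})\gtrsim g(1)>0$ is bounded below (so $1/v(\phi(a))\lesssim_v 1$). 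The genuine content of the argument is entirely packaged into the test functions and their three recorded properties; the only point requiring care is the cross term, where one must descend to the sup-norm bound $\norm{\gAlpha_a}_\infty\lesssim 1/v(\phi(a))$ to produce precisely the factor $v(a)/v(\phi(a))$ appearing in the statement, and the boundedness-below of $v$ that closes the final estimate.
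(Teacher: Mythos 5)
Your proposal is correct and takes essentially the same route as the paper's proof: the identical decomposition \(\psi(a)\bigl(h\circ\phi\circ\sigma_a-h(\phi(a))\bigr)=\bigl[(\WCO h)\circ\sigma_a-(\WCO h)(a)\bigr]-(\psi\circ\sigma_a-\psi(a))\,h\circ\phi\circ\sigma_a\), the same sup-norm bound \(\norm{h\circ\phi\circ\sigma_a}_\infty\leq 2/v(\phi(a))\) for the cross term, and the same \(\psi=\WCO 1\) argument with \(v\) bounded below to close the second inequality. The only cosmetic difference is that you work with the normalized \(\gAlpha_a\) throughout, whereas the paper runs the computation with \(\fAlpha\) and absorbs \(\norm{\fAlpha}_{\BMOA_v}\asymp_{v,g}1\) (from Lemma \ref{lem:coolStuff} via \eqref{eq:coolStuff}) into the constants.
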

\begin{proof}
Invoking theory from Section \ref{sec:FuncAlphaBeta} and using
\[
v(\phi(a)) (\fAlpha  \circ \phi\circ \sigma_a - \fAlpha ( \phi(a))) = \sigma_{\phi(a)} \circ \phi\circ \sigma_a = \phi_a,
\]
we can conclude that

\begin{equation}\label{eq:AnUsefulEq}
\begin{split}
\alpha(\psi,\phi,a)  &= \frac{v(a)}{  v( \phi(a) )   }  \abs{\psi(a)} \norm{\phi_a}_{H^2} = v(a) \norm{ \psi(a) ( \fAlpha  \circ \phi\circ \sigma_a - \fAlpha ( \phi(a))   )   }_{H^2} \\
& \leq   v(a) \norm{ \psi\circ \sigma_a  \fAlpha  \circ \phi\circ \sigma_a  - \psi(a)   \fAlpha ( \phi(a))     }_{H^2} \\
&\quad + v(a) \norm{  (\psi\circ \sigma_a - \psi(a))  \fAlpha  \circ \phi\circ \sigma_a     }_{H^2}.
\end{split}
\end{equation}

For the first term, Lemma \ref{lem:coolStuff} yields:

\begin{align*}
v(a) \norm{ \psi\circ \sigma_a  \fAlpha  \circ \phi\circ \sigma_a  - \psi(a)   \fAlpha ( \phi(a))     }_{H^2} &\leq  \norm{\WCO \fAlpha }_{\BMOA_v} \leq   \norm{\WCO}_{\BOP(X)}  \norm{\fAlpha }_{\BMOA_v} \\
&\lesssim_{v,g}     \norm{\WCO}_{\BOP(X)}.
\end{align*}

For the second term, we have

\begin{align*}
 v(a) &\norm{  (\psi\circ \sigma_a - \psi(a))  \fAlpha  \circ \phi\circ \sigma_a     }_{H^2}  \leq  \norm{  \fAlpha  \circ \phi\circ \sigma_a     }_\infty  v(a) \norm{  (\psi\circ \sigma_a - \psi(a))  }_{H^2}\\
&\leq 2 \frac{ v(a) \norm{  (\psi\circ \sigma_a - \psi(a))  }_{H^2}  }{  v(\phi(a))  }  \leq 2 \frac{ \norm{\psi}_* }{  v(\phi(a))  }  \lesssim_v  \norm{\WCO 1}_*   \leq \norm{\WCO }_{\BOP(X)} .
\end{align*}

\end{proof}

\begin{lem}\label{lem:BetaBounded}
Let \(v\) be admissible and assume \(\WCO\in \BOP(X) \), where \(X=\BMOA_v\) or \(X=\VMOA_v\). Then
\begin{align*}
 \beta(\psi,\phi,a)   &\lesssim_{v,g} \norm{\WCO \gBeta_a}_{*,1} +   \alpha(\psi,\phi,a)   + \frac{ v(a) \norm{  \psi\circ\sigma_a - \psi(a)     }_{H^2}  \norm{\phi_a }_{H^2}       }{ v(\phi(a))   } \lesssim_{v,g}   \norm{\WCO }_{\BOP(X)}.
\end{align*}

\end{lem}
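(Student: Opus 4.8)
The plan is to run the argument in parallel with Lemma \ref{lem:AlphaBounded}, now using the test function $\gBeta_a$ and the relation \eqref{eq:relationBeta}, which already reduces the task to estimating $v(a)\abs{\gBeta_a(\phi(a))}\gamma(\psi,a,1)$. The starting point is the elementary identity, obtained by adding and subtracting $\psi(a)$ and $\gBeta_a(\phi(a))$ in the product $(\WCO\gBeta_a)\circ\sigma_a = (\psi\circ\sigma_a)(\gBeta_a\circ\phi\circ\sigma_a)$,
\begin{align*}
\gBeta_a(\phi(a))\,(\psi\circ\sigma_a - \psi(a)) &= \big[(\WCO\gBeta_a)\circ\sigma_a - (\WCO\gBeta_a)(a)\big] \\
&\quad - (\psi\circ\sigma_a - \psi(a))(\gBeta_a\circ\phi\circ\sigma_a - \gBeta_a(\phi(a))) \\
&\quad - \psi(a)\,(\gBeta_a\circ\phi\circ\sigma_a - \gBeta_a(\phi(a))),
\end{align*}
where one uses $(\WCO\gBeta_a)(a) = \psi(a)\gBeta_a(\phi(a))$. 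Taking $H^1$-norms, multiplying by $v(a)$, and invoking the triangle inequality leaves exactly three terms to control.

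For the first term I would simply observe that $v(a)\gamma(\WCO\gBeta_a,a,1)\le\norm{\WCO\gBeta_a}_{*,1}$, since the semi-norm is a supremum over all $a\in\D$. For the second term, Hölder's inequality together with the decomposition $\gBeta_a\circ\phi\circ\sigma_a = (\gBeta_a\circ\sigma_{\phi(a)})\circ\phi_a$ (noting $\phi_a(0)=0$) and Lemma \ref{lem:laitilaLittlewood} give
\[
v(a)\norm{(\psi\circ\sigma_a - \psi(a))(\gBeta_a\circ\phi\circ\sigma_a - \gBeta_a(\phi(a)))}_{H^1} \lesssim_{v,g} v(a)\norm{\psi\circ\sigma_a - \psi(a)}_{H^2}\,\gamma(\gBeta_a,\phi(a),2)\,\norm{\phi_a}_{H^2}.
\]
The crucial point here is that $v(\phi(a))\gamma(\gBeta_a,\phi(a),2)\le\norm{\gBeta_a}_{*,v,2}\le\norm{\gBeta_a}_{\BMOA_v}=1$ directly, because $\gBeta_a$ is normalised in the $p=2$ norm; this reproduces the mixed term $v(a)\norm{\psi\circ\sigma_a-\psi(a)}_{H^2}\norm{\phi_a}_{H^2}/v(\phi(a))$. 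The third term is handled identically, using $\norm{\cdot}_{H^1}\le\norm{\cdot}_{H^2}$ and the same bound on $\gamma(\gBeta_a,\phi(a),2)$, to produce $v(a)\abs{\psi(a)}\norm{\phi_a}_{H^2}/v(\phi(a)) = \alpha(\psi,\phi,a)$. Combining the three estimates with \eqref{eq:relationBeta} yields the first displayed inequality.

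For the final inequality I would bound each of the three quantities by $\norm{\WCO}_{\BOP(X)}$. Since $\gBeta_a\in\VMOA_v\subset X$ with $\norm{\gBeta_a}_{\BMOA_v}=1$, Proposition \ref{prop:implicationOfJohnNirenberg} (valid because $v$ is admissible) gives the equivalence $\norm{\cdot}_{*,1}\asymp_{v,g}\norm{\cdot}_{*,v,2}$, so $\norm{\WCO\gBeta_a}_{*,1}\lesssim_{v,g}\norm{\WCO\gBeta_a}_{\BMOA_v}\le\norm{\WCO}_{\BOP(X)}$. The middle term $\alpha(\psi,\phi,a)$ is bounded by $\norm{\WCO}_{\BOP(X)}$ directly from Lemma \ref{lem:AlphaBounded}. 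For the mixed term I would use that $\phi_a$ is an analytic self-map of $\D$, whence $\norm{\phi_a}_{H^2}\le 1$, and then appeal to the second chain of inequalities in Lemma \ref{lem:AlphaBounded}, which already supplies $v(a)\norm{\psi\circ\sigma_a-\psi(a)}_{H^2}/v(\phi(a))\lesssim_v\norm{\WCO}_{\BOP(X)}$.

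I expect no conceptual obstacle; the difficulty is bookkeeping and keeping track of which parameter lives in which norm. The one genuinely delicate point is that the Hölder split must place the oscillation of $\psi$ in $H^2$ (so that it connects to Lemma \ref{lem:AlphaBounded}) while the estimate on $\WCO\gBeta_a$ is kept in the $p=1$ semi-norm; these two worlds are reconciled only through the $p$-invariance of Proposition \ref{prop:implicationOfJohnNirenberg}, which is exactly where the admissibility of $v$ and the choice of the parameter $1$ in $\gamma(\psi,a,1)$ in the definition of $\beta$ pay off.
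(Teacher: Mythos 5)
Your proposal is correct and follows essentially the same route as the paper's proof: the reduction via \eqref{eq:relationBeta}, the identical three-term decomposition of \(\gBeta_a(\phi(a))(\psi\circ\sigma_a-\psi(a))\) using \((\WCO\gBeta_a)\circ\sigma_a\), the same H\"older plus Lemma \ref{lem:laitilaLittlewood} estimates exploiting \(\norm{\gBeta_a}_{\BMOA_v}=1\), and the same final appeal to Lemma \ref{lem:AlphaBounded} together with the \(p\)-invariance of Proposition \ref{prop:implicationOfJohnNirenberg}. The only cosmetic difference is that the paper routes the mixed term through \(\norm{\WCO 1}_{\BMOA_v}/v(0)\) while you invoke \(\norm{\phi_a}_{H^2}\leq 1\) and the inner chain of Lemma \ref{lem:AlphaBounded}, which amounts to the same estimate.
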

\begin{proof}
Theory from Section \ref{sec:FuncAlphaBeta}, Hölder's inequality and Lemma \ref{lem:laitilaLittlewood} yield
\begin{align*}
\beta(\psi,\phi,a)     &\lesssim_{v,g}   v(a) \norm{  \gBeta_a(\phi(a)) ( \psi\circ\sigma_a - \psi(a)   )      }_{H^1} \\
&= v(a) \norm{   (\psi\circ\sigma_a) \gBeta_a\circ\phi\circ\sigma_a - \psi(a)   \gBeta_a(\phi(a))       }_{H^1}\\ 
&\quad +v(a) \norm{ (\psi\circ\sigma_a - \psi(a))  (\gBeta_a(\phi(a))     -    \gBeta_a\circ\phi\circ\sigma_a)      }_{H^1}\\
&\quad + v(a) \norm{   \psi(a) (\gBeta_a(\phi(a))     -    \gBeta_a\circ\phi\circ\sigma_a)       }_{H^1}\\
&\leq  \norm{\WCO \gBeta_a}_{*,1}  + \bigg(  \frac{ v(a) \norm{  \psi\circ\sigma_a - \psi(a)     }_{H^2}  \norm{\phi_a }_{H^2} }{v(\phi(a))} +  \alpha(\psi,\phi,a) \bigg)  \norm{   \gBeta_a   }_{\BMOA_v}    \\
& \lesssim_{v,g} \norm{\WCO}_{\BOP(X)}  +  \frac{  \norm{ \WCO 1}_{\BMOA_v}        }{ v(0)   } + \alpha(\psi,\phi,a)  .
\end{align*}
Lemma \ref{lem:AlphaBounded} gives the statement. 

\end{proof}

\begin{thm}[Boundedness]\label{thm:BoundednessOfWCO}
For an admissible weight \(v\), we have
\[
\WCO\in \BOP(\BMOA_{v} ) \ \Longleftrightarrow \  \sup_{a\in\D} (\alpha(\psi,\phi,a) + \beta(\psi,\phi,a))<\infty.
\]

More specifically,
\[
\norm{\WCO}_{ \BOP(\BMOA_v) } \asymp_{v,g}    \abs{\psi(0)} \left( 1+h(\abs{\phi(0)}) \right) +    \sup_{a\in\D} \alpha(\psi,\phi,a) +   \sup_{a\in\D} \beta(\psi,\phi,a),
\]
where
\[
\PEF\colon  z\mapsto \int_0^z  \frac{ dt}{   (1-t) g(\frac{1}{1-t})   }.
\]

\end{thm}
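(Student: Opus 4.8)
The plan is to establish the two-sided norm estimate, from which the stated equivalence follows immediately: boundedness of $\WCO$ forces the right-hand side to be finite through the estimate, while conversely finiteness of $\sup_a(\alpha+\beta)$ makes every term on the right finite (note that $\phi(0)\in\D$ keeps $\PEF(\abs{\phi(0)})$ finite, and $\sup_a\beta<\infty$ already forces $\psi\in\BMOA_{v,1}$, as remarked after the definition of $\beta$). Throughout I work with the $p=1$ realisation of the norm, which is legitimate because $v$ is admissible and Proposition \ref{prop:implicationOfJohnNirenberg} gives $\BMOA_v=\BMOA_{v,1}$ with equivalent norms, so that $\norm{\WCO f}_{\BMOA_v}\asymp_{v,g}\abs{(\WCO f)(0)}+\sup_{a\in\D}v(a)\gamma(\WCO f,a,1)$.

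For the upper bound I start from Lemma \ref{lem:WCO_Bounded}, which controls $v(a)\gamma(\WCO f,a,1)$ by $\norm{f}_{\BMOA_v}$ times $\alpha(\psi,\phi,a)+\beta(\psi,\phi,a)$ plus a middle term $v(a)\norm{\psi\circ\sigma_a-\psi(a)}_{H^2}\norm{\phi_a}_{H^2}/v(\phi(a))$. This middle term is absorbed into $\sup_a\beta$ as follows: since $\phi_a$ is a self-map of $\D$ fixing $0$ we have $\norm{\phi_a}_{H^2}\leq 1$, and $v$ is bounded below (as $g$ is positive and almost increasing on $[\tfrac12,\infty[$), so the middle term is $\lesssim v(a)\gamma(\psi,a,2)\leq\norm{\psi}_{*,v,2}$; by Proposition \ref{prop:implicationOfJohnNirenberg} the latter is comparable to $\norm{\psi}_{*,v,1}$, and since $\norm{\delta_z}_{X^*}\gtrsim 1$ by Corollary \ref{cor:EvaluationBoundedOnBMOAv} one gets $\norm{\psi}_{*,v,1}\lesssim\sup_a\beta$. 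Taking the supremum over $a$ bounds $\sup_a v(a)\gamma(\WCO f,a,1)$ by $\norm{f}_{\BMOA_v}\sup_a(\alpha+\beta)$. The constant term is handled via $\abs{(\WCO f)(0)}=\abs{\psi(0)}\abs{f(\phi(0))}\leq\abs{\psi(0)}\norm{\delta_{\phi(0)}}_{X^*}\norm{f}_{\BMOA_v}$ together with $\norm{\delta_{\phi(0)}}_{X^*}\asymp_{v,g}1+\PEF(\abs{\phi(0)})$ from Corollary \ref{cor:EvaluationBoundedOnBMOAv}. This yields $\norm{\WCO}_{\BOP(\BMOA_v)}\lesssim_{v,g}\abs{\psi(0)}(1+\PEF(\abs{\phi(0)}))+\sup_a(\alpha+\beta)$.

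For the lower bound the $\alpha$ and $\beta$ contributions are immediate from Lemmas \ref{lem:AlphaBounded} and \ref{lem:BetaBounded}, each of which already bounds $\alpha(\psi,\phi,a)$ and $\beta(\psi,\phi,a)$ by $\norm{\WCO}_{\BOP(\BMOA_v)}$ uniformly in $a$. To recover the term $\abs{\psi(0)}(1+\PEF(\abs{\phi(0)}))$, I test $\WCO$ on the unit-norm function $\gBeta_0$: evaluating at the origin gives $\abs{\psi(0)}\,\abs{\gBeta_0(\phi(0))}=\abs{(\WCO\gBeta_0)(0)}\leq\norm{\WCO\gBeta_0}_{\BMOA_v}\leq\norm{\WCO}_{\BOP(\BMOA_v)}$, while the estimates \eqref{eq:lowerBoundForfBeta} and \eqref{eq:BoundsForfBeta} established for $\gBeta$ in Subsection \ref{subsec:BetaFunc} give $\abs{\gBeta_0(\phi(0))}\gtrsim_{v,g}\norm{\delta_{\phi(0)}}_{X^*}\asymp_{v,g}1+\PEF(\abs{\phi(0)})$. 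Adding the three lower bounds completes the equivalence $\asymp_{v,g}$.

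The main obstacle is the middle term in Lemma \ref{lem:WCO_Bounded}: it is measured in the $H^2$ metric while $\beta$ is built from the $H^1$ metric $\gamma(\psi,a,1)$, so it cannot be absorbed term by term without the independence of the parameter $p$ supplied by Proposition \ref{prop:implicationOfJohnNirenberg}. This is precisely where admissibility of $v$ is indispensable, through the growth restriction \ref{eq:mainAssumpA1} and the resulting almost-increasing property of $x\mapsto v(1-x)x^{1/p-\epsilon_0}$; the simpler unweighted argument does not transfer directly.
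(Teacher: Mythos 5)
Your proposal is correct and follows essentially the same route as the paper's own proof: the upper bound via Lemma \ref{lem:WCO_Bounded}, absorbing the middle term through \(\norm{\phi_a}_{H^2}\leq 1\), the lower bound on \(v\), and the \(p\)-independence of Proposition \ref{prop:implicationOfJohnNirenberg}, plus Corollary \ref{cor:EvaluationBoundedOnBMOAv} for the evaluation at the origin; and the lower bound via Lemmas \ref{lem:AlphaBounded} and \ref{lem:BetaBounded} together with testing on \(\gBeta_0\). The only cosmetic difference is that the paper additionally tests on the constant function \(1\) to isolate \(\abs{\psi(0)}\), whereas you absorb that term directly using \(\abs{\gBeta_0(\phi(0))}\gtrsim_{v,g}\norm{\delta_{\phi(0)}}_{(\BMOA_v)^*}\gtrsim 1\), which is an immaterial variation.
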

\begin{proof}
Since \(\norm{\psi}_* \lesssim_{v,g} \norm{\psi}_{*,1} \leq \sup_{a\in\D}\beta(\psi,\phi,a)   \), Proposition \ref{prop:implicationOfJohnNirenberg} and Lemma \ref{lem:WCO_Bounded} yield
\begin{align*}
\sup_{\norm{f}_{\BMOA_v}\leq 1}  \sup_{a\in\D} v(a) \gamma(\WCO f ,a,2 ) &\lesssim_{v,g}   \sup_{a\in\D} \left(  \alpha(\psi,\phi,a)   +    \norm{   \psi  }_*             \frac{       \norm{\phi_a}_2  }{ v(\phi(a))  }  + \beta(\psi,\phi,a)   \right)\\
&\lesssim_{v,g}    \sup_{a\in\D} \alpha(\psi,\phi,a) + \sup_{a\in\D} \beta(\psi,\phi,a).
\end{align*}
Moreover,
\[
\sup_{\norm{f}_{\BMOA_v}\leq 1}  \abs{(\WCO f)(0)} \leq \abs{\psi(0)} \norm{ \delta_{\phi(0)} }_{\BMOA_v^*} \lesssim_{v,g} \abs{\psi(0)} \left( 1+h(\abs{\phi(0)}) \right)
\]
and we can conclude that
\[
\norm{\WCO}_{\BOP(\BMOA_v)} \lesssim_{v,g}    \abs{\psi(0)} \left( 1+h(\abs{\phi(0)}) \right) +    \sup_{a\in\D} \alpha(\psi,\phi,a) +    \sup_{a\in\D} \beta(\psi,\phi,a) . 
\]
For the lower estimate, we have 
\begin{align*}
 \abs{\psi(0)} \left( 1+h(\abs{\phi(0)}) \right) &\lesssim_{v,g}  \abs{\psi(0)} +   \abs{\psi(0)}  \gBeta_{0}( \phi(0)  )  \leq   \norm{\WCO 1 }_{\BMOA_v}  +  (\WCO \gBeta_{0}) (0)  \\
&\lesssim_{v,g}   \norm{\WCO }_{ \BOP(\BMOA_v) },
\end{align*}
after which Lemmas \ref{lem:AlphaBounded} and \ref{lem:BetaBounded} yield the lower bound for \(\norm{\WCO}_{ \BOP(\BMOA_v) } \).

\end{proof}

Notice that the condition given in Proposition \ref{prop:implicationOfJohnNirenberg} is sufficient to prove that
\[
\WCO\in \BOP(\BMOA_{v} ) \ \Longleftarrow \  \sup_{a\in\D} (\alpha(\psi,\phi,a) +  \sup_{a\in\D}  \beta(\psi,\phi,a))<\infty.
\]

The following corollary can be compared to \cite[Proposition 4.1]{Laitila-2009} with a slightly different proof.

\begin{cor}\label{cor:BoundednessOfWCO_VMOA}
For an admissible weight \(v\), the following statements are equivalent:
\begin{itemize}
\item \( \WCO\in \BOP(\VMOA_{v} ) \)
\item \( \sup_{a\in\D} (\alpha(\psi,\phi,a) + \beta(\psi,\phi,a))<\infty\) and \( \psi,\psi\phi\in\VMOA_v \)
\item \( \sup_{a\in\D} (\alpha(\psi,\phi,a) + \beta(\psi,\phi,a))<\infty\), \( \psi\in\VMOA_v \) and  \( \lim_{\abs{a}\to 1} v(a) \psi(a)  \gamma(\phi,a,2)  = 0\).
\end{itemize}

More specifically, if \(\WCO \colon \VMOA_v \to \VMOA_v \), then
\[
\norm{\WCO}_{\BOP(\VMOA_v)} \asymp_{v,g}    \abs{\psi(0)} \left( 1+h(\abs{\phi(0)}) \right) +    \sup_{a\in\D} \alpha(\psi,\phi,a) +   \sup_{a\in\D} \beta(\psi,\phi,a) . 
\]
\end{cor}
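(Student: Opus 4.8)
The plan is to reduce everything to the already-established $\BMOA_v$ theory of Theorem \ref{thm:BoundednessOfWCO} and to the density of polynomials in $\VMOA_v$ from Proposition \ref{prop:ConvergenceInVMOAV}, proving the chain (1)$\Rightarrow$(2), the equivalence (2)$\Leftrightarrow$(3), and (2)$\Rightarrow$(1), after which the displayed norm estimate is extracted from Theorem \ref{thm:BoundednessOfWCO}. Throughout I work with the parameter $p=2$ in the $\VMOA$-condition, which is legitimate since $\VMOA_{v,2}=\VMOA_{v,1}$ with the equivalent $\limsup$-seminorms of Proposition \ref{prop:implicationOfJohnNirenberg}; thus $f\in\VMOA_v$ if and only if $\lim_{\abs{a}\to 1}v(a)\gamma(f,a,2)=0$. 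For (1)$\Rightarrow$(2): Lemmas \ref{lem:AlphaBounded} and \ref{lem:BetaBounded} applied with $X=\VMOA_v$ give $\sup_a(\alpha(\psi,\phi,a)+\beta(\psi,\phi,a))\lesssim_{v,g}\norm{\WCO}_{\BOP(\VMOA_v)}<\infty$, while, since the constant function $1$ and the identity $z$ lie in $\VMOA_v$ (polynomials, Proposition \ref{prop:polynomialsInBMOAV}), their images $\WCO 1=\psi$ and $\WCO z=\psi\phi$ lie in $\VMOA_v$.

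For (2)$\Leftrightarrow$(3) I use the common hypotheses $\sup_a(\alpha+\beta)<\infty$ and $\psi\in\VMOA_v$ and expand
\begin{align*}
(\psi\phi)\circ\sigma_a - (\psi\phi)(a) &= (\psi\circ\sigma_a-\psi(a))(\phi\circ\sigma_a-\phi(a)) + (\psi\circ\sigma_a-\psi(a))\phi(a) \\
&\quad + \psi(a)(\phi\circ\sigma_a-\phi(a)).
\end{align*}
Taking $H^2$-norms and using $\abs{\phi\circ\sigma_a-\phi(a)}\leq 2$ (as $\abs{\phi}\leq 1$) together with $\abs{\phi(a)}\leq 1$, the first two summands are each bounded by a constant times $\gamma(\psi,a,2)$, so after multiplication by $v(a)$ they tend to $0$ because $\psi\in\VMOA_v$. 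Consequently $v(a)\gamma(\psi\phi,a,2)$ and $v(a)\abs{\psi(a)}\gamma(\phi,a,2)$ differ by a quantity that vanishes as $\abs{a}\to 1$, which yields the equivalence of $\psi\phi\in\VMOA_v$ with $\lim_{\abs{a}\to 1}v(a)\psi(a)\gamma(\phi,a,2)=0$.

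For (2)/(3)$\Rightarrow$(1): Theorem \ref{thm:BoundednessOfWCO} turns $\sup_a(\alpha+\beta)<\infty$ into $\WCO\in\BOP(\BMOA_v)$. Since polynomials are dense in $\VMOA_v$ and $\VMOA_v$ is closed in $\BMOA_v$, it suffices to show $\WCO z^n=\psi\phi^n\in\VMOA_v$ for all $n$; then for $f\in\VMOA_v$ one approximates by polynomials $p_k\to f$ in $\BMOA_v$, whence $\WCO p_k\to\WCO f$ with each $\WCO p_k\in\VMOA_v$, forcing $\WCO f\in\VMOA_v$, and boundedness on the larger space restricts to $\VMOA_v$. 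To see $\psi\phi^n\in\VMOA_v$, write
\[
(\phi\circ\sigma_a)^n-\phi(a)^n = (\phi\circ\sigma_a-\phi(a))\sum_{j=0}^{n-1}(\phi\circ\sigma_a)^j\phi(a)^{n-1-j},
\]
whose sum has modulus at most $n$; splitting $\psi\circ\sigma_a=(\psi\circ\sigma_a-\psi(a))+\psi(a)$ and again invoking $\abs{\phi\circ\sigma_a-\phi(a)}\leq 2$ bounds $v(a)\gamma(\psi\phi^n,a,2)$ by a constant multiple of $v(a)\gamma(\psi,a,2)+n\,v(a)\abs{\psi(a)}\gamma(\phi,a,2)$, both of which tend to $0$ by $\psi\in\VMOA_v$ and condition (3).

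Finally, the norm equivalence: the upper bound follows from $\norm{\WCO}_{\BOP(\VMOA_v)}\leq\norm{\WCO}_{\BOP(\BMOA_v)}$ (the $\BMOA_v$-norm restricts to $\VMOA_v$) and Theorem \ref{thm:BoundednessOfWCO}, while the lower bound is obtained exactly as in the proof of Theorem \ref{thm:BoundednessOfWCO}, testing on $1$, $\gAlpha_a$ and $\gBeta_{0}$, all of which belong to $\VMOA_v$, and applying Lemmas \ref{lem:AlphaBounded} and \ref{lem:BetaBounded}. I expect the main obstacle to be the verification that the monomials map into $\VMOA_v$, i.e.\ the product estimate for $\psi\phi^n$; the decisive simplifications are that $\abs{\phi}\leq 1$ converts the mixed oscillation factor into $\gamma(\psi,\cdot,2)$, and that condition (3) is designed precisely to annihilate the remaining $v(a)\abs{\psi(a)}\gamma(\phi,a,2)$ term.
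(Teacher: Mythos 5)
Your proof is correct, and while the overall scaffolding matches the paper's (the implication (1)$\Rightarrow$(2) via Lemmas \ref{lem:AlphaBounded} and \ref{lem:BetaBounded} applied to $\WCO 1$ and $\WCO z$, the reduction of (2)/(3)$\Rightarrow$(1) to polynomials via Theorem \ref{thm:BoundednessOfWCO}, Proposition \ref{prop:ConvergenceInVMOAV} and closedness of $\VMOA_v$, and the norm estimate by restriction plus the same test functions), the decisive step is handled by a genuinely different argument. The paper treats an arbitrary polynomial $f$ and bounds the oscillation term $v(a)\abs{\psi(a)}\norm{f\circ\phi\circ\sigma_a - f(\phi(a))}_{H^2}$ by $\alpha(\psi,\phi,a)\,v(\phi(a))\gamma(f,\phi(a),2)$ using the Littlewood-type composition estimate (Lemma \ref{lem:laitilaLittlewood}), and then needs a subsequence dichotomy on $(\phi(a_n))$ — either $\abs{\phi(a_{n_k})}\to 1$, where $\sup_a\alpha<\infty$ and $f\in\VMOA_v$ finish the job, or $\phi(a_{n_k})$ stays in a compact set, where $\norm{\phi_a}_{H^2}\asymp\norm{\phi\circ\sigma_a-\phi(a)}_{H^2}$ and the inequality $\psi(a)\norm{\phi\circ\sigma_a-\phi(a)}_{H^2}\le\gamma(\psi\phi,a,2)+\gamma(\psi,a,2)$ are invoked. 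You instead work only with monomials and use the elementary factorization $(\phi\circ\sigma_a)^n-\phi(a)^n=(\phi\circ\sigma_a-\phi(a))\sum_{j=0}^{n-1}(\phi\circ\sigma_a)^j\phi(a)^{n-1-j}$, exploiting $\abs{\phi}\le 1$ a.e.\ on $\T$ to get $v(a)\gamma(\psi\phi^n,a,2)\lesssim v(a)\gamma(\psi,a,2)+n\,v(a)\abs{\psi(a)}\gamma(\phi,a,2)$, which condition (3) kills directly — no Lemma \ref{lem:laitilaLittlewood}, no dichotomy, at the harmless cost of degree-dependent constants. A further merit of your route is that you isolate (2)$\Leftrightarrow$(3) as a standalone step via the exact identity $(\psi\phi)\circ\sigma_a-(\psi\phi)(a)=(\psi\circ\sigma_a-\psi(a))(\phi\circ\sigma_a-\phi(a))+(\psi\circ\sigma_a-\psi(a))\phi(a)+\psi(a)(\phi\circ\sigma_a-\phi(a))$; the paper only proves the direction (2)$\Rightarrow$(3) explicitly inside its (2)$\Rightarrow$(1) argument and leaves (3)$\Rightarrow$(2) implicit, so your write-up is in this respect more complete. (Your appeal to Proposition \ref{prop:implicationOfJohnNirenberg} for $p$-invariance is unnecessary — $\VMOA_v=\VMOA_{v,2}$ by the paper's convention — but harmless.)
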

\begin{proof}
Assume first \(\WCO\in \BOP(\VMOA_{v} )\). According to Proposition \ref{prop:polynomialsInBMOAV} the maps \(z\mapsto 1\) and \(z\mapsto z\) belong to \(\VMOA_{v}\) and by assumption, we can conclude \(\psi,\psi\phi \in \VMOA_v\). By Lemmas \ref{lem:AlphaBounded} and \ref{lem:BetaBounded},we have \(\sup_{a\in\D} (\alpha(\psi,\phi,a) + \beta(\psi,\phi,a))<\infty\). On the other hand, assume  \(\psi,\psi\phi \in \VMOA_v\) and \(\sup_{a\in\D} (\alpha(\psi,\phi,a) + \beta(\psi,\phi,a))<\infty\). Theorem \ref{thm:BoundednessOfWCO} yields \(\WCO\colon \VMOA_v \to \BMOA_v\) is bounded. All that is left to prove is that the codomain is \(\VMOA_v\). Proposition \ref{prop:ConvergenceInVMOAV} yields it is sufficient to prove that any analytic polynomial is mapped into \(\VMOA_v\). To this end, let \(f\) be an analytic polynomial. We have,
\[
\gamma(\WCO f,a,2) \leq  \norm{ (\psi\circ \sigma_a - \psi(a) ) f\circ \phi\circ  \sigma_a  }_{H^2} +  \norm{  \psi(a) ( f\circ \phi\circ  \sigma_a - f(\phi (a))   ) }_{H^2}.
\]   
Since \(f\) is bounded, for the first term we have
 \[
 \norm{ (\psi\circ \sigma_a - \psi(a) ) f\circ \phi\circ  \sigma_a  }_{H^2} \leq \norm{f}_\infty \gamma(\psi,a,2),
\]
and since \(\psi\in\VMOA_v\)
\[
\lim_{\abs{a}\to 1}  v(a) \norm{ (\psi\circ \sigma_a - \psi(a) ) f\circ \phi\circ  \sigma_a  }_{H^2} = 0.
\]
For the second term, using Lemma \ref{lem:laitilaLittlewood},
 \[
v(a) \norm{  \psi(a) ( f\circ \phi\circ  \sigma_a - f(\phi (a))   ) }_{H^2}\leq \alpha(\psi,\phi,a)v(\phi(a)) \gamma(f,\phi(a),2).
\]
It is now sufficient to prove that for every sequence \( (a_n)\subset \D\) with \(\lim_n\abs{a_n}= 1\) there is a subsequence \((a_{n_k})\) such that 
\begin{equation}\label{eq:bddVMOAHelpEq}
\lim_k \alpha(\psi,\phi,a_{n_k})v(\phi(a_{n_k})) \norm{   f\circ  \sigma_{\phi(a_{n_k})} - f(\phi (a_{n_k}))   ) }_{H^2} = 0.
\end{equation}
Since \(\phi (a_n)\) is bounded, there is always a subsequence, either entirely in a compact subset of \(\D\) or that converges to a point on \(\T\). If \(\abs{\phi(a_{n_k})}\to 1\) as \(k\to\infty\), \eqref{eq:bddVMOAHelpEq} follows from \(\sup_{a\in\D}\alpha(\psi,\phi,a) <\infty\) and \(f\in \VMOA_v\). If \((\phi(a_{n_k}))\) is contained in a compact subset of \(\D\), we note that 
\[
\norm{\phi_a}_{H^2}\asymp \norm{\phi\circ\sigma_a - \phi(a)}_{H^2}.
\]
To conclude the proof, it is sufficient to prove that
\[
\lim_{\abs{a}\to 1} v(a) \psi(a) \norm{\phi\circ\sigma_a - \phi(a)}_{H^2} = 0.
\]
This follows from the fact that
\begin{align*}
\psi(a) \norm{\phi\circ\sigma_a - \phi(a)}_{H^2} &\leq  \norm{\psi\circ\sigma_a\phi\circ\sigma_a -\psi(a) \phi(a)}_{H^2} + \norm{ ( \psi\circ\sigma_a-\psi(a) )\phi\circ\sigma_a}_{H^2} \\
&\leq \gamma(\psi\phi,a,2) + \gamma(\psi,a,2)
\end{align*}
and \(\psi,\psi\phi\in\VMOA_v\).
\end{proof}

Concerning the proof above, the case where the denseness of polynomials is used is when \(\VMOA_v \not\subset H^\infty\), that is, when \( \int_0^1 \frac{ dt }{ t\, v(1-t)  } \) is infinite (see Proposition \ref{prop:SomeBMOAvLipCts}).

\subsection{Compactness and related properties for \(\WCO\) on \(\BMOA_v\) and \(\VMOA_v\)}\label{subsec:CptAndRelProp}

The test functions that are used in the following theorem can be found in Section \ref{sec:FuncAlphaBeta}.

\begin{thm}\label{thm:fixCopyC0NotUC}
Let \(\WCO\in \BOP(\VMOA_v) \) and \(v\) is an admissible weight (see beginning of Section \ref{sec:WrappingItUp}). If \( \limsup_{\abs{\phi(a)}\to 1} (\alpha(\psi,\phi,a) +\beta(\psi,\phi,a))>0\), then \(\WCO\) fixes a copy of \(c_0\). Moreover, in this case \( \WCO \) is not unconditionally converging. Furthermore, the same statements hold for \(\WCO\in \BOP(\BMOA_v) \) if at least one of the following holds: 
\begin{itemize}
\item \( \BMOA_v \not\subset H^\infty \),
\item \( \psi\in \VMOA_v \).
\end{itemize}
\end{thm}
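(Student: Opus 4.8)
The plan is to manufacture, along a boundary sequence, a subspace $Y\simeq c_0$ on which $\WCO$ is an isomorphism. By hypothesis fix $\delta>0$ and $(a_n)$ with $\abs{\phi(a_n)}\to1$ and $\alpha(\psi,\phi,a_n)+\beta(\psi,\phi,a_n)\ge\delta$; after passing to a subsequence I may assume either $\alpha(\psi,\phi,a_n)\ge\delta/2$ for every $n$ (the \emph{$\alpha$-regime}) or $\beta(\psi,\phi,a_n)\ge\delta/2$ for every $n$ (the \emph{$\beta$-regime}). Since $\phi$ is continuous with $\phi(\D)\subseteq\D$, the condition $\abs{\phi(a_n)}\to1$ forces $\abs{a_n}\to1$. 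One checks that under the standing hypotheses the $\beta$-regime can only occur when $\BMOA_v\not\subset H^\infty$ (otherwise $\norm{\delta_{\phi(a_n)}}_{X^*}\lesssim1$ while $\psi\in\VMOA_v$ would make $v(a_n)\gamma(\psi,a_n,1)\to0$, contradicting $\beta\ge\delta/2$). As test functions take $f_n:=\gAlpha_{a_n}$ in the $\alpha$-regime and $f_n:=\gBeta_{a_n}-\gBeta_{a_n}(0)$ in the $\beta$-regime; the subtraction arranges $f_n(0)=0$ and perturbs $\WCO f_n$ only by the multiple $\gBeta_{a_n}(0)\psi$. Section \ref{sec:FuncAlphaBeta} gives, in both regimes, $f_n\in\VMOA_v$, $\norm{f_n}_{\BMOA_v}\asymp1$, and $f_n\to0$ in $\tau_0$ uniformly on compact subsets of $\D$ (because $\norm{\gAlpha_{a_n}}_{H^2}\to0$, resp.\ $\norm{\gBeta_{a_n}}_{H^1}\to0$, and on compacta $\tau_0$ is dominated by these Hardy-space norms).

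The construction applies the sliding-hump Lemma \ref{lem:EquivBasisC0} twice, with $U(f,a):=v(a)\gamma(f,a,2)$ read on the subspace $\{f:f(0)=0\}$, where it is a genuine norm with $\sup_a U(f,a)=\norm{f}_{\BMOA_v}$. First, $(f_n)$ satisfies its hypotheses: $\norm{f_n}\asymp1$; $f_n\in\VMOA_v$ gives $\lim_{\abs{a}\to1}U(f_n,a)=0$; and since $U(f,a)\lesssim_R\norm{f}_{H^2}$ for $\abs{a}\le R$, the $\tau_0$-convergence yields $\sup_{\abs{a}\le R}U(f_n,a)\to0$. This produces a subsequence $(n_k)$ with $(f_{n_k})$ equivalent to the unit-vector basis of $c_0$. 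Applying the lemma again to $(\WCO f_{n_k}-(\WCO f_{n_k})(0))$ — which lie in $\VMOA_v$ (automatic when $\WCO\in\BOP(\VMOA_v)$; see the last paragraph for the $\BMOA_v$ case), have norms $\asymp1$ (the lower bound is the crux, below), and tend to $0$ in $\tau_0$ since $\WCO$ is $\tau_0$--$\tau_0$ continuous on bounded sets — and passing to a further subsequence, $(\WCO f_{n_k})$ is $c_0$-equivalent as well. Hence for $Y:=\closed{\operatorname{span}\{f_{n_k}:k\in\N\}}\simeq c_0$ one has $\norm{\WCO\sum_k t_k f_{n_k}}\asymp\norm{(t_k)}_\infty\asymp\norm{\sum_k t_k f_{n_k}}$, so $\WCO|_Y$ is an isomorphism onto its image, i.e.\ $\WCO$ fixes a copy of $c_0$. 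Since then $\sum_k f_{n_k}$ is a weakly unconditionally Cauchy ($\WUC$) series whose image $\sum_k\WCO f_{n_k}$ is not unconditionally convergent, $\WCO$ is not unconditionally converging.

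The main obstacle is the lower bound $\norm{\WCO f_n}_{\BMOA_v}\gtrsim\delta$. Using Proposition \ref{prop:implicationOfJohnNirenberg} I bound $\norm{\WCO f_n}_{\BMOA_v}\gtrsim v(a_n)\gamma(\WCO f_n,a_n,1)$ and decompose $f_n\circ\phi\circ\sigma_{a_n}=f_n(\phi(a_n))+R_n$, so that $\gamma(\WCO f_n,a_n,1)=\norm{f_n(\phi(a_n))(\psi\circ\sigma_{a_n}-\psi(a_n))+(\psi\circ\sigma_{a_n})R_n}_{H^1}$. In the $\beta$-regime the first summand is the main term and \eqref{eq:relationBeta} (with the subtracted constant $\gBeta_{a_n}(0)$ negligible, since the $\beta$-regime forces $\BMOA_v\not\subset H^\infty$) gives $v(a_n)\abs{f_n(\phi(a_n))}\gamma(\psi,a_n,1)\gtrsim\beta(\psi,\phi,a_n)$; in the $\alpha$-regime the identity $v(\phi(a))(\fAlpha\circ\phi\circ\sigma_a-\fAlpha(\phi(a)))=\phi_a$ from Lemma \ref{lem:AlphaBounded} shows that $(\psi\circ\sigma_{a_n})R_n$ carries $\alpha(\psi,\phi,a_n)$. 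Every remaining cross term contains a factor $v(a_n)\gamma(\psi,a_n,\cdot)$, and the heart of the matter is that this tends to $0$. Here $\abs{a_n}\to1$ and the hypotheses enter: when $\WCO\in\BOP(\VMOA_v)$ one has $\psi=\WCO 1\in\VMOA_v$, so $v(a_n)\gamma(\psi,a_n,2)\to0$; and when $\BMOA_v\not\subset H^\infty$ one has $\norm{\delta_{\phi(a_n)}}_{X^*}\to\infty$, so the bounded quantity $\beta(\psi,\phi,a_n)=\norm{\delta_{\phi(a_n)}}_{X^*}v(a_n)\gamma(\psi,a_n,1)$ forces $v(a_n)\gamma(\psi,a_n,1)\to0$, whence $v(a_n)\gamma(\psi,a_n,2)\to0$ by Proposition \ref{prop:implicationOfJohnNirenberg}. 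The same proposition lets Hölder's inequality convert the $H^1$-estimates into the $H^2$-quantities $\gamma(\psi,a_n,2)$, exactly as in the proofs of Lemmas \ref{lem:AlphaBounded} and \ref{lem:BetaBounded}. Making all cross terms vanish uniformly is the delicate quantitative step.

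For $\WCO\in\BOP(\BMOA_v)$ the only additional point is the membership $\WCO f_{n_k}\in\VMOA_v$ required by the second application of Lemma \ref{lem:EquivBasisC0} (the condition $\lim_{\abs{a}\to1}U(\WCO f_{n_k},a)=0$), which may fail without the listed hypotheses. If $\psi\in\VMOA_v$, then since $\sup_a(\alpha+\beta)<\infty$ by Theorem \ref{thm:BoundednessOfWCO}, the estimates underlying Corollary \ref{cor:BoundednessOfWCO_VMOA} place the images of the bounded test functions $f_{n_k}$ back in $\VMOA_v$, and the perturbation $\gBeta_{a_n}(0)\psi$ stays there too. If $\BMOA_v\not\subset H^\infty$, then $\gBeta_{a_n}(0)\asymp\norm{\delta_{\phi(a_n)}}_{X^*}^{-1}\to0$ (by \eqref{eq:BoundsForfBeta}) and the decay $\norm{\gBeta_{a_n}}_{H^1}\to0$ are available, and these supply the boundary localization needed to verify the sliding-hump hypotheses for $(\WCO f_{n_k})$ directly. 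In either subcase the two-fold application of Lemma \ref{lem:EquivBasisC0} goes through, and the conclusions — fixing a copy of $c_0$, hence failing to be unconditionally converging — follow exactly as above.
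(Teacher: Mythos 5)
Your construction follows the paper's proof in all essentials: the same dichotomy along a sequence \((a_n)\) with \(\abs{\phi(a_n)}\to 1\) into an \(\alpha\)-regime and a \(\beta\)-regime, the same test functions \(\gAlpha_{a_n}\) and \(\gBeta_{a_n}\) from Section \ref{sec:FuncAlphaBeta}, lower bounds for \(\norm{\WCO f_n}_{\BMOA_v}\) obtained from (the content of) Lemmas \ref{lem:AlphaBounded} and \ref{lem:BetaBounded}, two applications of the sliding-hump Lemma \ref{lem:EquivBasisC0}, the wuC-series endgame for unconditional convergence, and the same exclusion of the \(\beta\)-regime when \(\BMOA_v\subset H^\infty\) via \(\psi\in\VMOA_v\) (using Corollary \ref{cor:BoundednessOfWCO_VMOA} in the \(\VMOA_v\) case). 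Your normalization \(f_n:=\gBeta_{a_n}-\gBeta_{a_n}(0)\), so as to read \(U(f,a)=v(a)\gamma(f,a,2)\) as a norm on \(\{f: f(0)=0\}\), is a harmless and in fact tidy variant of what the paper does.

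There is, however, one step that is genuinely wrong. In the branch where only \(\BMOA_v\not\subset H^\infty\) is assumed you argue: \(v(a_n)\gamma(\psi,a_n,1)\to 0\), ``whence \(v(a_n)\gamma(\psi,a_n,2)\to 0\) by Proposition \ref{prop:implicationOfJohnNirenberg}.'' The proposition gives no such pointwise (sequence-wise) upgrade from \(p=1\) to \(p=2\): every estimate in it --- \eqref{eq:Rineq}, \eqref{eq:R_BMOineq}, \eqref{eq:VMOequvalentNorms} --- compares suprema or limsups over entire boundary regions (all \(a\) with \(\abs{a}\geq 1-R\), all arcs with \(m(I)\leq R_{\BMO}\)), because John--Nirenberg is intrinsically a statement about all subarcs at once. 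To control \(v(a_n)\gamma(\psi,a_n,2)\) through \eqref{eq:lemR_BMOineq} you would need \(\sup_{\abs{b}\geq 1-R}v(b)\gamma(\psi,b,1)\) to be small, and your hypotheses only make it \emph{bounded}: for \(b\) in the annulus but off your sequence, \(\phi(b)\) may sit in a compact subset of \(\D\), where \(\norm{\delta_{\phi(b)}}_{X^*}\asymp 1\), so \(\sup_a\beta(\psi,\phi,a)<\infty\) yields no decay of \(v(b)\gamma(\psi,b,1)\) there. A truly pointwise conversion would require Hölder through \(H^4\), i.e.\ Proposition \ref{prop:implicationOfJohnNirenberg} at \(p=4\), which admissibility (condition \ref{eq:mainAssump1} with exponent \(2+\epsilon_0\)) does not supply --- the paper flags exactly this obstruction at the end of Section \ref{sec:FuncAlphaBeta}, and it is the reason \(\beta\) is defined with \(\gamma(\psi,a,1)\) rather than \(\gamma(\psi,a,2)\). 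The repair is the mechanism the paper actually exploits: in Lemmas \ref{lem:AlphaBounded} and \ref{lem:BetaBounded} every cross term carries a factor \(1/v(\phi(a_n))\) (via \(\norm{\fAlpha}_\infty\leq 2/v(\phi(a))\), respectively \(\gamma(\gBeta_a,\phi(a),2)\lesssim 1/v(\phi(a))\) together with \(\norm{\phi_a}_{H^2}\leq 1\)), so when \(v\) is unbounded these terms vanish simply because \(\abs{\phi(a_n)}\to 1\) and \(v\) is almost increasing --- no \(p\)-conversion of \(\gamma(\psi,a_n,\cdot)\) along the sequence is needed; and when \(v\) is bounded one is in the classical setting \(\BMOA_v=\BMOA\) (where \(\BMOA\not\subset H^\infty\) holds automatically), and the cross term is handled as in Lemma \ref{lem:FuncCharactTwo} by \cite[(3.10)]{Laitila-2009}, or one quotes \cite{LaitilaLN-2023} outright. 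With that substitution your argument goes through; the remaining soft spots (e.g.\ membership of the images \(\WCO f_{n_k}\) in \(\VMOA_v\) for the second sliding hump in the \(\BMOA_v\) case) are treated in your last paragraph at essentially the same level of detail as in the paper itself.
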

\begin{proof}
Let \( (a_n) \subset \D \) be a sequence such that  \( \lim_n \abs{\phi(a_n)}\to 1\)  and at least one of the following holds: \( \lim_n \alpha(\psi,\phi,a_n) > 0 \) or \( \lim_n \beta(\psi,\phi,a_n)>0\). If \( \lim_n \alpha(\psi,\phi,a_n) >0\), then Lemma \ref{lem:AlphaBounded} yields 

\[
\lim_n \norm{\WCO \gAlpha_{a_n}}_{\BMOA_v} > 0. 
\]
We can, therefore, by going to a subsequence if necessary, assume
\[
\inf_n \norm{\WCO \gAlpha_{a_n}}_{\BMOA_v} > 0. 
\]
Now, one can apply Lemma \ref{lem:EquivBasisC0}, first for \( (\gAlpha_{a_n}) \) (recall \(  \smash{\norm{ \smash{   \gAlpha_{a_n}  }   }_{\BMOA_v} \!\!= 1}\) for all \(n\)) and then for  \( (\WCO \gAlpha_{a_{n_k}}) \) to obtain the statement of fixing a copy of \(c_0\), where \( (n_k) \) is the sequence of indices obtained after the first application of  Lemma \ref{lem:EquivBasisC0}.

\vphantom{\rule{0cm}{0.35cm} }Since \smash{\( (\sum_k \gAlpha_{a_{n'(k)}}) \)} is \(wuC\), where \( (n'(k)) \) is the sequence of indices obtained after the two applications of the Lemma, it is sufficient to prove that \vphantom{\rule{0cm}{0.35cm} }\smash{\( (\sum_k \WCO \gAlpha_{a_{n'(k)}}) \)} is not unconditionally convergent, but since the terms are bounded from below in norm it can't converge in norm and we are done.

Similarly, if \( \lim_n \alpha(\psi,\phi,a_n) = 0\) and \( \lim_n \beta(\psi,\phi,a_n)>0\), Lemma \ref{lem:BetaBounded} yields 

\[
\inf_n \norm{\WCO \gBeta_{a_n}}_{\BMOA_v} > 0.
\]
If  \( \BMOA_v \not\subset H^\infty \), we can apply Lemma \ref{lem:EquivBasisC0} and the statement follows in the same manner as above. Else \(\sup_{z\in\D} \norm{\delta_z}_{X^*}<\infty\), where \(X=\VMOA_v\) or \(X=\BMOA_v\). If  \(\WCO|_{\VMOA_v} \in  \BOP(\VMOA_v) \), it follows from Corollary \ref{cor:BoundednessOfWCO_VMOA} that \(\psi\in\VMOA_v\), in which case  \( \lim_n \beta(\psi,\phi,a_n)>0\) is impossible and we are done.
\end{proof}

The following proof is a standard procedure.
\begin{lem}\label{lem:Btau0Cpt}
For any weight \(v\) yielding the evaluation maps \(f\mapsto f(z), \ f\in\BMOA_v , z\in\D \) bounded, the norm-closed unit ball \(  B_{\BMOA_v} \) is \(\tau_0\)-compact.
\end{lem}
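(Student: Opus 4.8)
The plan is to combine Montel's theorem with a lower-semicontinuity argument for the norm. First I would use the hypothesis that the evaluation maps are bounded: for every $f\in B_{\BMOA_v}$ and every $z\in\D$ one has $\abs{f(z)}\leq \norm{\delta_z}_{(\BMOA_v)^*}\norm{f}_{\BMOA_v}\leq \norm{\delta_z}_{(\BMOA_v)^*}$. Hence $B_{\BMOA_v}$ is uniformly bounded on every compact subset of $\D$, i.e.\ it is a locally bounded family of analytic functions. By Montel's theorem such a family is normal, so $B_{\BMOA_v}$ is relatively $\tau_0$-compact; equivalently its $\tau_0$-closure is $\tau_0$-compact.

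Next I would show that $B_{\BMOA_v}$ is itself $\tau_0$-closed. Since $(\HOLO(\D),\tau_0)$ is metrizable, it suffices to treat sequences: suppose $(f_n)\subset B_{\BMOA_v}$ and $f_n\to f$ in $\tau_0$. For each fixed $a\in\D$ and $r\in[0,1[$, the map
\[
f \longmapsto \int_\T \abs{f(\sigma_a(rw)) - f(a)}^2 \, dm(w)
\]
is $\tau_0$-continuous, because $w\mapsto\sigma_a(rw)$ takes values in a fixed compact subset of $\D$ and $\tau_0$-convergence is uniform there (so both $f_n(a)\to f(a)$ and $f_n(\sigma_a(rw))\to f(\sigma_a(rw))$ uniformly in $w$). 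Taking the supremum over $r\in[0,1[$ recovers $\gamma(f,a,2)^2=\norm{f\circ\sigma_a - f(a)}_{H^2}^2$, so $f\mapsto v(a)\gamma(f,a,2)$ is a supremum of $\tau_0$-continuous functionals and is therefore $\tau_0$-lower semicontinuous. A further supremum over $a\in\D$, together with the $\tau_0$-continuous term $\abs{f(0)}$, shows that $f\mapsto\norm{f}_{\BMOA_v}$ is $\tau_0$-lower semicontinuous. Consequently $\norm{f}_{\BMOA_v}\leq\liminf_n\norm{f_n}_{\BMOA_v}\leq 1$, so $f\in B_{\BMOA_v}$ and the ball is $\tau_0$-closed. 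Being a $\tau_0$-closed subset of a $\tau_0$-compact set, $B_{\BMOA_v}$ is $\tau_0$-compact.

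The main obstacle is the lower semicontinuity of the norm. The subtlety is that $\tau_0$-convergence controls $f$ only on compact subsets of $\D$, whereas $\gamma(f,a,2)$ is an $H^2$-quantity governed by boundary behaviour; one cannot expect the seminorms to be $\tau_0$-continuous. Representing each $H^2$-norm as the \emph{monotone} supremum over $r<1$ of genuine integrals over the compact circles $\sigma_a(r\T)$ is precisely what resolves this: each such integral is $\tau_0$-continuous, while the supremum yields only lower semicontinuity, which is exactly what is needed to conclude that the unit ball is closed.
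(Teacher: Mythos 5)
Your proof is correct and takes essentially the same route as the paper's: Montel's theorem for normality together with \(\tau_0\)-lower semicontinuity of the norm to show \(B_{\BMOA_v}\) is \(\tau_0\)-closed (the paper obtains the inequality \(\norm{f}_{\BMOA_v}\leq\liminf_n\norm{f_n}_{\BMOA_v}\) from Fatou's lemma, and your monotone-supremum representation over the circles \(\sigma_a(r\T)\) is an equivalent justification of the same step). One small point: passing from the pointwise bound \(\abs{f(z)}\leq\norm{\delta_z}_{(\BMOA_v)^*}\) to uniform boundedness on a compact set \(K\) requires \(\sup_{z\in K}\norm{\delta_z}_{(\BMOA_v)^*}<\infty\), which is not automatic since \(z\mapsto\norm{\delta_z}\) is only lower semicontinuous; the paper secures this by applying the Banach--Steinhaus theorem to the pointwise bounded family \(\{\delta_z : z\in K\}\) on the Banach space \(\BMOA_v\), and your argument should cite the same principle.
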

\begin{proof}
Since the evaluation maps are bounded, the unit ball is bounded with respect to \(\tau_0\) by the Banach-Steinhaus theorem. By Fatou's lemma, we have for any \(\tau_0\)-convergent sequence \( (f_n) \subset B_{\BMOA_v}  \) with an analytic function \(f\) as limit (defined on \(\D\)), 
\[
\norm{f}_{\BMOA_v}  \leq  \liminf_n \norm{ f_n }_{\BMOA_v}  \leq 1.
\]
This shows that \(  B_{\BMOA_v}  \) is \(\tau_0\)-closed in \(\HOLO(\D)\), because \((\HOLO(\D),\tau_0)\) is a Fréchet space. By Montel's theorem  \(  B_{\BMOA_v}  \) is \(\tau_0\)-compact.
\end{proof}

\begin{lem}\label{lem:HelpWithApplyingTheKnown}
Given that \(v\) is admissible, for any analytic self-map \(\phi\colon \D\to \D\), we have 
\[
\inf_{z\in\D} \frac{v(z)}{v(\phi(z)) } \gtrsim_{v,g}  v  \left(      \frac{  2\abs{\phi(0)}  }{ 1+\abs{\phi(0)} }       \right)^{-1}  > 0.
\]
Moreover, if \(\psi\in \BMOA_v\), then
\[
\WCO\in \BOP(\BMOA_v) \ \Longrightarrow \ \WCO\in \BOP(\BMOA),
\]
\[
\alpha(\psi,\phi,a)  \gtrsim_{v,g,\phi} \alpha(\psi,\phi,a) _{\BMOA}  \text{ and }  \beta(\psi,\phi,a)  \gtrsim_{v,g,\phi} \beta(\psi,\phi,a) _{\BMOA},
\]
where the \(\BMOA\) subscript stands for that \(v\equiv 1\) is used. 
\end{lem}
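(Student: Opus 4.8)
The plan is to prove the three assertions in order, using the pointwise lower bound for \(v(z)/v(\phi(z))\) as the engine that drives the two comparisons. The heart of that first bound is the Schwarz lemma combined with Lemma~\ref{lem:compWithAutomorph}. Since \(\sigma_{\phi(0)}\circ\phi\) is an analytic self-map of \(\D\) fixing \(0\), the Schwarz lemma gives \(\abs{u}\leq \abs{z}\) for \(u:=\sigma_{\phi(0)}(\phi(z))\); writing \(\phi(z)=\sigma_{\phi(0)}(u)\) and applying Lemma~\ref{lem:compWithAutomorph} with parameter \(\phi(0)\) and argument \(u\) yields
\[
g\Big(\tfrac{1}{1-\abs{\phi(z)}}\Big)\lesssim_g g\Big(\tfrac{1+\abs{\phi(0)}}{1-\abs{\phi(0)}}\Big)\, g\Big(\tfrac{1}{1-\abs{u}}\Big)\lesssim_g g\Big(\tfrac{1+\abs{\phi(0)}}{1-\abs{\phi(0)}}\Big)\, g\Big(\tfrac{1}{1-\abs{z}}\Big),
\]
the last step using that \(g\) is almost increasing and \(\abs{u}\leq\abs{z}\). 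Since \(v\asymp g(\tfrac{1}{1-\abs{\cdot}})\) and \(g(\tfrac{1+\abs{\phi(0)}}{1-\abs{\phi(0)}})\asymp v(\tfrac{2\abs{\phi(0)}}{1+\abs{\phi(0)}})\), dividing and taking the infimum over \(z\in\D\) gives the first assertion, with positivity clear because \(\tfrac{2\abs{\phi(0)}}{1+\abs{\phi(0)}}<1\).

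The comparison for \(\alpha\) is then immediate: by definition \(\alpha(\psi,\phi,a)=\tfrac{v(a)}{v(\phi(a))}\,\abs{\psi(a)}\,\norm{\phi_a}_{H^2}\), whereas \(\alpha(\psi,\phi,a)_{\BMOA}=\abs{\psi(a)}\,\norm{\phi_a}_{H^2}\), so that \(\alpha(\psi,\phi,a)=\tfrac{v(a)}{v(\phi(a))}\,\alpha(\psi,\phi,a)_{\BMOA}\), and the first assertion supplies the positive factor \(\tfrac{v(a)}{v(\phi(a))}\gtrsim_{v,g,\phi}1\).

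The comparison for \(\beta\) is where the real work lies, and it is the step I expect to be the main obstacle. Using Corollary~\ref{cor:EvaluationBoundedOnBMOAv} I would replace \(\norm{\delta_{\phi(a)}}_{(\BMOA_v)^*}\asymp_{v,g}1+\PEF(\abs{\phi(a)})\) and \(\norm{\delta_{\phi(a)}}_{\BMOA^*}\asymp 1+\ln\tfrac{1}{1-\abs{\phi(a)}}\), reducing the goal to \(v(a)(1+\PEF(\abs{\phi(a)}))\gtrsim_{v,g,\phi}1+\ln\tfrac{1}{1-\abs{\phi(a)}}\). The key estimate is a lower bound for \(\PEF\): the substitution \(s=\tfrac{1}{1-t}\) gives \(\PEF(x)=\int_1^{1/(1-x)}\tfrac{ds}{s\,g(s)}\), and since \(g\) is almost increasing one has \(g(s)\lesssim_g g(\tfrac{1}{1-x})\) throughout the range of integration, whence
\[
\PEF(x)\gtrsim_g \frac{1}{g(\tfrac{1}{1-x})}\ln\tfrac{1}{1-x}\asymp \frac{1}{v(x)}\ln\tfrac{1}{1-x}.
\]
Taking \(x=\abs{\phi(a)}\) gives \(v(a)\PEF(\abs{\phi(a)})\gtrsim_{v,g}\tfrac{v(a)}{v(\phi(a))}\ln\tfrac{1}{1-\abs{\phi(a)}}\), and the first assertion turns \(\tfrac{v(a)}{v(\phi(a))}\) into a positive constant; combining this with \(v(a)\gtrsim_{v,g}1\) (as \(v\) is almost increasing and positive) yields the desired bound, hence \(\beta(\psi,\phi,a)\gtrsim_{v,g,\phi}\beta(\psi,\phi,a)_{\BMOA}\).

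Finally, the operator implication follows by combining the two pointwise comparisons with the boundedness criterion. Since \(v\) is admissible and \(\psi\in\BMOA_v\subset\BMOA\) (the inclusion holding because \(v\gtrsim_{v,g}1\)), Theorem~\ref{thm:BoundednessOfWCO} applied to \(v\) gives \(\sup_a(\alpha(\psi,\phi,a)+\beta(\psi,\phi,a))<\infty\); the comparisons just established then force \(\sup_a(\alpha(\psi,\phi,a)_{\BMOA}+\beta(\psi,\phi,a)_{\BMOA})<\infty\), and since \(v\equiv1\) is itself an admissible weight, Theorem~\ref{thm:BoundednessOfWCO} applied to \(v\equiv1\) returns \(\WCO\in\BOP(\BMOA)\).
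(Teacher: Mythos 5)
Your proposal is correct and follows essentially the same route as the paper's proof: your Schwarz lemma plus Lemma \ref{lem:compWithAutomorph} step is just a repackaging of the paper's direct use of \cite[Corollary 2.40]{CowenMacCluer-1995} together with \ref{eq:mainAssumpA2} (the former lemma rests on exactly those ingredients), and your lower bound \(\PEF(x)\gtrsim_g v(x)^{-1}\ln\frac{1}{1-x}\) obtained by pulling the almost-increasing \(g\) out of the integral is precisely the paper's estimate for \(\norm{\delta_{\phi(a)}}_{(\BMOA_v)^*}\). The remaining steps --- the trivial \(\alpha\)-comparison and the double application of Theorem \ref{thm:BoundednessOfWCO} (noting \(v\equiv 1\) is admissible) --- coincide with the paper's argument.
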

Notice that \(  \beta(\psi,\phi,a) _{\BMOA} \) is not the same as in \cite{Laitila-2009} and \cite{LaitilaLN-2023}.
\begin{proof}
Applying \cite[Corollary 2.40]{CowenMacCluer-1995}, we have
\[
1-\abs{\phi(z)} \geq (1-\abs{z})\frac{ 1-\abs{\phi(0)} }{  1+\abs{\phi(0)}  } ,  
\]
which gives (using \ref{eq:mainAssumpA2} with \(b = (1-\abs{z})\frac{ 1-\abs{\phi(0)} }{  1+\abs{\phi(0)}  } ,\  a = 1-\abs{z}  \))
\[
g\left(    \frac{  1  }{ 1-\abs{\phi(z)} }  \right)  \leq  g\left(    \frac{  1  }{ 1-\abs{z} }       \frac{  1+\abs{\phi(0)}  }{ 1-\abs{\phi(0)} }       \right)  \lesssim_g  g\left(    \frac{  1  }{ 1-\abs{z} }    \right)  g\left(      \frac{  1+\abs{\phi(0)}  }{ 1-\abs{\phi(0)} }       \right).
\] 
Since \( v(z) \asymp g\left(    \frac{  1  }{ 1-\abs{z} }  \right)\) the first statement follows. Now, we obtain
\[
 \alpha(\psi,\phi,a)  = \frac{v(a)}{  v( \phi(a) )   }  \abs{\psi(a)} \norm{\phi_a}_{H^2}  \gtrsim_{v,g,\phi}   \abs{\psi(a)} \norm{\phi_a}_{H^2} =\alpha(\psi,\phi,a) _{\BMOA} \quad \forall a\in \D.
\]
For the function \(\beta\), we apply Corollary \ref{cor:EvaluationBoundedOnBMOAv} to obtain

\begin{align*}
\norm{  \delta_{\phi(a)}  }_{ (\BMOA_v)^* } &\gtrsim_{v,g}   1+  \int_0^{ \abs{\phi(a)} }  \frac{ dt}{   (1-t)    } \frac{1}{  g(  \frac{1}{  1-\abs{\phi(a)}  })  }   \gtrsim_{v,g}    \frac{    \ln \frac{e}{1-\abs{\phi(a)} }     }{  v( \abs{\phi(a)} )  }  \gtrsim_{v,g} \frac{    \norm{  \delta_{\phi(a)}  }_{ \BMOA^* }   }{   v(a)      v    \left(      \frac{  2\abs{\phi(0)}  }{ 1+\abs{\phi(0)} }       \right)    }  ,  
\end{align*}

and hence,
\[
\norm{  \delta_{\phi(a)}  }_{ \BMOA_v^* }   v(a)  \gamma(\psi,a,1)   \gtrsim_{v,g,\phi}  \norm{  \delta_{\phi(a)}  }_{ \BMOA^* }    \gamma(\psi,a,1).
\]
Finally, apply Theorem \ref{thm:BoundednessOfWCO} twice, first for the weighted case, then for the unweighted case to obtain \(\WCO\in \BOP(\BMOA_v) \ \Longrightarrow \ \WCO\in \BOP(\BMOA)\).

\end{proof}

\begin{thm}[Sufficiency for compactness]\label{thm:SuffForCpt}
Assume \(v\) is admissible (see beginning of Section \ref{sec:WrappingItUp}) and that \(\WCO\in \BOP(\BMOA_{v} ) \). If in addition, at least one of the following holds:
\begin{enumerate}
\item \( C_{\phi} \in \BOP(\BMOA_v)\) or \label{eq:xtraCondOne} 
\item \(\WCO|_{\VMOA_v} \in \BOP(\VMOA_{v} )\),  
\end{enumerate}
then \(  \lim_{\abs{\phi(a)}  \to 1  } (\alpha(\psi,\phi,a) + \beta(\psi,\phi,a)) =0  \) is sufficient to ensure \(\WCO\) is compact on \(\BMOA_v\).
\end{thm}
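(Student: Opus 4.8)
The plan is to exploit the \(\tau_0\)-compactness of \(B_{\BMOA_v}\) (Lemma \ref{lem:Btau0Cpt}) together with the obvious \(\tau_0\)-\(\tau_0\) continuity of \(\WCO\colon f\mapsto\psi\, f\circ\phi\), which reduces compactness to a sequential statement: it suffices to show that \(\norm{\WCO g_n}_{\BMOA_v}\to 0\) for every bounded sequence \((g_n)\subset\BMOA_v\) with \(g_n\to 0\) in \(\tau_0\). Indeed, from any bounded \((f_n)\) one extracts a \(\tau_0\)-convergent subsequence \(f_{n_k}\to f\in B_{\BMOA_v}\), and applying the displayed implication to \(g_k:=f_{n_k}-f\) upgrades \(\WCO f_{n_k}\to\WCO f\) in \(\tau_0\) to convergence in norm. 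The value at \(0\), namely \((\WCO g_n)(0)=\psi(0)g_n(\phi(0))\to0\), is trivial, so only the seminorm matters. Throughout I would use Proposition \ref{prop:implicationOfJohnNirenberg} to pass freely between the \(p=2\) seminorm defining \(\norm{\cdot}_{\BMOA_v}\) and the \(p=1\) oscillation, so that the pointwise estimate of Lemma \ref{lem:WCO_Bounded} becomes available.

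The second alternative hypothesis is the cheaper one. If \(\WCO|_{\VMOA_v}\in\BOP(\VMOA_v)\), then the function-theoretic condition makes \(\WCO|_{\VMOA_v}\) compact on \(\VMOA_v\) by the companion result Theorem \ref{thm:SuffForCptVMOA}. Since the bidual of a compact operator is compact and, by Theorem \ref{thm:Gantmacher}, the extension \(\WCO\in\BOP(\BMOA_v)\) is conjugate—through the isomorphism \(\VMOA_v^{**}\cong\BMOA_v\)—to \((\WCO|_{\VMOA_v})^{**}\), compactness transfers to \(\BMOA_v\) with no further estimate.

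For the first alternative, \(C_\phi\in\BOP(\BMOA_v)\), I would argue directly. Writing \((\WCO g_n)\circ\sigma_a-(\WCO g_n)(a)\) as the sum of \(T_1=\psi(a)\bigl(g_n\circ\phi\circ\sigma_a-g_n(\phi(a))\bigr)\), the cross term \(T_2=(\psi\circ\sigma_a-\psi(a))\bigl(g_n\circ\phi\circ\sigma_a-g_n(\phi(a))\bigr)\), and \(T_3=g_n(\phi(a))(\psi\circ\sigma_a-\psi(a))\), I fix \(\epsilon>0\) and choose \(R\) with \(\alpha(\psi,\phi,a)+\beta(\psi,\phi,a)<\epsilon\) whenever \(\abs{\phi(a)}>R\). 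On the boundary region \(\abs{\phi(a)}>R\) the estimate of Lemma \ref{lem:WCO_Bounded} applies with \(\norm{g_n}\) bounded; there \(\alpha\) and \(\beta\) are already small, while the middle term \(\tfrac{v(a)\gamma(\psi,a,2)\norm{\phi_a}_{H^2}}{v(\phi(a))}\le\tfrac{\norm{\psi}_{*}}{v(\phi(a))}\norm{\phi_a}_{H^2}\) vanishes as \(\abs{\phi(a)}\to1\) because \(\norm{\phi_a}_{H^2}\asymp\gamma(\phi,a,2)\le(1-\abs{\phi(a)}^2)^{1/2}\). On the interior region \(\abs{\phi(a)}\le R\) the point \(\phi(a)\) lies in a fixed compact subset of \(\D\), so \(g_n(\phi(a))\to0\) uniformly and the contribution of \(T_3\) tends to \(0\), while the prefactors \(v(a)\abs{\psi(a)}\norm{\phi_a}_{H^2}/v(\phi(a))=\alpha\) and its \(T_2\)-analogue stay bounded.

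The real difficulty is the \(T_1\) and \(T_2\) contributions on the interior region, where the naive bound \(\norm{g_n\circ\phi\circ\sigma_a-g_n(\phi(a))}_{H^2}\le\gamma(g_n,\phi(a),2)\norm{\phi_a}_{H^2}\) fails to help: \(\gamma(g_n,w,2)\) need not tend to \(0\) for a \(\tau_0\)-null bounded sequence, as the monomials show, since \(\gamma(z^{N},w,2)^2=1-\abs{w}^{2N}\to1\) on compacta. Instead I would pass to the Littlewood–Paley form \eqref{eq:LittlewoodPaley}, \(\norm{g_n\circ\phi\circ\sigma_a-g_n(\phi(a))}_{H^2}^2\asymp\int_\D\abs{g_n'(\phi(u))}^2\abs{\phi'(u)}^2(1-\abs{\sigma_a(u)}^2)\,dA(u)\), and split the integral at \(\abs{\phi(u)}=\rho\). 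On \(\{\abs{\phi(u)}\le\rho\}\) one has \(\abs{g_n'(\phi(u))}\le\sup_{\abs{w}\le\rho}\abs{g_n'(w)}\to0\), while the remaining mass equals \(\gamma(\phi,a,2)^2\asymp\norm{\phi_a}_{H^2}^2\), which is bounded, so this piece tends to \(0\) with \(n\). On the tail \(\{\abs{\phi(u)}>\rho\}\) the Bloch embedding (Lemma \ref{lem:BlochVContainBMOAV}) gives \(\abs{g_n'(\phi(u))}\le\norm{g_n}_{\BMOA_v}\bigl((1-\abs{\phi(u)}^2)v(\phi(u))\bigr)^{-1}\), and the surviving integral is a pullback Carleson mass for \(\phi\). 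Showing that this tail is small uniformly in \(n\) as \(\rho\to1\)—reconciling the weight \(v\) at the outer point \(a\) with the weight at the inner point \(\phi(u)\), and aligning \(\{\abs{\phi(u)}>\rho\}\) with the boundary behaviour of \(\alpha+\beta\)—is the crux, and is exactly where \(C_\phi\in\BOP(\BMOA_v)\) (through the Carleson-type control of Corollary \ref{cor:MainCorComp} and Proposition \ref{prop:PracticalEstimate}) and the hypothesis \(\lim_{\abs{\phi(a)}\to1}(\alpha+\beta)=0\) are simultaneously consumed. I expect this localization to be the main obstacle, the remaining terms being routine given the machinery of Sections \ref{sec:ImportantLemmas} and \ref{sec:FuncAlphaBeta}.
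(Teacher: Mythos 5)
Your overall frame coincides with the paper's: reduce, via the \(\tau_0\)-compactness of \(B_{\BMOA_v}\) (Lemma \ref{lem:Btau0Cpt}), to showing \(\norm{\WCO g_n}_{\BMOA_v}\to 0\) for bounded \(\tau_0\)-null sequences, then split at \(\abs{\phi(a)}\le r\) versus \(\abs{\phi(a)}>r\). Your disposal of the second alternative is correct and is a genuinely different, shorter route than the paper's: the paper runs both alternatives through one direct estimate, whereas you obtain compactness on \(\VMOA_v\) from Theorem \ref{thm:SuffForCptVMOA} (its hypotheses are exactly those at hand) and transfer it to \(\BMOA_v\) by Schauder's theorem together with the identification \(\iota^{-1}\WCO^{**}\iota=\WCO\) established in Theorem \ref{thm:Gantmacher}. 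That shortcut is sound.

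For the first alternative, however, there is a genuine gap, and you name it yourself: the tail term \(\int_{\{\abs{\phi(u)}>\rho\}}\abs{g_n'(\phi(u))}^2\abs{\phi'(u)}^2(1-\abs{\sigma_a(u)}^2)\,dA(u)\) is left as ``the crux'' with no proof, and the route you sketch cannot be completed as stated. The Bloch bound \(\abs{g_n'(w)}\lesssim\norm{g_n}_{\BMOA_v}\bigl((1-\abs{w}^2)v(w)\bigr)^{-1}\) discards the \(\BMOA\)-structure of \(g_n\), and on the interior region \(\abs{\phi(a)}\le r\) the hypothesis \(\lim_{\abs{\phi(a)}\to1}(\alpha+\beta)=0\) carries no information at all, so the needed smallness, uniform in \(n\) and \(a\), as \(\rho\to1\) does not follow from boundedness of \(C_\phi\) plus Carleson bookkeeping (Proposition \ref{prop:PracticalEstimate}); invoking Corollary \ref{cor:MainCorComp} here also risks circularity, since that corollary is itself a consequence of the main theorems. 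The paper replaces your disk-localization by a level-set argument on \(\T\): with \(E=\{w\in\T:\abs{\phi_a(w)}>t\}\) it bounds \(F_{n,a}=f_n\circ\phi\circ\sigma_a-f_n(\phi(a))\) off \(E\) pointwise by \(2\abs{\phi_a}\sup_{\abs{w}\le t}\abs{f_n\circ\sigma_{\phi(a)}(w)-f_n(\phi(a))}\), applies H\"older twice on \(E\) to isolate the factor \(\norm{\chi_E\,\psi\circ\sigma_a}_{H^1}\), uses \(C_\phi\in\BOP(\BMOA_v)\) only to get \(\norm{F_{n,a}}_{H^2}\lesssim 1/v(a)\) via Lemma \ref{lem:laitilaLittlewood} and Theorem \ref{thm:BoundednessOfWCO}, and then consumes the function-theoretic hypothesis through the uniform estimate \(\lim_{t\to1}\sup_{\abs{\phi(a)}\le r}\norm{\chi_E\,\psi\circ\sigma_a}_{H^2}=0\) imported from the proof in \cite{LaitilaLN-2023}, whose inputs --- \(\WCO\in\BOP(\BMOA)\) and \(\lim_{\abs{\phi(a)}\to1}\norm{(\psi\circ\sigma_a)\phi_a}_{H^2}=0\) --- are secured by Lemma \ref{lem:HelpWithApplyingTheKnown} and the unboundedness of \(v\). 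This transfer to the unweighted setting and the level-set estimate constitute the missing idea. Two smaller slips: \(\norm{\phi_a}_{H^2}\asymp\gamma(\phi,a,2)\) is false as \(\abs{\phi(a)}\to1\) (for \(\phi=\mathrm{id}\) one has \(\norm{\phi_a}_{H^2}=1\) while \(\gamma(\phi,a,2)^2=1-\abs{a}^2\)); the middle term on the boundary region vanishes instead because one may assume \(v\) unbounded, the bounded case being reduced to \cite{Laitila-2009} and \cite{LaitilaLN-2023} --- a reduction your proposal omits.
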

\begin{proof}
If \(v\) is bounded, the result follows from \cite{Laitila-2009} and \cite{LaitilaLN-2023} (notice that \eqref{eq:xtraCondOne} is trivially true), so we can assume \(v\) is unbounded. The major part of the proof is similar to the second part of \cite[Proof of Theorem 3.1]{Laitila-2009}. Let \( (f_n) \) be a bounded sequence in \(\BMOA_v\), which converges to \(0\) with respect to \(\tau_0\) (converges uniformly on compact subsets of \( \D\)). By Corollary \ref{cor:EvaluationBoundedOnBMOAv} and Lemma \ref{lem:Btau0Cpt}, it follows that the unit ball \(B_{\BMOA_v}\) is \(\tau_0\)-compact. Since \( (f_n) \) is contained in a \(\tau_0\)-compact set, it is sufficient to prove that \( \lim_n \norm{ \WCO f_n }_{\BMOA_v}  = 0\) to obtain \(\WCO\) is compact. 

Similarly to \cite[(3.17)]{Laitila-2009}, we have for any \( r\in]0,1[ \)
\begin{equation}\label{eq:compactness}
\norm{ \WCO f_n }_{\BMOA_v}  \lesssim   \abs{  \psi(0) f_n(\phi(0))  } + \sup_{\abs{\phi(a)} > r} v(a) \gamma(\WCO f_n ,a,1 )  +   \sup_{\abs{\phi(a)}\leq r} v(a) \gamma(\WCO f_n ,a,1 ). 
\end{equation}
The first term converges to zero, because \( f_n\to 0 \) as \(n\to \infty\) w.r.t. \(\tau_0\). For the second term, Lemma \ref{lem:WCO_Bounded} gives us
\[
  \sup_{\abs{\phi(a)} > r} v(a) \gamma(\WCO f_n ,a,1 ) \leq  \norm{ f_n }_{\BMOA_v}  \sup_{\abs{\phi(a)} > r}   \left(  \alpha(\psi,\phi,a)   +    \norm{   \psi  }_*             \frac{       \norm{\phi_a}_{H^2}  }{ v(\phi(a))  }  + \beta(\psi,\phi,a)   \right),
\]
which yields
\[  
\lim_{r\to 1} \sup_{\abs{\phi(a)} > r} v(a) \gamma(\WCO f_n ,a,1 ) = 0
\]
by assumption.

Therefore, for a given \(\epsilon>0\), we can choose \(r\in ]0,1[\) large enough to ensure that
\[
\sup_n  \sup_{\abs{\phi(a)} > r} v(a) \gamma(\WCO f_n ,a,1 ) < \epsilon. 
\]
It remains to prove that for  \(r\in ]0,1[\) arbitrarily close to \(1\),
\[
\lim_{n\to \infty} \sup_{\abs{\phi(a)}\leq r} v(a) \gamma(\WCO f_n ,a,1 ) \lesssim \epsilon.
\]
To this end, for the third term in \eqref{eq:compactness}, we have

\begin{equation}\label{eq:compactness2}
\begin{split}
 \sup_{\abs{\phi(a)}\leq r} v(a) \gamma(\WCO f_n ,a,1 ) & \leq    \sup_{\abs{\phi(a)}\leq r} v(a) \norm{  \psi\circ\sigma_a ( f_n\circ\phi\circ\sigma_a -   f_n(\phi(a)) )  }_{H^1}  \\
&\quad +  \sup_{\abs{\phi(a)}\leq r} v(a) \abs{f_n(\phi(a))} \norm{ \psi\circ\sigma_a - \psi(a)    }_{H^1}.
\end{split}
\end{equation}
For the second term above (in \eqref{eq:compactness2}), we have
\[
 \sup_{\abs{\phi(a)}\leq r} v(a) \abs{f_n(\phi(a))} \norm{ \psi\circ\sigma_a - \psi(a)    }_{H^1}  \leq  \sup_{z\in r\closed{\D} } \abs{f_n(z)} \norm{ \psi    }_{\BMOA_{v}} \stackrel{  n\to \infty}{  \longrightarrow} 0,
\]
since \(\lim_n f_n = 0\) w.r.t. \(\tau_0\). Now, for \(a\in\D\) and \( t\in[0,1[\) we define
\[
F_{n,a} := f_n\circ\phi\circ\sigma_a -   f_n(\phi(a)) =  f_n\circ\sigma_{\phi(a)} \circ \phi_a -  f_n(\phi(a))
\]
and
\[
E = E(\phi,a,t) := \{  w \in \T : \abs{ \phi_a(w) } > t   \}.
\]
Following Laitila, let \(\frac{1}{2}<t<1\) and note that by \cite[(3.19)]{Laitila-2009}
\[
F_{n,a}(z) \leq 2\abs{\phi_a(z) } \sup_{\abs{ w }\leq t } \abs{   f_n\circ\sigma_{\phi(a)} (w) -  f_n(\phi(a))   }, \quad z\in\closed{\D}\text{ with } \phi_a(z)\in t\closed{\D}.
\]
It follows that for \(a\in \D\)
\[
\norm{  \chi_{\T\setminus E} \psi\circ\sigma_a F_{n,a}  }_{H^1} \lesssim    \frac{    \norm{\psi}_{*} +   v(\phi(a))  \alpha(\psi,\phi,a)   }{ v(a)  } \sup_{\abs{ w }\leq t } \abs{   f_n\circ\sigma_{\phi(a)} (w) -  f_n(\phi(a))   },
\]
and because \(\WCO\in\BOP(\BMOA_v)\) and
\[
\abs{\sigma_{\phi(a)} (w) }^2  = 1 -  \frac{  (1-\abs{\phi(a)}^2)   (1-\abs{w  }^2)   }{     \abs{ 1 - \CONJ{ \phi(a) } w       }^2   } \leq   1 -  \frac{  (1-r^2)   (1-t^2)   }{    4    }<1,
\]
for \( \abs{w}\leq t \) and \(\abs{\phi(a)}\leq r\) , an application of Theorem \ref{thm:BoundednessOfWCO} yields
\[
\lim_{n} \sup_{\abs{\phi(a)}\leq r}  v(a) \norm{  \chi_{\T\setminus E} \psi\circ\sigma_a F_{n,a}  }_{H^1} = 0,
\]
using \(\lim_n f_n = 0\) w.r.t. \(\tau_0\).

In view of \eqref{eq:compactness2}, all that remains to show is that we can, for \(0<r<1\) arbitrarily close to \(1\), choose \(0<t<1\) near \(1\) to ensure that 
\[
\lim_n  \sup_{\abs{\phi(a)}\leq r}  v(a)  \norm{ \chi_{E}  \psi\circ\sigma_a ( f_n\circ\phi\circ\sigma_a -   f_n(\phi(a)) )  }_{H^1} <\epsilon.
\]
Applying H\"olders inequality twice, we have
\begin{align*}
&\norm{ \chi_{E}  \psi\circ\sigma_a ( f_n\circ\phi\circ\sigma_a -   f_n(\phi(a)) )  }_{H^1} \leq \left(\norm{ \chi_{E}  \psi\circ\sigma_a  }_{H^1}  \right)^{\frac{1}{2}}  \left( \norm{ \chi_{E}  \psi\circ\sigma_a  F_{n,a}^2   }_{H^1} \right)^{\frac{1}{2}}   \\
& \hspace{2cm}    \leq \left(\norm{ \chi_{E}  \psi\circ\sigma_a  }_{H^1}  \right)^{\frac{1}{2}}  \left( \norm{  F_{n,a}   }_{H^2} \right)^{\frac{1}{2}}  \left( \norm{    \psi\circ\sigma_a  F_{n,a}   }_{H^2} \right)^{\frac{1}{2}} .  \\
\end{align*}
For the last factor, we have (note that \(\norm{\psi}_* = \norm{\WCO1}_* \leq   \norm{ \WCO }_{\BOP(\BMOA_{v})} \) )
\begin{align*}
\norm{   \psi\circ\sigma_a  F_{n,a}   }_{H^2} &\leq  \norm{    \psi\circ\sigma_a  f_n \circ \phi \circ \sigma_a  - \psi(a) f_n(\phi(a))   }_{H^2} +  \abs{   f_n(\phi(a) ) } \norm{   \psi\circ\sigma_a   -  \psi(a)   }_{H^2}  \\
&\leq   2\frac{  \norm{ \WCO }_{\BOP(\BMOA_{v})} }{ v(a) }   \sup_n \norm{   f_n }_{\BMOA_{v}}  \norm{ \delta_{\phi(a)} }_{(\BMOA_{v})^* }.  
\end{align*}
Next, we prove that there is a positive number \(M=M(v,\psi,\phi,(f_n),r,\epsilon)\) such that
\begin{equation}\label{eq:ExtraConditionInAction}
\sup_{ \abs{\phi(a)}\leq r }v(a) \norm{ \chi_{E}  \psi\circ\sigma_a  }_{H^1}   \norm{  F_{n,a}   }_{H^2}   \lesssim_{v,\psi,\phi,(f_n)} \epsilon + M \sup_{ \abs{\phi(a)}\leq r }  \norm{ \chi_{E}  \psi\circ\sigma_a  }_{H^1}  .
\end{equation}

Assuming \(C_{\phi}\in\BOP(\BMOA_v) \), Lemma \ref{lem:laitilaLittlewood} and Theorem \ref{thm:BoundednessOfWCO} implies
\begin{equation}\label{eq:ExtraConditionInActionTwo}
\norm{  F_{n,a}   }_{H^2}  \leq  \norm{  f_n\circ\sigma_{\phi(a)}  -  f_n(\phi(a))}_{H^2} \norm{ \phi_a }_{H^2} \leq  \frac{      \norm{ \phi_a }_{H^2}    }{  v(\phi(a)) }   \sup_n \norm{   f_n }_{\BMOA_{v}} \lesssim_{v,\phi,(f_n)} \frac{    1   }{  v(a) } 
\end{equation}
and \eqref{eq:ExtraConditionInAction} follows.

Assuming  \(\WCO|_{\VMOA_v} \in \BOP(\VMOA_{v} )\), the fact that \(\frac{1}{2}<t<1\) gives us
\begin{align*} 
v(a) \norm{ \chi_{E}  \psi\circ\sigma_a  }_{H^1}   &\lesssim  v(a) \norm{   \psi\circ\sigma_a  \phi_a }_{H^1} \leq  v(a) \norm{   (\psi\circ\sigma_a - \psi(a))   }_{H^2} +  v(a)  \psi(a) \norm{   \phi_a }_{H^2}
\end{align*}  
and by Corollary \ref{cor:BoundednessOfWCO_VMOA}, using 
\[
 \sup_{\abs{\phi(a)}\leq r}  \norm{   \phi_a }_{H^2} \lesssim_r  \sup_{\abs{\phi(a)}\leq r} \norm{  \phi\circ \sigma_a  - \phi(a)  }_{H^2}, 
\]
we have
\[
\sup_{\abs{a}\geq s} v(a) \norm{ \chi_{E}  \psi\circ\sigma_a  }_{H^1} <\epsilon 
\]
for some \(s=s(v,\psi,\phi,r,\epsilon)\in]0,1[\). 
Using the first two inequalities in \eqref{eq:ExtraConditionInActionTwo}, we can conclude that \( \sup_{a\in\D}\sup_n \norm{  F_{n,a}   }_{H^2} <\infty \), and hence,
\begin{align*}
\sup_{ \abs{\phi(a)}\leq r }v(a) \norm{ \chi_{E}  \psi\circ\sigma_a  }_{H^1}   \norm{  F_{n,a}   }_{H^2}  &\lesssim_{v,\psi,\phi,(f_n)} \epsilon +  \sup_{  \substack{ \abs{a}\leq s \\
 \abs{\phi(a)}\leq r }   } v(a) \norm{ \chi_{E}  \psi\circ\sigma_a  }_{H^1}  \\
 &\lesssim_{v} \epsilon +   v(s)  \sup_{ \abs{\phi(a)}\leq r }  \norm{ \chi_{E}  \psi\circ\sigma_a  }_{H^1}.
\end{align*}
Summing up, we have proved that
\[
\norm{ \chi_{E}  \psi\circ\sigma_a ( f_n\circ\phi\circ\sigma_a -   f_n(\phi(a)) )  }_{H^1} \lesssim_* \epsilon +  M\sup_{ \abs{\phi(a)}\leq r }  \norm{ \chi_{E}  \psi\circ\sigma_a  }_{H^1}, 
\]
where \(\lesssim_*\) depend on \(v,\psi,\phi\) and \((f_n)\), and  \(M=M(v,\psi,\phi,(f_n),r,\epsilon)\).

The final part of the proof is to obtain
\[
\lim_{t\to 1} \sup_{\abs{\phi(a)}\leq r} \norm{ \chi_{E}  \psi\circ\sigma_a  }_{H^2} = 0.
\]

Following the proof of \cite[Proof of Theorem 2.1]{LaitilaLN-2023}, we have that if \(\WCO\in\BOP( \BMOA)\) and 
\[ 
\lim_{\abs{\phi(a)}\to 1} \norm{ (\psi\circ \sigma_a ) \phi_a }_{H^2} = 0,
\]
then
\[
\lim_{t\to 1} \sup_{\abs{\phi(a)}\leq r} \norm{ \chi_{E}  \psi\circ\sigma_a  }_{H^2} = 0 \quad \forall r\in ]0,1[ .  
\]
By assumption \(\WCO\in \BOP(\BMOA_v)\), and by Lemma \ref{lem:HelpWithApplyingTheKnown} and Theorem \ref{thm:BoundednessOfWCO} (see also \cite[Theorem 2.1]{Laitila-2009}, where a slightly different definition is used for \(\beta\)) the functions \(\psi\) and \(\phi\) in \(\WCO\) give rise to a bounded operator \(\WCO\in \BOP(\BMOA)\) via the function-theoretic characterization. Furthermore, 
\[
\alpha(\psi,\phi,a) \geq    \frac{v(a) \norm{ (\psi\circ \sigma_a ) \phi_a }_{H^2}   }{v(\phi(a))}   -  \frac{   \norm{\psi}_{\BMOA_v}  }{v(\phi(a))} , \quad a\in\D
\]
and with the aid of  Lemma \ref{lem:HelpWithApplyingTheKnown} ,
\[
\alpha(\psi,\phi,a)  +  \frac{   \norm{\psi}_{\BMOA_v}  }{v(\phi(a))}   \gtrsim_{v,g,\phi}   \norm{ (\psi\circ \sigma_a ) \phi_a }_{H^2}.
\]
Using the fact that \(v\) is unbounded, we conclude
\[ 
\lim_{\abs{\phi(a)}\to 1} \norm{ (\psi\circ \sigma_a ) \phi_a }_{H^2} = 0.
\]

\end{proof}

By turning via Carleson measure theory, it is proved that \( \WCO\in \BOP(\VMOA_v) \) is compact given 
\[
\lim_{\abs{a}\to 1} (\alpha(\psi,\phi,a) + \beta(\psi,\phi,a)) = 0.
\]
We will use the following variant of Carleson sets with center \(z\), where \(\frac{1}{2} < \abs{z}<1\): 
\begin{equation}\label{eq:CarlesonSquare}
S(z): = \{ w\in\D : 0<1-\abs{w} <  2(1-\abs{z}) \text{ and } \abs{\ARG w - \ARG z}< 2\pi (1-\abs{z})  \}.
\end{equation}
This is not the standard definition, but these scaled, open sets serves the same purpose as the classical ones when working with the measure \(\mu_f\) in Proposition \ref{prop:PracticalEstimate}. It is clear that the longest euclidean distance from the center of the Carleson set is the distance to one of the corners away from the origin. Based on this, some elementary calculations yield
\begin{equation}\label{eq:PoissonKernelEstimate}
\inf_{w\in S(z)}\abs{ \sigma_z'(w) } \gtrsim  \frac{ 1 }{ 1-\abs{z} }, \quad \frac{1}{2} < \abs{z}<1.
\end{equation}

The following proposition is a straightforward generalization of \cite[Lemma 4.6]{Laitila-2009} and \cite[Lemma 3.3]{Garnett-1981}.

\begin{prop}\label{prop:PracticalEstimate}
Let \(v\) be admissible and
\[
d\mu_f(z) := \abs{f'(z)}^2 (1-\abs{z}^2)  \, dA(z).
\]
We have
\[
\norm{f}_{*,v}^2 \asymp \sup_{\abs{z}\in ]\frac{1}{2},1[} \frac{ v(z)^2  }{ 1-\abs{z} } \mu_f(S(z)). 
\]
Moreover,

\begin{equation}\label{eq:PracticalEstimate}
\norm{f}_{\BMOA_v} \lesssim_v  \frac{ 1 +   \sup_{z\in]0,r[} v(z)  }{(1-r)^{\frac{5}{2}}} \sup_{\abs{z}\leq r} \abs{f(z)} + \sup_{\abs{a}\geq r} v(a) \gamma(f,a,2).
\end{equation}

\end{prop}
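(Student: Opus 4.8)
The plan is to reduce both assertions to the area integral furnished by the Littlewood--Paley identity \eqref{eq:LittlewoodPaley} at $p=2$. Since the factor $\abs{f(z)-f(a)}^{p-2}$ becomes $1$, that identity reads
\[
\gamma(f,a,2)^2 \asymp \int_{\D} \abs{f'(z)}^2 (1-\abs{z}^2) P_a(z)\,dA(z) = \int_{\D} P_a\,d\mu_f,
\]
so that $\norm{f}_{*,v}^2 \asymp \sup_{a\in\D} v(a)^2\int_{\D}P_a\,d\mu_f$, and the first assertion becomes the two one-sided Carleson estimates.

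For the lower bound $\sup_{z}\frac{v(z)^2}{1-\abs{z}}\mu_f(S(z))\lesssim\norm{f}_{*,v}^2$, I would fix $z$ with $\frac12<\abs{z}<1$ and invoke \eqref{eq:PoissonKernelEstimate}: on the scaled Carleson set $S(z)$ of \eqref{eq:CarlesonSquare} one has $P_z(w)=\abs{\sigma_z'(w)}\gtrsim(1-\abs{z})^{-1}$, whence $\frac{\mu_f(S(z))}{1-\abs{z}}\lesssim\int_{S(z)}P_z\,d\mu_f\le\int_{\D}P_z\,d\mu_f\asymp\gamma(f,z,2)^2$; multiplying by $v(z)^2$ and taking the supremum finishes this direction. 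The reverse bound is the main obstacle. Here I would fix $a$ (the range $\abs a\le\frac12$ being harmless, as $P_a$ is then comparable to a constant) and carry out the dyadic decomposition of $\D$ into the tent regions at the scales $2^k(1-\abs a)$, $k=0,\dots,N$, around $a/\abs a$. On the $k$-th region $P_a\asymp 4^{-k}(1-\abs a)^{-1}$, while the mass there is controlled by $\mu_f(S(z_k))$ for a centre $z_k$ with $1-\abs{z_k}\asymp 2^k(1-\abs a)$; bounding the latter by $C:=\sup_z\frac{v(z)^2}{1-\abs z}\mu_f(S(z))$ leaves, after multiplication by $v(a)^2$, a series of the shape $\sum_k \frac{v(a)^2}{v(z_k)^2}2^{-k}C$. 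The weight ratios grow across scales, and this is exactly where admissibility enters: condition \ref{eq:mainAssumpA2} (with parameters $1-\abs a$ and $2^k(1-\abs a)$) gives $v(a)/v(z_k)\lesssim g(2^k)$, and the growth condition \ref{eq:mainAssumpA1} (through Lemma \ref{lem:ExtraInfoAboutG}) forces $g(2^k)^2\lesssim 2^{k\theta}$ with $\theta<1$, so that $\sum_k g(2^k)^2 2^{-k}<\infty$ and the total is $\lesssim C$.

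For the second assertion \eqref{eq:PracticalEstimate} I would separate the two ranges of $a$ and may assume $r>\frac12$. When $\abs a\ge r$ the quantity $v(a)\gamma(f,a,2)$ is dominated by the second summand outright. When $\abs a<r$ I would split $\int_{\D}P_a\,d\mu_f$ over the inner disk $\{\abs w\le 2r-1\}$ and over the boundary layer $\{\abs w>2r-1\}$. On the inner disk, Cauchy's estimate on circles of radius $1-r$, which stay inside $\{\abs z\le r\}$, gives $\abs{f'(w)}\lesssim(1-r)^{-1}\sup_{\abs z\le r}\abs f$, and together with a crude bound on $P_a$ over that region this contributes a fixed negative power of $(1-r)$ times $\sup_{]0,r[}v\cdot\sup_{\abs z\le r}\abs f$, an expression the stated factor $\frac{1+\sup_{]0,r[}v}{(1-r)^{5/2}}$ safely dominates. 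For the boundary layer I would cover it by Carleson sets $S(z)$ of the single radial scale $\abs z=r$ and decompose angularly into slabs at distance $\asymp 2^k(1-\abs a)$ from $a/\abs a$; because every such box has $\abs z\ge r$, the easy inequality already proved bounds its mass by $\frac{1-r}{v(r)^2}\bigl(\sup_{\abs a\ge r}v(a)\gamma(f,a,2)\bigr)^2$, while the angular Poisson decay $P_a\lesssim 4^{-k}(1-\abs a)^{-1}$ beats the $\asymp 2^k$ growth in the number of boxes. The resulting geometric series converges here without any growth hypothesis, and since $v(a)\lesssim v(r)$ for $\abs a<r$ by almost increasingness, this piece is $\lesssim\sup_{\abs a\ge r}v(a)\gamma(f,a,2)$ with no spurious power of $(1-r)$. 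Adding $\abs{f(0)}\le\sup_{\abs z\le r}\abs f$ then yields \eqref{eq:PracticalEstimate}.
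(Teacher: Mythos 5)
Your proposal is correct and follows essentially the same route as the paper: the lower bound via \eqref{eq:PoissonKernelEstimate} and \eqref{eq:LittlewoodPaley}, the reverse bound via the dyadic tent decomposition at scales \(2^k(1-\abs{a})\) with \ref{eq:mainAssumpA2} producing the ratio \(g(2^k)\) and \ref{eq:mainAssumpA1} giving summability of \(\sum_k g(2^k)^2 2^{-k}\) (the paper sums by comparison with \(\int g^2\,d\arctan\) via Lemma \ref{lem:ExtraInfoAboutG}, you via the equivalent pointwise bound \(g(2^k)^2\lesssim 2^{2k/(2+\epsilon_0)}\)), and for \eqref{eq:PracticalEstimate} the same two ingredients, namely covering the layer \(\{\abs{w}>2r-1\}\) by Carleson boxes at radius \(r\) and Cauchy's estimate on the interior. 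The only organizational difference is in the second part, where the paper splits the Carleson supremum by scale and routes back through the first assertion while you estimate the Poisson integral \(\int_{\D}P_a\,d\mu_f\) directly for \(\abs{a}<r\); this even yields a slightly better power of \((1-r)\) than the stated \((1-r)^{-5/2}\), which is harmless since the claim is an upper bound.
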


\begin{proof}
Let \(z\in \D\)  with \(\abs{z} >\frac{1}{2}\). On the one hand, we have by \eqref{eq:PoissonKernelEstimate} and \eqref{eq:LittlewoodPaley}
\begin{equation}\label{eq:fBMOAvIsCarleson}
\begin{split}
\frac{ v(z)^2  }{ 1-\abs{z} } \mu_f (S(z)) &\lesssim  v(z)^2   \int_{S(z)} \abs{ \sigma_z'(w) }   \abs{f'(w)}^2 (1-\abs{w}^2)  \, dA(w) \\
&\lesssim  v(z)^2   \gamma(f,z,2)^2.  
\end{split}
\end{equation}
On the other hand, if we put 
\[
E_n := E_n(z) := \bigg\{ w\in \D : \abs{w-\frac{z}{\abs{z}}} < 2^n(1-\abs{z}) \bigg\},
\] 
and
\[
N_z := \max\{ n\in \N : 2^n ( 1-\abs{z}) < 1  \}
\]
we obtain for \(n = 1 ,2 ,\ldots , N_z\)
\begin{equation}\label{eq:EnInSn} 
S_n :=  S\Big(\frac{z}{\abs{z}}\Big(1-2^{n-1} ( 1-\abs{z})\Big) \Big) \supset E_n,
\end{equation}
and therefore, 
\[
v\Big(1- 2^{n-1} ( 1-\abs{z})\Big)^2 \mu_f( E_n ) \leq \bigg(    \sup_{ \abs{\zeta}\in ]\frac{1}{2},1[ }  \frac{ v(\zeta)^2  }{ 1-\abs{\zeta} } \mu_f (S(\zeta))   \bigg)    2^{n-1} ( 1-\abs{z}).
\]

We have \( \sup_{w\in \D}  \abs{ \sigma_z'(w) } \lesssim (1-\abs{z})^{-1}\) and for \(n\geq 2\) and \(\abs{z}\in]\frac{1}{2},1[\), we have
\begin{align*}
 \sup_{w\in E_n\setminus E_{n-1}} \abs{ \sigma_z'(w) }  & \lesssim \frac{2^{-2n}}{1-\abs{z}} \quad \text{ and } \quad   \sup_{w\in \D\setminus E_{N_z}} \abs{ \sigma_z'(w) }   \lesssim 1-\abs{z}.
\end{align*}
Combining the above estimate with \eqref{eq:LittlewoodPaley} after which the path of integration, \(\D\), is partitioned into \(E_1,E_n\setminus E_{n-1},n=2,\ldots, N_z\) and \(\D\setminus E_{N_z}\)  yield
\begin{align*}
&v(z)^2   \gamma(f,z,2)^2  \lesssim   \frac{v(z)^2  }{ 1-\abs{z} }  \mu_f(E_1)  + \sum_{n=2}^{N_z}    v(z)^2     \frac{ 2^{-2n}   }{  1-\abs{z} }  \mu_f(  E_n\setminus E_{n-1} ) +   v(z)^2  (1-\abs{z}) \mu_f( \D\setminus E_{N_z} )    \\
&\stackrel{ \eqref{eq:EnInSn} }{\lesssim}     
  \bigg(    \sup_{ \abs{\zeta}\in ]\frac{1}{2},1[ }  \frac{ v(\zeta)^2  }{ 1-\abs{\zeta} } \mu_f (S(\zeta))   \bigg) \bigg(  1  +    \sum_{n=2}^{N_z}  \frac{  v(z)^2   2^{-n} }{    v\Big(1- 2^{n-1} ( 1-\abs{z})\Big)^2 }   \bigg)   +   v(z)^2  (1-\abs{z}) \mu_f( \D) .
\end{align*} 
Clearly, for \(z\in\D\) with \(\abs{z}>1/2\),
\[
 v(z)^2  (1-\abs{z}) \mu_f( \D) \lesssim   \bigg(    \sup_{ \abs{\zeta}\in ]\frac{1}{2},1[ }  \frac{ v(\zeta)^2  }{ 1-\abs{\zeta} } \mu_f (S(\zeta))   \bigg) 
\]
and using \ref{eq:mainAssumpA2}, we have
\[
\frac{  v(z)^2   2^{-n} }{    v\Big(1- 2^{n-1}  ( 1-\abs{z})\Big)^2 }   \lesssim_{v,g} 
g\big(2^n\big)^2 2^{-n},  \quad n\geq 2.
\]
To finish the proof of 
\[
\norm{f}_{*,v}^2 \asymp_{v,g} \sup_{\abs{z}\in ]\frac{1}{2},1[} \frac{ v(z)^2  }{ 1-\abs{z} } \mu_f(S(z)), 
\]
we note that
\[
 \sum_{n=3}^{\infty}  g\big( 2^n \big)^2 2^{-n}  \lesssim_g \int_1^\infty  \frac{  g(e^x)^2 }{ e^x } \, dx  \asymp  \int_e^\infty g(x)^2 \, d\arctan(x),
\]
where the substitution \(x\mapsto \ln x\) and  \(\frac{dx}{x^2} \asymp  d\arctan(x) \) have been applied. Lemma \ref{lem:ExtraInfoAboutG} and \ref{eq:mainAssumpA1} yield that \( \int_e^\infty g^2 \, d\arctan \) is finite. 

To prove the last statement, let \(r\in]\frac{1}{2},1[\) and consider
\[
\norm{f}_{\BMOA_v} \lesssim \abs{f(0)}  +  \sqrt{ \sup_{\abs{z}\in ]r,1[} \frac{ v(z)^2  }{ 1-\abs{z} } \mu_f(S(z))  }  +  \sqrt{ \sup_{\abs{z}\in ]\frac{1}{2},r]} \frac{ v(z)^2  }{ 1-\abs{z} } \mu_f(S(z))  }.
\]
Equation \eqref{eq:fBMOAvIsCarleson} proves that 
\[
 \sup_{\abs{z}\in ]r,1[} \frac{ v(z)^2  }{ 1-\abs{z} } \mu_f(S(z))    \lesssim  \sup_{\abs{z}\in ]r,1[} v(z)^2   \gamma(f,z,2)^2.
\]
For a given \(z\in]\frac{1}{2},r[\), \(S(z)  \setminus \closed{r\D} \) can be covered by a \( N = N(r,z)= \inf \{n\in\mathbb N : \frac{ 1 - \abs{z} }{ 1 - r } \leq  n \}+1 \) number of Carleson sets, \( (S(z_j))_{j=1}^N\), where \(\abs{z_j}=r\) for all \(j\). It follows that for some \(j_0\in[1,N]\),
\begin{align*}
\frac{ v(z)^2  }{ 1-\abs{z} } \mu_f (S(z))   &\leq \frac{ v(z)^2  }{ 1-\abs{z} }  \sum_{j=1}^N  \mu_f (S(z_j))    + \frac{ v(z)^2  }{ 1-\abs{z} } \mu_f ( \closed{r\D}  )    \\
&\leq \frac{ v(z)^2  }{ 1-\abs{z} }  N \mu_f (S(z_{j_0}))    + \frac{ \sup_{z\in]0,r[} v(z)^2  }{ 1-r } \int_{ \closed{r\D} }   \abs{f'(w)}^2 (1-\abs{w}^2)  \, dA(w)  \\
&\leq \frac{ v(z)^2 (1-r) }{v(r)^2 1-\abs{z} }  N \frac{ v(r)^2 }{1-r} \mu_f (S(z_{j_0}))  +     \sup_{\abs{z}\leq r}\abs{f'(z)}^2  \frac{ \sup_{z\in]0,r[} v(z)^2  }{ 1-r }  \\
&\lesssim_v    \sup_{\abs{z}\in ]r,1[} \frac{ v(z)^2  }{ 1-\abs{z} } \mu_f(S(z))   +  \sup_{\abs{z}\leq r}\frac{ \abs{f(z)}^2 }{ (1-r)^4 }  \frac{ \sup_{z\in]0,r[} v(z)^2  }{ 1-r }  ,
\end{align*} 
where the Cauchy formula yields the comparison estimate between \(f'\) and \(f\).
\end{proof}

\begin{thm}[Sufficiency for compactness on \(\VMOA_v\)]\label{thm:SuffForCptVMOA}
Assuming \(v\) is admissible (see beginning of Section \ref{sec:WrappingItUp}), if \(\WCO\in \BOP(\VMOA_{v} ) \) and \(  \lim_{\abs{\phi(a)}  \to 1  } (\alpha(\psi,\phi,a) + \beta(\psi,\phi,a)) =0  \), then  \(\WCO\) is compact.
\end{thm}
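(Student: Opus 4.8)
The plan is to exhibit $\WCO$ as the operator-norm limit, as $r\to 1^-$, of the operators $W_r:=\WCO T_r$ (recall $T_r f=[z\mapsto f(rz)]$), each of which I will show to be compact; since the compact operators form a norm-closed ideal, this yields compactness of $\WCO$. Note that $W_r f=\psi\cdot\big(f\circ(r\phi)\big)$ is again a weighted composition operator, now with multiplier $\psi$ and symbol $r\phi$, and that $r\phi(\D)\subset r\closed{\D}$, a compact subset of $\D$; in particular $W_r$ maps $\BMOA_v$ into $\VMOA_v$, and $\norm{T_r}_{\BOP(\BMOA_v)}\le 1$ by Lemma \ref{lem:dilationsOfBMOAvUniformlyBounded}. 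By Theorem \ref{thm:Gantmacher} the hypothesis $\WCO\in\BOP(\VMOA_v)$ already gives $\WCO\in\BOP(\BMOA_v)$, so every operator below is bounded on $\BMOA_v$.

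\emph{Compactness of $W_r$.} By Corollary \ref{cor:EvaluationBoundedOnBMOAv} together with Lemma \ref{lem:Btau0Cpt} the ball $B_{\BMOA_v}$ is $\tau_0$-compact, so it suffices to show that $W_r$ maps every bounded, $\tau_0$-null sequence $(g_n)\subset\BMOA_v$ to a norm-null sequence. Since the symbol $r\phi$ takes values in $r\closed{\D}$, both $g_n$ and $g_n'$ tend to $0$ uniformly on $r\closed{\D}$; writing $(W_r g_n)'=\psi'\,(g_n\circ r\phi)+r\,\psi\,\phi'\,(g_n'\circ r\phi)$ and inserting this into \eqref{eq:LittlewoodPaley}, I would bound $v(a)\gamma(W_r g_n,a,2)$ by $\varepsilon_n\big(\norm{\psi}_{\BMOA_v}+\norm{\psi\phi}_{\BMOA_v}\big)$ with $\varepsilon_n\to0$, using $\psi\phi'=(\psi\phi)'-\psi'\phi$ and the fact that $\psi=\WCO 1$ and $\psi\phi=\WCO(z\mapsto z)$ both lie in $\BMOA_v$. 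Since also $(W_r g_n)(0)\to 0$, this gives $\norm{W_r g_n}_{\BMOA_v}\to 0$, whence $W_r$ is compact. The function-theoretic hypothesis is not needed here.

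\emph{Uniform approximation.} The heart of the matter is $\norm{\WCO-W_r}_{\BOP(\VMOA_v)}\to 0$. Fix $f$ with $\norm{f}_{\VMOA_v}\le 1$ and set $g:=f-T_r f$, so $\norm{g}_{\BMOA_v}\le 2$. I would apply the practical estimate \eqref{eq:PracticalEstimate} to $\WCO g$, splitting its norm into a compact-disk part $C(\rho)\,(1-\rho)^{-5/2}\sup_{|z|\le\rho}|\WCO g(z)|$ and a tail $\sup_{|a|\ge\rho}v(a)\gamma(\WCO g,a,2)$. In the compact-disk part, $|z|\le\rho$ forces $\phi(z)$ into a fixed compact subset of $\D$, and $|g(\phi(z))|=|f(\phi(z))-f(r\phi(z))|\lesssim_\rho(1-r)\norm{f}_{\BMOA_v}$ by the Cauchy estimate and Corollary \ref{cor:EvaluationBoundedOnBMOAv}, so this part tends to $0$ as $r\to1$ for each fixed $\rho$. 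For the tail I would first descend from the $H^2$- to the $H^1$-scale via \eqref{eq:Rineq}--\eqref{eq:R_BMOineq} of Proposition \ref{prop:implicationOfJohnNirenberg}, bounding it by $\sup_{|a|\ge 1-R_{\BMO}}v(a)\gamma(\WCO g,a,1)+\norm{\WCO g}_{\BMOA_v}(2R_A/R_{\BMO})^{\epsilon_0}$ with $\rho=1-R_A$, and then use Lemma \ref{lem:WCO_Bounded} to dominate $v(a)\gamma(\WCO g,a,1)$ by $2\big(\alpha(\psi,\phi,a)+\beta(\psi,\phi,a)+v(a)\gamma(\psi,a,2)\norm{\phi_a}_{H^2}/v(\phi(a))\big)$. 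The decisive point is that all three terms vanish as $|a|\to1$, i.e. $\lim_{|a|\to1}(\alpha(\psi,\phi,a)+\beta(\psi,\phi,a))=0$: along any sequence with $|a_n|\to1$, pass to a subsequence where either $|\phi(a_n)|\to1$, so the hypothesis applies, or $|\phi(a_n)|$ stays bounded away from $\T$, so $v(\phi(a_n))\gtrsim v(0)$ and Corollary \ref{cor:BoundednessOfWCO_VMOA} gives $\alpha\lesssim v(a_n)|\psi(a_n)|\gamma(\phi,a_n,2)\to0$, while $\beta$ and the middle term die because $\psi\in\VMOA_v$ and $\norm{\delta_{\phi(a_n)}}_{(\BMOA_v)^*}$ is bounded. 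Hence the tail is at most $2\sup_{|a|\ge 1-R_{\BMO}}(\cdots)$ plus a universal small factor, both \emph{independent of $r$}; choosing $R_{\BMO}$ then $R_A$ small makes the tail uniformly small, after which $r\to1$ disposes of the compact-disk part.

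I expect the main obstacle to be exactly this tail estimate: reconciling the hypothesis, phrased through $|\phi(a)|\to1$, with the genuine boundary tail $|a|\to1$ demanded by \eqref{eq:PracticalEstimate}. The regime $|a|\to1$ with $|\phi(a)|$ bounded away from $\T$ is precisely where the $\VMOA_v$-boundedness characterization of Corollary \ref{cor:BoundednessOfWCO_VMOA} ($\psi\in\VMOA_v$ and $v(a)\psi(a)\gamma(\phi,a,2)\to0$) is indispensable, and the $H^1$-to-$H^2$ passage forces Proposition \ref{prop:implicationOfJohnNirenberg} into play — this is the structural reason $\beta$ is built from the $H^1$-seminorm $\gamma(\psi,a,1)$ rather than $\gamma(\psi,a,2)$. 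A shorter, logically equivalent route avoids $W_r$ entirely: using the $\tau_0$-compactness of $B_{\BMOA_v}$, reduce to proving $\norm{\WCO g_n}_{\BMOA_v}\to0$ for bounded $\tau_0$-null $(g_n)$, and run the very same split; the dilation formulation is only a repackaging that names the compact approximants explicitly, as announced in the introduction.
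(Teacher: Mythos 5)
Your proposal is correct and takes essentially the same route as the paper: the paper likewise writes \(\WCO\) as the operator-norm limit of the dilated operators \(\WCO K_n\) with \(K_n=T_{n/(n+1)}\), proves the same reduction \(\lim_{\abs{\phi(a)}\to 1}(\alpha+\beta)=0\Rightarrow\lim_{\abs{a}\to 1}(\alpha+\beta)=0\) via the identical subsequence dichotomy through Corollary \ref{cor:BoundednessOfWCO_VMOA}, and runs the same tail estimate by combining Proposition \ref{prop:PracticalEstimate}, the \(H^2\)-to-\(H^1\) descent \eqref{eq:R_BMOineq} of Proposition \ref{prop:implicationOfJohnNirenberg}, and Lemma \ref{lem:WCO_Bounded}, with the same order of limits (\(R_{\BMO}\), then \(R_A=1-\rho\), then the dilation parameter). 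The only cosmetic deviation is that you verify compactness of \(W_r=\WCO T_r\) directly on bounded \(\tau_0\)-null sequences via the derivative splitting \(\psi\phi'=(\psi\phi)'-\psi'\phi\), whereas the paper shows \(K_n\) itself is compact on \(\VMOA_v\) and composes with the bounded \(\WCO\).
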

\begin{proof}
If \(v\) is bounded, the result is proved in \cite[Theorem 4.3]{Laitila-2009}, hence, we assume \(v\) is unbounded. First, we prove that if  \(\WCO\in \BOP(\VMOA_{v} ) \), then  
\begin{equation}\label{eq:VMOAStrongAssumption}
\lim_{\abs{\phi(a)}  \to 1  } (\alpha(\psi,\phi,a) + \beta(\psi,\phi,a)) =0 \ \Longrightarrow \ \lim_{\abs{a} \to 1  } (\alpha(\psi,\phi,a) + \beta(\psi,\phi,a)) =0. 
\end{equation}
Indeed, if \( (a_n) \) is a sequence with \( \lim_n \abs{a_n}\to 1 \), we extract an arbitrary subsequence, also called \( (a_n) \). If there is a subsequence \(  (a_{n_k}) \subset  (a_n) \) with \( \lim_k\abs{\phi(a_{n_k})} = 1 \), we can conclude that
\[
 \lim_k (\alpha(\psi,\phi,a_{n_k}) + \beta(\psi,\phi,a_{n_k})) = 0
\]
for such a subsequence. If this is not the case,  \(\abs{\phi(a_n)}\in \closed{R\D}\)  for some \(R<1\), and hence, by Corollary \ref{cor:BoundednessOfWCO_VMOA}
\[
\lim_n \alpha(\psi,\phi,a_n) \lesssim_{v,\phi,(a_n)} \lim_n   v(a_n) \psi(a_n)  \norm{\phi(a_n)-\phi\circ\sigma_{a_n}}_{H^2} = 0
\]
and
\[
\lim_n \beta(\psi,\phi,a_n) \lesssim_{v,\phi,(a_n)} \lim_n  v(a_n)  \norm{ \psi\circ\sigma_{a_n} - \psi(a_n) }_{H^2} = 0.
\]
This proves that for an arbitrary sequence \( (a_n) \) with \(\lim_n \abs{a_n} = 1\), every subsequence of \( (\alpha(\psi,\phi,a_n) + \beta(\psi,\phi,a_n))_n \) has a convergent subsequence, with limit zero, which means it converges to zero as \(n\to\infty\).

We are now ready to prove that \(\WCO\) is the uniform limit (in operator norm) of the compact operators \( \WCO K_n , n\in\mathbb N \), where \(K_n := T_{\frac{n}{n+1}} \), \(  f  \mapsto  [z\mapsto f(\frac{n}{n+1} z)],  f\in \VMOA_v \). The fact that \(K_n\in  \BOP(\VMOA_v)\) is compact follows from the following: Let \( (f_k) \) be a bounded sequence in \(\VMOA_v\) and \(f_0\) be the limit w.r.t. \( \tau_0\) (convergence on compact subsets) of some subsequence \((f_{k'})\). Applying \eqref{eq:LittlewoodPaley}, followed by and the Cauchy formula to estimate the \(f'\) by \(f\) yield
\[
\norm{K_n f_{k'} - K_n f_0}_{\BMOA_v}  \lesssim_{v,n}  \norm{K_{n+1} (f_{k'} -  f_0) }_\infty \stackrel{k'\to\infty}{\longrightarrow} 0.
\]
Although \(f_0\) might not be in \(\VMOA_v\), the function \(K_n f_0 \in \VMOA_v\) by the remark after \eqref{eq:LittlewoodPaley}.

Continuing, for \(n\in\mathbb N\), we have by Proposition \ref{prop:PracticalEstimate}, for every \(r\in]\frac{1}{2},1[\),
\begin{equation}\label{eq:EssNormEstimateVMOA}
\begin{split}
 \sup_{f\in B_{\VMOA_v}}  \norm{  (  \WCO  - \WCO K_n)f   }_{\BMOA_v} &\lesssim_v  \sup_{f\in B_{\VMOA_v}}  \frac{ 1 +   \sup_{z\in]0,r[} v(z)  }{(1-r)^{\frac{5}{2}}} \sup_{\abs{z}\leq r} \abs{ (  \WCO  - \WCO K_n)f(z)} \\
&\quad+ \sup_{f\in B_{\VMOA_v}} \sup_{\abs{a}\geq r} v(a) \gamma(  (  \WCO  - \WCO K_n)f   ,a,2).
\end{split}
\end{equation}
Furthermore, as in the proof of \cite[(4.13)]{Laitila-2009} for \(r\in]\frac{1}{2},1[\) an application of Cauchy's integral formula yields
\[
\lim_n \sup_{f\in B_{\VMOA_v}}  \sup_{\abs{z}\leq r} \abs{ (  \WCO  - \WCO K_n)f(z)} = 0.
\]
An application of Proposition \ref{prop:implicationOfJohnNirenberg} yields that for any \(R_{\BMO}\in]0,1[\), \(R_A\in]0,R_{\BMO}/2[\) and \(f\in\VMOA_v\), we have 
\begin{align*}
\sup_{\abs{a}\geq 1-R_A} v(a) \gamma( (  \WCO  - \WCO K_n)f  ,a,2) &  \lesssim_{v,\epsilon_0}     \sup_{\abs{a}\geq 1-R_{\BMO}} v(a) \gamma(  (  \WCO  - \WCO K_n)f  ,a,1) \\
&\quad   + \norm{  (  \WCO  - \WCO K_n)f   }_{*,v,2} \bigg(  \frac{2R_A}{R_{\BMO}}   \bigg)^{\epsilon_0   }.  
\end{align*}

Combining this estimate with Lemma \ref{lem:WCO_Bounded} and the fact that \(\sup_n\norm{K_n}_{\BOP(\VMOA_v)} < \infty\) yield
\begin{align*}
\sup_{f\in B_{\VMOA_v}} & \sup_{\abs{a}\geq 1-R_A} v(a) \gamma(    \WCO( I - K_n)f   ,a,2) \\
&\lesssim_{v,\epsilon_0}  \sup_{\abs{a}\geq 1-R_{\BMO}} \bigg(  \alpha(\psi,\phi,a)   +    v(a)   \norm{   \psi\circ\sigma_a -   \psi(a)   }_{H^2}          \frac{       \norm{\phi_a}_2  }{ v(\phi(a))  }  + \beta(\psi,\phi,a)   \bigg)   \\
&\quad + \norm{  \WCO   }_{\BOP(\VMOA_v)} \bigg(  \frac{2R_A}{R_{\BMO}}   \bigg)^{\epsilon_0   }.
\end{align*}
Using \(r=1-R_A\) in \eqref{eq:EssNormEstimateVMOA} gives us

\begin{align*}
\limsup_n & \sup_{f\in B_{\VMOA_v}}  \norm{  (  \WCO  - \WCO K_n)f   }_{\BMOA_v} \\
& \lesssim_{v,\epsilon_0}  \sup_{\abs{a}\geq 1-R_{\BMO}} \bigg(  \alpha(\psi,\phi,a)   +   v(a)   \norm{   \psi\circ\sigma_a -   \psi(a)   }_{H^2}        + \beta(\psi,\phi,a)   \bigg)  \\
&\quad + \norm{  \WCO  }_{\BOP(\VMOA_v)} \bigg(  \frac{2R_A}{R_{\BMO}}   \bigg)^{\epsilon_0   }.
\end{align*}
Since \( \psi\in \VMOA_v  \) by Corollary \ref{cor:BoundednessOfWCO_VMOA}, letting \(R_A\to 0\) followed by \(R_{\BMO}\to 0\) yield
\begin{equation}\label{eq:essNormUpperBound}
\limsup_n \norm{  \WCO  - \WCO K_n  }_{\BOP(\VMOA_v)} \lesssim_{v,\epsilon_0} \limsup_{\abs{a}\to 1} (\alpha(\psi,\phi,a)   + \beta(\psi,\phi,a) ), 
\end{equation}
where the right-hand side, considering \eqref{eq:VMOAStrongAssumption}, is zero by assumption.
 
\end{proof}

Similarly to \cite[Theorem 4.3]{Laitila-2009}, using Lemmas \ref{lem:AlphaBounded} and \ref{lem:BetaBounded}, we have the following Corollary:
\begin{cor}
Assuming \(v\) is admissible, if \(\WCO\in \BOP(\VMOA_{v} ) \), then the essential norm, is given by  
\begin{align*}
\norm{\WCO}_{e,\BOP(\VMOA_v)} := \inf_{ \substack{ K\in\BOP(\VMOA_v) \\
K \text{ compact} }}  \norm{ \WCO - K }_{\BOP(\VMOA_v)} &\asymp \limsup_{\abs{a}\to 1} (\alpha(\psi,\phi,a) + \beta(\psi,\phi,a) )  \\
&\asymp \limsup_{ \abs{\phi(a)} \to 1} (\alpha(\psi,\phi,a) + \beta(\psi,\phi,a) ).
\end{align*}

\end{cor}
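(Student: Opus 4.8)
The plan is to squeeze $\norm{\WCO}_{e,\BOP(\VMOA_v)}$ between the two $\limsup$'s, after first checking that these agree. If $v$ is bounded the statement is \cite[Theorem 4.3]{Laitila-2009}, so I would assume $v$ unbounded. Because $\phi$ is continuous, $\abs{\phi(a_n)}\to1$ forces $\abs{a_n}\to1$, so every sequence admissible for $\limsup_{\abs{\phi(a)}\to1}$ is admissible for $\limsup_{\abs a\to1}$, giving one inequality at once; for the reverse I would rerun the dichotomy behind \eqref{eq:VMOAStrongAssumption}, namely given $\abs{a_n}\to1$ pass to a subsequence along which $\phi(a_n)$ converges, the boundary case being dominated by $\limsup_{\abs{\phi(a)}\to1}(\alpha+\beta)$ and the interior case forcing $\alpha+\beta\to0$ since $\psi,\psi\phi\in\VMOA_v$ (Corollary \ref{cor:BoundednessOfWCO_VMOA}). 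Thus the two quantities coincide and it suffices to compare $\norm{\WCO}_e$ with $\limsup_{\abs a\to1}(\alpha+\beta)$. The upper bound is essentially done: the operators $\WCO K_n$ with $K_n=T_{n/(n+1)}$ are compact (proof of Theorem \ref{thm:SuffForCptVMOA}), so $\norm{\WCO}_e\le\limsup_n\norm{\WCO-\WCO K_n}$, which \eqref{eq:essNormUpperBound} bounds by a constant times $\limsup_{\abs a\to1}(\alpha+\beta)$.

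For the lower bound I would fix a compact $K\in\BOP(\VMOA_v)$ and a sequence $(a_n)$ with $\abs{\phi(a_n)}\to1$ (hence $\abs{a_n}\to1$) realising $\limsup_{\abs{\phi(a)}\to1}(\alpha+\beta)$. Lemmas \ref{lem:AlphaBounded} and \ref{lem:BetaBounded}, together with $\norm{\cdot}_{*,1}\lesssim_v\norm{\cdot}_{\BMOA_v}$ from Proposition \ref{prop:implicationOfJohnNirenberg}, give
\[
\alpha(\psi,\phi,a_n)+\beta(\psi,\phi,a_n)\lesssim_{v,g}\norm{\WCO\gAlpha_{a_n}}_{\BMOA_v}+\norm{\WCO\gBeta_{a_n}}_{\BMOA_v}+\varepsilon_n,
\]
where $\varepsilon_n$ collects the residual terms $v(a_n)\gamma(\psi,a_n,2)/v(\phi(a_n))$ (times a factor $\norm{\phi_{a_n}}_{H^2}\le1$); as $v(\phi(a_n))$ is bounded below by a positive constant and $\psi\in\VMOA_v=\VMOA_{v,2}$, one gets $\varepsilon_n\to0$. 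Since $\gAlpha_{a_n},\gBeta_{a_n}\in B_{\VMOA_v}$, inserting $K$ yields $\norm{\WCO g}_{\BMOA_v}\le\norm{\WCO-K}_{\BOP(\VMOA_v)}+\norm{Kg}_{\BMOA_v}$ for $g\in\{\gAlpha_{a_n},\gBeta_{a_n}\}$, so everything reduces to showing $\norm{K\gAlpha_{a_n}}\to0$ and $\norm{K\gBeta_{a_n}}\to0$.

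The decisive point is that the normalized test functions are weakly null. One has $\abs{\fAlpha(z)}\le 2/v(\phi(a))$, whence $\gAlpha_a\to0$ in $\tau_0$ as $\abs{\phi(a)}\to1$. For $\gBeta_a$ I would split as in Theorem \ref{thm:fixCopyC0NotUC}: if $\BMOA_v\not\subset H^\infty$ then $\norm{\delta_{\phi(a)}}_{(\BMOA_v)^*}\to\infty$ while $(1+\fBeta_a)^2$ stays bounded on each compact subset, so $\gBeta_a\to0$ in $\tau_0$; if $\BMOA_v\subset H^\infty$ then $\norm{\delta_{\phi(a)}}$ is bounded and $\beta(\psi,\phi,a_n)\lesssim v(a_n)\gamma(\psi,a_n,1)\to0$, so the $\gBeta$-term drops out. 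Now Theorem \ref{thm:Gantmacher} supplies the homeomorphism $\iota\colon(B_{\BMOA_v},\tau_0)\to(B_{\VMOA_v^{**}},w^*)$, which promotes a bounded $\tau_0$-null sequence to a weak$^*$-null, hence weakly null, sequence in $\VMOA_v$; as $K$ is compact it is completely continuous (Lemma \ref{lem:Structure}), giving $\norm{Kg}\to0$. Letting $n\to\infty$ and then taking the infimum over compact $K$ yields $\limsup_{\abs{\phi(a)}\to1}(\alpha+\beta)\lesssim_{v,g}\norm{\WCO}_e$, which with the upper bound and the equality of the two $\limsup$'s completes the proof.

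The main obstacle is converting $\tau_0$-nullity of $\gAlpha_a$ and $\gBeta_a$ into genuine weak nullity in the non-reflexive space $\VMOA_v$; this is precisely the role of the homeomorphism furnished by Theorem \ref{thm:Gantmacher}. A secondary subtlety is the $\BMOA_v\subset H^\infty$ versus $\BMOA_v\not\subset H^\infty$ dichotomy needed to control $\gBeta_a$, together with the bookkeeping that ensures the residual terms $\varepsilon_n$ and the passage between $\norm{\cdot}_{*,1}$ and $\norm{\cdot}_{\BMOA_v}$ (via Proposition \ref{prop:implicationOfJohnNirenberg}) genuinely vanish in the limit.
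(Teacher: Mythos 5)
Your proof is correct and fills in exactly the argument the paper only sketches (its proof is the pointer ``similarly to Laitila's Theorem 4.3, using Lemmas \ref{lem:AlphaBounded} and \ref{lem:BetaBounded}''): the upper bound via the compact operators \(\WCO K_n\) and \eqref{eq:essNormUpperBound}, the lower bound by absorbing an arbitrary compact \(K\) against the unit-norm test functions \(\gAlpha_{a_n}\), \(\gBeta_{a_n}\) from Lemmas \ref{lem:AlphaBounded} and \ref{lem:BetaBounded}, and the identification of the two limsups via the dichotomy behind \eqref{eq:VMOAStrongAssumption}. Your conversion of bounded \(\tau_0\)-null sequences into weakly null ones through the homeomorphism \(\iota\colon (B_{\BMOA_v},\tau_0)\to (B_{\VMOA_v^{**}},w^*)\) of Theorem \ref{thm:Gantmacher}, together with the \(\BMOA_v\subset H^\infty\) dichotomy controlling \(\gBeta_a\), is precisely the correct weighted replacement for the classical \(\VMOA^{**}\cong\BMOA\) step in Laitila's argument, so the two proofs coincide in substance.
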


We end this section with showing that for many admissible weights, the function \(\PEF\) defined in Lemma \ref{lem:GoodOne} is not in \( \VMOA_v \). Therefore, \( \VMOA_v \) is a proper subspace of \(\BMOA_v\).

\begin{prop}
If \(v\) is admissible and \(g\) satisfies the reverse inequality \ref{eq:mainAssumpA3}, that is, \(\abs{g(z)} \lesssim  g(\abs{z}) , \ z\in \C_{\Re\geq\frac{1}{2}} \), then \(\PEF \in \BMOA_v\setminus \VMOA_v\), where
\[
\PEF\colon  z\mapsto \int_0^z  \frac{ dt}{   (1-t) g(\frac{1}{1-t})   }.
\]
\end{prop}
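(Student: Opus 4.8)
The plan is to show $h\in\BMOA_v$ and then to rule out $h\in\VMOA_v$ by violating a necessary ``little-Bloch'' condition that every function of $\VMOA_v$ must satisfy.

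First I would record that $h\in\BMOA_v$ is essentially already contained in Lemma \ref{lem:GoodOne}. All the estimates proving $\norm{\PEF_c}_{\BMOA_v}\lesssim\norm{\PEF}_{\BMOA_v}<\infty$ survive at $c=1$ (in particular the inequality \eqref{eq:aConstZ} and the bound $\abs{1-(c/\abs{c})z}/\abs{1-cz}\le 2$), so $h=\PEF_1\in\BMOA_v$. If one wants to invoke the Lemma only for $c\in\D$, the same conclusion follows by a limiting argument: $\PEF_r\to h$ in $\tau_0$ as $r\to1^-$, the norms $\norm{\PEF_r}_{\BMOA_v}$ are uniformly bounded, and $\norm{\cdot}_{\BMOA_v}$ is $\tau_0$-lower semicontinuous by Fatou's lemma (as in the proof of Lemma \ref{lem:Btau0Cpt}). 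Note that this step needs only the forward inequality \ref{eq:mainAssumpA3}, which is part of admissibility; the reverse inequality is not used here.

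Next I would reduce non-membership in $\VMOA_v$ to a single pointwise derivative estimate. By Lemma \ref{lem:BlochVContainBMOAV}, every $f\in H^1$ obeys $(1-\abs{a}^2)\abs{f'(a)}\le\gamma(f,a,1)$, and since $P_a\,dm$ is a probability measure, $\gamma(f,a,1)\le\gamma(f,a,2)$. Hence for any $f\in\VMOA_v=\VMOA_{v,2}$,
\[
v(a)(1-\abs{a}^2)\abs{f'(a)}\le v(a)\gamma(f,a,1)\le v(a)\gamma(f,a,2)\stackrel{\abs{a}\to 1}{\longrightarrow}0 .
\]
Thus it suffices to produce one approach region to $\T$ along which $v(a)(1-\abs{a}^2)\abs{\PEF'(a)}$ stays bounded away from $0$. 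I would simply take $a=r\in\,]0,1[\,$ real. Since $\PEF'(z)=\big[(1-z)g(\tfrac{1}{1-z})\big]^{-1}$ and $g|_{[\frac12,\infty[}$ is positive, $g(\tfrac{1}{1-r})$ is a positive real number, so with $v(r)\asymp g(\tfrac{1}{1-r})$,
\[
v(r)(1-r^2)\abs{\PEF'(r)}\asymp g\Big(\tfrac{1}{1-r}\Big)(1-r^2)\big[(1-r)g(\tfrac{1}{1-r})\big]^{-1}=1+r .
\]
As $r\to1$ this tends to $2$, so the little-Bloch limit above fails for $h$; therefore $h\notin\VMOA_v$, which, together with $h\in\BMOA_v$, finishes the proof.

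I expect the only place where the assumed reverse inequality $\abs{g(z)}\lesssim g(\abs{z})$ genuinely enters is the alternative, more computational route through a Littlewood--Paley lower bound at a general point. There one would write $\gamma(\PEF,a,2)^2\asymp\int_{\D}\abs{\PEF'(z)}^2(1-\abs{\sigma_a(z)}^2)\,dA(z)$ and lower-bound $\abs{\PEF'(z)}^2=\big|(1-z)g(\tfrac{1}{1-z})\big|^{-2}\gtrsim\big[\abs{1-z}^2\,g(\tfrac{1}{\abs{1-z}})^2\big]^{-1}$ via $\abs{g(w)}\lesssim g(\abs{w})$ (legitimate because $\tfrac{1}{1-z}\in\C_{\Re\geq\frac12}$ for $z\in\D$), and then restrict the integral to a Carleson box about the point $1$. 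The main obstacle on that route is the boundary box estimate, i.e.\ checking that $\abs{1-z}$, $\abs{1-\CONJ{a}z}$ and $1-\abs{z}^2$ are all $\asymp 1-\abs{a}$ on the box so that $v(r)^2\gamma(\PEF,r,2)^2\gtrsim 1$. The little-Bloch computation above bypasses this entirely, because along the real axis $g$ is real and positive and the two $g$-factors cancel, so the reverse inequality is not actually required for the argument I would give.
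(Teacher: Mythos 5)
Your proposal is correct, and for the essential part ($\PEF\notin\VMOA_v$) it takes a genuinely different and more elementary route than the paper. The paper proves non-membership by a Littlewood--Paley/Carleson-box argument: it integrates $\abs{\PEF'}^2(1-\abs{\sigma_a}^2)$ over the truncated box $S_a=S(a)\cap a\closed{\D}$ near the point $1$, where $1-\abs{z}\geq 1-a$, $\abs{1-z}\lesssim 1-a$ and $\abs{1-\CONJ{a}z}\lesssim 1-a$, obtaining $\gamma(\PEF,a,2)^2\gtrsim g(\tfrac{1}{1-a})^{-2}$; this is exactly where the reverse inequality $\abs{g(z)}\lesssim g(\abs{z})$ is needed, to dominate $\abs{g(\tfrac{1}{1-z})}$ at the non-real points of the box. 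You instead test the weighted little-Bloch condition at real $a=r$: the chain $v(a)(1-\abs{a}^2)\abs{f'(a)}\leq v(a)\gamma(f,a,1)\leq v(a)\gamma(f,a,2)$ (Lemma \ref{lem:BlochVContainBMOAV} plus monotonicity of $H^p$-norms under the probability measure $P_a\,dm$) is a valid necessary condition for membership in $\VMOA_v=\VMOA_{v,2}$, and your computation $v(r)(1-r^2)\abs{\PEF'(r)}\asymp 1+r$ is exact because $g(\tfrac{1}{1-r})$ is real and positive, so the two $g$-factors cancel. Your handling of $\PEF\in\BMOA_v$ is also fine: the paper likewise cites Lemma \ref{lem:GoodOne} (whose proof in fact establishes the uniform bound for $c\in\closed{\D}$, so $\PEF=\PEF_1$ is covered), and your Fatou/$\tau_0$-lower-semicontinuity fallback, as in Lemma \ref{lem:Btau0Cpt}, is a correct alternative if one reads the lemma as restricted to $c\in\D$. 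What each approach buys: your argument is shorter, avoids all box geometry, and—as you correctly observe—never invokes the hypothesis $\abs{g(z)}\lesssim g(\abs{z})$, so it actually proves the stronger statement that $\PEF\in\BMOA_v\setminus\VMOA_v$ for \emph{every} admissible weight, rendering the proposition's extra hypothesis superfluous; quantitatively it even recovers the same lower bound, since $v(r)\gamma(\PEF,r,2)\geq v(r)(1-r^2)\abs{\PEF'(r)}\gtrsim 1$. The paper's Carleson-measure route gives nothing more for this particular result; its value is mainly methodological, reusing the machinery of Proposition \ref{prop:PracticalEstimate}.
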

\begin{proof}
We begin by proving that \(\PEF\not\in \VMOA_v\). Let \(a\in ]\frac{1}{2},1[\) and \(S(a)\) be the Carleson set defined in \eqref{eq:CarlesonSquare} and put \(S_a = S(a) \cap  a\closed{\D}\). For \(z\in S_a\), it holds that \(1-\abs{z} \geq 1-a \),\(\abs{1-z}\lesssim 1- a\), and from \eqref{eq:PoissonKernelEstimate}, it follows that \(\abs{1-\CONJ{a}z}^2 \lesssim (1-a)^2\). By the Littlewood-Paley identity, \eqref{eq:LittlewoodPaley} with \(p=2\) and the assumption \(\abs{g(z)} \lesssim  g(\abs{z}) , \ z\in \C_{\Re\geq\frac{1}{2}} \), we have 
\begin{align*}
\gamma(\PEF,a,2) &\gtrsim  \int_{S_a} \frac{(1-\abs{z}^2)(1-\abs{a}^2)}{       \abs{1-z}^2 \abs{g(\frac{1}{1-z})}^2         \abs{ 1-\CONJ{a}z}^2   } \, dA(z)  \gtrsim_g \frac{     \int_{S_a}  dA(z)       }{  (1-a)^2g(\frac{1}{1-a})^2   }  \asymp g(\frac{1}{1-a})^{-2}.
\end{align*}
We can conclude that
\[
\lim_{a\to 1}  v(a)^2 \gamma(\PEF,a,2) \gtrsim_g 1, 
\]
proving \(\PEF\not\in\VMOA_v\). By Lemma \ref{lem:GoodOne}  \(\PEF\in\BMOA_v\).

\end{proof}

The condition \(\abs{g(z)} \lesssim  g(\abs{z}) , \ z\in \C_{\Re\geq\frac{1}{2}} \) is trivially fulfilled for the standard weights, \(g(z)=z^c,  0\leq c<1/2\). The condition is also fulfilled for \( g(z) = (\ln (ez))^c, c>0 \). Indeed, for \( z\in \C_{\Re\geq\frac{1}{2}} \) we have \(  (\ln (e \abs{z})) \geq \ln (e/2)\), and hence, 
\[
 (\ln (e \abs{z}))^2  \leq  \abs{\ln (e z)}^2  \leq  (\ln (e \abs{z}))^2 +  (\pi/2)^2 \lesssim  (\ln (e \abs{z}))^2 .
\]

%
%

\section{Examples}\label{sec:Examples}
This section contains some practical examples of symbols \(\psi\) and \(\phi\) making \(\WCO\) bounded and compact on \(\BMOA_v\) and \(\VMOA_v\), where \(v\) is an admissible weight (see beginning of Section \ref{sec:WrappingItUp}). Before we proceed, we have the following useful lemma.

\begin{lem}\label{lem:ExampleNewWeight}
Let \(v\), be admissible. The weight 
\[
w(a) = v(a) \bigg(1+\int_0^{\abs{a}} \frac{  dt  }{ (1-t)  v(t)  } \bigg), \quad a\in\D
\] 
satisfies \( \sup_{0<x<1}   x\, w(1-x)^{2+\epsilon_0}   < \infty\), where \(\epsilon_0>0\) is given in \ref{eq:mainAssumpA1}.
\end{lem}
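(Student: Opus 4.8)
The plan is to reduce everything to the radial profile $g$ and to establish the sharper growth bound $w(1-x)\lesssim_{v,g} x^{-1/(2+\epsilon_0)}$ for $x\in\,]0,1[$; raising this to the power $2+\epsilon_0$ and multiplying by $x$ then gives $\sup_{0<x<1} x\,w(1-x)^{2+\epsilon_0}\lesssim_{v,g} 1$, which is the claim. First I would use $v(t)\asymp g(\tfrac{1}{1-t})$ together with the substitution $s=1-t$ to rewrite, for $x=1-\abs{a}\in\,]0,1[$,
\[
w(1-x) \asymp_{v,g} g\!\left(\tfrac{1}{x}\right)\left(1 + \int_x^1 \frac{ds}{s\,g(\frac{1}{s})}\right).
\]
It is worth stressing that the crude estimate $1+\PEF(\abs{a})\lesssim \ln\frac{e}{1-\abs{a}}$ from Corollary \ref{cor:EvaluationBoundedOnBMOAv} is \emph{not} strong enough here: it only yields $w(1-x)\lesssim g(\tfrac1x)\ln\frac{e}{x}$, whence $x\,w(1-x)^{2+\epsilon_0}\lesssim(\ln\frac{e}{x})^{2+\epsilon_0}$, which is unbounded as $x\to 0$. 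The logarithmic factor must therefore be absorbed into the power growth of $g$, and this is precisely where condition \ref{eq:mainAssumpA2} enters.

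The core step is to control $g(\tfrac1x)\int_x^1 \frac{ds}{s\,g(1/s)}$. For each $s\in[x,1]$ I would apply \ref{eq:mainAssumpA2} with $b=x\leq a=s<2$ to obtain $g(\tfrac1x)\lesssim_g g(\tfrac{s}{x})\,g(\tfrac1s)$, hence $g(\tfrac1x)/g(\tfrac1s)\lesssim_g g(\tfrac{s}{x})$. Integrating and substituting $u=s/x$ (so that $\frac{ds}{s}=\frac{du}{u}$) then gives
\[
g\!\left(\tfrac1x\right)\int_x^1 \frac{ds}{s\,g(\frac1s)} \lesssim_g \int_x^1 g\!\left(\tfrac{s}{x}\right)\frac{ds}{s} = \int_1^{1/x} g(u)\,\frac{du}{u}.
\]

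Finally I would invoke \ref{eq:mainAssumpA1}: since $\sup_{0<x<1} x\,g(\tfrac1x)^{2+\epsilon_0}<\infty$, one has $g(u)\lesssim_g u^{1/(2+\epsilon_0)}$ for $u\geq 1$ (equivalently $g(\tfrac1x)\lesssim_g x^{-1/(2+\epsilon_0)}$). Substituting this bound into the last integral yields
\[
\int_1^{1/x} g(u)\,\frac{du}{u} \lesssim_g \int_1^{1/x} u^{\frac{1}{2+\epsilon_0}-1}\,du \asymp x^{-1/(2+\epsilon_0)},
\]
where the growth rate is dictated by the exponent $\tfrac{1}{2+\epsilon_0}-1$ exceeding $-1$. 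Combining this with the trivial bound $g(\tfrac1x)\lesssim_g x^{-1/(2+\epsilon_0)}$ for the ``$+1$'' term gives $w(1-x)\lesssim_{v,g} x^{-1/(2+\epsilon_0)}$, and therefore $x\,w(1-x)^{2+\epsilon_0}\lesssim_{v,g} x\cdot x^{-1}=1$, as desired. The main obstacle is the bookkeeping observation that the naive logarithmic bound is too lossy; once one commits to using \ref{eq:mainAssumpA2} to trade the ratio $g(\tfrac1x)/g(\tfrac1s)$ for $g(\tfrac{s}{x})$, the remaining estimates are routine, and the borderline nature of the final integral (exponent exactly $\tfrac{1}{2+\epsilon_0}-1$) reflects that $w$ only just continues to satisfy the growth restriction \ref{eq:mainAssumpA1}.
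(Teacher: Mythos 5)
Your proof is correct and follows essentially the same route as the paper's own: your use of \ref{eq:mainAssumpA2} to trade the ratio \(g(\tfrac1x)/g(\tfrac1s)\) for \(g(\tfrac{s}{x})\) is, after the change of variable \(s=1-t\), exactly the paper's application of \ref{eq:mainAssumpA2} in the form \(v(\abs{a})/v(t)\lesssim v\big(\tfrac{\abs{a}-t}{1-t}\big)\) followed by the substitution \(t\mapsto \tfrac{\abs{a}-t}{1-t}\), and your final integral \(\int_1^{1/x} g(u)\,\tfrac{du}{u}\) coincides with the paper's \(\int_0^{\abs{a}} v(t)\,\tfrac{dt}{1-t}\) under \(u=\tfrac{1}{1-t}\). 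Both arguments then conclude by integrating the power bound \(g(u)\lesssim_g u^{1/(2+\epsilon_0)}\) coming from \ref{eq:mainAssumpA1}, so the two proofs differ only in coordinates, not in substance.
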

\begin{proof}
For \(a\in\D\), \ref{eq:mainAssumpA2} gives us
\[
w(a) \asymp_{v,g} v(a) + \int_0^{\abs{a}} \frac{ v(\abs{a})\,  dt  }{ (1-t)  v(t)  } \lesssim_{v,g}  v(a) + \int_0^{\abs{a}}  v\bigg(  \frac{\abs{a}-t}{1-t}  \bigg)  \frac{  dt  }{ (1-t)    } =  v(\abs{a}) + \int_0^{\abs{a}}  v( t )  \frac{  dt  }{ (1-t)    },
\]
where the substitution \(t\mapsto (\abs{a}-t)/(1-t)\) was used to obtain the last equality. Furthermore, by \ref{eq:mainAssumpA1} we have
\[
\int_0^{\abs{a}}  v( t )  \frac{  dt  }{ (1-t)    } \lesssim_{v,g,\epsilon_0}  \int_0^{\abs{a}}  (1-t)^{-\big(1+\frac{1}{2+\epsilon_0}\big)}   \, dt   \lesssim_{\epsilon_0}  (1-\abs{a})^{-\frac{1}{2+\epsilon_0}} 
\]
yielding
\[
\sup_{a\in\D} (1-\abs{a}) w(a)^{2+\epsilon_0} \lesssim_{v,g,\epsilon_0}   \sup_{a\in\D} (1-\abs{a}) v(a)^{2+\epsilon_0} + 1  
\]
and we are done.
\end{proof}

Via the function-theoretic characterization of boundedness and compactness of \(\WCO\), it is clear that if \(\psi\) and \(\phi\) makes \(\WCO\) act in a bounded (compact) manner on \(\VMOA_v\), then \(\WCO\) will  act in a bounded (compact) manner on \(\BMOA_v\) too. By Proposition \ref{prop:polynomialsInBMOAV}, \(\VMOA_v\) contains all analytic polynomials for any weight, \(v\), satisfying \(  \lim_{a\to 1}  v(a)^2  (1-\abs{a}) = 0 \). Therefore, for polynomial symbols \(\psi\) and \(\phi\) for which \(\sup_{a\in\D}\alpha(\psi,\phi,a)<\infty\), \(\WCO\) acts boundedly on \(\VMOA_v\) (and \(\BMOA_v\)). More is true, let \(\psi = q_1\) and \( \phi = q_2\) be two fractions of polynomials, where the denominators have no zeros in \(\closed{\D}\). By \eqref{eq:LittlewoodPaley}, it follows that \(\norm{q_j}_{\BMOA_v} \asymp_{q_j} \norm{p_j}_{\BMOA_v} \) for some polynomials \(p_j\), \(j=1,2\) for any admissible weight \(v\). It also follows that \(q_1,q_2,q_1 q_2\in \VMOA_v\), and by Corollaries  \ref{cor:EvaluationBoundedOnBMOAv}, \ref{cor:EvaluationHierarchyOnBMOAv} and Lemma \ref{lem:ExampleNewWeight}, \(\lim_{\abs{a}\to 1}\beta(q_1,q_2,a) = 0 \). All that remains for boundedness and compactness, is to prove that  \(\sup_{a\in\D}\alpha(q_1,q_2,a)<\infty\) and \(\lim_{\abs{q_2(a)}\to 1}\alpha(q_1,q_2,a) =0\) respectively. Note that \(\lim_{\abs{q_2(a)}\to 1}\alpha(q_1,q_2,a) =0\) grants boundedness. This follows from \(q_2 \in \VMOA_v\), \(q_1\in H^\infty\) and 
\[
\sup_{\abs{q_2(a)}\leq R} \norm{(q_2)_a}_{H^2} \asymp_R  \norm{q_2(a)-q_2\circ\sigma_a}_{H^2}, \quad 0<R<1.
\] 
It is worth noting that if \(\abs{q_2(\eta)}=1\) for some \(\eta \in \T\), the continuity of \(q_2\) ensures there is a disk \(D:=\eta(c+(1-c)\D)\), \( 0<c<1\) such that \(\lim^*_{\abs{q_2(a)}\to 1}\alpha(q_1,q_2,a)=  0\), where the limit is taken outside the disk \(D\).

Turning our attention to the multiplication operator \(M_\psi \colon f\mapsto \psi f \), Corollary \ref{cor:MainCorMult} yields it is bounded but not compact on \(X=\BMOA_v\) or \(X=\VMOA_v\) (\(v\) admissible), for any \( \psi\in H^\infty \cap \BMOA_w\setminus \{0\} \), where \(w(a) = v(a)\norm{\delta_a}_{(X)^*} , a\in\D \). It is worth noting that the product of admissible weights is admissible if and only if \ref{eq:mainAssump1} is satisfied for the product. Moreover, by Corollary \ref{cor:EvaluationBoundedOnBMOAv} the admissible weight 
\begin{equation}\label{eq:wOne}
w_1(a) := v(a) \ln\frac{e}{1-\abs{a}} , a\in\D 
\end{equation} 
dominates \(w\) yielding \(\BMOA_{w_1} \subset \BMOA_w\).

Another interesting fact is the following result concerning the composition operator \(C_\phi\). Let \(v\) be an admissible weight. Assume \(\psi\in X\), where \(X=\BMOA_v\) or \(X=\VMOA_v\) and \(\phi\) is a self-map of \(\D\), continuous on \(\closed{\D}\). If \(\WCO \colon X\to X \) is a bounded (compact) operator, then for every \(n\in\mathbb N\), \(\psi C_{\phi^n}\colon X\to X \) is bounded (compact). To this end, since \(v\) is almost increasing, \ref{eq:mainAssumpA2} yields
\begin{equation}\label{eq:vasympvn}
v(a^n) \lesssim_{v} v(a) \lesssim_{v,g} v\bigg( 1- \frac{ 1 - \abs{a}}{ 1-\abs{a}^n } \bigg) v(a^n) \lesssim_v v(1-\frac{1}{n}) v(a^n) , \quad a\in\D,n\in\mathbb N.
\end{equation}
Furthermore, put \(c_0:= \epsilon_0/(2(1+\epsilon_0)) \). By Lemma \ref{lem:ExampleNewWeight} and Corollary \ref{cor:EvaluationBoundedOnBMOAv}, we have
\begin{equation}\label{eq:extraTermToZero}
\limsup_{\abs{a}\to 1}   (1-\abs{a})^{1-c_0} v(a)^2 \norm{\delta_a}_{X^*}^2 = 0.
\end{equation}
We will establish
\begin{equation}\label{eq:asympSamePhiA}
 \norm{ (\phi^n)_a}_{H^2}^2  \lesssim_n (1-\abs{a})^{1-c_0} + \norm{\phi_a}_{H^2}^2,
\end{equation}
and after multiplying both sides of \eqref{eq:asympSamePhiA} with \(v(a)^2\psi(a)^2/v(\phi(a))^2\)  \eqref{eq:vasympvn} yields
\[
\alpha(\psi,\phi^n,a)^2 \lesssim_{n,v,g} \frac{(1-\abs{a})^{1-c_0}v(a)^2\psi(a)^2}{v(\phi(a))^2} + \alpha(\psi,\phi,a)^2.
\]
 In view of Theorems \ref{thm:MainThm} and \ref{thm:MainThmCpt}, the statement concerning boundedness follows from taking the limit \(\abs{a}\to 1\) together with \eqref{eq:extraTermToZero}, and compactness follows from considering \(\abs{\phi(a)}\to 1\).

To prove \eqref{eq:asympSamePhiA}, let \(n\in\mathbb N\) and \(r\in]0,\frac{1}{2}[\) be small enough so that for \(a\in \D\) with \(\abs{a}\geq 1-r\), \( \abs{ \phi(z)- \phi(a) } < \frac{1}{2n^2}\) for every 
\[
z\in  J(a) :=\bigg\{ w\in\T : \abs{w-\frac{a}{\abs{a}}}<2(1-\abs{a})^{c_0/2} \bigg\}. 
\]
Furthermore, \(\sup_{z\in \T\setminus J(a)} P_a(z) \lesssim (1-\abs{a})^{1-c_0} \). It follows that for \( \abs{a}\geq 1-r \) and \(z\in  J(a) \), we have
\begin{equation}\label{eq:ctsOfPhi}
\abs{ \sum_{k=0}^{n-1} \CONJ{\phi(a)}^k \phi(z)^k  -  \sum_{k=0}^{n-1} \CONJ{\phi(a)}^k \phi(a)^k  }  \leq \sum_{k=0}^{n-1}  \abs{ \phi^k(z)  -  \phi(a)^k  } \leq n^2 \abs{ \phi(z)  -  \phi(a)  } <\frac{1}{2},
\end{equation}
and hence,
\begin{equation}\label{eq:PhiNLessPhi}
\begin{split}
\norm{ (\phi^n)_a}_{H^2}^2   &=  \int_{\T} \abs{  \frac{   \phi(a)^n - \phi^n  }{    1 - \CONJ{\phi(a)}^n  \phi^n    }  }^2  P_a  \, dm \\
& \lesssim (1-\abs{a})^{1-c_0}  + \int_{J(a)} \abs{  \frac{   \sum_{k=0}^{n-1} \phi(a)^k \phi^{n-k-1}  }{   \sum_{k=0}^{n-1} \CONJ{\phi(a)}^k \phi^k    }  }^2   \abs{  \frac{   \phi(a) - \phi  }{    1 - \CONJ{\phi(a)}  \phi    }  }^2  P_a  \, dm \\
&\stackrel{\eqref{eq:ctsOfPhi} } { \leq }  (1-\abs{a})^{1-c_0}  + \int_{J(a)} \abs{  \frac{  n  }{   \sum_{k=0}^{n-1} \abs{\phi(a)}^{2k} - \frac{1}{2}    }  }^2   \abs{  \frac{   \phi(a) - \phi  }{    1 - \CONJ{\phi(a)}  \phi    }  }^2  P_a  \, dm \\
& \leq  (1-\abs{a})^{1-c_0}  + 4n^2 \int_{\T}   \abs{  \frac{   \phi(a) - \phi  }{    1 - \CONJ{\phi(a)}  \phi    }  }^2  P_a \, dm \leq (1-\abs{a})^{1-c_0}  + 4n^2\norm{\phi_a}_{H^2}^2.
\end{split}
\end{equation}

The following partial complement to \eqref{eq:asympSamePhiA} can be proved using similar calculations to \eqref{eq:PhiNLessPhi}: If we only consider sequences \((a_j)\), with \(\inf_j\abs{\phi(a_j)}>0 \), then
\begin{equation}\label{eq:phiAkbddFrB}
\norm{ \phi_{a_j}}_{H^2}^2  \lesssim_{n,(\phi(a_j))} (1-\abs{a_j})^{1-c_0} + \norm{(\phi^n)_{a_j}}_{H^2}^2.
\end{equation}

If \(\phi(\D)\subset b\closed{\D}\) for some \(0<b<1\), then by Corollary \ref{cor:EvaluationBoundedOnBMOAv}
\[
\alpha(\psi,\phi,a)\asymp_{b,v} \abs{\psi(a)}v(a)\norm{\phi\circ\sigma_a - \phi(a)}_{H^2}  \lesssim_{\psi,v} \ln\frac{e}{1-\abs{a}}v(a)\norm{\phi\circ\sigma_a - \phi(a)}_{H^2}
\]
and
\[
\beta(\psi,\phi,a)\asymp_{b} v(a)\norm{\psi\circ\sigma_a - \psi(a)}_{H^1}.  
\]
It follows that if \(\phi\in \VMOA_{w_1}\cap H^\infty\), then \(\phi\) can be scaled to grant that the operator \(\WCO\colon X\to X \) is bounded (compact) if \(\psi\in X\), where \(X=\BMOA_v\) or \(X=\VMOA_v\). Since \(w_1\) is admissible (see \eqref{eq:wOne}), the remark right after \eqref{eq:LittlewoodPaley} yields many examples \(\phi\in \VMOA_{w_1}\cap H^\infty\), e.g. a dilation of an analytic function.

Recall that a Blaschke product is a function of the form
\[
z\mapsto z^m\prod_n\frac{\abs{b_n}}{b_n} \frac{b_n-z}{1-\CONJ{b_n}z},  
\] 
where \(m=0,1,\ldots\) and \((b_n)\subset \D\cup\{1\}\setminus \{0\}\) is a sequence satisfying \( \sum_n (1-\abs{b_n}) <\infty   \) (\cite[p.~20]{Duren-1970}). An interesting fact is that an infinite Blaschke product (\(b_n\neq 1\) for infinitely many \(n\)) as the symbol \(\phi\) gives rise to a bounded composition operator if and only if \(v\asymp 1\). When \(v\asymp 1\), the statement follows from \(\alpha(1,\phi,a) \asymp \norm{\phi_a}_{H^2}\leq 1\). When \(v\) is unbounded, let \((a_n)\subset \D\) be the zeros of \(\phi\). Since for every \(a\in\D\), the functions \(\sigma_{\phi(a)}\) and \(\sigma_a\) are automorphisms on \(\T\) and \(\phi\) has a modulus \(1\) a.e., it follows that \(\norm{\phi_a}=1\) for all \(a\in \D\). Moreover, \(v(a_n)/v(\phi(a_n)) = v(a_n)/v(0) \to \infty\) as \(n\to\infty\), due to \(\lim_n \abs{a_n} = 1\), resulting in \(\sup_{a\in\D} \alpha(1,\phi,a) =\infty\).

Next, we consider the polynomial \(\phi(z) = \frac{1+z}{2}, z\in \D\). To see that it is bounded, it is enough to prove that
\[
\limsup_{ \abs{\phi(a)}\to 1  } \alpha(1,\phi,a)<\infty,
\] 
which is the same as
\[
\limsup_{ a\to 1  } \alpha(1,\phi,a)<\infty.
\] 
In \cite[Example 5.1]{Laitila-2009}, J. Laitila showed that 
\[
\norm{\phi_a}_{H^2}^2 = \frac{1-\abs{a}^2}{2(1-\Re a)} . 
\]
Since \(\abs{\phi(a)}\geq \abs{a}\) for \(a\in 1/2+(1/2)\closed{\D}\), we have for these \(a\in\D\), \(v(a)/v(\phi(a))\lesssim 1\). It remains to examine the tangential limits \(a\to 1\), that lie outside this disk. These are \(a\in \D\) near \(1\) for which \(\Re a \leq \abs{a}^2\). Using this and \ref{eq:mainAssumpA2}, we have

\[
1-\abs{\phi(a)}^2 = 1-\frac{1}{4}(1+\abs{a}^2+2\Re a) \leq\frac{3}{4} (1-\Re a)
\]
and

\[
\frac{v(a)}{v(\phi(a))} \asymp_{v,g} \frac{v(a^2)}{v(\phi(a)^2)} \lesssim_{v,g}  v\bigg( 1- \frac{ 1 - \abs{a}^2}{ 1-\abs{\phi(a)}^2 } \bigg) \lesssim_v v\bigg( 1-\frac{3}{4} \frac{ 1 - \abs{a}^2}{ 1-\abs{\phi(a)}^2 } \bigg) \lesssim_v v\bigg( 1-  \frac{ 1 - \abs{a}^2}{ 1-\Re a  } \bigg). 
\]
It now follows from \ref{eq:mainAssumpA1} that
\begin{equation}\label{eq:ExCphiBounded}
\sup_{a\in\D} \alpha(1,\phi,a) =  \sup_{a\in\D} \frac{v(a)}{v(\phi(a))}\norm{\phi_a}_{H^2} <\infty
\end{equation}
proving \(C_{\phi}\colon \VMOA\to \VMOA\) is bounded. Considering the limit \(a\to 1\) along the real line together with Lemma \ref{lem:HelpWithApplyingTheKnown}, it is clear that \(C_{\phi}\) is not compact.

Let \(\phi(z) = \frac{1+z}{2}, z\in \D\) and \(\psi(z) = 1-z, z\in\D\), which makes \(M_{\psi}\in\BOP(\VMOA_v)\) in accordance with Proposition \ref{prop:polynomialsInBMOAV} and the discussion above. In \cite[Example 5.1]{Laitila-2009}, J. Laitila proved that \(\WCO\) acts as a compact operator \(\VMOA\to\VMOA\) although neither \(M_{\psi}\) nor \(C_{\phi}\) is compact. This example works in the same manner on \(\VMOA_v\). All that remains to prove for this statement is  that \(\WCO\) is compact on \(\VMOA_v\). From the first paragraph after the proof to Lemma \ref{lem:ExampleNewWeight}, it is sufficient to prove that
\[
\lim_{a\to 1} \alpha(\psi,\phi,a) =0.
\]
This is true due to \eqref{eq:ExCphiBounded} and
\[
\alpha(\psi,\phi,a) = \abs{1-a}\alpha(1,\phi,a).
\]

\section{Proof of main results}\label{sec:proofsOfMainResults}

\begin{proof}[Proof of Theorem \ref{thm:MainThm}]

Theorem \ref{thm:BoundednessOfWCO}, Corollary \ref{cor:BoundednessOfWCO_VMOA} and Corollary \ref{cor:EvaluationBoundedOnBMOAv} yield the statements.

\end{proof}

\vspace{0.4cm}

\begin{lem}\label{lem:FuncCharactTwo}
Let \(X=\BMOA_v\) or \(X=\VMOA_v\), where \(v\) is an admissible weight (see beginning of Section \ref{sec:WrappingItUp}). It holds for \(\WCO\in\BOP(X) \) that
\[
\WCO \text{ is compact } \ \ \Longrightarrow \ \ \limsup_{ \abs{\phi(a)} \to 1} \norm{\WCO\fAlpha}_{\BMOA_v}  = 0  \ \ \Longrightarrow \ \  \limsup_{ \abs{\phi(a)} \to 1} \alpha(\psi,\phi,a)  = 0, 
\]

where 
\[
\fAlpha\colon z \mapsto  \frac{\sigma_{\phi(a)}(z) - \phi(a) }{v(\phi(a)}.
\]

\end{lem}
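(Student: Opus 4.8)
The plan is to treat the two implications separately, using the test-function estimates of Section \ref{sec:FuncAlphaBeta} together with Lemma \ref{lem:AlphaBounded}. Both parts rest on the explicit identity
\[
\fAlpha(z)=\frac{\sigma_{\phi(a)}(z)-\phi(a)}{v(\phi(a))}=\frac{-z\,(1-\abs{\phi(a)}^2)}{v(\phi(a))\,(1-\CONJ{\phi(a)}\,z)},
\]
obtained by a one-line computation. Since $v$ is strictly positive and almost increasing we have $\inf_{\D}v\geq v(0)/C>0$, so for every $R\in\,]0,1[$ and $\abs{z}\leq R$,
\[
\abs{\fAlpha(z)}\leq\frac{R\,(1-\abs{\phi(a)}^2)}{(1-R)\,\inf_{\D}v},
\]
which tends to $0$ uniformly on $\abs{z}\leq R$ as $\abs{\phi(a)}\to1$; that is, $\fAlpha\to0$ in $\tau_0$ as $\abs{\phi(a)}\to1$. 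Recall also from Section \ref{sec:FuncAlphaBeta} that $\sup_{a\in\D}\norm{\fAlpha}_{\BMOA_v}<\infty$, that $\fAlpha\in\VMOA_v$, and that $\norm{\fAlpha}_{\BMOA_v}\geq1$.

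For the first implication I would run the standard compactness/$\tau_0$ argument. Fix any sequence $(a_n)$ with $\abs{\phi(a_n)}\to1$ and suppose, for contradiction, that $\norm{\WCO f^{(\alpha)}_{a_n}}_{\BMOA_v}\not\to0$; passing to a subsequence we may assume these norms stay above some $\delta>0$. By the uniform bound above, $(f^{(\alpha)}_{a_n})$ is bounded in $X$ and $\tau_0$-null. Compactness of $\WCO$ on $X$ yields a further subsequence along which $\WCO f^{(\alpha)}_{a_{n_k}}$ converges in $\norm{\cdot}_{\BMOA_v}$ to some $h\in X$; boundedness of the evaluation functionals (Corollary \ref{cor:EvaluationBoundedOnBMOAv}) makes norm convergence imply pointwise convergence, and hence, by Montel's theorem, $\tau_0$-convergence. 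On the other hand $\WCO\colon(\HOLO(\D),\tau_0)\to(\HOLO(\D),\tau_0)$ is continuous, being $f\mapsto\psi\,(f\circ\phi)$ with $\psi,\phi$ fixed, so $\WCO f^{(\alpha)}_{a_{n_k}}\to\WCO 0=0$ in $\tau_0$. Hence $h=0$, contradicting $\norm{h}_{\BMOA_v}=\lim_k\norm{\WCO f^{(\alpha)}_{a_{n_k}}}_{\BMOA_v}\geq\delta$. As $(a_n)$ was arbitrary, $\limsup_{\abs{\phi(a)}\to1}\norm{\WCO\fAlpha}_{\BMOA_v}=0$.

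For the second implication I would invoke Lemma \ref{lem:AlphaBounded}. Because $\norm{\fAlpha}_{\BMOA_v}\geq1$, replacing $\gAlpha_a$ by $\fAlpha$ in that lemma only enlarges the right-hand side, giving
\[
\alpha(\psi,\phi,a)\lesssim_{v,g}\norm{\WCO\fAlpha}_{\BMOA_v}+\frac{v(a)\,\norm{\psi\circ\sigma_a-\psi(a)}_{H^2}}{v(\phi(a))}.
\]
The first summand vanishes along $\abs{\phi(a)}\to1$ by hypothesis. For the second, $\psi=\WCO1\in X$, so $v(a)\norm{\psi\circ\sigma_a-\psi(a)}_{H^2}=v(a)\gamma(\psi,a,2)\leq\norm{\psi}_{*,v,2}\lesssim_{v,g}\norm{\psi}_{\BMOA_v}<\infty$ by Proposition \ref{prop:implicationOfJohnNirenberg}; thus the term is $\lesssim_{v,g}\norm{\psi}_{\BMOA_v}/v(\phi(a))$. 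If $v$ is unbounded then, being almost increasing, $v(\phi(a))\to\infty$ as $\abs{\phi(a)}\to1$ and this term vanishes; if $v\asymp1$ the full statement is the known unweighted result of \cite{Laitila-2009,LaitilaLN-2023}. Taking $\limsup_{\abs{\phi(a)}\to1}$ completes the implication.

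The routine calculations aside, the two points needing care are exactly those deferred above: that $\WCO$ is $\tau_0$–$\tau_0$ continuous and that norm convergence transfers to $\tau_0$ convergence (both via Corollary \ref{cor:EvaluationBoundedOnBMOAv}), and the bounded-versus-unbounded dichotomy for $v$, where the bounded case is handled by citation exactly as in Theorems \ref{thm:SuffForCpt} and \ref{thm:SuffForCptVMOA}. I expect the compactness-to-$\tau_0$ passage to be the main obstacle, though it is standard.
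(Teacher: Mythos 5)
Your proposal is correct and follows essentially the same route as the paper: for the first implication the paper cites \cite[Lemma 3.3]{Contreras-2016} together with the remark that on the compact set \(\closed{\WCO(CB_X)}^{\norm{\cdot}_{\BMOA_v}}\) the norm topology agrees with \(\tau_0\) — precisely the subsequence argument you write out (which, as in the paper's remark, never uses \(\tau_0\)-compactness of \(B_X\) and so covers \(\VMOA_v\) as well) — and for the second implication the paper likewise applies Lemma \ref{lem:AlphaBounded} with \(\norm{\fAlpha}_{\BMOA_v}\geq 1\) and the same unbounded/bounded dichotomy for \(v\), deferring the bounded case to \cite[(3.10)]{Laitila-2009}. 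The only cosmetic differences are that your Montel step is superfluous (pointwise identification of the norm limit already suffices) and your bounded-case citation is a little coarser than the paper's pinpoint reference.
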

\begin{proof}
Note that \(C:=\sup_{a\in\D}\norm{   \smash{\fAlpha} }_{\BMOA_v}<\infty \) by \eqref{eq:coolStuff} and \(\WCO|_{CB_X}\) is always \(\tau_0-\tau_0\) continuous. If \(X=\BMOA_v\), Lemma \ref{lem:Btau0Cpt} yields \((CB_X,\tau_0)\) is compact. Now, \cite[Lemma 3.3]{Contreras-2016} proves the first implication, noting that for every \(0< R<1\)
\[
 \sup_{\abs{z}\leq R}  \abs{\fAlpha(z)}  \lesssim \frac{  R (1-\abs{\phi(a)}^2)   }{    1-  \abs{\phi(a)}  R  },
\]
and hence, \(\fAlpha \to 0\) w.r.t. \(\tau_0\) as \(\abs{\phi(a)}\to 1\). On closer inspection, the use of \(B_X\) being compact w.r.t. \(\tau_0\) is redundant for the implication used above. Indeed, if \(\WCO\) is compact, \(K := \closed{\WCO(C B_X)}^{\norm{\cdot}_{\BMOA_v}}\) is compact and by a standard argument, the norm topology induced by \(\norm{\cdot}_{\BMOA_v}\) restricted to \(K\)  is equivalent to the topology on \(K\) induced by \(\tau_0\). Therefore, the first implication also holds for \(X=\VMOA_v\).

Concerning the second implication, the boundedness of \(\WCO\) ensures  \(\sup_{a\in\D}\beta(\psi,\phi,a)<\infty\) and \(\psi\in\BMOA_v\). If \(v\) is unbounded, Lemma \ref{lem:AlphaBounded} gives the second implication. If \(v\) is bounded, Lemma \ref{lem:AlphaBounded} combined with \cite[(3.10)]{Laitila-2009} gives the implication.

\end{proof}

\begin{proof}[Proof of Theorem \ref{thm:MainThmCpt}]

In the following, Theorem (*) means either Theorem \ref{thm:SuffForCpt} or Theorem \ref{thm:SuffForCptVMOA} depending on if \(\WCO\) is operating on \(\BMOA_v\) or \(\VMOA_v\). The statements are proved as follows:    
\[
\eqref{eq:Main_FuncTheorCharact}    \  \  \    \stackrel{  \mathclap{   \substack{ \text{ Theorem (*) }   \\   \  }   }     }{\Longrightarrow}   \  \  \    \eqref{eq:Main_Cpt}   \  \  \     \stackrel{  \mathclap{   \substack{ \text{ Lemma } \ref{lem:Structure}  \\   \  }   }     }{\Longrightarrow}    \  \  \  \substack{  \eqref{eq:Main_wCpt} \\
 \text{ and } \\
 \eqref{eq:Main_cCts}   }  \  \  \  \Longrightarrow   \  \  \  \substack{  \eqref{eq:Main_wCpt} \\
 \text{ or } \\
 \eqref{eq:Main_cCts}   }   \  \  \   \stackrel{  \mathclap{   \substack{ \text{ Lemma } \ref{lem:Structure}  \\   \  }   }     }{\Longrightarrow}   \  \   \  \eqref{eq:Main_fixC0}     \  \  \  \stackrel{  \mathclap{   \substack{ \text{ Theorem } \ref{thm:fixCopyC0NotUC}  \\   \  }   }     }{\Longrightarrow}  \   \  \  \eqref{eq:Main_FuncTheorCharact},   
\]

and
\[
    \eqref{eq:Main_fixC0}  \  \  \   \stackrel{  \mathclap{   \substack{ \text{ Lemma } \ref{lem:UCfixC0}  \\   \  }   }     }{\Longrightarrow}  \  \  \  \eqref{eq:Main_UC}  \  \  \  \stackrel{  \mathclap{   \substack{ \text{ Theorem } \ref{thm:fixCopyC0NotUC}  \\   \  }   }     }{\Longrightarrow}  \  \  \  \eqref{eq:Main_fixC0}
\]
followed by
\[
    \eqref{eq:Main_Cpt}  \  \  \   \stackrel{  \mathclap{   \substack{ \text{ \cite[pp.~120-121]{XXX} }  \\   \  }   }     }{\Longrightarrow}  \  \  \     \eqref{eq:Main_FSS}  \  \  \   \stackrel{  \mathclap{   \substack{ \text{ Def. }  \\   \  }   }     }{\Longrightarrow}  \  \  \   \eqref{eq:Main_SS}  \  \  \  \stackrel{  \mathclap{   \substack{ \text{ Def. }  \\   \  }   }     }{\Longrightarrow}  \  \  \  \eqref{eq:Main_fixC0}.
\]

Lemma \ref{lem:FuncCharactTwo} connects \eqref{eq:Main_FuncTheorCharactTwo} with the rest of the equivalent statements in the first list.

Theorem \ref{thm:Gantmacher} together with the rest of Subsection \ref{ss:GeneralBanach} proves the equivalence of   \eqref{eq:Main_wCpt},   \eqref{eq:Main_Gantmacher} and   \eqref{eq:Main_AC}.

\end{proof}
The only implications in the proof that do not hold for an arbitrary bounded operator on a Banach space \( T\in\BOP(X) \) are the ones involving (\ref{eq:Main_FuncTheorCharact}), (\ref{eq:Main_FuncTheorCharactTwo}). The implication \((\ref{eq:Main_UC}) \  \Rightarrow  \ (\ref{eq:Main_fixC0})\) holds in general, which can be seen using the standard basis of \(c_0\) as a wuC series, which does not converge unconditionally.

To finish the section, some open problems and conjectures are gathered.

\begin{conj}
The condition \( x\mapsto v(1-x)x^{\frac{1}{p}-\epsilon}\) is almost increasing for some \(\epsilon>0\) can be replaced by the milder assumption \( \inf_{x\in ]0,1[} v(x) > 0 \) in Proposition \ref{prop:implicationOfJohnNirenberg}. 
\end{conj}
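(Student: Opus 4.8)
The plan is to pinpoint the unique place where the growth restriction actually enters the proof of Proposition~\ref{prop:implicationOfJohnNirenberg} and then to try to dispense with it there. Auditing the chain of lemmas, the almost-increasing hypothesis on $x\mapsto v(1-x)x^{1/p-\epsilon_0}$ is invoked only in Proposition~\ref{prop:intBMOAimplyConformInvBMOA}, precisely in the estimate \eqref{eq:usefulForJohnNirenberg}, which is what permits the dyadic sum $\sum_{j}2^{-j/p}v(1-m(J_j))\eta(f,J_j,p)$ to be summed as a geometric series of ratio $2^{-\epsilon_0}$. By contrast, Proposition~\ref{prop:ConformInvBMOAimplyintBMOA}, the John--Nirenberg Lemma~\ref{lem:JohnNirenberg}, and the comparison $\norm{f}_{*,\BMO_{v,p}}\lesssim\norm{f}_{*,\BMO_{v,1}}$ already use only positivity and the almost-increasing property of $v$. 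Hence the whole conjecture reduces to establishing the single uniform bound $v(a)\gamma(f,a,p)\lesssim\norm{f}_{*,\BMO_{v,p}}$ under the weaker hypotheses, and everything else carries over verbatim.

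First I would attempt to replace the polynomial decay $2^{-\epsilon_0 j}$ furnished by the weight with the exponential decay that John--Nirenberg furnishes for the oscillation itself. Rather than bounding $\eta(f,J_j,p)$ crudely by $\norm{f}_{*,\BMO_{v,p}}/v(1-m(J_j))$ on every scale, I would feed the distributional inequality of Lemma~\ref{lem:JohnNirenberg} directly into $\gamma(f,a,p)^p=\int_\T\abs{f-f(a)}^p P_a\,dm$, splitting $f-f(a)=(f-m_{J_0}(f))+(m_{J_0}(f)-f(a))$ and estimating the first summand through the layer-cake formula, with $P_a(w)\lesssim 2^{-2k}/m(J_0)$ on the annulus at angular scale $2^k m(J_0)$ playing the role of the scale weight. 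The aim is to show that the exponential decay of the oscillation's distribution function beats the at-most-polynomial growth of the ratio $v(a)/v(1-m(J_j))$. A complementary route, available because $f$ is analytic, is to re-run the Carleson-measure computation of Proposition~\ref{prop:PracticalEstimate} directly from the subharmonicity expressed in \eqref{eq:LittlewoodPaley}, or to adapt Baernstein's conformally invariant summation \cite{Baernstein-1979}, which sidesteps the dyadic decomposition entirely and may absorb the weight more gracefully.

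The step I expect to be the genuine obstacle is the contribution of the coarse arcs, namely those $J_j$ whose angular extent is of order one. There $1-m(J_j)$ is small, $v(1-m(J_j))$ is controlled only from below by $\inf_x v(x)$, and the ratio $v(a)/v(1-m(J_j))$ can be as large as $v(a)/\inf_x v(x)$; for a weight growing like $v(a)\asymp(1-\abs{a})^{-c}$ this makes the $j$-th term of the naive sum of order $2^{j(c-1/p)}$, so the tail diverges as soon as $c\ge 1/p$ and the bound degenerates to one proportional to $v(a)$ rather than to a constant. Closing this gap therefore demands either genuine cancellation among the coarse scales --- which only the analytic structure can supply, and which the exponential John--Nirenberg decay may or may not deliver --- or a conformally flavoured argument avoiding scale-by-scale summation. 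I would regard the statement as plausible but open, and before committing to either route I would first test the borderline standard weights $v(x)=(1-x)^{-c}$ with $1/p\le c<1$ for an explicit analytic counterexample, since these are exactly the weights excluded by the $\epsilon_0$-hypothesis but admitted by mere positivity.
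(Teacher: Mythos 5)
You should first note that the statement you were handed is a conjecture: the paper offers no proof, only a diagnosis of the obstruction, so there is no argument to check yours against step by step. Measured against that diagnosis, your audit is accurate and in one respect sharper than the paper's. You locate the sole use of the \(\epsilon_0\)-hypothesis in estimate \eqref{eq:usefulForJohnNirenberg} inside Proposition \ref{prop:intBMOAimplyConformInvBMOA}, and you observe that Lemma \ref{lem:JohnNirenberg} and the comparison of the \(\BMO_{v,p}\)- and \(\BMO_{v,1}\)-seminorms survive under mere positivity; this is precisely the paper's remark that the ``binary'' integration over arcs permits the John--Nirenberg part for any strictly positive weight, while the Poisson-kernel weighting in the conformally invariant seminorm is where an unbounded \(v\) causes trouble, so that Proposition \ref{prop:intBMOAimplyConformInvBMOA} is the problematic piece. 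Your coarse-scale computation --- terms of order \(2^{j(c-1/p)}\) for \(v(1-x)=x^{-c}\), hence divergence once \(c\ge 1/p\) --- makes quantitative what the paper leaves implicit, and the window \(1/p\le c<1\) is the right hunting ground for a counterexample, since for \(c>1\) the space \(\BMOA_{v,1}\) reduces to constants, as the paper recalls.

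Two cautions on your proposed escape routes, however. First, the Baernstein route is explicitly foreclosed in the paper in a way you did not anticipate: a straightforward conformally invariant adaptation would require the family \(\{\hat{\phi}\in\AUT : \abs{\hat{\phi}(0)}\ge R\}\) to be a group under composition, and it is not (it is not closed under composition); so this route demands a genuinely new idea, not a transplant of \cite{Baernstein-1979}. Second, your hope that the exponential John--Nirenberg decay ``beats'' the polynomial growth of \(v(a)/v(1-m(J_j))\) conflates decay in the level \(\lambda\) with decay in the scale index \(j\): Lemma \ref{lem:JohnNirenberg} improves the distribution function of \(f-m_I(f)\) on a \emph{fixed} arc \(I\), and that gain is already fully spent in passing from \(\eta(f,I,1)\) to \(\eta(f,I,p)\); it yields no smallness across dyadic scales, which is where the loss lives. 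In particular the drift of the means, \(\abs{m_{J_k}(f)-m_{J_0}(f)}\lesssim\sum_{j\le k}\eta(f,J_j,1)\), is a deterministic sum untouched by any distributional inequality, and for \(v(1-x)=x^{-c}\) it alone already produces your divergent \(2^{j(c-1/p)}\) terms. So your first route, as stated, fails for the same reason the naive summation does, and any proof or counterexample must exploit analyticity beyond John--Nirenberg. Your final classification of the statement as plausible but open coincides with the paper's own stance.
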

The problematic part is Proposition \ref{prop:intBMOAimplyConformInvBMOA}. An initial idea is to apply Baernstein's approach. However, instead of \(\AUT\) being a group with respect to composition, a straightforward approach would demand that 
\[
 \{  \hat{\phi}\in \AUT : \abs{\hat{\phi}(0)}\geq R  \} 
\] 
is a group, which is not true (it is not closed under composition). The binary nature of integrating over an interval (integration over \(\T\) with the binary function \(\chi_I\)) allows Lemma \ref{lem:JohnNirenberg} and the very beginning of the proof of Proposition \ref{prop:implicationOfJohnNirenberg} regardless of a strictly positive weight \(v\). In the conformal setting (integration over \(\T\) with the Poisson kernel \(P_a\) as a weight) an unbounded weight \(v\) complicate matters.

\begin{conj}\label{conj:invariantCompAut}
There exists an increasing, radial (not admissible) weight \(v\colon \D \to ]0,\infty[\), \(\hat{\phi} \in \AUT\) and \( f\in \BMOA_v  \) such that \(  f\circ \hat{\phi}  \notin \BMOA_v  \).  
\end{conj}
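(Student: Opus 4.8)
The plan is to produce the counterexample via the conformal-change identity \(\norm{f\circ\hat{\phi}}_{*,v,p} = \norm{f}_{*,v\circ \hat{\phi}^{-1},p}\) recorded in the introduction, which turns the problem into finding a radial increasing weight \(v\) and an automorphism \(\hat{\phi}\) with \(\BMOA_v \neq \BMOA_{v\circ\hat{\phi}^{-1}}\). I would take \(\hat{\phi}=\sigma_a\) with \(a\in\,]0,1[\) (so \(\hat{\phi}^{-1}=\sigma_a\)) and first record the elementary boundary asymptotics \(1-\abs{\sigma_a(z)}\asymp \tfrac{1-a}{1+a}(1-\abs{z})\) as \(z\to 1\). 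Writing \(v(z)\asymp g(\tfrac{1}{1-\abs{z}})\), this gives \(v(\sigma_a(z))\asymp g(\tfrac{K}{1-\abs{z}})\) near \(z=1\) with \(K=\tfrac{1+a}{1-a}>1\): composition with \(\sigma_a\) inflates the weight by a \emph{fixed} dilation of the argument of \(g\) near that boundary point. A useful bookkeeping remark is that once such a counterexample is produced, the weight is automatically non-admissible, since Lemma \ref{lem:compWithAutomorph} forces composition-invariance whenever \ref{eq:mainAssumpA2} holds; so I need not verify non-admissibility separately.

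The heart of the construction is the choice of \(g\colon[1,\infty[\to\,]0,\infty[\). I would build \(g\) increasing and sublinear, say \(g(t)\lesssim t^{1/2}\) (which keeps \(\BMOA_v\) far from the constants-only regime), but carrying a sparse sequence of \emph{cliffs}: scales \(t_n=2^{k_n}\) at which \(g(Kt_n)/g(t_n)=e^{H_n}\) with \(H_n\to\infty\), while \(g\) is essentially flat between consecutive cliffs. Passing to \(G(s):=\log g(e^s)\), this asks for an increasing \(G\) of slope \(\le \tfrac12\) that is flat except on intervals of width \(\log K\) over which it rises by \(H_n\); spacing the cliffs so the average slope stays \(\le\tfrac12\) (e.g.\ \(H_n=n\) with flat runs of length \(\approx 2H_n\)) is elementary. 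Two features are designed in: the cliff supplies the blow-up factor \(g(Kt_n)/g(t_n)\to\infty\), while the bottoms \(g(t_n)\) grow super-geometrically, \(g(t_{n+1})\ge e^{H_n}g(t_n)\), which is what will make the relevant tail sums summable.

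For the test function I would take the lacunary series \(f(z)=\sum_n g(t_n)^{-1}z^{t_n}\), noting \(f\in H^2\) since \(\sum_n g(t_n)^{-2}<\infty\). By rotation invariance the quantities \(\gamma(f,b,2)\) depend only on \(\abs{b}\), and \eqref{eq:LittlewoodPaley} yields the standard lacunary estimate
\[
\gamma(f,b,2)^2\asymp \sum_{t_n\ge (1-\abs{b})^{-1}}\abs{a_n}^2+(1-\abs{b})^2\sum_{t_n< (1-\abs{b})^{-1}}t_n^2\abs{a_n}^2,\qquad a_n=g(t_n)^{-1}.
\]
Using monotonicity of \(g\), the super-geometric growth of the bottoms, and \(g(t)\lesssim t^{1/2}\), one checks \(\sup_{b\in\D}v(b)^2\gamma(f,b,2)^2<\infty\), so \(f\in\BMOA_v\). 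On the other hand, testing along the real radii \(b_n=1-t_n^{-1}\) and combining \(v(\sigma_a(b_n))\asymp g(Kt_n)\) with \(\gamma(f,b_n,2)\gtrsim\abs{a_n}=g(t_n)^{-1}\) gives
\[
v(\sigma_a(b_n))\,\gamma(f,b_n,2)\gtrsim \frac{g(Kt_n)}{g(t_n)}=e^{H_n}\longrightarrow\infty ,
\]
so \(f\notin\BMOA_{v\circ\sigma_a}\), that is \(f\circ\sigma_a\notin\BMOA_v\), which is the desired counterexample.

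The main obstacle is the verification that \(f\in\BMOA_v\): because \(v\) is non-admissible, Propositions \ref{prop:implicationOfJohnNirenberg} and \ref{prop:PracticalEstimate} are unavailable, so the argument must rest entirely on the lacunary structure and the reduction to \(\abs{b}\). The delicate inequality is the tail bound \(\sup_m g(2^m)^2\sum_{t_n\ge 2^m}g(t_n)^{-2}\lesssim 1\) against a weight with long flat stretches; this is precisely what dictates the super-geometric growth of the cliff bottoms in the design of \(g\). A secondary point worth flagging is that I need the fixed-factor oscillation \(g(Kt)/g(t)\to\infty\), which is strictly stronger than the growing-window oscillation \(g(t^2)/g(t)^2\to\infty\) displayed by the staircase example in the comments on admissible weights; the explicit cliff construction, rather than that slowly oscillating example, is what secures this while retaining sublinearity.
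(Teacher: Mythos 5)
First, a point of comparison: the paper does not prove this statement --- it is posed as a conjecture, and Section \ref{sec:proofsOfMainResults} only constructs a staircase weight for which the ratio \(a\mapsto v(\hat{\phi}_{1/2}(a))/v(a)\) is unbounded, explicitly flagging the missing ingredient (``find a function \(f\) such that \(\limsup_{\abs{a}\to 1} v(a)\gamma(f,a,2)>0\) \emph{if such exists}''). Your proposal tries to supply exactly that missing test function, which is the right thing to attempt; the reduction via \(\norm{f\circ\hat{\phi}}_{*,v,p}=\norm{f}_{*,v\circ\hat{\phi}^{-1},p}\) and your tail-sum bound \(\sup_m g(2^m)^2\sum_{t_n\geq 2^m}g(t_n)^{-2}\lesssim 1\) are both fine. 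But the crucial claim \(f\in\BMOA_v\) is false for your \(f\). At the \emph{post-cliff} radii \(1-\abs{b}=(Kt_n)^{-1}\) (take \(b\) real, all coefficients positive, so \(\abs{f'(b)}\geq a_nt_n\abs{b}^{t_n-1}\gtrsim_K t_n/g(t_n)\)), the paper's own Lemma \ref{lem:BlochVContainBMOAV} gives
\begin{equation*}
v(b)\,\gamma(f,b,2)\;\geq\; v(b)(1-\abs{b}^2)\abs{f'(b)}\;\gtrsim_K\; g(Kt_n)\cdot\frac{1}{t_n}\cdot\frac{t_n}{g(t_n)}\;=\;e^{H_n}\longrightarrow\infty .
\end{equation*}
The same blow-up is visible in your own displayed lacunary formula: at \(N=Kt_n\) the low-frequency term contains \(N^{-2}t_n^2a_n^2=K^{-2}g(t_n)^{-2}\), while \(v(b)^2=g(Kt_n)^2=e^{2H_n}g(t_n)^2\); in checking membership you have in effect used the cliff-bottom value \(g(t_n)\) for \(v(b)\) where the cliff-top value \(g(Kt_n)\) applies. (A separate, cosmetic slip: for \(a\in\,]0,1[\) one has \(1-\abs{\sigma_a(z)}\asymp K(1-\abs{z})\) as \(z\to1\) and \(\asymp K^{-1}(1-\abs{z})\) as \(z\to-1\), so the inflation \(v(\sigma_a(b_n))\asymp g(Kt_n)\) occurs at \(b_n=-(1-t_n^{-1})\), not at \(1-t_n^{-1}\); since \(\gamma(f,\cdot,2)\) is essentially radial for lacunary \(f\), this alone would be harmless.)

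The failure is moreover structural, not a matter of redesigning \(g\) or \(f\): the cliff that inflates the composed weight has \emph{fixed} multiplicative width \(K\), i.e. fixed pseudohyperbolic width, and \(\gamma(f,\cdot,2)\) cannot drop by more than a constant over such a step. Indeed, writing \(\sigma_{b'}=\sigma_b\circ\hat{\tau}\) with \(\hat{\tau}(0)=\sigma_b(b')\) and using that \(f(b)\) is the mean of \(f\circ\sigma_b\) (hence minimizes the \(L^2\)-distance to constants),
\begin{equation*}
\gamma(f,b',2)^2=\int_\T\abs{f\circ\sigma_b-f(b')}^2P_{\sigma_b(b')}\,dm\;\geq\;\frac{1-\rho}{1+\rho}\,\gamma(f,b,2)^2,\qquad \rho=\abs{\sigma_b(b')}.
\end{equation*}
Given \(\hat{\phi}\in\AUT\) there is \(K=K(\hat{\phi})\) with \(1-\abs{\smash{\hat{\phi}^{-1}(b)}}\geq(1-\abs{b})/K\); choosing \(b'\) on the ray of \(b\) with \(1-\abs{b'}=(1-\abs{b})/K\), one gets \(\rho(b,b')\leq 1-1/K<1\) uniformly, and since \(v\) is radial and increasing, \(v(\hat{\phi}^{-1}(b))\leq v(b')\). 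Hence for every \(b\in\D\) and every \(f\in\BMOA_v\),
\begin{equation*}
v(\hat{\phi}^{-1}(b))\,\gamma(f,b,2)\;\leq\; C_K\, v(b')\,\gamma(f,b',2)\;\leq\; C_K\,\norm{f}_{*,v,2},
\end{equation*}
so \(\norm{f\circ\hat{\phi}}_{*,v,2}=\norm{f}_{*,v\circ\hat{\phi}^{-1},2}\lesssim_K\norm{f}_{*,v,2}\): composition with a fixed automorphism is automatically bounded on \(\BMOA_v\) for \emph{every} increasing radial weight, with no condition of type \ref{eq:mainAssumpA2} needed (and the same works for \(p\geq1\), since the mean is a \(2\)-near-minimizer in \(L^p\)). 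In other words, the ratio \(v(\hat{\phi}^{-1}(b))/v(b)\) can only be large at points where any \(f\in\BMOA_v\) is forced to have compensatingly small \(v(b)\gamma(f,b,2)\) --- exactly what kills your lacunary candidate --- and this elementary quasi-invariance indicates that the conjectured counterexample cannot exist at all for \(p=2\), so no completion of your construction is possible.
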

A direct approach yields that this is  the same as proving that there exists \( \hat{\phi}\in\AUT \) and \( f\in \BMOA_v  \) such that
\[
\sup_{a\in\D} \frac{v(\hat{\phi}(a))}{v(a)}    v(a)  \gamma(f,a,2) = \infty.
\] 
Examining the quotient \smash{  \(\frac{v(\hat{\phi}(a))}{v(a)}\) }, we have the following: Using \smash{ \(v(z)=g(\frac{1}{1-\abs{z}^2}) \) }, for some increasing unbounded function \(g\colon [1,\infty[\to ]0,\infty[\), the automorphisms \smash{  \( \hat{ \phi }_c := \sigma_{-\sqrt{1-c}} , \ 0< c<1 \) } yield
\begin{equation}\label{eq:conjNotClosedWrtComp}
 g\left(\frac{1}{1-\abs{ \hat{\phi}_c (a) }^2}\right) = g\left(\frac{    \abs{  1 + \sqrt{1-c}a  }^2  }{  c (1-\abs{ a }^2)   }\right) \geq  g\left(\frac{1}{c} \frac{1}{1-\abs{a}^2}  \right)  , \ a\in ]0,1[.
\end{equation}
For any increasing unbounded function \(G\colon [1,\infty[\to [1,\infty[\)  of any growth rate, we can create a continuous \(g\), dominated by \(G\), as follows: Let \(x_0=1\) and \(g(x) = 1\) for all \(x\in[x_0,x_1]\), where \(x_1>x_0\) is a point where \( G(x_1)\geq 1  \). On \([x_1,x_1+1]\) we define \(g\) to increase to the value \( G(x_1+1)\). Define \(g\) to be constant for all \(x\in[x_1+1,x_2]\), where \(x_2>x_1+1\) is such that \(G(x_2)/G(x_1+1) \geq 2\). Continuing this process we obtain a sequence \((x_n)\), with the property that  \(G(x_{n+1})/G(x_n+1) \geq n+1\) for all \(n\) and \( ([x_n,x_n+1[)_n \cup ([x_n+1,x_{n+1}[)_n\) is a partition of \([1,\infty[\), where \(g\) is increasing on the first family of intervals and constant on the latter. Moreover, as \(x\geq 1\), the function \(g\) is  increasing, unbounded and satisfies 
\[
\frac{  g(2x_{n} )  }{ g( x_{n} )  } = \frac{  g(2x_{n} )  }{ g( x_{n-1}+1 )  }  \geq  \frac{  g(1+x_{n} )  }{ g( x_{n-1}+1  )  } \geq  \frac{  G(x_{n} )  }{ G( x_{n-1}+1  )  } \geq n   
\]
for all \(n\). In combination with \eqref{eq:conjNotClosedWrtComp}, it follows that such a weight, \(a\mapsto \frac{v(\hat{\phi}_{1/2}(a))}{v(a)}\) is unbounded on \([0,1[\). The remaining part would be to find a function \(f\) such that \(\limsup_{\abs{a}\to 1}  v(a)  \gamma(f,a,2) >0 \) if such exists. It would also be interesting if one could find an analytic function \(g\) satisfying the desired properties. Notice that if \(v\) increases too fast, then the space consists only of constant functions, in which case composition with an automorphism will not change the function at all. Recall that if \(v(a) \gtrsim (1-\abs{a})^{-(1+\epsilon)} \) for any \(\epsilon>0\), then \(\BMOA_v\) consists of constant functions.

\begin{conj}\label{conj:MainBMOA}
We can drop the assumption in Theorem \ref{thm:MainThmCpt} that at least one of the following is true:
\begin{enumerate}
\item \( C_{\phi} \in \BOP(\BMOA_v)\), and \(\BMOA_v \not\subset  H^\infty\) or \(\psi\in \VMOA_v\),
\item \(\WCO|_{\VMOA_v} \in \BOP(\VMOA_{v} )\).  
\end{enumerate}
\end{conj}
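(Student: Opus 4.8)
The plan is to pin down the exactly two places where the extra assumption is actually consumed in the proof of Theorem \ref{thm:MainThmCpt} and to remove it from each. As noted in the remark following that proof, the only implications in the cycle \eqref{eq:Main_Cpt}--\eqref{eq:Main_FSS} that are not valid for an arbitrary $T\in\BOP(X)$ are those touching the function-theoretic statements \eqref{eq:Main_FuncTheorCharact} and \eqref{eq:Main_FuncTheorCharactTwo}; moreover Lemma \ref{lem:FuncCharactTwo} ties \eqref{eq:Main_FuncTheorCharactTwo} to the rest with no hypothesis on $v$. Hence the conjecture reduces to extending the sufficiency direction \eqref{eq:Main_FuncTheorCharact}$\Rightarrow$\eqref{eq:Main_Cpt} (Theorems \ref{thm:SuffForCpt} and \ref{thm:SuffForCptVMOA}) and the necessity direction \eqref{eq:Main_fixC0}$\Rightarrow$\eqref{eq:Main_FuncTheorCharact} (Theorem \ref{thm:fixCopyC0NotUC}) to every $\WCO\in\BOP(X)$. (The separate list \eqref{eq:Main_Gantmacher}--\eqref{eq:Main_AC} is outside the scope, since Theorem \ref{thm:Gantmacher} intrinsically needs $\WCO|_{\VMOA_v}\in\BOP(\VMOA_v)$.) By Corollary \ref{cor:BoundednessOfWCO_VMOA} the genuinely uncovered regime is $\psi\notin\VMOA_v$ (which forces $\WCO|_{\VMOA_v}\notin\BOP(\VMOA_v)$) together with $C_\phi\notin\BOP(\BMOA_v)$ or $\BMOA_v\subset H^\infty$.

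For the necessity direction I would first isolate what survives: the $\alpha$-test functions $\gAlpha_a$ of Section \ref{sec:FuncAlphaBeta} satisfy $\lim_{\abs{\phi(a)}\to1}\norm{\gAlpha_a}_{H^2}=0$ with no condition on $v$, so the part of Theorem \ref{thm:fixCopyC0NotUC} driven by $\limsup\alpha>0$ already holds unconditionally (this is exactly why $M_\psi$ fixes $c_0$ in Corollary \ref{cor:MainCorMult}, via $\limsup\abs{\psi(a)}>0$). The only gap is the subcase $\lim_n\alpha(\psi,\phi,a_n)=0$, $\lim_n\beta(\psi,\phi,a_n)>0$, $\abs{\phi(a_n)}\to1$, with $\BMOA_v\subset H^\infty$ and $\psi\notin\VMOA_v$; then $\norm{\delta_{\phi(a_n)}}_{X^*}\asymp_{v,g}1$, so $\beta\asymp v(a_n)\gamma(\psi,a_n,1)\gtrsim1$, while the $\gBeta_{a_n}$ of Subsection \ref{subsec:BetaFunc} neither blow up in norm nor become $\tau_0$-null, so Lemma \ref{lem:EquivBasisC0} cannot be started. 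Heuristically $\WCO$ here is a small perturbation of a multiple of $M_\psi$, and the plan is to transfer a $c_0$-copy of $M_\psi$ through the factorization $\WCO=M_\psi C_\phi$, i.e. to build bounded $f_k$ whose pullbacks $f_k\circ\phi$ stay bounded below on the arcs carrying the oscillation of $\psi$ while remaining disjointly supported in the gliding sense. The obstruction is that the oscillation detected by $\beta$ is intrinsic to the fixed multiplier $\psi$ and does not migrate to the boundary with $k$; since $\psi\notin\VMOA_v$, the images $\WCO f_k$ are not $\tau_0$-null, so a fundamentally non-concentrating construction of the $c_0$-copy is required.

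For the sufficiency direction the critical estimate is \eqref{eq:ExtraConditionInAction}: one must bound $v(a)\norm{\chi_E\psi\circ\sigma_a}_{H^1}\norm{F_{n,a}}_{H^2}$ uniformly for $\abs{\phi(a)}\le r$. With $C_\phi$ bounded this is immediate, since $\norm{F_{n,a}}_{H^2}\lesssim\gamma(f_n,\phi(a),2)\norm{\phi_a}_{H^2}\lesssim1/v(a)$; without it, the quantity $v(a)\norm{\phi_a}_{H^2}/v(\phi(a))=\alpha(1,\phi,a)$ is uncontrolled, precisely at points where $\psi$ is small. The plan is to keep the multiplier attached throughout, estimating the product $v(a)\norm{(\psi\circ\sigma_a)F_{n,a}}_{H^2}$ (which $\WCO$-boundedness and Theorem \ref{thm:BoundednessOfWCO} do control) rather than $\norm{F_{n,a}}_{H^2}$ alone, and to absorb the factor $\abs{\psi(a)}\,v(a)\norm{\phi_a}_{H^2}/v(\phi(a))$ using $\sup_a\alpha(\psi,\phi,a)<\infty$. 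The hard point will be that splitting $(\psi\circ\sigma_a)F_{n,a}$ by Hölder reintroduces $H^4$-type norms of $\psi\circ\sigma_a-\psi(a)$, which the admissibility growth bound \ref{eq:mainAssumpA1} (tuned to the exponent pair $(1,2)$) does not control through Proposition \ref{prop:implicationOfJohnNirenberg}; so one must find a genuinely weight-adapted substitute for the $C_\phi$-boundedness estimate that does not invoke a higher exponent.

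I expect the main obstacle to be the necessity half in the regime $\BMOA_v\subset H^\infty$ with $\psi\notin\VMOA_v$. The whole apparatus of Sections \ref{sec:FuncAlphaBeta}--\ref{sec:WrappingItUp} rests on test functions that concentrate at the boundary through $\norm{\delta_{\phi(a)}}_{X^*}\to\infty$, and this mechanism simply collapses once the evaluation functionals are uniformly bounded; extracting a $c_0$-copy there has to be driven by the multiplier $\psi$ and its failure to lie in $\VMOA_v$, not by the composition symbol $\phi$. By contrast, the sufficiency half, while delicate, looks more likely to yield to a refinement of the Carleson-measure estimate in Proposition \ref{prop:PracticalEstimate} together with the $p$-independence furnished by Proposition \ref{prop:implicationOfJohnNirenberg}. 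It is therefore conceivable that the conjecture is true only after further hypotheses on $v$, or even false in the $\beta$-dominated, $H^\infty$-contained regime, and a decisive step would be to either construct the transferred $c_0$-copy above or to exhibit a bounded, non-compact $\WCO$ in that regime that fails to fix $c_0$.
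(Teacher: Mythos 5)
You have not proved the statement, and neither does the paper: Conjecture \ref{conj:MainBMOA} is left open, with only the remark following it that the extra assumption enters solely through Theorem \ref{thm:SuffForCpt} and Theorem \ref{thm:fixCopyC0NotUC}. Your localization of the problem agrees with, and indeed sharpens, that remark: you correctly observe that the \(\alpha\)-driven half of Theorem \ref{thm:fixCopyC0NotUC} needs none of the bulleted hypotheses, that the \(\beta\)-driven half collapses exactly when \(\BMOA_v\subset H^\infty\) and \(\psi\notin\VMOA_v\) (uniformly bounded evaluation functionals make \(\norm{(1+\fBeta_a)^2}_{\BMOA_v}\asymp 1\), so the \(\gBeta_a\) are neither norm-blowing nor \(\tau_0\)-null and Lemma \ref{lem:EquivBasisC0} cannot be started), and that in the sufficiency proof the hypothesis is consumed precisely in \eqref{eq:ExtraConditionInAction}, where \(C_\phi\in\BOP(\BMOA_v)\) supplies \(\norm{F_{n,a}}_{H^2}\lesssim 1/v(a)\). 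Your warning that keeping the multiplier attached and splitting by H\"older reintroduces \(H^4\)-type quantities, which \ref{eq:mainAssumpA1} does not reach through Proposition \ref{prop:implicationOfJohnNirenberg}, matches the paper's own explanation (end of Section \ref{sec:FuncAlphaBeta}) of why \(\gamma(\psi,a,1)\) rather than \(\gamma(\psi,a,2)\) appears in \(\beta\).

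The gap is that both of your essential steps remain unexecuted plans: the ``transfer of a \(c_0\)-copy through \(M_\psi\)'' in the \(\beta\)-dominated, \(H^\infty\)-contained regime is only heuristic (you name the obstruction --- \(\WCO f_k\) not \(\tau_0\)-null since \(\psi\notin\VMOA_v\) --- without overcoming it), and the ``weight-adapted substitute'' for \eqref{eq:ExtraConditionInAction} is likewise not produced; your closing concession that the conjecture may require further hypotheses on \(v\), or fail outright in that regime, acknowledges that nothing beyond the paper's own remark has been established. One concrete inaccuracy in your reduction: the uncovered regime is not only \(\psi\notin\VMOA_v\). By Corollary \ref{cor:BoundednessOfWCO_VMOA}, \(\WCO|_{\VMOA_v}\notin\BOP(\VMOA_v)\) can also occur with \(\psi\in\VMOA_v\) but \(\psi\phi\notin\VMOA_v\); if in addition \(C_\phi\notin\BOP(\BMOA_v)\), both bullets of Theorem \ref{thm:MainThmCpt} fail, and there the necessity half is covered by Theorem \ref{thm:fixCopyC0NotUC} (via \(\psi\in\VMOA_v\)) while the sufficiency half is genuinely open --- so your claim that the problem reduces to the \(\psi\notin\VMOA_v\) case understates the sufficiency side of the conjecture.
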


This extra assumption is only needed in Theorem \ref{thm:SuffForCpt} and Theorem \ref{thm:fixCopyC0NotUC}. Being able to remove it would yield a complete characterization of e.g. compactness of \(\WCO\in\BOP(\BMOA_v)\) for admissible weights.

\begin{conj}\label{conj:PolynomialSymbBdd}
Given an admissible weight \(v\) (see beginning of Section \ref{sec:WrappingItUp}), all analytic polynomial symbols \(\phi\colon \D\to\D\) makes \(C_\phi\) bounded.
\end{conj}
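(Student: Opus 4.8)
The plan is to verify the function-theoretic criterion for boundedness of $C_\phi$ supplied by Corollary \ref{cor:MainCorComp}: it suffices to show
\[
\sup_{a\in\D}\frac{v(a)}{v(\phi(a))}\norm{\phi_a}_{H^2}<\infty,\qquad \phi_a=\sigma_{\phi(a)}\circ\phi\circ\sigma_a
\]
(membership $\phi\in X_v$ is automatic from Proposition \ref{prop:polynomialsInBMOAV}, since $v(a)\sqrt{1-\abs a}$ is bounded by \ref{eq:mainAssumpA1} and Lemma \ref{lem:ExtraInfoAboutG}). First I would record the two elementary facts that drive everything: $\phi_a$ is a self-map of $\D$ fixing the origin, whence $\norm{\phi_a}_{H^2}\leq 1$; and, via \eqref{eq:LittlewoodPaley} with $\abs{\phi'}$ bounded, $\gamma(\phi,a,2)\lesssim_\phi\sqrt{1-\abs a^2}$. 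On any compact set $\abs a\leq r$ the displayed quantity is bounded by continuity, so the problem is local as $\abs a\to1$. If $a\to\zeta\in\T$ with $\abs{\phi(\zeta)}<1$, then $v(\phi(a))$ is bounded below while $\norm{\phi_a}_{H^2}\asymp_r\gamma(\phi,a,2)\lesssim_\phi\sqrt{1-\abs a^2}$, and $v(a)\sqrt{1-\abs a^2}$ is bounded; so these approaches are harmless. Because $\phi$ is a polynomial, the only remaining boundary points are those of the contact set $\{\zeta\in\T:\abs{\phi(\zeta)}=1\}$, which is either all of $\T$ (exactly when $\phi(z)=\eta z^n$, $\eta\in\T$) or finite.

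The inner case $\phi=\eta z^n$ is immediate: here $\phi_a$ is inner and fixes $0$, so $\norm{\phi_a}_{H^2}=1$, while $v(a)/v(\phi(a))=g\!\big(\tfrac1{1-\abs a}\big)/g\!\big(\tfrac1{1-\abs a^n}\big)\lesssim g(n)$ by \ref{eq:mainAssumpA2} (this is exactly \eqref{eq:vasympvn}). For the generic case I would work near a single contact point $\zeta$, where by the Julia--Wolff--Carath\'eodory theorem (see \cite{CowenMacCluer-1995}) the angular derivative of the polynomial $\phi$ at $\zeta$ is finite and nonzero. Put $R:=\frac{1-\abs a^2}{1-\abs{\phi(a)}^2}$. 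Lemma \ref{lem:HelpWithApplyingTheKnown} (Schwarz--Pick) gives $1-\abs{\phi(a)}\gtrsim_\phi 1-\abs a$, hence $R\lesssim_\phi1$; in the regime $R\gtrsim1$ the weights satisfy $v(a)\asymp v(\phi(a))$ (apply \ref{eq:mainAssumpA2} in both directions) and $\norm{\phi_a}_{H^2}\leq1$ closes the estimate. In the regime $R\leq1$ we have $1-\abs a^2\leq1-\abs{\phi(a)}^2$, so \ref{eq:mainAssumpA2}, applied with the smaller quantity $1-\abs a^2$ and the larger $1-\abs{\phi(a)}^2$, yields $\frac{v(a)}{v(\phi(a))}\lesssim g(1/R)$, and \ref{eq:mainAssumpA1} together with Lemma \ref{lem:ExtraInfoAboutG} gives $g(1/R)\lesssim R^{-1/(2+\epsilon_0)}$. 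Consequently everything reduces to the geometric decay estimate
\[
\norm{\phi_a}_{H^2}^2\lesssim_\phi R=\frac{1-\abs a^2}{1-\abs{\phi(a)}^2},
\]
since then $\frac{v(a)}{v(\phi(a))}\norm{\phi_a}_{H^2}\lesssim R^{\,1/2-1/(2+\epsilon_0)}\lesssim1$, the exponent being positive because $\epsilon_0>0$.

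It remains to establish this decay. Using \eqref{eq:LittlewoodPaley} for $\phi_a$ and the change of variables $z=\sigma_a(\zeta)$ (together with $1-\abs{\sigma_a(\zeta)}^2=(1-\abs\zeta^2)\abs{\sigma_a'(\zeta)}$ and $h:=\sigma_{\phi(a)}\circ\phi=\phi_a\circ\sigma_a$), one obtains
\[
\norm{\phi_a}_{H^2}^2\asymp(1-\abs{\phi(a)}^2)^2(1-\abs a^2)\int_\D\frac{\abs{\phi'(\zeta)}^2(1-\abs\zeta^2)}{\abs{1-\CONJ{\phi(a)}\phi(\zeta)}^4\,\abs{1-\CONJ a\zeta}^2}\,dA(\zeta),
\]
so the target becomes $(1-\abs{\phi(a)}^2)^3\,I\lesssim_\phi1$ for this integral $I$. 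I would localize $I$ to a fixed neighbourhood of the contact point $\zeta$ (the complementary part, where $\abs{\phi(\zeta)}$ stays away from $\abs{\phi(a)}$, contributes a lower-order term), replace $\phi$ there by its affine Taylor part $\eta+\phi'(\zeta)(\cdot-\zeta)$, and evaluate the resulting Forelli--Rudin-type integral; the nonvanishing of $\phi'(\zeta)$ guarantees the model is a genuine affine self-map, matching the explicit computation carried out for $\phi(z)=\tfrac{1+z}2$ in Section \ref{sec:Examples}, where precisely $\norm{\phi_a}_{H^2}^2\asymp R$ holds. The main obstacle is exactly this last step: controlling $I$ uniformly for polynomials of arbitrary degree, that is, showing that the contribution near each contact point is governed by the affine model while the off-diagonal and higher-order Taylor contributions are genuinely lower order. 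The affine case (the worked example) and the inner case $\eta z^n$ (via \eqref{eq:vasympvn}) are already settled and provide strong evidence; converting the local model computation into a uniform boundary estimate is what stands between this plan and a complete proof, and is the reason the statement is recorded here as a conjecture.
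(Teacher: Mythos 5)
You have set out to prove something the paper itself does not prove: Conjecture \ref{conj:PolynomialSymbBdd} is stated as an open problem, with only the supporting evidence of Section \ref{sec:Examples} (the inner maps \(\eta z^n\) via \eqref{eq:vasympvn}, and the affine example \(\phi(z)=\frac{1+z}{2}\) via \eqref{eq:ExCphiBounded}) behind it. So there is no proof to compare against, and your proposal --- correctly, and to its credit --- does not claim to close the problem. Everything you do prove checks out: the membership \(\phi\in X_v\) from Proposition \ref{prop:polynomialsInBMOAV}, \ref{eq:mainAssumpA1} and Lemma \ref{lem:ExtraInfoAboutG}; the reduction to the contact set, which for a non-inner polynomial is indeed finite; the inner case via \eqref{eq:vasympvn}; the two-regime analysis in \(R=\frac{1-\abs{a}^2}{1-\abs{\phi(a)}^2}\), where \(R\lesssim_\phi 1\) by Lemma \ref{lem:HelpWithApplyingTheKnown}, the case \(R\asymp 1\) closes with \(\norm{\phi_a}_{H^2}\leq 1\) and \ref{eq:mainAssumpA2}, and the case \(R\leq 1\) gives \(\frac{v(a)}{v(\phi(a))}\lesssim g(1/R)\lesssim R^{-1/(2+\epsilon_0)}\); and the change-of-variables identity
\[
\norm{\phi_a}_{H^2}^2\asymp(1-\abs{\phi(a)}^2)^2(1-\abs{a}^2)\int_\D\frac{\abs{\phi'(\zeta)}^2(1-\abs{\zeta}^2)}{\abs{1-\CONJ{\phi(a)}\phi(\zeta)}^4\abs{1-\CONJ{a}\zeta}^2}\,dA(\zeta),
\]
which I verified from \eqref{eq:LittlewoodPaley} and the involutivity of \(\sigma_a\). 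Your reformulation is also calibrated correctly: since \(\epsilon_0>0\) gives \(\frac12>\frac{1}{2+\epsilon_0}\), the single estimate \(\norm{\phi_a}_{H^2}^2\lesssim_\phi R\) would settle the conjecture for \emph{every} admissible weight simultaneously, and it is consistent with the lower bound \(\norm{\phi_a}_{H^2}^2\geq R^2\abs{\phi'(a)}^2\) coming from Schwarz--Pick, and sharp (\(\asymp\)) in Laitila's affine example.

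The genuine gap is exactly where you place it, and it is the whole difficulty: the uniform estimate \((1-\abs{\phi(a)}^2)^3 I\lesssim_\phi 1\) near a contact point. The proposed route --- replace \(\phi\) by its affine Taylor part at \(\zeta\) and evaluate a Forelli--Rudin integral --- is not a routine perturbation argument, because the error \(\phi(\zeta')-\eta-\phi'(\zeta)(\zeta'-\zeta)=O(\abs{\zeta'-\zeta}^2)\) enters inside the fourth-power singularity \(\abs{1-\CONJ{\phi(a)}\phi(\zeta')}^{-4}\), whose size near the diagonal is governed by the level sets of \(1-\abs{\phi}\) on \(\T\); for tangential approach \(a\to\zeta\) (the regime where \(R\) is small, which is precisely the regime your reduction must handle) the quadratic error is comparable to the main term \(\abs{1-\CONJ{\phi(a)}\phi(\zeta')}\) itself on part of the integration region, so "lower order" has to be argued from the structure of \(1-\abs{\phi(e^{it})}^2\) near its zeros (a real-analytic function vanishing to even order) rather than from pointwise Taylor bounds. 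One small remark that may help: for a \emph{fixed} admissible \(v\) your target has slack --- you only need \(g(1/R)^2\norm{\phi_a}_{H^2}^2\lesssim 1\), hence \(\norm{\phi_a}_{H^2}^2\lesssim R^{\theta}\) for some \(\theta>\frac{2}{2+\epsilon_0}\) suffices --- so a proof losing an \(\epsilon\) in the exponent would still prove the conjecture as stated, even if it misses the clean bound \(\norm{\phi_a}_{H^2}^2\lesssim R\). As it stands, your proposal is a correct and useful reduction of the open conjecture, not a proof of it.
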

In section \ref{sec:Examples}, a few examples of polynomial symbols rendering \(C_\phi\) bounded are given.

\begin{conj}\label{conj:ComparingBetas}
Given an admissible weight \(v\) and \( X= \BMOA_v \) or \( X= \VMOA_v \) such that \(X\) is not contained in \(H^{\infty}\), it holds that
\[
 \sup_{a\in\D} \norm{\delta_{\phi(a)} }_{X^*} \gamma(\psi , a , 2)  \lesssim_{v,g,\psi,\phi}  \sup_{a\in\D} \norm{\delta_{\phi(a)} }_{X^*} \gamma(\psi , a , 1) \text{ and } 
\]  
and
\[ 
\limsup_{\abs{\phi(a)}\to 1} \norm{\delta_{\phi(a)} }_{X^*} \gamma(\psi , a , 2) \lesssim_{v,g,\psi,\phi}  \limsup_{\abs{\phi(a)}\to 1} \norm{\delta_{\phi(a)} }_{X^*} \gamma(\psi , a , 1).
\]  

\end{conj}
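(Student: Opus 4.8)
The plan is to reduce the statement to the $p$-independence machinery of Proposition~\ref{prop:implicationOfJohnNirenberg} and to isolate precisely the weight comparison that this machinery does not cover. Write $w(a):=\norm{\delta_{\phi(a)}}_{X^*}$, so that by Corollary~\ref{cor:EvaluationBoundedOnBMOAv} one has $w(a)\asymp_{v,g}1+\PEF(\abs{\phi(a)})$, a quantity depending on $a$ only through $\abs{\phi(a)}$; set $A_p:=\sup_{a\in\D}w(a)\gamma(\psi,a,p)$. We assume $\psi\in\BMOA_v$, which is the case of interest and guarantees $\norm{\psi}_{*,v,1}<\infty$ below. By H\"older's inequality $\gamma(\psi,a,1)\leq\gamma(\psi,a,2)$, so $A_1\leq A_2$ is immediate and the entire content is the reverse bound $A_2\lesssim_{v,g,\psi,\phi}A_1$, together with its $\limsup$ analogue as $\abs{\phi(a)}\to1$.

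First I would fix $a\in\D$ with $\abs{a}$ close to $1$ and apply the local estimate \eqref{eq:lemR_BMOineq} of Proposition~\ref{prop:intBMOAimplyConformInvBMOA} with $p=2$, followed by \eqref{eq:Rineq} and Proposition~\ref{prop:ConformInvBMOAimplyintBMOA}, to obtain, for any $R_{\BMO}\in]0,1[$ with $\abs{a}\geq1-R_{\BMO}/2$,
\[
v(a)\gamma(\psi,a,2)\lesssim_{v,g}\sup_{\abs{b}\geq1-R_{\BMO}}v(b)\gamma(\psi,b,1)+\norm{\psi}_{*,v,1}\Big(\tfrac{2(1-\abs{a})}{R_{\BMO}}\Big)^{\epsilon_0},
\]
where I have used $\norm{\psi}_{*,\BMO_{v,2}}\asymp_{v,g}\norm{\psi}_{*,v,1}$ from Proposition~\ref{prop:implicationOfJohnNirenberg}. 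Multiplying by $w(a)/v(a)$ and writing each $v(b)\gamma(\psi,b,1)$ as $\tfrac{v(b)}{w(b)}w(b)\gamma(\psi,b,1)$ bounds the right-hand side by
\[
\Big(\sup_{\abs{b}\geq1-R_{\BMO}}\tfrac{w(a)v(b)}{v(a)w(b)}\Big)A_1+\tfrac{w(a)}{v(a)}\Big(\sup_{b\in\D}\tfrac{v(b)}{w(b)}\Big)\Big(\tfrac{2(1-\abs{a})}{R_{\BMO}}\Big)^{\epsilon_0}A_1.
\]
If the two weight ratios here were uniformly bounded, I could fix $R_{\BMO}$, use $w(a)\lesssim_{v,g,\phi}1+\PEF(\abs{a})\lesssim\ln\tfrac{e}{1-\abs{a}}$ from Corollaries~\ref{cor:EvaluationBoundedOnBMOAv} and~\ref{cor:EvaluationHierarchyOnBMOAv} together with the admissibility bound $v(a)\lesssim(1-\abs{a})^{-1/(2+\epsilon_0)}$ to force the second term to $0$ as $\abs{a}\to1$, and treat $\abs{a}$ bounded away from $1$ trivially; taking the supremum over $a$ would then yield $A_2\lesssim A_1$, and restricting the whole computation to sequences with $\abs{\phi(a)}\to1$ (which forces $\abs{a}\to1$ by continuity of $\phi$ on $\D$) would give the $\limsup$ statement.

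The main obstacle is exactly the appearance of the non-radial ratios $\tfrac{w(a)v(b)}{v(a)w(b)}$ and $\tfrac{v(b)}{w(b)}=\tfrac{v(b)}{1+\PEF(\abs{\phi(b)})}$, neither of which is controlled. Since $\phi$ is only an analytic self-map, a point $b$ tending to $\T$ may be sent by $\phi$ into a fixed compact subset of $\D$, so that $w(b)$ stays bounded while $v(b)\to\infty$; thus $\sup_b v(b)/w(b)=\infty$ in general, and the John--Nirenberg "bad point" $b$ realizing the interior supremum bears no relation to the given $a$. This is the precise mechanism behind the remark that $w$ "is not necessarily equivalent to a radial function": the $p$-independence of $\BMOA_v$ rests on the dyadic summation \eqref{eq:usefulForJohnNirenberg}, which needs the weight to be almost increasing along radii, whereas $a\mapsto w(a)$ can decay sharply along a radius. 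The one-sided Schwarz--Pick bound $1-\abs{\phi(z)}\geq(1-\abs{z})\tfrac{1-\abs{\phi(0)}}{1+\abs{\phi(0)}}$ from Lemma~\ref{lem:HelpWithApplyingTheKnown} only controls $w$ from above, not below, so it does not help. A genuine proof would therefore seem to require a localized John--Nirenberg inequality in which the supremum over all small subarcs in Lemma~\ref{lem:JohnNirenberg} is replaced by a supremum over subarcs sharing the boundary point $a/\abs{a}$, so that the dominating point inherits a weight comparable to $w(a)$. Establishing such a localization is the step I expect to be genuinely hard, and is presumably why the statement remains a conjecture.
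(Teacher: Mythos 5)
This statement is Conjecture~\ref{conj:ComparingBetas}: the paper offers no proof of it, only the remark after Theorem~\ref{thm:MainThmCpt} that \(\norm{\delta_{\phi(a)}}_{X^*}\) need not be equivalent to a radial weight, and the closing observation that the case \(X\subset H^\infty\) follows from Proposition~\ref{prop:implicationOfJohnNirenberg}. You correctly recognized this status rather than manufacturing a proof, and your partial reduction is sound: the trivial direction \(\gamma(\psi,a,1)\leq\gamma(\psi,a,2)\); the chain \eqref{eq:lemR_BMOineq} plus \eqref{eq:Rineq} giving \(v(a)\gamma(\psi,a,2)\lesssim_{v,g}\sup_{\abs{b}\geq 1-R_{\BMO}}v(b)\gamma(\psi,b,1)+\norm{\psi}_{*,v,1}\bigl(2(1-\abs{a})/R_{\BMO}\bigr)^{\epsilon_0}\); the identification of the uncontrolled ratios \(w(a)v(b)/(v(a)w(b))\) as the precise obstruction; and the mechanism behind it (\(\phi\) may send points \(b\to\T\) into a fixed compact subset of \(\D\), so \(\sup_b v(b)/w(b)=\infty\) while \(w(b)\gtrsim 1\)). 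This is exactly the non-radiality the paper's remark alludes to, and your observation that \(\abs{\phi(a)}\to 1\) forces \(\abs{a}\to 1\) is also correct, as is the treatment of \(\abs{a}\) bounded away from \(1\), since the implicit constant may depend on \(\psi\) and \(\phi\).

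Two refinements to your diagnosis. First, the fix you propose --- localizing Lemma~\ref{lem:JohnNirenberg} to arcs sharing the boundary point \(a/\abs{a}\) --- buys less than you suggest: the dyadic arcs \(J_k\) in the proof of Proposition~\ref{prop:intBMOAimplyConformInvBMOA} are already centered at \(a/\abs{a}\), and the Calder\'on--Zygmund proof of John--Nirenberg uses only subarcs of the given arc, so that part of the machinery localizes essentially for free. The genuinely non-local step is the conversion of arc data back into point data: the arc \(J_k\) corresponds to the point \(b_k=(1-m(J_k))\,a/\abs{a}\) on the same ray as \(a\), but \(w(b_k)=\norm{\delta_{\phi(b_k)}}_{X^*}\) tracks \(\abs{\phi(b_k)}\), which can oscillate along that ray; what \eqref{eq:usefulForJohnNirenberg} really needs is almost-monotonicity of the weight along radii with a power gain, and even a fully localized John--Nirenberg inequality would not restore that for \(w\). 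So the hard point is the weight, not the localization, and your own earlier sentence (``\(a\mapsto w(a)\) can decay sharply along a radius'') is the more accurate statement of the obstruction. Second, two small notes: the conjecture's displays omit the factor \(v(a)\) that appears in \(\beta\), and you matched the literal statement, which is consistent but worth flagging since the motivating comparison of the two versions of \(\beta\) includes it; and the admissibility bound \(v(a)\lesssim(1-\abs{a})^{-1/(2+\epsilon_0)}\) is not needed where you invoke it --- \(w(a)/v(a)\lesssim_{v,g,\phi}\ln\frac{e}{1-\abs{a}}\), using only \(v\gtrsim 1\) from almost-increasingness, already forces the \((1-\abs{a})^{\epsilon_0}\) term to vanish.
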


If \(X\subset H^\infty\) Proposition \ref{prop:implicationOfJohnNirenberg} yields the statements above.

\vspace{1cm}

{\bf Acknowledgement} The author was financially supported by the Magnus Ehrnrooth Foundation. I would like to thank the reviewers for their valuable comments, among other things, mentioning that methods from \cite{Perfekt-2013} and \cite{Perfekt-2015} could be applied to \(\VMOA_v\) and \(\BMOA_v\), which led to Theorem \ref{thm:Gantmacher}. I would also like to thank Professor Jani Virtanen for some valuable discussions and feedback on this manuscript.

\end{document}